\let\oldcolor\color
\renewcommand{\color}[1]{\oldcolor{#1}}  
\newtheorem{theorem}{Theorem}[section]
\newtheorem{lemma}[theorem]{Lemma}
\newtheorem{proposition}[theorem]{Proposition}
\newtheorem{corollary}[theorem]{Corollary}
\newtheorem{property}[theorem]{Property}
\newtheorem{assumption}[theorem]{Assumption}
\theoremstyle{definition}
\newtheorem{definition}[theorem]{Definition}
\newtheorem{remark}[theorem]{Remark}
\newtheorem{example}[theorem]{Example}
\newtheorem{examples}[theorem]{Examples}
\newcommand{\C}{\mbb{C}}
\newcommand{\I}{\mc{I}}
\newcommand{\J}{\mc{J}}
\newcommand{\N}{\mbb{N}}
\newcommand{\R}{\mbb{R}}
\newcommand{\Z}{\mbb{Z}}
\newcommand{\hh}{\mbb{H}}
\newcommand{\oo}{\mbb{O}}
\newcommand{\OO}{\Omega}
\newcommand{\mb}{\mathbf}
\newcommand{\mbb}{\mathbb}
\newcommand{\mc}{\mathcal}
\newcommand{\mi}{\mathit}
\newcommand{\mr}{\mathrm}
\newcommand{\mscr}{\mathscr}
\newcommand{\pr}{\prime}
\newcommand{\sss}{\scriptscriptstyle}
\newcommand{\compl}{\complement}
\newcommand{\sq}{\mbb{S}}
\newcommand{\bs}{{\tiny $\blacksquare$}}
\newcommand{\prd}{\prod}
\newcommand{\ui}{i}
\newcommand{\pa}{\mc{P}}
\newcommand{\dsim}{\bigtriangleup}
\newcommand{\dibar}{\overline\partial}
\def\dd#1#2{\dfrac{\partial#1}{\partial#2}} 
\def\ds#1#2{\frac{\partial#1}{\partial#2}}
\newcommand\IM{\operatorname{Im}}
\newcommand\RE{\operatorname{Re}}
\newcommand{\cS}{\mathbb{S}}
\newcommand\vs[1]{{#1}_s^\circ}
\newcommand{\tenso}{\odot}
\newcommand\HH{\mathbb H}
\newcommand{\SD}{\EuScript{D}}
\def\dd#1#2{\dfrac{\partial#1}{\partial#2}}
\title{\bf Slice regular functions in several variables}
\author{
\textsc{Riccardo Ghiloni and Alessandro Perotti}}
\date{
{\small
\textit{Department of Mathematics, University of Trento, I-38123, Povo-Trento, Italy}} 
\\
{\small \texttt{ghiloni@science.unitn.it} $\qquad$ \texttt{perotti@science.unitn.it}}
}
\begin{document}


\maketitle



\begin{abstract}
In this paper, we lay the foundations of the theory of slice regular functions in several variables ranging in any real alternative $^*$-algebra, including quaternions, octonions and Clifford algebras. This theory is an extension of the classical theory of holomorphic functions in several complex variables.

\vspace{.5em}

\noindent \emph{2010 MSC:} Primary 30G35; Secondary 32A30; 17D05


\noindent \emph{Keywords:}  Slice regular functions; Functions of  hypercomplex variables; Cauchy integral formula; Quaternions; Clifford algebras; Octonions; Real alternative algebras
\end{abstract}


\tableofcontents


\section{Introduction}

The theory of slice regular functions of one variable in a real alternative $^*$-algebra is now well developed. It was introduced firstly for functions of one quaternionic variable by Gentili and Struppa in \cite{GeSt2006CR,GeSt2007Adv} and then extended to octonions in \cite{GeStRocky} and to Clifford algebras in \cite{CoSaSt2009Israel}. In \cite{GhPe_Trends,AIM2011}, a new approach to slice regularity, based on the concept of stem function, allowed to extend the theory to any real alternative $^*$-algebra $A$ of finite dimension.

The original definition \cite{GeSt2006CR,GeSt2007Adv} of slice regularity for a quaternion-valued function $f$, defined on an open domain $\OO$ of the algebra $\HH$ of quaternions, requires that, for every imaginary unit $J\in\HH$, the restriction of $f$ to the complex line generated by $J$ is holomorphic with respect to the complex structure defined by left multiplication by $J$. The approach taken in \cite{AIM2011,GhPe_Trends} allows to embed the class of slice regular functions into a larger class, that of slice functions, on which no holomorphicity condition is assumed. We refer to the monograph \cite{GeStoSt2013} for a survey of slice analysis in one quaternionic variable and to the papers \cite{AlgebraSliceFunctions,singular,DivisionAlgebras} for a recent account of the theory on real alternative $^*$-algebras. 

In the present paper we propose a generalization of slice analysis to several variables in a real alternative $^*$-algebra $A$.  Our function theory includes, in particular, the class of polynomials in several (ordered) variables with right coefficients in $A$. Our approach is based on the concept of stem functions of several variables and on the introduction  of a family of commuting complex structures on the real vector space $\R^{2^n}$.
For $A=\mathbb H$, several variables have been investigated also by Colombo, Sabadini and Struppa \cite{CoSaSt2013Indiana}. Their approach via stem functions is similar to ours, but the definition of regularity is different, as we will see in Section \ref{sec:Slice regular functions}. 
For $A=\mathbb O$, the algebra of octonions, a  slice functions theory of several variables has been proposed by Ren and Yang \cite{Ren_yang_2020}. The major difference with our theory is that the authors define slice functions 
on a class of non-open subsets of the space $\mathbb O^n$, where the octonionic variables associate and commute. In \cite{Ren_yang_2020} the  authors state as challenging the possibility to establish the theory on 
open subsets of $\mathbb O^n$. 
For $A=\C$, as one may expect, the slice function theory in $n$ variables reduces to the classic theory of several complex variables on domains $D$ of $\C^n$, with the unique restriction that $D$ must be assumed invariant with respect to complex conjugation in every variable $z_1,\ldots,z_n$.

Some of the results proved here were presented in \cite{GhPe_ICNPAA} for the case $A=\R_m$, the real Clifford algebra of signature $(0,m)$, and in \cite{MoscowSeveral} for the general case of real alternative $^*$-algebras.

We describe the structure of the paper. In the Introduction we give some preliminaries and recall the main definitions of the one variable slice function theory. Then we present without proofs the principal results in two quaternionic variables, where the exposition is simpler but sufficiently representative of the theory, at least for the associative case.  In Section~\ref{sec:Slice functions}, we introduce the \emph{stem functions} of several variables in $A$ and define the induced \emph{slice functions}. We prove a representation formula and the identity principle. We generalize to $n$ variables the concepts of \emph{spherical  value} and \emph{spherical derivatives} and give the relation between sliceness in $n$ variables and sliceness in one variable. We then prove the smoothness properties of slice functions. We study also the multiplicative structures on slice functions induced by pointwise products of stem functions, and we investigate some special real subalgebras of slice functions. Section~\ref{sec:Slice regular functions} is dedicated to slice regularity. After giving the definition of a family of commuting complex structures on $A\otimes \R^{2^n}$, the concept of \emph{slice regular function} of several variables is introduced. All polynomials (with ordered monomials) turn out to be slice regular functions. 
We study the real dimension of the zero set of polynomials in the quaternionic and octonionic cases and give some results about the zero set of polynomials with Clifford coefficients. In particular, we prove that these zero sets are nonempty for nonconstant polynomials.
We show that slice regularity in several variables has an interpretation, by means of the spherical value and spherical derivatives, in terms of slice regularity in one variable. We investigate Leibniz's rule and we prove the stability of slice regularity under the so-called slice tensor product of slice functions. We show the relation between slice regularity and expansions in (ordered) power series on products of open balls in $A$ centered in the origin. Finally, we define a \emph{slice Cauchy kernel} associated to any given slice regular function, and obtain a Cauchy integral formula. In the associative case, we are able to define a universal slice Cauchy kernel, and express it in terms of pointwise operations in $A$.

\subsection{Preliminaries}

Let $A$ be a \emph{real algebra with unity $1\ne0$}. Assume that $A$ is \emph{alternative}, i.e. $x^2y=x(xy)$ and $(yx)x=yx^2$ for all $x,y\in A$. A theorem of E.\ Artin asserts that the subalgebra generated by any two elements of $A$ is associative. The real multiples of $1$ in $A$ are identified with the field $\R$ of real numbers. Assume that $A$ is a *-algebra, i.e., it is equipped with a real linear anti-involution $A\to A$, $x\mapsto x^c$, such that $(xy)^c=y^cx^c$ for all $x,y\in A$ and  $x^c=x$ for $x$ real. Let $t(x):=x+x^c\in A$ be the \emph{trace} of $x$ and $n(x):=xx^c\in A$  the \emph{(squared) norm} of $x$. 
Let
\[
\cS_A:=\{J\in A : t(x)=0,\ n(x)=1\}
\]
be the `sphere' of the imaginary units of $A$ compatible with the *-algebra structure of $A$. Assuming $\cS_A\ne\emptyset$, one can consider the \emph{quadratic cone} of $A$, defined as the subset of $A$
\[
Q_A:=\bigcup_{J\in \cS_A}\C_J,
\]
where $\C_J=\langle 1,J\rangle$ is the complex `slice' of $A$ generated by $1,J$ as a vector subspace or, equivalently, by $J$ as a subalgebra. It holds $\C_J\cap\C_K=\R$ for each $J,K\in\cS_A$ with $J\ne\pm K$. The quadratic cone is a real cone invariant w.r.t.\ translations along the real axis. 
Observe that $t$ and $n$ are real-valued on $Q_A$ and that $Q_A=A$ if and only if $A$ is isomorphic as a real $^*$-algebra to one of the division algebras $\C,\HH,\mathbb O$ with the standard conjugations (see \cite[Prop.~1]{AIM2011}).
Moreover, it holds
\[
Q_A=\R\cup\{x\in A\setminus\R: t(x)\in\R,n(x)\in\R,4n(x)>t(x)^2\}.
\]

Each element $x$ of $Q_A$ can be written as $x=\RE(x)+\IM(x)$, with $\RE(x)=\frac{x+x^c}2$, $\IM(x)=\frac{x-x^c}2=\beta J$, where $\beta=\sqrt{n(\IM(x))}\geq0$ and $J\in\cS_A$. The choice of $\beta\geq0$ and $J\in\cS_A$ is unique if $x\not\in\R$.

We refer to \cite[\S2]{AIM2011} and \cite[\S1]{AlgebraSliceFunctions} for more details and examples about real alternative $^*$-algebras and their quadratic cones.


\subsection{The one variable slice function theory}

The \emph{slice functions} on $A$ are the functions which are compatible with the slice character of the quadratic cone.
More precisely, let $D$ be a subset of $\C$ be a set that is invariant w.r.t.\ complex conjugation. 
Let $A\otimes_{\R}\C$ be the complexified algebra, whose elements $w$ are of the form $w=a+\ui b$ with $a,b\in A$ and $\ui^2=-1$. In $A\otimes_{\R}\C$ we consider the complex conjugation mapping $w=a+\ui b$ to $\overline w=a-\ui b$  for all $a,b\in A$.
If a function $F: D \to A\otimes_{\R}\C$ satisfies  $F(\overline z)=\overline{F(z)}$ for every $z\in D$, then $F$  is called a \emph{stem function} on $D$. For every $J\in\cS_A$, we define the real $^*$-algebra isomorphism $\phi_J:\C\to\C_J$ by setting
\begin{equation}\label{eq:phiJ}
\text{$\phi_J(\alpha+i\beta):=\alpha+J\beta\;$ for all $\alpha,\beta\in\R$.}
\end{equation}
Let $\OO_D$ be the \emph{circular} subset of the quadratic cone defined by 
\[
\OO_D=\bigcup_{J\in\cS_A}\phi_J(D)=\{\alpha+J\beta\in
A : \alpha,\beta\in\R, \alpha+\ui\beta\in D,J\in\cS_A\}.
\]
The stem function $F=F_1+\ui F_2:D \to A\otimes_{\R}\C$  induces the \emph{(left) slice function} $f=\I(F):\OO_D \to A$ in the following way: if $x=\alpha+J\beta =\phi_J(z)\in \OO_D\cap \C_J$, then  
\[ f(x)=F_1(z)+JF_2(z),\]
where $z=\alpha+\ui\beta$. 

Suppose that $D$ is open. Left multiplication by $i$ defines a complex structure on $A\otimes_{\R}\C$. The slice function $f=\I(F):\OO_D \to A$ is called \emph{(left) slice regular} if $F$ is holomorphic. 
For example, polynomial functions $f(x)=\sum_{j=0}^d x^ja_j$ with right coefficients belonging to $A$ are slice regular on the quadratic cone. 

To any slice function $f=\I(F):\OO_D \to A$, one can associate the function $\vs f:\OO_D \to A$, called \emph{spherical value} of $f$, and the function $f'_s:\OO_D \setminus \R \to A$, called  \emph{spherical derivative} of $f$, defined as
\[
\vs f(x):=\frac{1}{2}(f(x)+f(x^c))
\quad \text{and} \quad
f'_s(x):=\frac{1}{2}\IM(x)^{-1}(f(x)-f(x^c)).
\] 
If $x=\alpha+\beta J\in\OO_D$ and $z=\alpha+\ui\beta\in D$, then $\vs f(x)=F_1(z)$ and $f'_s(x)=\beta^{-1} F_2(z)$. Therefore $\vs f$ and $f'_s$ are slice functions, constant on every set $\cS_x:=\alpha+\beta\,\cS_A$. They are slice-regular only if $f$ is locally constant.
Moreover, the formula
\[
f(x)=\vs f(x)+\IM(x)f'_s(x)
\]
holds for all $x\in\OO_D\setminus \R$. As we will see later, the concepts of spherical value and spherical derivative in one variable will have a central role to get a characterization of slice regularity in several variables in terms of separate one variable regularity.

We refer the reader to \cite[\S3,4]{AIM2011} for more properties of slice functions and slice regularity in one variable.

\subsection{Slice regular functions on $\hh^2$}
Before presenting the full theory in the general case of $n$ variables in a real alternative $^*$-algebra $A$, in this subsection we summarize the main results in the simpler case of two quaternionic variables. We refer to the following sections for full proofs.

\paragraph{Slice functions on $\hh^2$.}
Let $D$ be a non-empty subset of $\mathbb C^2$, invariant w.r.t.\ complex conjugation in each variable $z_1,z_2$. Let $\OO_D$ be the \emph{circular} open subset of $\HH^2$ associated to  $D\subset\C^2$, defined as
\[
\OO_D:=\{(x_1,x_2)\in \HH^2 : x_h=\alpha_h+J_h\beta_h, \text{ with
$J_h\in\cS_\HH$ for $h=1,2$}, (\alpha_1+i\beta_1,\alpha_2+i\beta_2)\in D\}.
\]
Let $\{e_\emptyset,e_1,e_2,e_{12}\}$ denote a fixed basis of $\R^4$. If $\pa(2)$ denotes the set of all subsets of $\{1,2\}$, we can write any element $x$ of the real vector space $\HH\otimes\R^4$ as $x=\sum_{K\in\pa(2)}e_Ka_K$, where each $a_K$ belongs to $\HH$, and $e_{\{1\}}=e_1$, $e_{\{2\}}=e_2$ and $e_{\{1,2\}}=e_{12}$.

\begin{definition}
 A function $F:D \rightarrow \HH \otimes \R^4$, with $F=e_\emptyset F_\emptyset+e_1F_1+e_2F_2+e_{12}F_{12}$ and $F_K:D\to\HH$ for each $K\in\pa(2)$, is called a \emph{stem function} if  the components $F_\emptyset, F_1, F_2, F_{12}$ are, respectively, even-even, odd-even, even-odd, odd-odd w.r.t.\ the pair $(\beta_1,\beta_2)$, where $z_1=\alpha_1+i\beta_1$ and $z_2=\alpha_2+i\beta_2$ with $\alpha_1,\alpha_2,\beta_1,\beta_2\in\R$. The \emph{(left) slice function} $f=\I(F):\OO_D \rightarrow \HH$ induced by $F$ is the function obtained by setting, for each $x=(x_1,x_2)=(\alpha_1+J_1\beta_1,\alpha_2+J_2\beta_2)$,
\[
\textstyle
f(x):=F_\emptyset(z_1,z_2)+J_1F_1(z_1,z_2)+J_2F_1(z_1,z_2)+J_1J_2F_{12}(z_1,z_2)
\]
where $(z_1,z_2)=(\alpha_1+i\beta_1,\alpha_2+i\beta_2)\in D$.
\end{definition}

\paragraph{Representation formula on $\HH^2$.}
The values of a slice function can be recovered by its values on a four-dimensional slice of $\OO_D$. Let $a^c$ denote the conjugate of a quaternion $a\in\HH$.

\begin{proposition}\label{pro:rep2}
Let $f:\OO_D\to\HH$ be a slice function and let $y=(y_1,y_2)=(\alpha_1+I_1\beta_1,\alpha_2+I_2\beta_2)\in\OO_D$. Then for every $x=(x_1,x_2)=(\alpha_1+J_1\beta_1,\alpha_2+J_2\beta_2)\in\OO_D$ it holds:
\begin{align*}
f(x)=&\,\frac14\big(
f(y_1,y_2)+f(y_1^c,y_2)+f(y_1,y_2^c)+f(y_1^c,y_2^c)+\\
&+J_1I_1\left( -f(y_1,y_2)+ f(y_1^c,y_2)- f(y_1,y_2^c)+ f(y_1^c,y_2^c)\right)+\\
&+J_2I_2\left(-f(y_1,y_2)- f(y_1^c,y_2)+ f(y_1,y_2^c)+ f(y_1^c,y_2^c)\right)+\\
&+J_1J_2I_2I_1\left(f(y_1,y_2)- f(y_1^c,y_2)- f(y_1,y_2^c)+ f(y_1^c,y_2^c)\right)
\big).
\end{align*}
\end{proposition}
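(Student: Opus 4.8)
The plan is to express each of the four quaternionic values appearing on the right-hand side in terms of the stem components $F_K(z_1,z_2)$, $K\in\pa(2)$, and then to invert the resulting linear system. Writing $y_h=\alpha_h+I_h\beta_h$ with $z_h=\alpha_h+i\beta_h$, I first observe that $y_1^c=\alpha_1-I_1\beta_1$ corresponds to the conjugate complex coordinate $\bar z_1$, and likewise $y_2^c$ to $\bar z_2$. Applying the defining formula of the slice function $f=\I(F)$ at each of the four points obtained from $(y_1,y_2)$ by conjugating none, the first, the second, or both entries therefore produces $F_\emptyset+I_1F_1+I_2F_2+I_1I_2F_{12}$ with the arguments $(z_1,z_2)$ replaced by one of $(z_1,z_2)$, $(\bar z_1,z_2)$, $(z_1,\bar z_2)$, $(\bar z_1,\bar z_2)$.

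Next I would invoke the parity hypotheses on the stem components. Since $F_\emptyset,F_1,F_2,F_{12}$ are even-even, odd-even, even-odd and odd-odd in $(\beta_1,\beta_2)$, the substitution $z_1\mapsto\bar z_1$ flips the sign of $F_1$ and $F_{12}$ while fixing $F_\emptyset,F_2$, and $z_2\mapsto\bar z_2$ flips the sign of $F_2$ and $F_{12}$ while fixing $F_\emptyset,F_1$. This turns the four values into the sign-patterned system
\begin{align*}
f(y_1,y_2)&=F_\emptyset+I_1F_1+I_2F_2+I_1I_2F_{12},\\
f(y_1^c,y_2)&=F_\emptyset-I_1F_1+I_2F_2-I_1I_2F_{12},\\
f(y_1,y_2^c)&=F_\emptyset+I_1F_1-I_2F_2-I_1I_2F_{12},\\
f(y_1^c,y_2^c)&=F_\emptyset-I_1F_1-I_2F_2+I_1I_2F_{12},
\end{align*}
all components now evaluated at $(z_1,z_2)$.

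This is a $4\times4$ Hadamard-type system, so I would solve it by taking the four signed combinations of the equations: the sum of all four isolates $4F_\emptyset$, while the three alternating combinations isolate $4I_1F_1$, $4I_2F_2$ and $4I_1I_2F_{12}$. To recover the components themselves I left-multiply by the inverses of the imaginary units, using that $I_h\in\cS_\HH$ gives $I_h^{-1}=I_h^c=-I_h$, whence $(I_1I_2)^{-1}=I_2^{-1}I_1^{-1}=I_2I_1$.

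Finally I would substitute the recovered components into $f(x)=F_\emptyset+J_1F_1+J_2F_2+J_1J_2F_{12}$ and collect terms. Associativity of $\HH$ lets me move $J_1$ and $J_2$ past the recovered components, producing precisely the coefficients $J_1I_1$, $J_2I_2$ and $J_1J_2I_2I_1$ of the statement together with the matching sign combinations of the four values; the specific right-to-left ordering $I_2I_1$ in the last term is exactly what $(I_1I_2)^{-1}=I_2I_1$ delivers. The point demanding care is this noncommutative bookkeeping: the signs coming from $I_h^{-1}=-I_h$ and the left/right multiplication orders must be tracked exactly, and it is crucial that only associativity — and never commutativity — of $\HH$ is used, so that the formula remains valid in the genuinely noncommutative setting.
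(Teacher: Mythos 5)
Your proof is correct and follows essentially the same route as the paper: the paper proves the general $n$-variable representation formula (Proposition \ref{prop:representation}) by evaluating $f$ at the conjugated points $y^{c,H}$, using the parity relations \eqref{eq:stem2}, and inverting via the combinatorial Lemma \ref{lem:combinatorial}, which for $n=2$ is precisely the orthogonality of the $4\times 4$ Hadamard sign matrix you invert explicitly. Your bookkeeping with $I_h^{-1}=-I_h$ and $(I_1I_2)^{-1}=I_2I_1$, using only associativity, matches the paper's $[J_K,[I_K^{-1},f(y^{c,H})]]$ formulation exactly.
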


\begin{corollary}[Identity principle]\label{cor:ip2}
Let $f,g:\OO_D \to A$ be slice functions and let $I_1,I_2 \in \sq_\HH$ such that $f=g$ on $\OO_D \cap(\C_{I_1}\times\C_{I_2})$. Then $f=g$ on the whole $\OO_D$. 
\end{corollary}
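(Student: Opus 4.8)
The plan is to derive the identity principle directly from the representation formula (Proposition~\ref{pro:rep2}), which reconstructs the value of any slice function at an arbitrary point of $\OO_D$ from only four of its values on the fixed four-dimensional slice $\C_{I_1}\times\C_{I_2}$.

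First I would fix an arbitrary point $x=(x_1,x_2)=(\alpha_1+J_1\beta_1,\alpha_2+J_2\beta_2)\in\OO_D$, with $J_1,J_2\in\sq_\HH$, and set $y=(y_1,y_2)=(\alpha_1+I_1\beta_1,\alpha_2+I_2\beta_2)$, using the \emph{same} real parameters $\alpha_h,\beta_h$ but the prescribed units $I_1,I_2$. The key observation is that the four evaluation points appearing in Proposition~\ref{pro:rep2}, namely $(y_1,y_2)$, $(y_1^c,y_2)$, $(y_1,y_2^c)$ and $(y_1^c,y_2^c)$, all lie in $\OO_D\cap(\C_{I_1}\times\C_{I_2})$. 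Membership in $\C_{I_1}\times\C_{I_2}$ is immediate, since $y_1,y_1^c\in\C_{I_1}$ and $y_2,y_2^c\in\C_{I_2}$; membership in $\OO_D$ follows from the definition of $\OO_D$ together with the invariance of $D$ under complex conjugation in each variable: since $(\alpha_1+i\beta_1,\alpha_2+i\beta_2)\in D$, all four sign-combinations $(\alpha_1\pm i\beta_1,\alpha_2\pm i\beta_2)$ lie in $D$, and hence the corresponding points of $\HH^2$ lie in $\OO_D$.

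Next I would apply Proposition~\ref{pro:rep2} to $f$ and to $g$ separately. Both expansions use the \emph{same} four evaluation points and the \emph{same} coefficients $1,\,J_1I_1,\,J_2I_2,\,J_1J_2I_2I_1$ determined by $x$ and $y$. By hypothesis $f=g$ on $\OO_D\cap(\C_{I_1}\times\C_{I_2})$, so $f$ and $g$ agree at each of the four points $(y_1,y_2),(y_1^c,y_2),(y_1,y_2^c),(y_1^c,y_2^c)$. Substituting these equalities into the two identical linear combinations yields $f(x)=g(x)$. Since $x\in\OO_D$ was arbitrary, this gives $f=g$ on all of $\OO_D$.

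The argument is essentially a direct corollary, so there is no serious obstacle; the only point requiring care is the verification that the four reconstruction points actually belong to $\OO_D$, which is where the circularity of $\OO_D$ and the conjugation-invariance of $D$ are used. One could alternatively apply the formula once to the slice function $h=f-g$ and show $h\equiv0$; this is equivalent, but it would first require noting that a difference of slice functions is again a slice function, so I would prefer the symmetric formulation above, which sidesteps that step.
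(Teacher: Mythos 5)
Your proof is correct and follows essentially the same route as the paper, which obtains Corollary~\ref{cor:ip2} (and its $n$-variable version, Corollary~\ref{cor:ip}) as an immediate consequence of the representation formula by evaluating at the four points $(y_1,y_2),(y_1^c,y_2),(y_1,y_2^c),(y_1^c,y_2^c)$ lying on the slice $\C_{I_1}\times\C_{I_2}$. Your explicit check that these points belong to $\OO_D$, via the conjugation-invariance of $D$, is a detail the paper leaves implicit but is exactly the right justification.
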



\paragraph{Smoothness.}
Suppose that $D$ is open in $\C^2$.

\begin{proposition} \label{thm:Cr2}
For every slice function $f=\I(F):\OO_D\to\HH$, it holds:
\begin{itemize}
\item[$(\mr{i})$] If $F$ is continuous on $D$, then
$f$ is continuous on $\OO_D$.
\item[$(\mr{ii})$] Let $k\in\N\setminus\{0\}$. If $F$
is of class $\mscr{C}^{4k+3}$ on $D$, then $f$ is of class
$\mscr{C}^{k}$ on $\OO_D$.
\item[$(\mr{iii})$] Let $k\in\{\infty,\omega\}$. If $F$
is of class $\mscr{C}^k$ on $D$, then $f$ is of class
$\mscr{C}^k$ on $\OO_D$.
\end{itemize}
\end{proposition}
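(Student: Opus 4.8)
The plan is to replace the two quantities in the definition of $f$ that are not even continuous at the real points---the imaginary units $J_1,J_2$, defined only where $\beta_h\ne0$---by genuinely real-analytic functions of $x$, pushing the resulting singularities onto the parities of the stem components. With $z_h=\alpha_h+i\beta_h$ and $\IM(x_h)=\beta_h J_h$, off the set $\{\beta_1\beta_2=0\}$ one has
\[
f(x)=F_\emptyset(z)+\IM(x_1)\,g_1(z)+\IM(x_2)\,g_2(z)+\IM(x_1)\IM(x_2)\,g_{12}(z),
\]
where $g_1:=F_1/\beta_1$, $g_2:=F_2/\beta_2$, $g_{12}:=F_{12}/(\beta_1\beta_2)$ (the real factors $\beta_h$ commute and associate freely, so no ordering issue arises). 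By the prescribed parities each of $F_\emptyset,g_1,g_2,g_{12}$ is even in both $\beta_1$ and $\beta_2$. Meanwhile the maps $x_h\mapsto\RE(x_h)=\alpha_h$, $x_h\mapsto\IM(x_h)$ and $x_h\mapsto n(\IM(x_h))=\beta_h^2$ are polynomial, hence real-analytic, on $\HH$. Thus it suffices to show that each even-even coefficient, read through $z=z(x)$, extends to a function on all of $\OO_D$ of the asserted regularity; the real-analytic factors $\IM(x_h)$ and the finite sum then preserve it.

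The only nontrivial analytic input is a Whitney-type lemma quantifying the loss when one passes from $\beta$ to $\beta^2$: if $\varphi(\alpha,\beta,\dots)$ is $\mscr{C}^m$ and even in $\beta$ then $\varphi=\Phi(\alpha,\beta^2,\dots)$ with $\Phi\in\mscr{C}^{\lfloor m/2\rfloor}$, while if $\varphi$ is $\mscr{C}^m$ and odd in $\beta$ then $\varphi/\beta=\Phi(\alpha,\beta^2,\dots)$ with $\Phi\in\mscr{C}^{\lfloor(m-1)/2\rfloor}$, the remaining variables carried along as parameters. I apply this once per variable, the required division by $\beta_h$ being produced exactly by the odd clause at the $\beta_h$-step, so that no singular quotient is ever differentiated. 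Starting from $F\in\mscr{C}^{4k+3}$ and reducing first in $\beta_1$ (even clause for $F_\emptyset,F_2$, odd clause for $F_1,F_{12}$) lands in class $\mscr{C}^{2k+1}$ as a function of $(\alpha_1,\beta_1^2,\alpha_2,\beta_2)$, still even, resp.\ odd, in $\beta_2$; reducing then in $\beta_2$ lands in class $\mscr{C}^{k}$ as a function of $(\alpha_1,\beta_1^2,\alpha_2,\beta_2^2)=(\RE x_1,n(\IM x_1),\RE x_2,n(\IM x_2))$. Composing with the real-analytic coordinate maps yields the coefficients in $\mscr{C}^k$ on $\OO_D$, whence $f\in\mscr{C}^k$; the floor-halving chain $4k+3\to 2k+1\to k$ is precisely what forces the hypothesis in $(\mr{ii})$.

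For $(\mr{iii})$ the identical two-step reduction applies with no loss, since an even $\mscr{C}^\infty$ (resp.\ real-analytic) function of $\beta$ is a $\mscr{C}^\infty$ (resp.\ real-analytic) function of $\beta^2$---by Whitney's theorem, resp.\ by inspection of the power series---so $f$ inherits class $\mscr{C}^\infty$ or $\mscr{C}^\omega$. Part $(\mr{i})$ must avoid the coefficients $g_K$, because for merely continuous $F$ the quotients $F_1/\beta_1,\dots$ may be unbounded as $\beta_h\to0$; instead I argue directly on $f=F_\emptyset+J_1F_1+J_2F_2+J_1J_2F_{12}$. Along any $x^{(n)}\to x^{(0)}$ in $\OO_D$ one has $\alpha^{(n)}_h\to\alpha^{(0)}_h$, $\beta^{(n)}_h\to\beta^{(0)}_h$ and $\IM(x^{(n)}_h)\to\IM(x^{(0)}_h)$; at indices $h$ with $\beta^{(0)}_h>0$ the unit $J^{(n)}_h=\IM(x^{(n)}_h)/\beta^{(n)}_h$ converges, and at indices with $\beta^{(0)}_h=0$ the components that are odd in $\beta_h$ tend to $0$ and are multiplied by the bounded units, so those terms drop out; comparing with the well-defined value $f(x^{(0)})$ gives continuity. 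The main obstacle is none of the individual steps but the bookkeeping: tracking the two parities through two successive square-reductions so the count is sharp, and checking that the even/odd Whitney lemma may be iterated with the remaining variables as parameters while preserving joint regularity.
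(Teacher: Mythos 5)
Your proposal is correct and takes essentially the same route as the paper's proof (Theorem \ref{thm:Cr}, specialized to $n=2$, $A=\HH$): the Whitney even/odd representation applied once per variable with the same sharp count $4k+3\to 2k+1\to k$ (the paper's loss function $\rho(s)=\lfloor (s-1)/2\rfloor$ iterated twice), followed by substitution of the real-analytic maps $x_h\mapsto(\RE(x_h),n(\IM(x_h)))$ and multiplication by the factors $\IM_K(x)$. For part $(\mr{i})$ you argue directly on sequences rather than via the paper's stratified induction over the closed sets $Q(\ell)$, but the ingredients coincide: oddness forces the components $F_K$, $K\ni h$, to vanish where $\beta_h=0$, and the units stay bounded by compactness of $\cS_\HH$.
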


\paragraph{Multiplicative structure on slice functions.}

Every product on $\HH\otimes\R^4$ induces a product on slice functions, and hence a structure of real algebra on the set of slice functions. We consider the product on $\HH\otimes\R^4$ constructed as follows. First, we equip $\R^4$ with the unique (commutative and associative) multiplicative structure which makes the real linear isomorphism $\R^4\to\C\otimes\C$, sending $e_\emptyset$ to the unity $1=1\otimes1$, $e_1$ to $i\otimes 1$, $e_2$ to $1\otimes i$ and $e_{12}$ to $i\otimes i$, a real algebra isomorphism. In other words, $e_\emptyset=1$ is the unity of $\R^4$, $e_1^2=e_2^2=-1$ and $e_1e_2=e_2e_1=e_{12}$. Then, we extend this product to $\HH\otimes\R^4$ by setting $(a\otimes v)\cdot(b\otimes w)=(ab)\otimes(vw)$ for all $a,b\in\HH$ and $v,w\in\R^4$. In this way, we can identify the real algebra $\HH\otimes\R^4$ with $\HH\otimes(\C\otimes\C)$. 

\begin{definition} \label{def:slice-product2}
Let $f,g:\OO_D \to \HH$ be slice functions with $f=\I(F)$ and $g=\I(G)$. We define the \emph{{(tensor) slice product} $f\cdot g:\OO_D\to \HH$} of $f$ and $g$ by $f\cdot g:=\I(FG)$, where $FG$ is the pointwise product defined by $(FG)(z)=F(z)G(z)$ in $\HH\otimes(\C\otimes\C)$ for all $z\in D$.
\end{definition}

For example, the slice product of the coordinate functions $x_1:\HH^2\to\HH$ and $x_2:\HH^2\to\HH$, with $x_h=\alpha_h+J_h\beta_h=\I(\alpha_h+e_h\beta_h)$ for $h=1,2$, is the slice function $x_1\cdot x_2:\HH^2\to\HH$ given by
\begin{align*}
x_1\cdot x_2&=\I((\alpha_1+e_1\beta_1)(\alpha_2+e_2\beta_2))=\I(\alpha_1\alpha_2+e_1\alpha_2\beta_1+e_2\alpha_1\beta_2+e_{12}\beta_1\beta_2)=\\
&=\alpha_1\alpha_2+J_1\alpha_2\beta_1+J_2\alpha_1\beta_2+J_1J_2\beta_1\beta_2.
\end{align*}
In this case, the slice product $x_1\cdot x_2$ coincides with the pointwise product $x_1x_2$. Moreover, $x_1\cdot x_2=x_2\cdot x_1$. In general, if $a,b\in\HH$, the slice product of $x_1a=\I(\alpha_1a+e_1(\beta_1a))$ and $x_2b=\I(\alpha_2b+e_2(\beta_2b))$ is the slice function $(x_1a)\cdot (x_2b)=x_1x_2ab$, while $(x_2b)\cdot(x_1a)=x_1x_2ba$. Note that the pointwise product $x_2x_1$ is not even a slice function, see Remark \ref{rem:order}.

The isomorphism from the real algebra of stem functions $F:D\to\HH\otimes(\C\otimes\C)$ with the pointwise product to the real algebra of slice functions $f:\OO_D\to\HH$ with the slice product can be expressed by the commutativity of the following diagrams for all $J_1,J_2\in \cS_\HH$:
\begin{center}
\begin{tikzpicture}

\node (A) at (0.05,0.04) [] {$\C^2\supset$}; 

\node (B) at (.8,0) [] {$D$}; 

\node (C) at (4,0) [] {$\HH\otimes(\C\otimes\C)$}; 

\node (A2) at (-0.06,-1.94) [] {$\HH^2\supset$}; 

\node (B2) at (.8,-2) [] {$\OO_D$}; 

\node (C2) at (4,-2) [] {$\HH$}; 

\draw [] (A)  (B);

\draw [->] (B) --node[above, ]{$F$} (C);

\draw [->] (B) --node[left, ]{$\phi_{J_1}\times\phi_{J_2}$} (B2);
\draw [-> ] (C) --node[right, ]{$\Phi_{J_1,J_2}$} (C2);

\draw [] (A2)  (B2);

\draw [->] (B2) --node[above, ]{$f$} (C2);

\node at (2.4,-1) [] {$\circlearrowleft$};
\end{tikzpicture}
\end{center}
Here $\phi_{J_1}\times\phi_{J_2}:D\to\OO_D$ denotes the product map $(\phi_{J_1}\times\phi_{J_2})(z_1,z_2):=(\phi_{J_1}(z_1),\phi_{J_2}(z_2))$, where $\phi_J$ is the map defined in \eqref{eq:phiJ}, and $\Phi_{J_1,J_2}:\HH\otimes(\C\otimes\C)\to\HH$ is the real linear map defined by $\Phi_{J_1,J_2}(a\otimes(z_1\otimes z_2)):=\phi_{J_1}(z_1)\phi_{J_2}(z_2)a$ for all $a\in\HH$ and $z_1,z_2\in\C$.

\paragraph{Slice regular functions.}
Let $\J_1$ and $\J_2$ be the commuting complex structures on $\R^4\simeq\C\otimes\C$ induced, respectively, by the standard structures of the two copies of $\C$. Explicitly, $\J_h(e_h)=-1$ for $h=1,2$ and $\J_1(e_2)=\J_2(e_1)=e_{12}$. We extend these structures to $\HH\otimes\R^4$ by setting $\J_h(a\otimes v)=a\otimes\J_h(v)$ for all $a\in\HH$ and $v\in\R^4$.

Throughout the remaining part of this section, we assume that $D$ is open in $\C^2$.

\begin{definition} \label{def:partial_h,dibar_h2}
Let $F:D \to \HH \otimes \R^4$ be a stem function of class $\mscr{C}^1$. For each $h=1,2$, we denote $\partial_h$ and $\dibar_h$ the \emph{Cauchy-Riemann operators} w.r.t.\ the complex structures $i$ on $D$ and  $\J_h$ on $\HH\otimes \R^{4}$, i.e.
\[
\partial_hF=\frac{1}{2}\left(\dd{F}{\alpha_h}-\J_h\left(\dd{F}{\beta_h}\right)\right)
\;\;\text{ and }\;\;
\quad \dibar_hF=\frac{1}{2}\left(\dd{F}{\alpha_h}+\J_h\left(\dd{F}{\beta_h}\right)\right),
\]
where $\alpha_h+i \beta_h:D\to\C$ is the $h^{\mr{th}}$-coordinate function of $D$. Let $f=\I(F):\OO_D\to\HH$ and let $h=1,2$. We define the \emph{slice partial derivatives} of $f$ as the following slice function on $\OO_D$:
\[
\dd{f}{x_h}:=\I(\partial_hF)
\;\;\text{ and }\;\;
\dd{f}{x_h^c}:=\I(\dibar_hF).\, \text{  \bs}
\]
\end{definition}

We denote $\mc{S}^1(\OO_D,\HH)$ the set of all slice functions induced by stem functions of class~$\mscr{C}^1$.

\begin{proposition}[Leibniz's rule]\label{prop:leibniz2}
For each slice functions $f,g\in\mc{S}^1(\OO_D,\HH)$ and $h=1,2$, it holds:
\begin{align*}
\frac{\partial}{\partial x_h}(f\cdot g)&=\frac{\partial f}{\partial x_h}\cdot g+f\cdot \frac{\partial g}{\partial x_h},\\
\frac{\partial}{\partial x_h^c}(f\cdot g)&=\frac{\partial f}{\partial x_h^c}\cdot g+f\cdot \frac{\partial g}{\partial x_h^c}.
\end{align*}
\end{proposition}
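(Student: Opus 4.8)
The plan is to transfer the identity from slice functions down to their stem functions, where everything reduces to the ordinary Leibniz rule plus a single algebraic compatibility. Recall that by Definition~\ref{def:slice-product2} the slice product is $f\cdot g=\I(FG)$, so that $\I$ is an $\R$-algebra homomorphism from stem functions with the pointwise product to slice functions with the slice product, and that by Definition~\ref{def:partial_h,dibar_h2} the slice partial derivatives are $\frac{\partial f}{\partial x_h}=\I(\partial_h F)$ and $\frac{\partial f}{\partial x_h^c}=\I(\dibar_h F)$. Hence it suffices to establish, at the level of stem functions, the two identities
\begin{align*}
\partial_h(FG)&=(\partial_h F)\,G+F\,(\partial_h G),\\
\dibar_h(FG)&=(\dibar_h F)\,G+F\,(\dibar_h G),
\end{align*}
and then apply $\I$, using that it is additive and multiplicative.

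The key algebraic fact I would isolate first is that each complex structure $\J_h$ is a two-sided multiplier for the product on $\HH\otimes(\C\otimes\C)$, namely
\[
\J_h(\Phi\Psi)=(\J_h\Phi)\,\Psi=\Phi\,(\J_h\Psi)\qquad\text{for all }\Phi,\Psi\in\HH\otimes(\C\otimes\C).
\]
This holds because $\J_h$ is multiplication by $i$ in the $h$-th copy of $\C$ inside $\C\otimes\C$, a commutative and associative factor that therefore slides freely past both the quaternionic part (which $\J_h$ leaves untouched, so that no issue of noncommutativity of $\HH$ arises) and the other complex factor. Concretely, I would verify it on decomposable tensors $a\otimes(z_1\otimes z_2)$, where it reduces to the elementary identity $(iu)v=u(iv)=i(uv)$ in the commutative field $\C$, and then extend by $\R$-bilinearity.

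With this in hand the computation is routine. The ordinary real partial derivatives obey the product rule for the pointwise, $\R$-bilinear product,
\[
\frac{\partial(FG)}{\partial\alpha_h}=\frac{\partial F}{\partial\alpha_h}\,G+F\,\frac{\partial G}{\partial\alpha_h},\qquad
\frac{\partial(FG)}{\partial\beta_h}=\frac{\partial F}{\partial\beta_h}\,G+F\,\frac{\partial G}{\partial\beta_h}.
\]
Substituting these into $\partial_h(FG)=\tfrac12\big(\frac{\partial(FG)}{\partial\alpha_h}-\J_h\big(\frac{\partial(FG)}{\partial\beta_h}\big)\big)$, then using the $\R$-linearity of $\J_h$ together with the two-sided multiplier property to move $\J_h$ onto the appropriate factor of each summand, and finally regrouping the $\alpha_h$- and $\beta_h$-terms of $F$ and of $G$ separately, yields exactly $(\partial_h F)G+F(\partial_h G)$. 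The computation for $\dibar_h$ is identical up to the sign in front of $\J_h$.

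The only genuine obstacle is the compatibility of $\J_h$ with the algebra structure: for a generic complex structure on $\R^4$ no such Leibniz rule would hold, and it is precisely the fact that $\J_h$ acts as multiplication by $i$ in a commutative tensor factor that makes it behave as a derivation-compatible operator. Once that is verified, the remainder is bookkeeping, and applying the algebra isomorphism $\I$ to the stem-function identities—turning $(\partial_h F)G+F(\partial_h G)$ into $\frac{\partial f}{\partial x_h}\cdot g+f\cdot\frac{\partial g}{\partial x_h}$, and likewise for $\dibar_h$—completes the proof.
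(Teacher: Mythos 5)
Your proof is correct and follows essentially the same route as the paper: the paper's Lemma~\ref{lem:technical} reduces Leibniz's rule to exactly the identity you isolate, namely that $\J_h$ slides past the product, $\J_h\big((\partial_{\beta_h}F)G\big)=\big(\J_h(\partial_{\beta_h}F)\big)G$ and $\J_h\big(F(\partial_{\beta_h}G)\big)=F\big(\J_h(\partial_{\beta_h}G)\big)$, after which the ordinary product rule for $\partial_{\alpha_h},\partial_{\beta_h}$ and the algebra isomorphism $\I$ finish the argument. The only cosmetic difference is that you verify the multiplier property invariantly on decomposable tensors in $\HH\otimes(\C\otimes\C)$ using commutativity of the complex factor, whereas the paper (Proposition~\ref{prop:leibniz}) checks the equivalent sign conditions $\sigma_\otimes^n(K,H)=(-1)^{|K\cap H|}$ componentwise, a formulation it needs later for general $\dsim$-products.
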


\begin{definition}
Let $F:D\to\HH\otimes\R^4$ be a stem function of class $\mscr{C}^1$ and let $f=\I(F):\OO_D \rightarrow \HH$ be the induced slice function. $F$ is called \emph{holomorphic} if $\dibar_1F=\dibar_2F=0$ on $D$. If $F$ is holomorphic, then we say that $f=\I(F)$ is a \emph{slice regular function}. \bs
\end{definition}

Thanks to Proposition \ref{thm:Cr2}, every slice regular function is real analytic on $\OO_D$.

\begin{example}
All polynomial functions $f:\HH^2\to\HH$ of the form $f(x)=\sum_{(\ell_1,\ell_2)\in L}x_1^{\ell_1} x_2^{\ell_2} a_{\ell_1,\ell_2}$, for some finite subset $L$ of $\N^2$ and $a_{\ell_1,\ell_2}\in\HH$, are slice regular. More generally, the sum of a convergent power series $\sum_{(\ell_1,\ell_2)\in\N^2}x_1^{\ell_1} x_2^{\ell_2} a_{\ell_1,\ell_2}$ is slice regular on a product of two open balls of $\HH$ centered at the origin.
\end{example}

For each $J\in\cS_\HH$ and for each slice function $f:\OO_D\to\HH$, we define $\OO_D(J):=\OO_D\cap(\C_J\times\C_J)$ and we denote  $f_J:\OO_D(J)\to\HH$ the restriction of $f$ to $\OO_D(J)$.

\begin{proposition} \label{prop:slice-regularity2}
Let $f\in\mc{S}^1(\OO_D,\HH)$. The following assertions are equivalent:
\begin{itemize}
  \item[$(\mr{i})$] $f$ is slice regular.
  \item[$(\mr{ii})$] $\displaystyle\frac{\partial f}{\partial x_h^c}=0$ on $\OO_D$ for $h=1,2$.
  \item[$(\mr{iii})$] There exists $J \in \cS_\HH$ such that $f_J:\OO_D(J)\to\HH$ is holomorphic w.r.t.\ the complex structures on $\OO_D(J)$ and on $\HH$ defined by the left multiplication by $J$; that is,
\begin{equation}\label{eq:holom-f_I2}
\dd{f_J}{\alpha_h}(z)+J\dd{f_J}{\beta_h}(z)=0\;\text{ for all $z\in\OO_D(J)$ and for all $h=1,2$,}
\end{equation}
where $z=(\alpha_1+J\beta_1,\alpha_2+J\beta_2)\in\C_J\times\C_J$.
  \item[$(\mr{iv})$] For each $J \in \cS_\HH$, $f_J$ is holomorphic in the sense of \eqref{eq:holom-f_I2}.
  \item[$(\mr{v})$] $f$ is slice regular w.r.t.\ $x_1$, and the spherical value and spherical derivative of $f$ w.r.t.\ $x_1$ are slice regular w.r.t.\ $x_2$. 
 \end{itemize}
\end{proposition}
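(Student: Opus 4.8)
The plan is to fix the stem function $F=e_\emptyset F_\emptyset+e_1F_1+e_2F_2+e_{12}F_{12}$ inducing $f$ and to translate each assertion into a system of Cauchy--Riemann equations for the four components $F_K\colon D\to\HH$. First I would compute $\dibar_hF$ explicitly from the action of $\J_h$ on the basis $\{e_\emptyset,e_1,e_2,e_{12}\}$ (for instance $\J_1(e_\emptyset)=e_1$, $\J_1(e_1)=-e_\emptyset$, $\J_1(e_2)=e_{12}$, $\J_1(e_{12})=-e_2$, with the symmetric rules for $\J_2$). This rewrites $2\dibar_1F$ as $e_\emptyset(\partial F_\emptyset/\partial\alpha_1-\partial F_1/\partial\beta_1)+e_1(\partial F_1/\partial\alpha_1+\partial F_\emptyset/\partial\beta_1)+e_2(\partial F_2/\partial\alpha_1-\partial F_{12}/\partial\beta_1)+e_{12}(\partial F_{12}/\partial\alpha_1+\partial F_2/\partial\beta_1)$, so that $\dibar_1F=0$ amounts to two decoupled pairs of classical Cauchy--Riemann equations in $z_1$, one for $(F_\emptyset,F_1)$ and one for $(F_2,F_{12})$; symmetrically, $\dibar_2F=0$ couples $(F_\emptyset,F_2)$ and $(F_1,F_{12})$ in $z_2$. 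With this dictionary the equivalence $(\mr i)\Leftrightarrow(\mr{ii})$ is immediate, since $\partial f/\partial x_h^c=\I(\dibar_hF)$ and $\I$ is injective, a slice function determining its stem function through the representation formula of Proposition~\ref{pro:rep2}.

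Next I would establish the cycle $(\mr i)\Rightarrow(\mr{iv})\Rightarrow(\mr{iii})\Rightarrow(\mr i)$. For $(\mr i)\Rightarrow(\mr{iv})$, fix $J\in\cS_\HH$ and observe that on $\OO_D(J)$ one has $f_J=F_\emptyset+J(F_1+F_2)-F_{12}$, using $J^2=-1$ and associativity of $\HH$; differentiating and forming $\partial f_J/\partial\alpha_h+J\,\partial f_J/\partial\beta_h$ produces a term free of $J$ and a term multiplied by $J$, and substituting the eight Cauchy--Riemann equations makes both vanish. The step $(\mr{iv})\Rightarrow(\mr{iii})$ is trivial.

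The heart of the argument is $(\mr{iii})\Rightarrow(\mr i)$, which at first sight looks underdetermined, since a single slice provides only two quaternionic equations whereas holomorphicity of $F$ consists of eight. The device I would exploit is the parity of stem functions. Writing the $h=1$ holomorphicity equation on $\C_J\times\C_J$ as $P+JQ=0$, where $P=\partial F_\emptyset/\partial\alpha_1-\partial F_1/\partial\beta_1-\partial F_2/\partial\beta_1-\partial F_{12}/\partial\alpha_1$ and $Q$ is the coefficient of $J$, I would note that, grouped by parity in $(\beta_1,\beta_2)$, the eight terms making up $P+JQ$ fall into four blocks of types even-even, odd-even, even-odd and odd-odd, precisely because $F_\emptyset,F_1,F_2,F_{12}$ carry these four parities. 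Since left multiplication by the constant $J$ preserves parity and the parity decomposition of a function on the conjugation-symmetric parameter domain is unique, each of the four blocks vanishes separately; these four vanishings are exactly the four equations constituting $\dibar_1F=0$. Running the same argument for $h=2$ yields $\dibar_2F=0$, hence $(\mr i)$. This parity splitting is the main obstacle, and it is the precise point where the stem-function hypothesis is indispensable.

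Finally, for $(\mr i)\Leftrightarrow(\mr v)$ I would identify the stem functions of the spherical value $\vs{f}$ and the spherical derivative $f'_s$ of $f$ taken in the variable $x_1$: the parities give $\vs{f}=F_\emptyset+J_2F_2$, with stem function $e_\emptyset F_\emptyset+e_2F_2$, and $f'_s=\beta_1^{-1}(F_1+J_2F_{12})$, with stem function $e_\emptyset(\beta_1^{-1}F_1)+e_2(\beta_1^{-1}F_{12})$, the factor $\beta_1^{-1}$ restoring evenness in $\beta_1$ exactly as in one variable. Applying the $\dibar_2$ computation to these two reduced stem functions reproduces, respectively, the $e_\emptyset,e_2$ and the $e_1,e_{12}$ components of $\dibar_2F=0$; together with $\dibar_1F=0$, which is slice regularity in $x_1$, this is equivalent to $\dibar_1F=\dibar_2F=0$. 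Reading the equivalence in both directions closes the proof.
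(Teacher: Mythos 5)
Your proposal is correct, and in substance it follows the same route as the paper's $n$-variable proofs (Proposition \ref{prop:slice-regularity} together with Theorem \ref{prop:regularwrtxh}, specialized to $n=2$). Your componentwise computation of $\dibar_hF$ is exactly Lemma \ref{lem:h-holomorphy}, and your parity device for $(\mr{iii})\Rightarrow(\mr{i})$ is the paper's mechanism in different clothing: the paper evaluates the holomorphicity identity at the conjugated points $x^{\,c,L}$, pulls back via the stem parities \eqref{eq:stem2}, and takes the signed sums of Lemma \ref{lem:combinatorial} — which is precisely the projection onto the four parity components in $(\beta_1,\beta_2)$ that you invoke through uniqueness of the even/odd decomposition on a conjugation-symmetric domain. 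For $n=2$ your phrasing is arguably cleaner, but it is the same idea, so the heart of your argument checks out; note only that you correctly use that left multiplication by the constant $J$ preserves parity and that $J$ is invertible when extracting the $e_1$-type equation.

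Two small points in your treatment of $(\mr{v})$ must be made explicit in the converse direction. First, the assertion that slice regularity w.r.t.\ $x_1$ \emph{is} $\dibar_1F=0$ is not immediate in the direction you need: for fixed $y_2=\alpha_2+I_2\beta_2$, the one-variable stem of $x_1\mapsto f(x_1,y_2)$ is $(F_\emptyset+I_2F_2)+\ui\,(F_1+I_2F_{12})$, so the Cauchy--Riemann equations you obtain are the four desired ones mixed in pairs by $I_2$; you must separate them, e.g.\ by your own parity trick applied in $\beta_2$ (or by comparing the identities for $w_2$ and $\overline{w_2}$), which is what the paper does via \eqref{eq:sr_one_M} and Lemma \ref{lem:combinatorial}. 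Second, the equations for the pair $(F_1,F_{12})$ in $z_2$ coming from the spherical derivative hold a priori only where $\beta_1\neq0$, since $f'_s$ w.r.t.\ $x_1$ is defined on $\OO_D\setminus\R_{\{1\}}$; to get $\dibar_2F=0$ on all of $D$ you need the routine observation that $\{\beta_1\neq0\}\cap D$ is dense in the open set $D$ and the relevant derivatives are continuous because $F\in\mscr{C}^1$ — a step the paper carries out explicitly at the end of the proof of Theorem \ref{prop:regularwrtxh}. Both fixes are one-liners with tools already present in your write-up, so the proposal stands once they are inserted.
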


As a simple illustration of condition $(\mr{e})$, consider the polynomial $f(x_1,x_2)=x_1x_2$. For every fixed $x_2\in\HH$, $g(x_1):=f(x_1,x_2)=x_1x_2$ is slice regular w.r.t.\ the variable $x_1$. Moreover, the spherical value $\vs g(x_1)=x_2\RE(x_1)$ and the spherical derivative $g'_s(x_1)=x_2$ of $g$ w.r.t.\ $x_1$ are slice regular w.r.t.\ the variable $x_2$ for every fixed $x_1\in\HH$. 

\begin{proposition}
The zero set of any nonconstant polynomial function  $f:\HH^2\to\HH$ is a nonempty real algebraic subset of $\R^8=\HH^2$, whose real dimension can assume precisely the three values $4$, $5$ and $6$.
\end{proposition}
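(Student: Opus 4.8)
The plan is to prove the three assertions—algebraicity, non-emptiness, and the dimension range—separately, with the dimension bounds forming the core of the argument.

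\emph{Algebraicity and non-emptiness.} Since $\HH\cong\R^4$, decomposing the $\HH$-valued polynomial $f$ into its four real coordinate functions exhibits $f\colon\R^8\to\R^4$ as a quadruple of real polynomials in the $8$ real variables, so its zero set $Z(f)$ is a real-algebraic subset of $\R^8$. For non-emptiness I would reduce to one variable: grouping $f(x_1,x_2)=\sum_{\ell_1,\ell_2}x_1^{\ell_1}x_2^{\ell_2}a_{\ell_1,\ell_2}$ by powers of $x_1$ gives, for each fixed $x_2=p$, a genuine one-variable polynomial $x_1\mapsto\sum_{\ell_1}x_1^{\ell_1}b_{\ell_1}(p)$ with right coefficients $b_{\ell_1}(p)=\sum_{\ell_2}p^{\ell_2}a_{\ell_1,\ell_2}$. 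If $f$ depends on $x_1$, then some $b_{\ell_1}$ with $\ell_1\ge1$ is a nonzero polynomial, hence $b_{\ell_1}(p)\neq0$ for $p$ outside a proper algebraic subset; for such $p$ the one-variable fundamental theorem of algebra over $\HH$ yields a root, so $Z(f)\neq\emptyset$. If $f$ depends only on $x_2$, the same theorem applied in $x_2$ produces a root.

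\emph{Set-up for the dimension.} I work on the dense stratum where $x_1,x_2\notin\R$, using coordinates $x_h=\alpha_h+\beta_h J_h$ with $\beta_h>0$ and $J_h\in\cS_\HH$, so that the stratum is diffeomorphic to an open subset of $\{(\alpha_h,\beta_h)\}\times\cS_\HH\times\cS_\HH$ of real dimension $8$. Writing $f=\I(F)$ with $F=e_\emptyset F_\emptyset+e_1F_1+e_2F_2+e_{12}F_{12}$, the equation $f=0$ becomes
\[
F_\emptyset(z)+J_1F_1(z)+J_2F_2(z)+J_1J_2F_{12}(z)=0,\qquad z=(\alpha_1+i\beta_1,\alpha_2+i\beta_2).
\]
I then split $Z(f)$ according to the locus $W:=\{z:F_\emptyset(z)=F_1(z)=F_2(z)=F_{12}(z)=0\}$, over which the whole torus $\cS_\HH\times\cS_\HH$ solves the equation, versus its complement.

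\emph{Upper bound $\dim Z(f)\le6$ (the main point).} The crux is $\dim_\R W\le2$. Fixing $I\in\cS_\HH$, every $z\in W$ satisfies $f_I(z_1,z_2)=F_\emptyset(z)+IF_1(z)+IF_2(z)+I^2F_{12}(z)=0$, so $W$ embeds into the zero set of the restriction $f_I$ of $f$ to $\C_I\times\C_I$. Now $f_I$ is a holomorphic polynomial map $\C^2\to\HH\cong\C^2$, and by the identity principle (Corollary~\ref{cor:ip2}) $f_I\equiv0$ would force $f\equiv0$; since $f$ is nonconstant, $f_I\not\equiv0$, so its zero set is a proper complex-analytic subset of $\C^2$ of real dimension $\le2$, whence $\dim_\R W\le2$. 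The spherical part of $Z(f)$, fibered over $W$ with full $4$-dimensional torus fibers, therefore has dimension $\le2+4=6$. Off $W$ at least one of $F_1(z),F_2(z),F_{12}(z)$ is nonzero, and solving the displayed equation first for $J_1$ with $J_2$ fixed shows that the fiber in $(J_1,J_2)$ lies in a graph over $\cS_\HH$ together with at most one copy of $\cS_\HH$, hence has dimension $\le2$; fibering over the $4$-dimensional space of $z$ bounds this part by $6$ as well. The strata where $x_1$ or $x_2$ is real are lower-dimensional and are controlled by the one-variable identity principle.

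\emph{Lower bound and the three values.} For $\dim Z(f)\ge4$, I may assume by symmetry that $f$ depends on $x_1$; then the projection $Z(f)\to\HH$, $(x_1,x_2)\mapsto x_2$, has image containing the cofinite set of $p$ for which $f(\cdot,p)$ is a nonconstant one-variable polynomial, so the image has dimension $4$ and hence $\dim Z(f)\ge4$. Finally I exhibit each value: $f=x_1$ has $Z(f)=\{0\}\times\HH$ of dimension $4$; $f=x_1^2+1$ has $Z(f)=\cS_\HH\times\HH$ of dimension $6$; and $f=x_1^2+x_2^2+1$ has, on the stratum $J_1\neq\pm J_2$, exactly the solutions $\alpha_1=\alpha_2=0$, $\beta_1^2+\beta_2^2=1$ with $(J_1,J_2)$ free, a piece of dimension $1+4=5$, while all remaining strata contribute $\le4$, so $\dim Z(f)=5$. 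The genuinely delicate step is the upper bound, and within it the estimate $\dim_\R W\le2$, which is precisely where the identity principle and the holomorphy of $f_I$ enter.
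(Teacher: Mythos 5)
Your proposal is correct, but your route to the crucial upper bound is genuinely different from the paper's. The paper (Proposition \ref{prop:dim-HH}) proves the general $n$-variable estimate $4n-4\leq\dim_\R V(f)\leq 4n-2$ by induction on $n$, never using slice regularity: writing $f(x)=\sum_{h=0}^d x_1^hp_h(x')$, the one-variable quaternionic fundamental theorem of algebra makes every fiber of the projection $V(f)\to\HH^{n-1}$ over the complement of $W:=V(p_d)$ a nonempty finite union of points and $2$-spheres, the fibers over $W$ are trivially $\leq4$-dimensional with $\dim_\R W$ controlled by the inductive hypothesis, and the real-algebraic Sard/fiber-dimension theorem assembles the bounds; sharpness is then shown by exactly your three examples $x_1$, $x_1^2+x_2^2+1$, $x_1^2+1$ (Example \ref{ex:rag-HH}, where $\dim_\R V(x_1^2+x_2^2+1)=4n-3$ is obtained by a fiber count over the set $S$ rather than by your stratification $J_1\neq\pm J_2$). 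You instead decompose the zero set through the stem function: over the locus where all four components $F_\emptyset,F_1,F_2,F_{12}$ vanish, the fiber is the full torus $\cS_\HH\times\cS_\HH$, and that locus has real dimension $\leq2$ because it embeds in the zero set of the holomorphic, not identically vanishing restriction $f_I$ (Proposition \ref{prop:splitting} and the identity principle, Corollary \ref{cor:ip2}); elsewhere you solve linearly for $J_1$. Your argument buys a structural picture of $V(f)$ (a graph over $\cS_\HH$ plus sphere fibers over a complex curve) and makes visible where slice regularity of polynomials enters, at the cost of being tailored to two variables; the paper's induction extends verbatim to $\HH^n$ and, via the octonionic fundamental theorem of algebra, to $\oo^n$ (Proposition \ref{prop:dim-oo}). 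Three small repairs to your write-up, none of which affects the bounds: off your locus $W$ it can happen that only $F_\emptyset(z)\neq0$, in which case the fiber is empty rather than contained in a graph, so your sentence ``at least one of $F_1,F_2,F_{12}$ is nonzero'' is not literally true; the exceptional set of $p$ in your non-emptiness and lower-bound arguments is a proper real algebraic subset of $\HH$, not a cofinite set (its complement is still open, dense and $4$-dimensional, which is all you use); and ``by symmetry'' is not literal, since monomials are ordered (Remark \ref{rem:order}) and freezing $x_1=q$ produces a two-sided polynomial in $x_2$ to which the one-variable fundamental theorem of algebra need not apply --- fortunately the only case you need is when $f$ depends solely on $x_2$, where $f$ is already a one-variable polynomial with right coefficients and $V(f)=\HH\times V(p_0)$ has dimension $\geq4$.
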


Leibniz's rule (Proposition \ref{prop:leibniz2}) and Proposition \ref{prop:slice-regularity2} imply that the slice product preserves slice regularity.  
In particular, every slice product $f(x)=f_1(x_1)\cdot f_2(x_2)$, with $f_h(x_h)$ slice regular w.r.t.\ the variable $x_h$ for $h=1,2$,  is slice regular. 

Since the complex structures $\J_1$ and $\J_2$ commute, every pair of Cauchy-Riemann operators commute. In particular, for every slice regular function $f$, also the slice partial derivatives $\ds{f}{x_1}$ and $\ds{f}{x_2}$ are slice regular.

\paragraph{Cauchy integral formula for slice regular functions.}
Let $J\in\cS_\HH$ be fixed. Recall that $\phi_J:\C\to\C_J$ denotes the real $^*$-algebra isomorphism $\phi_J(\alpha+i\beta):=\alpha+J\beta$. Let $E'_1$ and $E'_2$ be bounded non-empty open subsets of $\C$ invariant under complex conjugation and with boundaries of class $\mscr{C}^1$. Define $E_h:=\phi_J(E_h')$ for $h=1,2$, and $E:=E_1\times E_2\subset\C_J\times\C_J$ with distinguished boundary $\partial^*E:=(\partial E_1)\times (\partial E_2)$. Let $\OO(E)=\OO_{E'_1\times E'_2}$ be the circular open subset of $\HH^2$ such that $\OO(E)\cap(\C_J\times\C_J)=E$.

\begin{definition}
We define the \emph{slice Cauchy kernel for $E$} as the function $C:\OO(E)\times\partial^*E\to\HH$ given by
\begin{align*}
C(x,y):=C_\HH(x_1,y_1)\cdot_x C_\HH(x_2,y_2),
\end{align*}
where each $C_\HH(x_h,y_h)=\Delta_{y_h}(x_h)^{-1}(y_h^c-x_h)=(x_h^2-x_ht(y_h)+n(y_h))^{-1}(y_h^c-x_h)$ is the usual slice Cauchy kernel in one quaternionic variable, 
and the slice product is performed w.r.t.\ $x=(x_1,x_2)\in\OO(E)$ for each $y=(y_1,y_2)\in\partial^*E$. \bs
\end{definition}

It is worth noting that $C(\cdot,y)$ is slice regular on $\OO(E)$ for each fixed $y\in\partial^*E$.

For $h=1,2$, let $\xi_h:T_h\to\partial E'_h$ be piecewise $\mscr{C}^1$ parametrizations of $\partial E'_h$ and let $T:=T_1\times T_2$. Given two continuous functions $p,q:\partial^*E\to\HH$, we define
\[
\int_{\partial^*E}p(y)\,dy\,q(y):=\int_Tp(\xi_1(t_1),\xi_2(t_2))\,\dot{\xi}_1(t_1)\,\dot{\xi}_2(t_2)\,q(\xi_1(t_1),\xi_2(t_2))\,dt_1dt_2,
\]
where $\dot{\xi}_h:T_h\to\HH$ denotes the a.e.\ defined derivatives of $\xi_h$.

\begin{theorem}
Let $f:\OO_D\to \HH$ be a slice regular function. Suppose that the closure of $\OO(E)$ in $\HH^2$ is contained in $\OO_D$. Then
\[
f(x)=(2\pi)^{-2}\int_{\partial^*E}C(x,y)J^{-2}\,dy\, f(y)\quad\text{ for all $x\in\OO(E)$,}
\]
and the slice Cauchy kernel $C$ can be expressed in terms of pointwise operations as follows:
\begin{align*}
 C(x,y)=&\,\Delta_{y_1}(x_1)^{-1}x_1\Delta_{y_2}(x_2)^{-1}x_2-\Delta_{y_1}(x_1)^{-1}\Delta_{y_2}(x_2)^{-1}x_2y_1^c+\\
 &-\Delta_{y_1}(x_1)^{-1}x_1\Delta_{y_2}(x_2)^{-1}y_2^c+\Delta_{y_1}(x_1)^{-1}\Delta_{y_2}(x_2)^{-1}y_1^cy_2^c=\\
=&\,\Delta_{y_1}(x_1)^{-1}\big(x_1\Delta_{y_2}(x_2)^{-1}x_2-\Delta_{y_2}(x_2)^{-1}x_2y_1^c-x_1\Delta_{y_2}(x_2)^{-1}y_2^c+\Delta_{y_2}(x_2)^{-1}y_1^cy_2^c\big)
\end{align*}
for all $(x,y)\in\OO(E)\times\partial^*E$.
\end{theorem}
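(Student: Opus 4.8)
The plan is to prove the integral formula by reducing it to a single slice $\C_J\times\C_J$, where it becomes the classical Cauchy formula in two complex variables, and then to propagate the result to all of $\OO(E)$ by the identity principle; the explicit expression of $C$ will then follow from a direct expansion of the defining slice product. First I would invoke Proposition~\ref{prop:slice-regularity2}$(\mr{iii})$: since $f$ is slice regular, its restriction $f_J$ to $\OO_D(J)=\OO_D\cap(\C_J\times\C_J)$ is holomorphic with respect to the complex structure given by left multiplication by $J$ in each variable. Identifying $\C_J\cong\C$ via $\phi_J$ and viewing $\HH$ as a two-dimensional left $\C_J$-module, $f_J$ becomes an ordinary $\HH$-valued holomorphic map on $E=E_1\times E_2$, continuous up to the distinguished boundary $\partial^*E$. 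The iterated classical Cauchy formula then yields, for every $x\in E$,
\[
f_J(x)=(2\pi)^{-2}J^{-2}\int_{\partial^*E}(y_1-x_1)^{-1}(y_2-x_2)^{-1}\,dy\,f_J(y),
\]
where the two factors $\tfrac{1}{2\pi J}$ produce the coefficient $(2\pi)^{-2}J^{-2}$, and all kernel factors lie in the commutative field $\C_J$ and act on $f_J(y)$ by left multiplication, so the ordering agrees with that in the statement.

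Next I would identify this kernel with the slice Cauchy kernel restricted to the slice. A short computation shows that for $x_h,y_h\in\C_J$ one has $\Delta_{y_h}(x_h)=(x_h-y_h)(x_h-y_h^c)$, whence $C_\HH(x_h,y_h)=\Delta_{y_h}(x_h)^{-1}(y_h^c-x_h)=(y_h-x_h)^{-1}$; and since on $\C_J\times\C_J$ the slice product of the two separate-variable factors reduces to their product in $\C_J$, we obtain $C(x,y)=(y_1-x_1)^{-1}(y_2-x_2)^{-1}$ there. Hence the asserted formula already holds for every $x\in E=\OO(E)\cap(\C_J\times\C_J)$.

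To pass from $E$ to all of $\OO(E)$, I would check that both sides are slice regular in $x$ and apply the identity principle (Corollary~\ref{cor:ip2}). The left side is slice regular by hypothesis. For the right side I use that $C(\cdot,y)$ is slice regular on $\OO(E)$ for each fixed $y\in\partial^*E$, as noted after its definition: writing $C(\cdot,y)=\I(\Gamma(\cdot,y))$ with $\Gamma(\cdot,y)$ holomorphic and depending continuously on $y$, the map $x\mapsto(2\pi)^{-2}\int_{\partial^*E}C(x,y)J^{-2}\,dy\,f(y)$ is induced by the stem function obtained by integrating $\Gamma(\cdot,y)J^{-2}\,dy\,f(y)$ over the fixed manifold $\partial^*E$; since $\dibar_1$ and $\dibar_2$ commute with this integration, the integrated stem function is again holomorphic, so the right side is slice regular. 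Agreement on $\OO(E)\cap(\C_J\times\C_J)$ then forces equality on all of $\OO(E)$. The hard part here is precisely this regularity step — rigorously justifying that integrating holomorphic stem functions against $f(y)\,dy$ over the $x$-independent manifold $\partial^*E$ produces a holomorphic stem function, so that the Cauchy--Riemann operators may be taken under the integral sign, together with the careful matching of the constant $(2\pi)^{-2}J^{-2}$, the left-module ordering, and the orientation when invoking the classical two-variable formula on the slice.

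Finally, for the pointwise expression I would expand $C(x,y)=C_\HH(x_1,y_1)\cdot_x C_\HH(x_2,y_2)$. Writing $\Delta_h:=\Delta_{y_h}(x_h)$, each factor splits as $C_\HH(x_h,y_h)=\Delta_h^{-1}y_h^c-\Delta_h^{-1}x_h$, where $\Delta_h^{-1}$ and $\Delta_h^{-1}x_h$ are real-coefficient functions of $x_h$ alone, hence central for the slice product, while $y_h^c$ and $1$ are constant quaternionic coefficients. Using the multiplicative rule for slice products of functions of separate variables (the extension of $(x_1a)\cdot(x_2b)=x_1x_2ab$ to $(p_1(x_1)c_1)\cdot(p_2(x_2)c_2)=p_1(x_1)p_2(x_2)c_1c_2$ for central $p_1,p_2$), the four cross terms give
\[
\Delta_1^{-1}x_1\Delta_2^{-1}x_2-\Delta_1^{-1}\Delta_2^{-1}x_2y_1^c-\Delta_1^{-1}x_1\Delta_2^{-1}y_2^c+\Delta_1^{-1}\Delta_2^{-1}y_1^cy_2^c,
\]
which, after collecting $\Delta_1^{-1}$ on the left, is exactly the claimed expression. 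The only delicate point in this last step is keeping track of the order of the constant factors $y_1^c$ and $y_2^c$, which is dictated by the fixed ordering of the slice product.
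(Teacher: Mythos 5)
Your proposal is correct and follows the same architecture as the paper's proof (given there for general $n$ in Section \ref{sec:Cauchy}): restrict to the slice $(\C_J)^2$, apply the classical iterated Cauchy formula, identify the slice Cauchy kernel with $(y_1-x_1)^{-1}(y_2-x_2)^{-1}$ on the slice via the factorization $\Delta_{y_h}(x_h)=(x_h-y_h)(x_h-y_h^c)$ and Proposition \ref{prop:slice-pointwise-products}, and propagate by the identity principle; your algebraic expansion of the kernel using centrality of the slice preserving factors $\Delta_h^{-1}$ and $\Delta_h^{-1}x_h$ is essentially the paper's computation \eqref{eq:psi} together with Proposition \ref{prop:reduced-prod}. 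The one genuine deviation is the extension step, and here you make the argument heavier than necessary: you flag as ``the hard part'' the verification that the integral on the right-hand side is slice \emph{regular}, via commuting $\dibar_1,\dibar_2$ with the integration over $\partial^*E$. This is avoidable, because the identity principle (Corollary \ref{cor:ip2}, or Corollary \ref{cor:ip}) requires only that both sides be slice \emph{functions}, not slice regular ones. The paper gets sliceness of $x\mapsto\int_T C_f(x,t)\,dt$ with no differentiation under the integral at all: it applies the representation formula \eqref{eq:f} to the integrand $\psi_t$ pointwise in $t$, interchanges the integral with the resulting finite sum, and invokes the sliceness criterion of Corollary \ref{cor:sliceness-intrinsic}. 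In fact your own observation already yields this for free: the stem-function conditions \eqref{eq:stem} are linear and survive integration over the $x$-independent set $T$, and $\I$ commutes with such integrals, so the integral of slice functions is slice without any Cauchy--Riemann interchange. So your route is valid (the dominated-convergence justification you sketch would go through, since the integrand and its $z$-derivatives are continuous on the compact $T$), but it spends effort on a regularity property the argument never uses; trimming it to mere sliceness gives exactly the paper's proof.
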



\section{Slice functions}\label{sec:Slice functions}


\subsection{Basic definitions}

If $S$ is a set, then we denote $|S|$ the cardinality of $S$ and $\mc{P}(S)$ the set of all subsets of~$S$. Let $\N$ be the set of all non-negative integers and let $\N^*:=\N\setminus\{0\}$. For simplicity, given any $n\in\N^*$, we use the symbol $\pa(n)$ instead of $\mc{P}(\{1,\ldots,n\})$. Given $z=(z_1,\ldots,z_n) \in \C^n$ and $h \in \{1,\ldots,n\}$, we define
\[
\overline{z}^h:=(z_1,\ldots,z_{h-1},\overline{z_h},z_{h+1},\ldots,z_n),
\]
where $\overline{z_h}$ denotes the usual conjugation of the complex number $z_h$. Given a subset $D$ of $\C^n$, we say that $D$ is invariant under complex conjugations if $\overline{z}^h \in D$ for all $z \in D$ and for all $h \in \{1,\ldots,n\}$.

\begin{assumption}\label{assumption1}
Throughout the paper, $A$ is a real alternative algebra of finite dimension with unity $1\neq0$, we equip with its natural Euclidean topology and structure of real analytic manifold, as a finite dimensional real vector space.

We denote $n$ a positive integer and $\{e_K\}_{K \in \mc{P}(n)}$ a fixed basis of the real vector space $\R^{2^n}$. We identify $\R$ with a real vector subspace of $\R^{2^n}$ via the real linear embedding sending $1\in\R$ into $e_\emptyset\in\R^{2^n}$, and we write $e_{\emptyset}=1$. For simplicity, we set $e_k:=e_{\{k\}}$ for all $k\in\{1,\ldots,n\}$.

We assume that $\cS_A\neq\emptyset$.

We also assume that $D$ is a non-empty subset of $\C^n$ invariant under complex conjugations.
\end{assumption}

Consider the (real) tensor product $A \otimes \R^{2^n}$. Each element $x$ of $A \otimes \R^{2^n}$ can be uniquely written as $x=\sum_{K\in\pa(n)}e_Ka_K$ with $a_K\in A$. In particular, we can identify each element $a$ of $A$ with the element $e_\emptyset a=1a$ of $A\otimes\R^{2^n}$, and we write $a=1a$. As a consequence, $A$ turns out to be a real vector subspace of $A\otimes\R^{2^n}$. Note that, given any function $F:D \to A \otimes \R^{2^n}$, there exist, and are unique, functions $F_K:D\to A$ such that $F=\sum_{K \in \pa(n)}e_KF_K$. We say that $F_K$ is the \emph{$K$-component of $F$}.

\begin{definition}\label{def:stem-function}
We say that a function $F:D \to A \otimes \R^{2^n}$ with $F=\sum_{K \in \pa(n)}e_KF_K$ is a \emph{stem function} if $F_K(\overline{z}^h)=(-1)^{|K \cap \{h\}|}F_K(z)$ or, equivalently,
\begin{equation} \label{eq:stem}
F_K(\overline{z}^h)=
\left\{
 \begin{array}{ll}
F_K(z)  & \text{ if  $\,h \not\in K$}
\vspace{.3em}
\\
-F_K(z)  & \text{ if $\,h \in K$}
 \end{array}
\right.
\end{equation}
for all  $z \in D$, $K \in \pa(n)$ and $h \in \{1,\ldots,n\}$. We denote $\mr{Stem}(D,A\otimes\R^{2^n})$ the set of all stem functions from $D$ to $A\otimes\R^{2^n}$. \bs 
\end{definition}

Let $z=(z_1,\ldots,z_n) \in \C^n$ and let $H=\{h_1,\ldots,h_p\} \in \pa(n)\setminus\{\emptyset\}$ with $h_1<\ldots <h_p$. Define $\overline{z}^{H} \in \C^n$ by setting
\[
\overline{z}^{H}:=(z_1,\ldots,z_{h_1-1},\overline{z_{h_1}},z_{h_1+1},\ldots,z_{h_p-1},\overline{z_{h_p}},z_{h_p+1},\ldots,z_n).
\]
If $H=\emptyset$, then we set $\overline{z}^H:=z$. Note that $\overline{z}^{\{h\}}=\overline{z}^h$ for all $h \in \{1,\ldots,n\}$ and $\overline{z}^H \in D$ for all $z \in D$ and $H \in \pa(n)$. Moreover, if $F:D \to A \otimes\R^{2^n}$ is a function with $F=\sum_{K \in \pa(n)}e_KF_K$, then $F$ is a stem function if and only if
\begin{equation} \label{eq:stem2}
F_K(\bar z^{H})=(-1)^{|K\cap H|}F_K(z) \; \text{ for all $z\in D$ and $K,H \in \pa(n)$}.
\end{equation}

\begin{definition} \label{def:circularization}
Let $W$ be a subset of $\C^n$. We call \emph{circularization of $W$ (in $A^n$)} the set $\OO_W$ of all points $(\alpha_1+J_1\beta_1,\ldots,\alpha_n+J_n\beta_n)\in(Q_A)^n$ with $\alpha_1,\beta_1,\ldots,\alpha_n,\beta_n \in \R$ and $J_1,\ldots,J_n \in \cS_A$ such that $(\alpha_1+\ui \beta_1,\ldots,\alpha_n+\ui \beta_n) \in W$. A subset $\Theta$ of $(Q_A)^n$ is said to be \emph{circular (in $A^n$)} if $\Theta=\OO_W$ for some subset $W$ of $\C^n$.

Given any $x\in(Q_A)^n$, we denote $\cS_x$ the smallest circular subset of $(Q_A)^n$ containing $x$. \bs
\end{definition}

Note that, if $x=(x_1,\ldots,x_n)\in(Q_A)^n$, then $\cS_x=\cS_{x_1}\times\cdots\times\cS_{x_n}$, where $\cS_{x_h}=\alpha_h+\beta_h\cS_A$ if $x_h=\alpha_h+J_h\beta_h$ for some $\alpha_h,\beta_h\in\R$ and $J_h\in\cS_A$. Given another point $y=(y_1,\ldots,y_n)$ in $(Q_A)^n$, we have that $\cS_x=\cS_y$ if and only if $t(x_h)=t(y_h)$ and $n(x_h)=n(y_h)$ for all $h\in\{1,\ldots,n\}$.

Let us introduce a notation, which is very useful especially in the non-associative case. 

\begin{definition} \label{def:[]}
Given any $m\in\N^*$ and any sequence $u=(u_1,\ldots,u_m)$ of elements of $A$, we define the \emph{ordered product $[u]=[u_1,\ldots,u_m]$ of $u$} by setting $[u]=[u_1,\ldots,u_m]:=u_1$ if $m=1$ and 
\[
[u]=[u_1,\ldots,u_m]:=u_1(u_2(\cdots(u_{m-1}u_m)\ldots))
\]
if $m\geq2$.
Given any $v\in A$, we write $[u,v]$ to indicate $[u_1,\ldots,u_m,v]$.

Moreover, we set $[\emptyset]:=1$ and $[\emptyset,v]:=v$. 

Let $H\in\pa(m)$. If $H=\emptyset$, then we define $u_H:=\emptyset$; hence $[u_H]=1$ and $[u_H,v]=v$. If $H\neq\emptyset$, then we write $H=\{h_1,\ldots,h_p\}$ with $h_1<\ldots <h_p$, and we define $u_H:=(u_{h_1},\ldots,u_{h_p})$; as a consequence, we have:
\[
[u_H]=u_{h_1}(u_{h_2}(\cdots(u_{h_{p-1}}u_{h_p})\ldots))
\]
and
\[
[u_H,v]=u_{h_1}(u_{h_2}(\cdots(u_{h_{p-1}}(u_{h_p}v))\ldots)).
\]
We use also the symbols $(u_h)_{h=1}^m$ to denote $u$, and $(u_h)_{h\in H}$ to denote $u_H$. 

Suppose now that, for each $h\in\{1,\ldots,m\}$, $u_h$ is invertible in $A$, and denote $u_h^{-1}$ its inverse $(u_h)^{-1}$ in $A$. In this case, if $H=\emptyset$, then we define $u_H^{-1}:=\emptyset$; hence $[u_H^{-1}]=1$ and $[u_H^{-1},v]=v$. If $H\neq\emptyset$ and $H=\{h_1,\ldots,h_p\}$ with $h_1<\ldots <h_p$, then we define $u_H^{-1}:=(u_{h_p}^{-1},u_{h_{p-1}}^{-1},\ldots,u_{h_1}^{-1})$; as a consequence, we have:
\[
[u_H^{-1}]=u_{h_p}^{-1}(u_{h_{p-1}}^{-1}(\cdots(u_{h_2}^{-1}u_{h_1}^{-1})\ldots))
\]
and
\[
[u_H^{-1},v]=u_{h_p}^{-1}(u_{h_{p-1}}^{-1}(\cdots(u_{h_2}^{-1}(u_{h_1}^{-1}v))\ldots)).
\]
We use also the symbols $((u_h)_{h=1}^m)^{-1}$ to denote $(u_{m+1-h}^{-1})_{h=1}^m$, and $((u_h)_{h\in H})^{-1}$ to denote $u_H^{-1}$. \bs
\end{definition}

Given any $v,w\in A$, any $H\in\pa(m)$, and any $u=(u_1,\ldots,u_m)\in A^m$ with $m\geq 1$ and $u_1,\ldots,u_m$ invertible in $A$, it is immediate to verify that
\begin{equation}\label{eq:uHv=w}
[u_H,v]=w\, \text{ if and only if }\,v=[u_H^{-1},w].
\end{equation}
The elements $[u_H]$ and $[u_H^{-1}]$ are invertible in $A$, see for instance Lemma 1.5(2) of \cite{AlgebraSliceFunctions}. However, in general, if $A$ is not associative, then $[u_H^{-1}]\neq[u_H]^{-1}$. On the contrary, if $A$ is associative, then $[u_H^{-1}]=[u_H]^{-1}$ and
$[u_H]=u_{h_1}u_{h_2}\cdots u_{h_p}$.

We are in position to introduce the notion of slice function in several variables.

\begin{definition} \label{def:slice-function}
Given a function $f:\OO_D\to A$, we say that $f$ is a \emph{(left) slice function} if there exists a stem function $F:D \to A \otimes\R^{2^n}$ with $F=\sum_{K \in \pa(n)}e_KF_K$ such that
\[
\textstyle
f(x):=\sum_{K \in \pa(n)}[J_K,F_K(z)]
\]
for all $x=(\alpha_1+J_1\beta_1,\ldots,\alpha_n+J_n\beta_n) \in \OO_D$, where $\alpha_1,\beta_1,\ldots,\alpha_n,\beta_n \in \R$, $z=(\alpha_1+i\beta_1,\ldots,\alpha_n+i\beta_n)\in D$ and $J=(J_1,\ldots,J_n) \in(\cS_A)^n$; hence $J_K=(J_{k_1},\ldots,J_{k_p})$ if $K=\{k_1,\ldots,k_p\}\in\pa(n)\setminus\{\emptyset\}$ with $k_1<\ldots<k_p$, and $J_\emptyset=\emptyset$.

If this is the case, we say that $f$ is \emph{induced by $F$}, and we write $f=\I(F)$. We denote $\mc{S}(\OO_D,A)$ the set of all slice functions from $\OO_D$ to $A$, and $\I:\mr{Stem}(D,A\otimes\R^{2^n})\to\mc{S}(\OO_D,A)$ the map sending each stem function $F$ into the corresponding slice function $\I(F)$. \bs
\end{definition}

The preceding definition is well-posed. Let $x=(x_1,\ldots,x_n)$ be a point of $\OO_D$. For each $h\in\{1,\ldots,n\}$, there exist $\alpha_h,\beta_h\in\R$ with $\beta_h\geq0$ and $J_h\in\cS_A$ such that $x_h=\alpha_h+J_h\beta_h$. If $x_h\in\R$, i.e., $\beta_h=0$, then $\alpha_h$ is uniquely determined by $x$; on the contrary, $J_h$ can be chosen arbitrarily in $\cS_A$. If $x_h\not\in\R$, i.e. $\beta_h>0$, then $\alpha_h$, $\beta_h$ and $J_h$ are uniquely determined by $x_h$, and $x_h$ has the following two representations:
\[
\alpha_h+J_h\beta_h=x_h=\alpha_h+(-J_h)(-\beta_h).
\]
Set $z:=(\alpha_1+i\beta_1,\ldots,\alpha_n+i\beta_n)\in D$. Let $L$ be the set of all $h\in\{1,\ldots,n\}$ such that $x_h\not\in\R$. Thanks to \eqref{eq:stem}, we know that $F_K(z)=0$ if $K\not\subset L$. For each $H\in\pa(n)$ with $\emptyset\neq H\subset L$, it is possible to write $x$ as follows:
\[
x=(\alpha_1+J_1\beta_1,\ldots,\alpha_n+J_n\beta_n)
\]
and $x=x'_H$, where
\[
x'_H=(\alpha_1+(\epsilon_1 J_1)(\epsilon_1\beta_1),\ldots,\alpha_n+(\epsilon_n J_n)(\epsilon_n\beta_n))
\]
with $\epsilon_h=-1$ if $h\in H$, $\epsilon_h=1$ if $h\in L\setminus H$, and $J_h$ can be chosen arbitrarily in $\cS_A$ if $h\not\in L$. By \eqref{eq:stem2}, we have
\begin{align*}
f(x)&\textstyle=\sum_{K \in \pa(n)}[J_K,F_K(z)]=\sum_{K \in \pa(n),K\subset L}[J_K,F_K(z)]=\\
&\textstyle=\sum_{K \in \pa(n),K\subset L}[J_K,(-1)^{|K\cap H|}F_K(\overline{z}^H)]=\\
&\textstyle=\sum_{K \in \pa(n)}[((-1)^{|H\cap\{k\}|}J_k)_{k\in K},F_K(\overline{z}^H)]=f(x'_H).
\end{align*}
It follows that Definition \ref{def:slice-function} is well-posed, as claimed. In Proposition \ref{prop:representation} below, we will show that a slice function is induced by a unique stem function.

The real algebra $A$ we are working with is assumed to be alternative. In particular, it is power-associative. Consequently, if $a\in A$ and $m\in\N^*$ then the power $a^m$ is a well-defined element of $A$, independently from the system of parentheses we use to compute it. For convention, we set $a^0:=1$ for all $a\in A$.

\begin{remark}\label{rem:vector}
The usual pointwise defined operations of addition and multiplication by real scalars define structures of real vector spaces on the sets $\mr{Stem}(D,A\otimes\R^{2^n})$ and $\mc{S}(\OO_D,A)$, which makes the map $\I:\mr{Stem}(D,A\otimes\R^{2^n})\to\mc{S}(\OO_D,A)$ a real linear map. Given any $F=\sum_{K\in\pa(n)}e_KF_K,G=\sum_{K\in\pa(n)}e_KG_K\in\mr{Stem}(D,A\otimes\R^{2^n})$, $f,g\in\mc{S}(\OO_D,A)$ and $r\in\R$, we set $(F+G)(z):=\sum_{K\in\pa(n)}e_K(F_K(z)+G_K(z))$, $(Fr)(z):=\sum_{K\in\pa(n)}e_K(F_K(z)r)$, $(f+g)(x):=f(x)+g(x)$ and $(fr)(x):=f(x)r$, where $F_K(z)+G_K(z)$, $F_K(z)r$, $f(x)+g(x)$ and $f(x)r$ are additions and scalar multiplications in $A$. Evidently, $\I(F+G)=\I(F)+\I(G)$ and $\I(Fr)=\I(F)r$. Actually, the map $\I$ is an isomorphism of real vector spaces, see Corollary \ref{cor:I} below. \bs
\end{remark}

\begin{definition}\label{def:polynomials}
Given $x=(x_1,\ldots,x_n)\in A^n$, $\ell=(\ell_1,\ldots,\ell_n)\in\N^n$ and $a\in A$, we denote $x^\ell a$ the element $[(x_h^{\ell_h})_{h=1}^n,a]$ of $A$. A function $P:(Q_A)^n\to A$ is \emph{monomial} if there exist $\ell\in\N^n$ and $a\in A$ such that $P(x)=x^\ell a$ for all $x\in(Q_A)^n$. The function $P:(Q_A)^n\to A$ is \emph{polynomial} if it is the finite sum of monomial functions.

The restriction of a monomial (respectively polynomial) function on $\OO_D$ is said to be \emph{monomial} (respectively \emph{polynomial}) \emph{on $\OO_D$.} \bs
\end{definition}

We conclude the present section with an important result, which asserts that the class of slice functions includes the one of polynomial functions. First, we need a definition.

\begin{definition}\label{def:pkqk}
 For each $k\in\N$, we denote $p_k$ and $q_k$ the real polynomials in $\R[X,Y]$ such that $(\alpha+i\beta)^k=p_k(\alpha,\beta)+iq_k(\alpha,\beta)$ for all $\alpha,\beta\in\R$. \bs
\end{definition}

\begin{proposition}\label{prop:polynomials}
Each polynomial function on $\OO_D$ is a slice function. More precisely, given any $\ell=(\ell_1,\ldots,\ell_n)\in\N^n$ and $a\in A$, the function $F^{(\ell)}:D\to A\otimes\R^{2^n}$, defined by
\begin{equation}\label{eq:F-ell}
\textstyle
F^{(\ell)}(z)=\sum_{K\in\pa(n)}e_K\big(\big(\prod_{h\in\{1,\ldots,n\}\setminus K}p_{\ell_h}(\alpha_h,\beta_h)\big)\big(\prod_{h\in K}q_{\ell_h}(\alpha_h,\beta_h)\big)a\big),
\end{equation}
for all $z=(\alpha_1+i\beta_1,\ldots,\alpha_n+i\beta_n)\in D$, is a stem function inducing the monomial function $x^\ell a$ on $\OO_D$.
\end{proposition}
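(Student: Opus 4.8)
The plan is to reduce to the case of a single monomial and then verify separately the two claims hidden in the statement: that $F^{(\ell)}$ is a stem function, and that the slice function it induces equals $x^\ell a$. The reduction is immediate, since by Definition \ref{def:polynomials} a polynomial function is a finite sum of monomial functions, and the map $\I$ is real linear (Remark \ref{rem:vector}), so a finite sum of slice functions is again a slice function. Thus it suffices to treat the monomial $x^\ell a$ and exhibit the stem function $F^{(\ell)}$ of \eqref{eq:F-ell} inducing it.

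The first ingredient is the parity of the polynomials $p_k,q_k$ of Definition \ref{def:pkqk}. Comparing $(\alpha+i(-\beta))^k$ with $\overline{(\alpha+i\beta)^k}=p_k(\alpha,\beta)-iq_k(\alpha,\beta)$ shows that $p_k$ is even and $q_k$ is odd in $\beta$, that is $p_k(\alpha,-\beta)=p_k(\alpha,\beta)$ and $q_k(\alpha,-\beta)=-q_k(\alpha,\beta)$. Since conjugating the $h$-th coordinate replaces $\beta_h$ by $-\beta_h$, the $K$-component $F^{(\ell)}_K$ of \eqref{eq:F-ell} carries the even factor $p_{\ell_h}$ when $h\notin K$ and the odd factor $q_{\ell_h}$ when $h\in K$. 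Hence $F^{(\ell)}_K(\overline z^h)=(-1)^{|K\cap\{h\}|}F^{(\ell)}_K(z)$, which is exactly the stem condition \eqref{eq:stem}; this proves that $F^{(\ell)}$ is a stem function.

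For the second part I would evaluate $\I(F^{(\ell)})$ at a point $x=(\alpha_1+J_1\beta_1,\ldots,\alpha_n+J_n\beta_n)\in\OO_D$ and compare it with $x^\ell a=[(x_h^{\ell_h})_{h=1}^n,a]$. Because each slice $\C_{J_h}$ is commutative and associative and $\phi_{J_h}$ is a $^*$-algebra isomorphism, one has $x_h^{\ell_h}=\phi_{J_h}((\alpha_h+i\beta_h)^{\ell_h})=p_{\ell_h}(\alpha_h,\beta_h)+J_h\,q_{\ell_h}(\alpha_h,\beta_h)$; write $p_h,q_h\in\R$ for these two real numbers. Expanding the ordered product $[(p_h+J_hq_h)_{h=1}^n,a]$ by bilinearity of the product of $A$ yields $2^n$ terms indexed by the subset $K$ of positions at which one selects the summand $J_hq_h$ instead of $p_h$. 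In each such term the real scalars $p_h$ (for $h\notin K$) and $q_h$ (for $h\in K$) factor out of the nested product, leaving the ordered product $[J_K,a]$ of the remaining imaginary units taken in increasing order. Therefore
\[
x^\ell a=\sum_{K\in\pa(n)}\Big(\prod_{h\notin K}p_h\Big)\Big(\prod_{h\in K}q_h\Big)[J_K,a]=\sum_{K\in\pa(n)}[J_K,F^{(\ell)}_K(z)]=\I(F^{(\ell)})(x),
\]
where the middle equality uses $F^{(\ell)}_K(z)=\big(\prod_{h\notin K}p_h\big)\big(\prod_{h\in K}q_h\big)a$ together with the fact that a central real scalar can be pulled out of $[J_K,\cdot]$.

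The only delicate point is this last expansion in the non-associative setting: one must keep the imaginary units $J_h$ in their prescribed nesting and move only the \emph{real} coefficients past them. This is legitimate because real multiples of $1$ are central and associate with every element of $A$, so the bilinear expansion and the extraction of scalars introduce no ambiguity despite the possible failure of associativity; beyond the commutative associative structure of the single slices $\C_{J_h}$, no further appeal to Artin's theorem is required.
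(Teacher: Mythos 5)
Your proposal is correct and follows essentially the same route as the paper's proof: the stem property via the parity of $p_k$ and $q_k$ obtained by comparing $\overline{z_h^{\ell_h}}$ with $\overline{z_h}^{\ell_h}$, and the identity $\I(F^{(\ell)})(x)=x^\ell a$ via the factorization $x_h^{\ell_h}=p_{\ell_h}(\alpha_h,\beta_h)+J_hq_{\ell_h}(\alpha_h,\beta_h)$ together with bilinear expansion of the ordered product $[(p_h+J_hq_h)_{h=1}^n,a]$, merely written in the expanding direction where the paper recombines the sum over $K\in\pa(n)$ into the nested product. Your closing remark is also accurate: centrality of real scalars (an immediate consequence of $\R$-bilinearity of the multiplication) is all that is needed to move the coefficients through the nested products, the associativity of each slice $\C_{J_h}$ having already been established via the isomorphisms $\phi_{J_h}$.
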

\begin{proof}
Let $\ell=(\ell_1,\ldots,\ell_n)\in\N^n$. It suffices to show that the function $F^{(\ell)}:D\to A$, defined in \eqref{eq:F-ell}, is a stem function and $\I(F^{(\ell)})(x)=x^\ell a_\ell$. Let $z=(z_1,\ldots,z_n)=(\alpha_1+i\beta_1,\ldots,\alpha_n+i\beta_n)\in D$, let $h\in\{1,\ldots,n\}$ and, given any $K\in\pa(n)$, let $F_K^{(\ell)}:D\to A$ be the function
\[\textstyle
F_K^{(\ell)}(z):=\big(\prod_{h\in\{1,\ldots,n\}\setminus K}p_{\ell_h}(\alpha_h,\beta_h)\big)\big(\prod_{h\in K}q_{\ell_h}(\alpha_h,\beta_h)\big)a.
\]
Note that, for each $z_h=\alpha_h+i\beta_h\in\C$, it holds:
\[
p_{\ell_h}(\alpha_h,\beta_h)-iq_{\ell_h}(\alpha_h,\beta_h)=\overline{z_h^{\ell_h}}=\overline{z_h}^{\ell_h}=p_{\ell_h}(\alpha_h,-\beta_h)+iq_{\ell_h}(\alpha_h,-\beta_h)
\]
and hence $p_{\ell_h}(\alpha_h,-\beta_h)=p_{\ell_h}(\alpha_h,\beta_h)$ and $q_{\ell_h}(\alpha_h,-\beta_h)=-q_{\ell_h}(\alpha_h,\beta_h)$. Consequently, we have $F_K^{(\ell)}(\overline{z}^h)=-F_K^{(\ell)}(z)$ if $h\in K$ and $F_K^{(\ell)}(\overline{z}^h)=F_K^{(\ell)}(z)$ if $h\in\{1,\ldots,n\}\setminus K$. This proves that $F^{(\ell)}=\sum_{K\in\pa(n)}e_KF^{(\ell)}_K$ is a stem function. Moreover, if $x=(\alpha_1+J_1\beta_1,\ldots,\alpha_n+J_n\beta_n) \in \OO_D$ for some $J_1,\ldots,J_n \in \cS_A$, then
\begin{align*}
\I(F^{(\ell)})(x)&\textstyle=\sum_{K\in\pa(n)}[(J_h)_{h\in K},F_K^{(\ell)}(z)]=\\
&\textstyle=\sum_{K\in\pa(n)}\big(\prod_{h\in\{1,\ldots,n\}\setminus K}p_{\ell_h}(\alpha_h,\beta_h)\big)[(J_hq_{\ell_h}(\alpha_h,\beta_h))_{h\in K},a]=\\
&\textstyle=[(p_{\ell_h}(\alpha_h,\beta_h)+J_hq_{\ell_h}(\alpha_h,\beta_h))_{h=1}^n,a]=[(x_h^{\ell_h})_{h=1}^n,a]=x^\ell a.
\end{align*}
The proof is complete.
\end{proof}

\begin{definition}
We say that a stem function $F:\C^n\to A\otimes\R^{2^n}$ is \emph{monomial}, or \emph{polynomial}, if $\I(F):(Q_A)^n\to A$ is. The restriction of a monomial (respectively polynomial) stem function on $D$ is said to be \emph{monomial} (respectively \emph{polynomial}) \emph{on $D$.} \bs
\end{definition}
Thanks to Proposition \ref{prop:representation} below, a stem function $F:D\to A\otimes\R^{2^n}$ is monomial on $D$ if and only if it has form~\eqref{eq:F-ell}.

\subsection{Representation formulas}

We need an elementary, but useful, combinatorial lemma.

\begin{lemma} \label{lem:combinatorial}
For each $H,L \in \pa(n)$, it holds:
\[
\textstyle
\sum_{K \in \pa(n)}(-1)^{|H \cap K|+|K \cap L|}=2^n\delta_{H,L}\,,
\]
where $\delta_{H,L}=1$ if $H=L$ and $\delta_{H,L}=0$ otherwise.
\end{lemma}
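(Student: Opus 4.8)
The plan is to reduce the double-indexed exponent $|H\cap K|+|K\cap L|$ to a single-set intersection by working modulo $2$, and then to evaluate the resulting sum over $\pa(n)$ by factoring it as a product over the $n$ coordinates.

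First I would observe that the exponent splits coordinate by coordinate:
\[
|H \cap K| + |K \cap L| = \sum_{k \in K}\big(|H \cap \{k\}| + |L \cap \{k\}|\big).
\]
For a single index $k$, the quantity $|H \cap \{k\}| + |L \cap \{k\}|$ counts how many of the two sets $H,L$ contain $k$; it is $\equiv 1 \pmod 2$ precisely when $k$ lies in exactly one of them, i.e.\ when $k \in H \dsim L$, and $\equiv 0$ otherwise. Summing over $k \in K$ and reducing mod $2$ therefore gives $|H \cap K| + |K \cap L| \equiv |(H \dsim L) \cap K| \pmod 2$, whence
\[
(-1)^{|H \cap K| + |K \cap L|} = (-1)^{|(H \dsim L) \cap K|}.
\]
Setting $M := H \dsim L$, the claim reduces to showing $\sum_{K \in \pa(n)} (-1)^{|M \cap K|} = 2^n\,\delta_{M,\emptyset}$, since $M=\emptyset$ is equivalent to $H=L$.

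Next I would evaluate this sum by the standard factorization of a sum over subsets into a product over elements. Using $|M\cap K|=\sum_{k\in K}|M\cap\{k\}|$, hence $(-1)^{|M \cap K|} = \prod_{k \in K} (-1)^{|M \cap \{k\}|}$, and summing over all $K\subseteq\{1,\ldots,n\}$ — equivalently, choosing independently for each $k$ whether $k\in K$ — yields
\[
\sum_{K \in \pa(n)} (-1)^{|M \cap K|} = \prod_{k=1}^n \big(1 + (-1)^{|M \cap \{k\}|}\big).
\]
For each $k$, the factor equals $1+1=2$ if $k \notin M$ and $1+(-1)=0$ if $k \in M$. Hence the product vanishes as soon as $M$ is nonempty, and equals $2^n$ when $M=\emptyset$; combined with the previous reduction this is exactly $2^n\,\delta_{H,L}$.

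There is no serious obstacle here; the only point requiring care is the parity bookkeeping in the first step, namely confirming $|H\cap\{k\}|+|L\cap\{k\}| \equiv |(H\dsim L)\cap\{k\}| \pmod 2$, after which the factorization is routine. An alternative, equally short route that avoids the symmetric difference is a direct induction on $n$: peeling off the index $n$ and splitting the sum according to whether $n\in K$, the inductive step collapses the two halves into the factor $1+(-1)^{|H\cap\{n\}|+|L\cap\{n\}|}$, reproducing the same product and giving $2$ or $0$ depending on whether $H$ and $L$ agree at $n$.
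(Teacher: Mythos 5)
Your proof is correct. It shares with the paper's argument the same underlying parity observation --- the paper opens by writing $|H\cap K|+|K\cap L|=|K\cap(H\setminus L)|+|K\cap(L\setminus H)|+2|K\cap H\cap L|$, which is precisely your statement that only the symmetric difference $H\dsim L$ matters modulo $2$ --- but the evaluation step is genuinely different. The paper keeps the two blocks $H\setminus L$ and $L\setminus H$ separate and then runs a three-way case analysis ($H\subset L$, $L\subset H$, neither contained in the other), in each case decomposing $K$ into pairs or triples of independent subsets $(S_1,S_2,S_3)$ and killing the sum via the binomial identity $\sum_{h=0}^{m}\binom{m}{h}(-1)^h=(1+(-1))^m=0$. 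You instead set $M:=H\dsim L$ once and for all and factor the character sum coordinate by coordinate, $\sum_{K\in\pa(n)}(-1)^{|M\cap K|}=\prod_{k=1}^n\big(1+(-1)^{|M\cap\{k\}|}\big)$, so that a single factor vanishes as soon as $M\neq\emptyset$. This handles all cases uniformly and eliminates the paper's case distinctions; it is the finest-grained version of the paper's block factorization (binary choices per element rather than per block). What the paper's version buys is an explicit display of the cancellation as alternating binomial sums, but your route is shorter, and the parity bookkeeping you flag as the one delicate point is verified correctly.
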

\begin{proof} Since $|H \cap K|+|K \cap L|=|K \cap (H \setminus L)|+|K \cap (L \setminus H)|+2|K \cap H \cap L|$ for all $K \in \pa(n)$, it suffices to prove that the sum $s(H,L):=\sum_{K \in \pa(n)}(-1)^{|K \cap (H \setminus L)|+|K \cap (L \setminus H)|}$ is equal to $2^n$ if $H=L$ and is null otherwise. This is evident in the case in which $L=H$. Let $L \neq H$. If $H \subset L$, then we have:
\begin{eqnarray*}
\textstyle
s(H,L)
\!\!\!\! &=& \!\!\!\!
\textstyle
\sum_{K \in \pa(n)}(-1)^{|K \cap (L \setminus H)|}=\sum_{S_1 \in \pa(L \setminus H), \, S_2 \in \pa(\compl (L \setminus H))}(-1)^{|S_1|}=\\
&=& \!\!\!\!
\textstyle
2^{n-|L \setminus H|}\sum_{S_1 \in \pa(L \setminus H)}(-1)^{|S_1|}=2^{n-|L \setminus H|}\sum_{h=0}^{|L \setminus H|}{|L \setminus H| \choose h}(-1)^h=\\
&=& \!\!\!\!
\textstyle
2^{n-|L \setminus H|}(1+(-1))^{|L \setminus H|}=0,
\end{eqnarray*}
where $\compl (L \setminus H)$ is the complement of $L \setminus H$ in $\{1,\ldots,n\}$. Similarly, one proves that $s(H,L)=0$ if $L \subset H$. Finally, suppose $H \not\subset L$ and $L \not\subset H$. We have:
\begin{eqnarray*}
\textstyle
s(H,L)
\!\!\!\! &=& \!\!\!\!
\textstyle
\sum_{K \in \pa(n)}(-1)^{|K \cap (L \setminus H)|+|K \cap (H \setminus L)|}=\\
&=& \!\!\!\!
\textstyle
\sum_{S_1 \in \pa(L \setminus H), \, S_2 \in \pa(H \setminus L), \, S_3\in\pa(\compl (L \dsim H))}(-1)^{|S_1|}(-1)^{|S_2|}=\\
&=& \!\!\!\!
\textstyle
2^{n-|L \dsim H|}\big(\sum_{S_1 \in \pa(L \setminus H)}(-1)^{|S_1|}\big)\big(\sum_{S_2 \in \pa(H \setminus L)}(-1)^{|S_2|}\big)=\\
&=& \!\!\!\!
\textstyle
2^{n-|L \dsim H|}(1+(-1))^{|L \setminus H|}(1+(-1))^{|H \setminus L|}=0,
\end{eqnarray*}
where $L \dsim H$ is the usual symmetric difference between $L$ and $H$.
\end{proof}

Let $x=(x_1,\ldots,x_n)\in A^n$ and let $H \in \pa(n)$. If $H=\emptyset$, then we set $x^{c,H}:=x$. Suppose that $H\neq\emptyset$ and write $H=\{h_1,\ldots,h_p\}$ with $h_1<\ldots<h_p$. We denote $x^{\, c,H}$ the element of $A^n$ defined as follows:
\[
x^{\, c,H}:=(x_1,\ldots,x_{h_1-1},x_{h_1}^{\, c},x_{h_1+1},\ldots,x_{h_p-1},x_{h_p}^{\, c},x_{h_p+1},\ldots,x_n).
\]

We have the following representation formulas.

\begin{proposition}[Representation formula] \label{prop:representation}
Let $f:\OO_D \to A$ be a slice function and let $y \in \OO_D$. Write $y$ as follows:
\[
y=(\alpha_1+I_1\beta_1,\ldots,\alpha_n+I_n\beta_n),
\]
where $\alpha_h,\beta_h \in \R$ and $I_h \in \sq_A$ for all $h \in \{1,\ldots,n\}$. Then it holds:
\begin{equation} \label{eq:f}
\textstyle
f(x)=2^{-n}\sum_{K,H \in \pa(n)}(-1)^{|K \cap H|}[J_K,[I_K^{-1}, f(y^{\, c,H})]],
\end{equation}
where $x=(\alpha_1+J_1\beta_1,\ldots,\alpha_n+J_n\beta_n)$ for some $J=(J_1,\ldots,J_n)\in(\cS_A)^n$, and $I:=(I_1,\ldots,I_n)$.

Furthermore, if $F=\sum_{K \in \pa(n)}e_KF_K$ is a stem function inducing $f$, then we have:
\begin{align}\label{eq:components-F}
F_K(z)&\textstyle=2^{-n}[I_K^{-1},\sum_{H \in \pa(n)}(-1)^{|K \cap H|}f(y^{\, c,H})]=\nonumber\\
&\textstyle=2^{-n}\sum_{H \in \pa(n)}(-1)^{|K \cap H|}[I_K^{-1},f(y^{\, c,H})]
\end{align}
for all $K \in \pa(n)$, where $z=(\alpha_1+\ui \beta_1,\ldots,\alpha_n+\ui \beta_n) \in D$. In particular, each slice function $f$ is induced by a unique stem function $F$. 
\end{proposition}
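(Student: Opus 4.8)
The plan is to compute the values $f(y^{\,c,H})$ directly from the stem function inducing $f$, to assemble them into a linear system indexed by $\pa(n)$, and to invert that system by means of the combinatorial Lemma~\ref{lem:combinatorial}. The starting point is an identity relating $f(y^{\,c,H})$ to the components of $F=\sum_{K\in\pa(n)}e_KF_K$. Writing $z=(\alpha_1+\ui\beta_1,\ldots,\alpha_n+\ui\beta_n)\in D$, each conjugated coordinate $y_h^{\,c}=\alpha_h-I_h\beta_h=\alpha_h+(-I_h)\beta_h$ is obtained from $y_h$ by replacing the imaginary unit $I_h\in\cS_A$ with $-I_h\in\cS_A$ while keeping the \emph{same} associated complex number $\alpha_h+\ui\beta_h$. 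Hence $y^{\,c,H}$ is represented over the same $z$ with imaginary units $\epsilon_hI_h$, where $\epsilon_h=-1$ for $h\in H$ and $\epsilon_h=1$ otherwise. Applying Definition~\ref{def:slice-function} and pulling the central real scalars $\epsilon_k$ out of each ordered product $[(\epsilon_kI_k)_{k\in K},F_K(z)]$, and using $\prod_{k\in K}\epsilon_k=(-1)^{|K\cap H|}$, I obtain
\[
f(y^{\,c,H})=\sum_{K\in\pa(n)}(-1)^{|K\cap H|}[I_K,F_K(z)]\qquad\text{for every }H\in\pa(n).
\]
The well-posedness of Definition~\ref{def:slice-function} established above guarantees that this is independent of the choices made when some $\beta_h=0$.

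Next I would invert this system. Fixing $L\in\pa(n)$, multiplying the displayed identity by $(-1)^{|L\cap H|}$ and summing over $H$, the coefficient of each $[I_K,F_K(z)]$ becomes $\sum_{H\in\pa(n)}(-1)^{|L\cap H|+|K\cap H|}$, which by Lemma~\ref{lem:combinatorial} equals $2^n\delta_{L,K}$. This collapses the double sum to $\sum_{H}(-1)^{|L\cap H|}f(y^{\,c,H})=2^n[I_L,F_L(z)]$. To solve for $F_L(z)$ I would invoke the inversion identity~\eqref{eq:uHv=w} with $u=I=(I_1,\ldots,I_n)$; each $I_h$ is invertible with $I_h^{-1}=-I_h$, so~\eqref{eq:uHv=w} yields $F_L(z)=[I_L^{-1},\,2^{-n}\sum_H(-1)^{|L\cap H|}f(y^{\,c,H})]$. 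Since $v\mapsto[I_L^{-1},v]$ is a composition of left multiplications and hence $\R$-linear, I may distribute it over the sum and extract the scalar $2^{-n}$, obtaining exactly~\eqref{eq:components-F}.

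The representation formula~\eqref{eq:f} then follows by substituting~\eqref{eq:components-F} into $f(x)=\sum_{K\in\pa(n)}[J_K,F_K(z)]$ and using the $\R$-linearity of $v\mapsto[J_K,v]$. Uniqueness is immediate from~\eqref{eq:components-F}: given $f$, for each $z\in D$ one may choose any $y\in\OO_D$ lying over $z$ (for instance with a single fixed imaginary unit in every coordinate), and then~\eqref{eq:components-F} expresses each $F_K(z)$ solely through the values of $f$. Hence two stem functions inducing the same slice function have identical components and therefore coincide.

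I expect the main obstacle to be the bookkeeping forced by non-associativity: one must resist writing $[I_L,F_L(z)]$ as a single left multiplication by $[I_L]$ and instead work formally through~\eqref{eq:uHv=w}, since $[I_L^{-1}]\neq[I_L]^{-1}$ in general. The only genuinely computational point is the extraction of the signs $\epsilon_k$ from the ordered products, which rests on the centrality of $\R$ in $A$ and the compatibility of real scalar multiplication with the product; everything else is linear algebra over $\R$ combined with Lemma~\ref{lem:combinatorial}.
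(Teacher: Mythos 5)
Your proof is correct and takes essentially the same approach as the paper's: both establish the key identity $f(y^{\,c,H})=\sum_{K\in\pa(n)}(-1)^{|K\cap H|}[I_K,F_K(z)]$, collapse the sum over $H$ using Lemma~\ref{lem:combinatorial}, solve for $F_K(z)$ via the inversion identity \eqref{eq:uHv=w}, and substitute back to get \eqref{eq:f}, with uniqueness read off from \eqref{eq:components-F}. The only cosmetic difference is that you derive the key identity by flipping the imaginary units to $\epsilon_hI_h$ over the fixed base point $z$ and extracting the central real signs, whereas the paper keeps the units $I_h$ and passes to the conjugated point $\overline{z}^H$ via the stem parity \eqref{eq:stem2}; these two computations are equivalent, as the well-posedness argument preceding Definition~\ref{def:slice-function} already shows.
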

\begin{proof}
Let $x=(\alpha_1+J_1\beta_1,\ldots,\alpha_n+J_n\beta_n)$. Define $z:=(\alpha_1+\ui\beta_1,\ldots,\alpha_n+\ui\beta_n) \in D$. Let $K \in \pa(n)$. Thanks to (\ref{eq:stem2}) and to Lemma \ref{lem:combinatorial}, we obtain:
\begin{eqnarray*}
\textstyle
\sum_{H \in \pa(n)}(-1)^{|K \cap H|}f(y^{\, c,H})
\!\!\!\! &=& \!\!\!\!
\textstyle
\sum_{H \in \pa(n)}(-1)^{|K \cap H|}\big(\sum_{L \in \pa(n)}[I_L,F_L(\overline{z}^H)]\big)=\\
&=& \!\!\!\!
\textstyle
\sum_{H \in \pa(n)}(-1)^{|K \cap H|}\big(\sum_{L \in \pa(n)}(-1)^{|H \cap L|}[I_L,F_L(z)]\big)=\\
&=& \!\!\!\!
\textstyle
\sum_{L \in \pa(n)}[I_L,F_L(z)]\big(\sum_{H \in \pa(n)}(-1)^{|K \cap H|+|H \cap L|}\big)=\\
&=& \!\!\!\!
\textstyle 2^n[I_K,F_K(z)].
\end{eqnarray*}
Bearing in mind \eqref{eq:uHv=w}, we deduce \eqref{eq:components-F}. Consequently,
\begin{eqnarray*}
f(x)
\!\!\!\! &=& \!\!\!\!
\textstyle
\sum_{K \in \pa(n)}[J_K,2^{-n} \sum_{H \in \pa(n)}(-1)^{|K \cap H|}[I_K^{-1},f(y^{\, c,H})]]=\\
&=& \!\!\!\!
\textstyle
2^{-n}\sum_{K,H \in \pa(n)}(-1)^{|K \cap H|}[J_K,[I_K^{-1},f(y^{\, c,H})]],
\end{eqnarray*}
as desired.
\end{proof}

As an immediate corollary, we obtain:

\begin{corollary}[Identity principle]\label{cor:ip}
Let $f,g:\OO_D \to A$ be slice functions and let $I_1,\ldots,I_n \in \sq_A$ such that $f=g$ on $\OO_D \cap(\C_{I_1}\times\ldots\times\C_{I_n})$. Then $f=g$ on the whole $\OO_D$. 
\end{corollary}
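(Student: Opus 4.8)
The plan is to derive the identity principle directly from the representation formula (Proposition~\ref{prop:representation}), using the fact that formula~\eqref{eq:f} reconstructs the value of a slice function at an arbitrary point of $\OO_D$ purely from its values on a single complex slice, with coefficients that do not depend on the function. Since $\mc{S}(\OO_D,A)$ is a real vector space (Remark~\ref{rem:vector}), I would first reduce to the case $g=0$: replacing $f$ by the slice function $f-g$, it suffices to show that a slice function vanishing on $\OO_D\cap(\C_{I_1}\times\cdots\times\C_{I_n})$ vanishes identically.

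So let $f$ be a slice function with $f=0$ on $\OO_D\cap(\C_{I_1}\times\cdots\times\C_{I_n})$, and fix an arbitrary point $x=(\alpha_1+J_1\beta_1,\ldots,\alpha_n+J_n\beta_n)\in\OO_D$ with $J_h\in\cS_A$. Put $z=(\alpha_1+\ui\beta_1,\ldots,\alpha_n+\ui\beta_n)\in D$ and, using the same $z$ together with the prescribed units $I_1,\ldots,I_n\in\cS_A$, set $y:=(\alpha_1+I_1\beta_1,\ldots,\alpha_n+I_n\beta_n)$. By Definition~\ref{def:circularization}, $y\in\OO_D$, so the representation formula is available for this reference point $y$ and target point $x$.

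The decisive point is that each conjugate point $y^{\,c,H}$, for $H\in\pa(n)$, lies inside the slice $\OO_D\cap(\C_{I_1}\times\cdots\times\C_{I_n})$: its $h$-th coordinate equals either $y_h=\alpha_h+I_h\beta_h$ or $y_h^c=\alpha_h+I_h(-\beta_h)$, and both belong to $\C_{I_h}$, since $\C_{I_h}$ is invariant under conjugation. Consequently $f(y^{\,c,H})=0$ for every $H\in\pa(n)$.

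Substituting into formula~\eqref{eq:f} gives
\[
f(x)=2^{-n}\sum_{K,H\in\pa(n)}(-1)^{|K\cap H|}[J_K,[I_K^{-1},f(y^{\,c,H})]]=0,
\]
because every summand carries the factor $f(y^{\,c,H})=0$. As $x$ was an arbitrary point of $\OO_D$, we conclude $f\equiv0$, i.e.\ $f=g$ on the whole $\OO_D$. I expect no real obstacle here: the substance is entirely contained in the representation formula, and the only thing requiring a word of justification is that the sampled points $y^{\,c,H}$ never leave the given slice, which is immediate from the conjugation-invariance of each $\C_{I_h}$.
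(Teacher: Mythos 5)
Your proof is correct and follows exactly the route the paper intends: the paper derives Corollary~\ref{cor:ip} as an immediate consequence of the representation formula \eqref{eq:f}, which reconstructs $f(x)$ from the values $f(y^{\,c,H})$ lying in the slice $\OO_D\cap(\C_{I_1}\times\cdots\times\C_{I_n})$. Your write-up merely makes explicit the two small points the paper leaves tacit (the reduction to $f-g=0$ via linearity, and the conjugation-invariance of each $\C_{I_h}$ together with $\overline{z}^H\in D$ ensuring the sampled points stay in the slice), so there is nothing to correct.
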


\begin{remark}\label{rem:order}
Let $n\geq2$ and let $f:\hh^n\to\hh$ be the function $f(x_1,\ldots,x_n):=x_2x_1$, i.e. the pointwise product between the coordinate functions $x_2$ and $x_1$. The function $f$ is not slice. Otherwise, being $x_2x_1=x_1x_2$ on $(\C_i)^n$, Proposition \ref{prop:polynomials} and Corollary \ref{cor:ip} would imply that $x_2x_1=x_1x_2$ on the whole $\hh^n$, i.e. the algebra of quaternions is commutative, which is false. \bs     
\end{remark}

Bearing in mind Remark \ref{rem:vector}, we have:

\begin{corollary}\label{cor:I}
The map $\I:\mr{Stem}(D,A\otimes\R^{2^n})\to\mc{S}(\OO_D,A)$, sending stem functions $F$ into the corresponding slice functions $\I(F)$, is a bijection, and hence a real linear isomorphism.
\end{corollary}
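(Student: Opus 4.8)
The plan is to assemble the statement directly from two facts already in hand: the real linearity of $\I$ recorded in Remark \ref{rem:vector}, and the uniqueness clause of Proposition \ref{prop:representation}. Since $\I$ is already known to be a real linear map, it suffices to establish that it is a bijection; the isomorphism assertion then follows immediately, with no further computation.

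First I would dispatch surjectivity, which is essentially tautological. By Definition \ref{def:slice-function}, the codomain $\mc{S}(\OO_D,A)$ is precisely the set of functions $f:\OO_D\to A$ arising as $\I(F)$ for some stem function $F\in\mr{Stem}(D,A\otimes\R^{2^n})$. Hence every element of the codomain lies in the image of $\I$ by construction, and $\I$ is surjective without any argument.

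The real content lies in injectivity, and this is exactly the uniqueness statement proved in Proposition \ref{prop:representation}. Concretely, suppose $\I(F)=\I(G)=:f$ for two stem functions $F=\sum_{K\in\pa(n)}e_KF_K$ and $G=\sum_{K\in\pa(n)}e_KG_K$. Fixing any $y\in\OO_D$ written in the form required by the Representation Formula and applying the recovery formula \eqref{eq:components-F} to both $F$ and $G$, each component $F_K(z)$ and each component $G_K(z)$ is expressed by the identical explicit formula in terms of the common values $f(y^{\,c,H})$, $H\in\pa(n)$. Therefore $F_K=G_K$ for every $K\in\pa(n)$, so $F=G$, and $\I$ is injective.

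Combining surjectivity and injectivity yields that $\I$ is a bijection, and together with the real linearity from Remark \ref{rem:vector} this makes $\I$ a real linear isomorphism, as claimed. I do not anticipate any genuine obstacle in this corollary: the entire burden is carried by Proposition \ref{prop:representation}, and the only point meriting a moment's care is the observation that the phrase ``induced by a unique stem function'' in that proposition is literally the injectivity of $\I$, so the corollary is purely a matter of reorganizing results already established.
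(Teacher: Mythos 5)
Your proposal is correct and is exactly the paper's argument: surjectivity is immediate from Definition \ref{def:slice-function}, injectivity is the uniqueness clause of Proposition \ref{prop:representation} via formula \eqref{eq:components-F}, and linearity comes from Remark \ref{rem:vector} (which is why the paper introduces the corollary with ``Bearing in mind Remark \ref{rem:vector}'' and gives no further proof). Nothing is missing.
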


Another consequence is the following intrinsic characterization of sliceness.

\begin{corollary}[Sliceness criterion] 
\label{cor:sliceness-intrinsic}
Let $f:\OO_D \to A$ be a function. Then $f$ is a slice function if and only if there exist $I=(I_1,\ldots,I_n)\in(\cS_A)^n$ with the following property: 
\begin{equation} \label{eq:sliceness-intrinsic}
\textstyle
f(x)=2^{-n}\sum_{K,H \in \pa(n)}(-1)^{|K \cap H|}[J_K,[I_K^{-1},f(y^{\, c,H})]]
\end{equation}
for all $y=(\alpha_1+I_1\beta_1,\ldots,\alpha_n+I_n\beta_n) \in \OO_D$ and for all $x=(\alpha_1+J_1\beta_1,\ldots,\alpha_n+J_n\beta_n) \in \OO_D$, where $\alpha_1,\beta_1,\ldots,\alpha_n,\beta_n \in \R$ and $J=(J_1,\ldots,J_n)\in(\cS_A)^n$.
\end{corollary}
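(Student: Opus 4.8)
The plan is to read this statement as the assertion that $f$ is slice precisely when it obeys the representation formula \eqref{eq:f} relative to \emph{some} fixed $I$. The forward implication is immediate: if $f$ is a slice function, then Proposition \ref{prop:representation} yields \eqref{eq:f} for \emph{every} $I\in(\cS_A)^n$, and \eqref{eq:f} is literally \eqref{eq:sliceness-intrinsic}, so any choice of $I$ works. All the work is in the converse, where I would reconstruct a stem function out of the hypothesis and then show that it induces $f$.

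So fix $I=(I_1,\ldots,I_n)$ for which \eqref{eq:sliceness-intrinsic} holds and \emph{define} candidate components $F_K:D\to A$ by the right-hand side of \eqref{eq:components-F}, namely
\[
\textstyle F_K(z):=2^{-n}\sum_{H\in\pa(n)}(-1)^{|K\cap H|}[I_K^{-1},f(y^{\,c,H})],
\]
where, for $z=(\alpha_1+i\beta_1,\ldots,\alpha_n+i\beta_n)\in D$, I set $y:=(\alpha_1+I_1\beta_1,\ldots,\alpha_n+I_n\beta_n)\in\OO_D$. This is well posed, since $z$ determines the reals $\alpha_h,\beta_h$ uniquely, and since $D$ is invariant under complex conjugations each $y^{\,c,H}$ lies in $\OO_D$, so every value $f(y^{\,c,H})$ is legitimately defined. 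Put $F:=\sum_{K\in\pa(n)}e_KF_K$. That $\I(F)=f$ is then purely formal: by Definition \ref{def:slice-function} one has $\I(F)(x)=\sum_{K\in\pa(n)}[J_K,F_K(z)]$, and substituting the definition of $F_K$ and pulling the real scalars out produces exactly the right-hand side of \eqref{eq:sliceness-intrinsic}, which equals $f(x)$ by hypothesis.

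It remains to check that $F$ is a stem function, i.e.\ that \eqref{eq:stem} holds, and this is the technical heart of the argument. The key observation is that passing from $z$ to $\overline{z}^h$ flips the sign of $\beta_h$, which on the algebra side replaces $y$ by $y^{\,c,\{h\}}$, because $\alpha_h-I_h\beta_h=y_h^{\,c}$; consequently $(y^{\,c,\{h\}})^{\,c,H}=y^{\,c,\{h\}\dsim H}$, since conjugating a given coordinate twice is the identity. Substituting $H':=\{h\}\dsim H$ in the defining sum and using the identity $(-1)^{|K\cap H|}=(-1)^{|K\cap\{h\}|}(-1)^{|K\cap H'|}$ — the case $h\in K$ contributing the extra sign $-1$, the case $h\notin K$ contributing $+1$ — reindexes the sum back to $F_K(z)$ up to the factor $(-1)^{|K\cap\{h\}|}$, giving $F_K(\overline{z}^h)=(-1)^{|K\cap\{h\}|}F_K(z)$, which is exactly \eqref{eq:stem}. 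I expect the main obstacle to be precisely this sign bookkeeping: one must match the change of the combinatorial sign $(-1)^{|K\cap H|}$ under $H\mapsto\{h\}\dsim H$ against the parity $(-1)^{|K\cap\{h\}|}$ demanded by the stem condition, while correctly tracking the identification of complex-coordinate conjugation with algebra-coordinate conjugation. Once \eqref{eq:stem} is verified, $F$ is a genuine stem function inducing $f$, so $f\in\mc{S}(\OO_D,A)$, completing the converse.
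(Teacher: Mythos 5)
Your proof is correct and takes essentially the same route as the paper's: the forward direction is immediate from Proposition \ref{prop:representation}, and for the converse you define $F_K$ by the right-hand side of \eqref{eq:components-F}, verify the stem condition \eqref{eq:stem} via the reindexing $H\mapsto H\dsim\{h\}$ with the sign identity $(-1)^{|K\cap(\{h\}\dsim H')|}=(-1)^{|K\cap\{h\}|}(-1)^{|K\cap H'|}$, and conclude $f=\I(F)$ from \eqref{eq:sliceness-intrinsic}. The only (harmless, indeed slightly cleaner) difference is that the paper first wraps to $[I_K,F_K(\overline{z}^h)]$ via \eqref{eq:uHv=w} before reindexing and unwraps afterwards, whereas you manipulate $F_K$ directly, which is legitimate because the real signs pass through the linear map $v\mapsto[I_K^{-1},v]$.
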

\begin{proof}
If $f$ is slice, then \eqref{eq:sliceness-intrinsic} follows from \eqref{eq:f}.

Suppose that \eqref{eq:sliceness-intrinsic} holds for some $I=(I_1,\ldots,I_n)\in(\cS_A)^n$. We will show that $f=\I(F)$ for some stem function $F:D \to A \otimes \R^{2^n}$. For each $K \in \pa(n)$, define the function $F_K:D \to A$ by setting
\[
\textstyle
F_K(z):=2^{-n}\sum_{H \in \pa(n)}(-1)^{|K \cap H|}[I_K^{-1},f(y^{\, c,H})],
\]
where $y=(\alpha_1+I_1\beta_1,\ldots,\alpha_n+I_n\beta_n)$ if $z=(\alpha_1+\ui \beta_1,\ldots,\alpha_n+\ui \beta_n)\in D$.

Fix $K \in \pa(n)$, $h \in \{1,\ldots,n\}$ and $z=(\alpha_1+\ui \beta_1,\ldots,\alpha_n+\ui \beta_n) \in D$. Set $y:=(\alpha_1+I_1\beta_1,\ldots,\alpha_n+I_n\beta_n)$. Note that 
$|K \cap H|=|K \cap (H \dsim \{h\})|-(-1)^{|H \cap \{h\}|}|K \cap \{h\}|$ for all $H \in \pa(n)$. Moreover, the map $\Psi_h:\pa(n) \to \pa(n)$, sending $H$ into $H \dsim \{h\}$, is a bijection. Bearing in mind the last two elementary facts and \eqref{eq:uHv=w}, we have that
\begin{eqnarray*}
\textstyle
2^n[I_K,F_K(\overline{z}^h)]
\!\!\!\! &=& \!\!\!\!
\textstyle
\sum_{H \in \pa(n)}(-1)^{|K \cap H|}f(y^{\, c,H \dsim \{h\}})=\\
&=& \!\!\!\!
\textstyle
\sum_{H \in \pa(n)}(-1)^{|K \cap (H \dsim \{h\})|+|K \cap \{h\}|}f(y^{\, c,H \dsim \{h\}})=\\
&=& \!\!\!\!
\textstyle
(-1)^{|K \cap \{h\}|}\sum_{H \in \pa(n)}(-1)^{|K \cap \Psi_h(H)|}f(y^{\, c,\Psi_h(H)})=
\\
&=& \!\!\!\!
\textstyle
(-1)^{|K \cap \{h\}|}\sum_{H \in \pa(n)}(-1)^{|K \cap H|}f(y^{\, c,H})=
\\
&=& \!\!\!\!
\textstyle
2^n(-1)^{|K \cap \{h\}|}[I_K,F_K(z)];
\end{eqnarray*}
consequently, $[I_K,F_K(\overline{z}^h)]=(-1)^{|K \cap \{h\}|}[I_K,F_K(z)]$. Using \eqref{eq:uHv=w} again, we deduce:
\[
F_K(\overline{z}^h)=[I_K^{-1},[I_K,F_K(\overline{z}^h)]]=(-1)^{|K \cap \{h\}|}[I_K^{-1},[I_K,F_K(z)]]=(-1)^{|K \cap \{h\}|}F_K(z).
\]
In other words, the function $F:D \to A \otimes \R^{2^n}$, defined by $F:=\sum_{K \in \pa(n)}e_KF_K$, is a stem function. Formula \eqref{eq:sliceness-intrinsic} now ensures that $f=\I(F)$.
\end{proof}

A consequence of the last result is as follows.

\begin{corollary}\label{cor:fl}
Let $\{f_l:\OO_D\to A\}_{l\in\N}$ be a sequence of slice functions, which pointwise converges to a function $f:\OO_D\to A$. Then $f$ is a slice function.
\end{corollary}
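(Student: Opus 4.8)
The plan is to deduce the result directly from the Sliceness criterion (Corollary~\ref{cor:sliceness-intrinsic}), exploiting the fact that the intrinsic characterization \eqref{eq:sliceness-intrinsic} is a pointwise identity that survives passage to pointwise limits. The crucial preliminary observation is that, for a slice function, formula \eqref{eq:sliceness-intrinsic} holds for \emph{every} choice of $I\in(\cS_A)^n$, since it is merely a specialization of the representation formula \eqref{eq:f}, which is valid for an arbitrary $y\in\OO_D$. Hence I may fix a single $I$ and apply the criterion to every $f_l$ with that same $I$.

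First I would fix once and for all an element $I=(I_1,\ldots,I_n)\in(\cS_A)^n$; this is possible since $\cS_A\neq\emptyset$ by Assumption~\ref{assumption1}. Because each $f_l$ is a slice function, Corollary~\ref{cor:sliceness-intrinsic} (equivalently, formula \eqref{eq:f}) gives, for all $x=(\alpha_1+J_1\beta_1,\ldots,\alpha_n+J_n\beta_n)\in\OO_D$ and the corresponding $y=(\alpha_1+I_1\beta_1,\ldots,\alpha_n+I_n\beta_n)\in\OO_D$,
\[
f_l(x)=2^{-n}\sum_{K,H \in \pa(n)}(-1)^{|K \cap H|}[J_K,[I_K^{-1}, f_l(y^{\, c,H})]].
\]

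Next I would pass to the limit $l\to\infty$. For each fixed $K$, the maps $v\mapsto[I_K^{-1},v]$ and $w\mapsto[J_K,w]$ are compositions of left multiplications in $A$, hence $\R$-linear endomorphisms of the finite-dimensional algebra $A$, and therefore continuous for its Euclidean topology. Consequently the right-hand side above is a finite $\R$-linear combination of the quantities $f_l(y^{\,c,H})$, depending continuously on them. Since $f_l\to f$ pointwise and each $y^{\,c,H}$ lies in $\OO_D$, we have $f_l(x)\to f(x)$ and $f_l(y^{\,c,H})\to f(y^{\,c,H})$ for every $H\in\pa(n)$. Taking limits on both sides yields
\[
f(x)=2^{-n}\sum_{K,H \in \pa(n)}(-1)^{|K \cap H|}[J_K,[I_K^{-1}, f(y^{\, c,H})]]
\]
for all such pairs $x,y$, which is exactly condition \eqref{eq:sliceness-intrinsic} for the chosen $I$. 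Corollary~\ref{cor:sliceness-intrinsic} then ensures that $f$ is a slice function.

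The only point requiring a moment's care is the interchange of the pointwise limit with the finite sum of ordered products; but this is immediate here precisely because all operations involved are continuous, $A$ being finite-dimensional, and no uniformity in the convergence is needed since \eqref{eq:sliceness-intrinsic} is a pointwise identity. There is thus no genuine obstacle to overcome: the entire argument rests on recognizing that sliceness can be tested by the linear intrinsic formula, which is manifestly stable under pointwise limits.
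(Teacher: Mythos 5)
Your proof is correct and follows essentially the same route as the paper: apply the sliceness criterion (Corollary~\ref{cor:sliceness-intrinsic}) to each $f_l$ with a fixed $I\in(\cS_A)^n$, pass to the pointwise limit in the identity \eqref{eq:sliceness-intrinsic} using the continuity of the finitely many ordered products in the finite-dimensional algebra $A$, and invoke the criterion again for $f$. Your additional remarks — that \eqref{eq:sliceness-intrinsic} holds for every choice of $I$ by the representation formula \eqref{eq:f}, and that the maps $v\mapsto[I_K^{-1},v]$ and $w\mapsto[J_K,w]$ are continuous as compositions of left multiplications — merely make explicit what the paper leaves implicit.
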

\begin{proof}
Let $x,y,I,J$ be as in the statement of Corollary \ref{cor:sliceness-intrinsic}. Since each $f_l$ is a slice function, equation \eqref{eq:sliceness-intrinsic} holds for each $f_l$. Note that $\{f_l(x)\}_{l\in\N}$ converges to $f(x)$ and $\{[J_K,[I_K^{-1},f_l(y^{\, c,H})]]\}_{l\in\N}$ converges to $[J_K,[I_K^{-1},f(y^{\, c,H})]]$ for all $K,H\in\pa(n)$. It follows that equation \eqref{eq:sliceness-intrinsic} holds also for $f$. Using Corollary \ref{cor:sliceness-intrinsic} again, we deduce that $f$ is a slice function.        
\end{proof}


\subsection{Spherical value and spherical derivatives}

Let $F=\sum_{K \in \pa(n)}e_KF_K$ be a stem function and let $f:\OO_D \to A$ be the slice function $\I(F)$.

\begin{definition}\label{def:vs}
We call \emph{spherical value of $f$} the slice function $\vs f:\OO_D\to A$ induced by the $A$-valued stem function $F_\emptyset:D\to A$, that is $\vs f:=\I(F_\emptyset)$. \bs
\end{definition}

From \eqref{eq:components-F} it follows that 
\begin{equation}\label{eq:vs}
\textstyle
\vs f(x)=2^{-n}\sum_{H\in\pa(n)}f(x^{c,H})
\end{equation}
for all $x\in\OO_D$. For each $K\in\pa(n)\setminus\{\emptyset\}$,we define:
\begin{itemize}
 \item $D_K:=\bigcap_{k\in K}\{(z_1,\ldots,z_n)\in D:z_k\not\in\R\}$; we assume to be non-empty.
 \item $F_K^*:D_K\to A$ by $F_K^*(z):=\beta_K^{-1}F_K(z)$, where $z=(\alpha_1+\ui \beta_1,\ldots,\alpha_n+\ui \beta_n)\in D_K$ and $\beta_K:=\prd_{k\in K}\beta_k$.
 \item $\R_K:=\bigcup_{k\in K}\{(x_1,\ldots,x_n)\in A^n:x_k\in\R\}$.
\end{itemize}

Note that $\R_K$ is closed in $A$, $D_K$ is invariant under all the complex conjugations of $\C^n$, and the circularization of $D_K$ in $A^n$ is equal to $\OO_D\setminus\R_K$, i.e.
\[
\OO_{D_K}=\OO_D\setminus\R_K.
\]
Furthermore, it is immediate to verify that the function $F_K^*$ is a $A$-valued stem function on $D_K$.

\begin{definition}\label{def:ds}
For each $K\in\pa(n)\setminus\{\emptyset\}$, we call \emph{spherical $K$-derivative of $f$} the slice function $f'_{s,K}:\OO_D\setminus\R_K\to A$ induced by $F_K^*$, that is $f'_{s,K}:=\I(F_K^*)$. \bs
\end{definition}

Bearing in mind \eqref{eq:components-F} and the equality $\IM(x)=\frac{x-x^c}{2}$, given any $K\in\pa(n)\setminus\{\emptyset\}$, we have 
\begin{align*}
\textstyle
f'_{s,K}(x)&\textstyle=2^{-n}[((\beta_kJ_k)_{k\in K})^{-1},\sum_{H\in\pa(n)}(-1)^{|K\cap H|}f(x^{c,H})]=\nonumber\\
&\textstyle=2^{-n}[((\IM(x_k))_{k\in K})^{-1},\sum_{H\in\pa(n)}(-1)^{|K\cap H|}f(x^{c,H})]
\end{align*}
for all $x=(x_1,\ldots,x_n)\in\OO_D\setminus\R_K$. As a consequence, if for each $x=(x_1,\ldots,x_n)\in A^n$ and $K\in\pa(n)\setminus\{\emptyset\}$ we set
\begin{equation}\label{eq:ImK}
\textstyle
\IM_K(x):=(\IM(x_k))_{k\in K},
\end{equation}
then we have
\begin{equation}\label{eq:f'sk}
\textstyle
f'_{s,K}(x)=2^{-n}[(\IM_K(x))^{-1},\sum_{H\in\pa(n)}(-1)^{|K\cap H|}f(x^{c,H})]
\end{equation}
for all $x\in\OO_D\setminus\R_K$. Note that the latter equality can be rewritten as follows:
\begin{equation}\label{eq:f'sk-2}
\textstyle
f'_{s,K}(x)=2^{|K|-n}[((x_k-(x_k)^c)_{k\in K})^{-1},\sum_{H\in\pa(n)}(-1)^{|K\cap H|}f(x^{c,H})].
\end{equation}

In order to simplify the notation, we set $D_\bullet:=D_{\{1,\ldots,n\}}$ and $\R_\bullet:=\R_{\{1,\ldots,n\}}$. Consequently,
\begin{align}
&D_\bullet\textstyle=\bigcap_{k=1}^n\{(z_1,\ldots,z_n)\in D:z_k\not\in\R\}\neq\emptyset,\\
&\R_\bullet\textstyle=\bigcup_{k=1}^n\{(x_1,\ldots,x_n)\in A^n:x_k\in\R\},\\
&\OO_{D_\bullet}\textstyle=\OO_D\setminus\R_\bullet=\bigcap_{k=1}^n\{(x_1,\ldots,x_n)\in\OO_D:\IM(x_k)\neq0\}\neq\emptyset.
\end{align}

Moreover, we set
\begin{equation}\label{eq:R-empty}
\R_\emptyset:=\emptyset.
\end{equation}

\begin{remark}
According to Definitions \ref{def:vs} and \ref{def:ds}, and \eqref{eq:R-empty}, we can also say that the spherical value of $f$ is the \emph{spherical $\emptyset$-derivative of $f$}, that is $f'_{s,\emptyset}:=\vs f$. \bs
\end{remark}

\begin{proposition}
Let $f:\OO_D\to A$ be a slice function. The following assertions hold.
\begin{itemize}
 \item[$(\mr{i})$] For each $x\in\OO_D$, the spherical value $\vs f$ is constant on $\cS_x$. For each $K\in\pa(n)\setminus\{\emptyset\}$ and for each $x\in\OO_D\setminus\R_K$, the spherical $K$-derivative $f'_{s,K}$ is constant on $\cS_x$. More precisely, if $f=\I(F)$, then
\begin{equation}
\text{$\vs f(x)=F_\emptyset(z)\,$ for all $x\in\OO_D$}
\end{equation}
and
\begin{equation}
\text{$f'_{s,K}(x)=\beta_K^{-1}F_K(z)\,$ for all $x\in\OO_D\setminus\R_K$,}
\end{equation}
where $z:=(\alpha_1+i\beta_1,\ldots,\alpha_n+i\beta_n)\in D$ if $x=(\alpha_1+J_1\beta_1,\ldots,\alpha_n+J_n\beta_n)\in\OO_D$.
 \item[$(\mr{ii})$] If $x=(x_1,\ldots,x_n)\in\OO_D\setminus\R_L$ for some $L\in\pa(n)$ and $x_h\in\R$ for all $h\in\{1,\ldots,n\}\setminus L$, then it holds the formula
\begin{equation}\label{eq:spherical_representation_L}
\textstyle
f(x)=\vs f(x)+\sum_{K\in\pa(n)\setminus\{\emptyset\},K\subset L}[\IM_K(x),f'_{s,K}(x)].
\end{equation}
In particular, for all $x\in\OO_D\setminus\R_\bullet$, we have:
\begin{equation}\label{eq:spherical_representation}
\textstyle
f(x)=\vs f(x)+\sum_{K\in\pa(n)\setminus\{\emptyset\}}[\IM_K(x),f'_{s,K}(x)].
\end{equation}

 \item[$(\mr{iii})$] If $x=(x_1,\ldots,x_n)\in\OO_D\setminus\R_L$ for some $L\in\pa(n)$ and $x_h\in\R$ for all $h\in\{1,\ldots,n\}\setminus L$, then $f$ is constant on $\cS_x$ if and only if $f'_{s,K}(x)=0$ for all $K\in\pa(n)\setminus\{\emptyset\}$ with $K\subset L$. In this case, $f$ takes the value $\vs f(x)$ on $\cS_x$.
 
In particular, for each $x\in\OO_D\setminus\R_\bullet$, $f$ is constant on $\cS_x$ if and only if $f'_{s,K}(x)=0$ for all $K\in\pa(n)\setminus\{\emptyset\}$.
\end{itemize}
\end{proposition}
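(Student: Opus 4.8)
The plan is to treat the three assertions in turn, reducing each to the definition of the operator $\I$ and to formulas already recorded in the excerpt. For $(\mr{i})$ I would just unwind the definitions. By Definition \ref{def:vs}, $\vs f=\I(F_\emptyset)$ with $F_\emptyset$ regarded as the stem function $e_\emptyset F_\emptyset$ sitting in the $\emptyset$-component; since $J_\emptyset=\emptyset$ and $[\emptyset,v]=v$, the defining sum in Definition \ref{def:slice-function} collapses to $\vs f(x)=[J_\emptyset,F_\emptyset(z)]=F_\emptyset(z)$. Likewise, by Definition \ref{def:ds} and $F_K^*(z)=\beta_K^{-1}F_K(z)$, one gets $f'_{s,K}(x)=F_K^*(z)=\beta_K^{-1}F_K(z)$. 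Both right-hand sides depend on $x$ only through $z=(\alpha_1+i\beta_1,\ldots,\alpha_n+i\beta_n)$, hence only through the data $(\alpha_h,\beta_h)_h$ that is constant along $\cS_x$; this gives at once the explicit formulas and the constancy on $\cS_x$. The only thing to check is that $F_\emptyset$ and $F_K^*$ are even in every $\beta_h$, so that they really are $\emptyset$-placed stem functions: for $F_\emptyset$ this is \eqref{eq:stem}, and for $F_K^*$ it is the sign bookkeeping $(-\beta_K)^{-1}(-F_K)=\beta_K^{-1}F_K$ for $h\in K$.

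For $(\mr{ii})$ I would start from $f(x)=\sum_{K\in\pa(n)}[J_K,F_K(z)]$. The hypothesis $x_h\in\R$ for $h\notin L$ means $\beta_h=0$ there, so for any $K\not\subset L$ one may pick $h_0\in K\setminus L$; then $\overline{z}^{h_0}=z$, while \eqref{eq:stem} forces $F_K(\overline{z}^{h_0})=-F_K(z)$, whence $F_K(z)=0$. Thus only the terms with $K\subset L$ survive, the $K=\emptyset$ term being $F_\emptyset(z)=\vs f(x)$ by $(\mr{i})$. For $\emptyset\neq K\subset L$ I would rewrite $[J_K,F_K(z)]$ as $[\IM_K(x),f'_{s,K}(x)]$: since $\IM(x_k)=\beta_kJ_k$ and $f'_{s,K}(x)=\beta_K^{-1}F_K(z)$ by $(\mr{i})$, and since the $\beta_k$ are real, hence central, and pull out of each factor of the ordered product, the product of the extracted scalars $\prod_{k\in K}\beta_k=\beta_K$ cancels $\beta_K^{-1}$ and leaves exactly $[J_K,F_K(z)]$. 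Summing yields \eqref{eq:spherical_representation_L}, and $L=\{1,\ldots,n\}$ gives \eqref{eq:spherical_representation}.

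For $(\mr{iii})$ the forward implication is immediate from $(\mr{ii})$: on $\cS_x$ the functions $\vs f$ and $f'_{s,K}$ are constant by $(\mr{i})$, so if $f'_{s,K}(x)=0$ for all $\emptyset\neq K\subset L$, then each surviving term $[\IM_K(x'),f'_{s,K}(x')]=[\IM_K(x'),0]=0$ and $f(x')=\vs f(x')=\vs f(x)$ for every $x'\in\cS_x$. For the converse I would invoke \eqref{eq:f'sk}. The key observation is that $x^{c,H}\in\cS_x$ for every $H\in\pa(n)$, because replacing $x_k$ by $x_k^c=\alpha_k+(-J_k)\beta_k$ keeps the point in $\cS_{x_k}=\alpha_k+\beta_k\cS_A$. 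Hence if $f\equiv v$ on $\cS_x$, then $f(x^{c,H})=v$ for all $H$, and \eqref{eq:f'sk} gives $f'_{s,K}(x)=2^{-n}[(\IM_K(x))^{-1},(\sum_{H\in\pa(n)}(-1)^{|K\cap H|})\,v]$; by Lemma \ref{lem:combinatorial} with the second subset empty the inner sum vanishes for every $K\neq\emptyset$, so $f'_{s,K}(x)=0$, and the value on $\cS_x$ is $\vs f(x)$ by the forward part.

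The main obstacle is the non-associative scalar bookkeeping of step $(\mr{ii})$: the cancellation $\beta_K\cdot\beta_K^{-1}$ inside the ordered product is legitimate only because the moved factors are real scalars (central in $A$), and this must be stated carefully since $A$ is not assumed associative. Everything else is a direct substitution into the formulas established in $(\mr{i})$ and the combinatorial identity of Lemma \ref{lem:combinatorial}.
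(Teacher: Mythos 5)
Your proposal is correct and follows essentially the same route as the paper: part $(\mr{i})$ from the $A$-valuedness of the inducing stem functions, part $(\mr{ii})$ by observing $F_K(z)=0$ for $K\not\subset L$ via \eqref{eq:stem} and extracting the real scalars $\beta_K$ from the ordered product, and part $(\mr{iii})$ by combining $(\mr{i})$--$(\mr{ii})$ for one direction and \eqref{eq:f'sk} with Lemma \ref{lem:combinatorial} for the converse. Your explicit checks of the evenness of $F_K^*$ and of the centrality of real scalars in the non-associative setting are points the paper treats as immediate, but they are the right things to verify.
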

\begin{proof}
Point $(\mr{i})$ follows immediately from the fact that the stem functions inducing $\vs f$ and the $f'_{s,K}$'s are $A$-valued.

Let $x=(\alpha_1+J_1\beta_1,\ldots,\alpha_n+J_n\beta_n)\in\OO_D\setminus\R_L$ for some $L\in\pa(n)$ and $\beta_h=0$ for all $h\in\{1,\ldots,n\}\setminus L$, and let $z=(\alpha_1+i\beta_1,\ldots,\alpha_n+i\beta_n)\in D$. Denote $F=\sum_{K\in\pa(n)}e_KF_k$ the stem function inducing $f$. By \eqref{eq:stem}, if $K\in\pa(n)$ with $K\not\subset L$
, then $F_K(z)=0$. Consequently, $f(x)=\sum_{K\in\pa(n),K\subset L}[J_K,F_K(z)]$, where $J=(J_1,\ldots,J_n)$. On the other hand, by the very definitions of spherical value and derivatives, we deduce:
\begin{align*}
f(x)&\textstyle=\sum_{K\in\pa(n),K\subset L}[J_K,F_K(z)]=\vs f(x)+\sum_{K\in\pa(n)\setminus\{\emptyset\},K\subset L}[\IM_K(x),\beta_K^{-1}F_K(z)]=\\
&\textstyle=\vs f(x)+\sum_{K\in\pa(n)\setminus\{\emptyset\},K\subset L}[\IM_K(x),f'_{s,K}(x)].
\end{align*}
This proves \eqref{eq:spherical_representation_L}, which reduces to \eqref{eq:spherical_representation} when $L=\{1,\ldots,n\}$.

Let us prove $(\mr{iii})$. If $f'_{s,K}(x)=0$ for each $K\in\pa(n)\setminus\{\emptyset\}$ with $K\subset L$, then $(\mr{i})$ and \eqref{eq:spherical_representation_L} imply at once that such $f'_{s,K}$'s vanish on the whole $\cS_x$ and $f$ is constantly equal to $\vs f(x)$ on $\cS_x$. Finally, suppose that $f$ is constantly equal on $\cS_x$ to some $a\in A$. Choose $K\in\pa(n)\setminus\{\emptyset\}$ with $K\subset L$. Since $x\in\OO_D\setminus\R_K$ and $x^{c,H}\in\cS_x$ for all $H\in\pa(n)$, \eqref{eq:f'sk} and Lemma \ref{lem:combinatorial} ensure that
\begin{align*}
f'_{s,K}(x)&\textstyle=2^{-n}[(\IM_K(x))^{-1},\sum_{H\in\pa(n)}(-1)^{|K\cap H|}a]=\\
&\textstyle=2^{-n}\big(\sum_{H\in\pa(n)}(-1)^{|K\cap H|}\big)[(\IM_K(x))^{-1},a]=\\
&=\delta_{K,\emptyset}[(\IM_K(x))^{-1},a]=0.
\end{align*}
The proof is complete.
\end{proof}

We now show that there exists a relation between the spherical value and derivatives of $f$ and their one-variable analogues introduced in \cite[Definition 6]{AIM2011}. Let $z=(z_1,\ldots,z_n)\in D$ and let $h\in\{1,\ldots,n\}$. Denote $D_h(z)$ the subset of $\C$ defined by
\begin{equation}\label{def:Dhz}
D_h(z):=\{w\in\C:(z_1,\ldots,z_{h-1},w,z_{h+1},\ldots,z_n)\in D\}.
\end{equation}
Since $D$ is invariant under all the complex conjugations of $\C^n$, it follows immediately that $D_h(z)$ is invariant under the complex conjugation of $\C$. Note that $D_h(z)\neq\emptyset$, because it contains $z_h$. Moreover, $D_h(z)$ is open in $\C$ if $D$ is open in $\C^n$.

Let $x=(x_1,\ldots,x_n)\in \OO_D$. Denote $\OO_{D,h}(x)$ the subset of $Q_A$ defined by
\begin{equation}\label{def:OmegaDhx}
\OO_{D,h}(x):=\{a\in A:(x_1,\ldots,x_{h-1},a,x_{h+1},\ldots,x_n)\in\OO_D\}.
\end{equation}

Suppose that $x\in\OO_{\{z\}}$. Let us show that $\OO_{D_h(z)}=\OO_{D,h}(x)$. First, note that, if we write $z=(\alpha_1+i\beta_1,\ldots,\alpha_n+i\beta_n)$ with $\alpha_1,\ldots,\alpha_n,\beta_1,\ldots,\beta_n\in\R$, then $x_\ell=\alpha_\ell+J_\ell\beta_\ell$ for each $\ell\in\{1,\ldots,n\}$ and for some $J_\ell\in\cS_A$. Let $a\in\OO_{D_h(z)}$. Write $a=\alpha+J\beta$ with $\alpha,\beta\in\R$ and $J\in\cS_A$. By definition of $\OO_{D_h(z)}$ and $D_h(z)$, we have that $\alpha+i\beta\in D_h(z)$ and
\[
(\alpha_1+i\beta_1,\ldots,\alpha_{h-1}+i\beta_{h-1},\alpha+i\beta,\alpha_{h+1}+i\beta_{h+1},\ldots,\alpha_n+i\beta_n)\in D,
\]
respectively. The definitions of $\OO_D$ and $\OO_{D,h}(x)$ imply that
$(x_1,\ldots,x_{h-1},a,x_{h+1},\ldots,x_n)\in\OO_D$ and $a\in\OO_{D,h}(x)$, respectively. Vice versa, if $a\in A$ with $(x_1,\ldots,x_{h-1},a,x_{h+1},\ldots,x_n)\in\OO_D$, then there exists $z'=(z'_1,\ldots,z'_n)\in D$ with $z'_\ell=\alpha'_\ell+i\beta'_\ell$ and $J'_\ell\in\cS_A$ for each $\ell\in\{1,\ldots,n\}$ such that $\alpha'_h+J'_h\beta'_h=a$, and $\alpha_\ell+J_\ell\beta_\ell=x_\ell=\alpha'_\ell+J'_\ell\beta'_\ell$ for each $\ell\in\{1,\ldots,n\}\setminus\{h\}$. Let $\ell\neq h$. Note that, if $\beta_\ell=0$, then $\beta'_\ell=0$ as well, and $\alpha_\ell=x_\ell=\alpha'_h$. If $\beta_\ell\neq0$, then either $(\beta'_\ell,J'_\ell)=(\beta_\ell,J_\ell)$ or $(\beta'_\ell,J'_\ell)=(-\beta_\ell,-J_\ell)$. Define $H:=\{\ell\in\{1,\ldots,n\}\setminus\{h\}:\beta_\ell\neq0,(\beta'_\ell,J'_\ell)=(-\beta_\ell,-J_\ell)\}$. Since $D$ is invariant under all complex conjugations of $\C^n$, it follows that
\[
(z_1,\ldots,z_{h-1},\alpha'_h+i\beta'_h,z_{h+1},\ldots,z_n)=
\overline{\,z'\,}^{H}\in D.
\]
Hence $a=\alpha'_h+J'_h\beta'_h\in\OO_{D_h(z)}$. We have just proven that
\begin{equation}\label{eq:Dhz}
\OO_{D_h(z)}=\OO_{D,h}(x)
\end{equation}
for all $z=(z_1,\ldots,z_n)\in D$, $x=(x_1,\ldots,x_n)\in\OO_{\{z\}}\subset\OO_D$ and $h\in\{1,\ldots,n\}$.

\begin{definition}\label{def:gy}
Let $g:\OO_D\to A$ be a function and let $h\in\{1,\ldots,n\}$. We say that $g$ is a \textit{slice function w.r.t.\ $\mr{x}_h$} if, for each $y=(y_1,\ldots,y_n)\in\OO_D$, the restriction function $g_h^{\sss(y)}:\OO_{D,h}(y)\to A$, defined by
\[
g_h^{\sss(y)}(x_h):=g(y_1,\ldots,y_{h-1},x_h,y_{h+1},\ldots,y_n),
\]
is a slice function. \bs 
\end{definition}

Let $g:\OO_D\to A$ be a slice function w.r.t.\ $\mr{x}_h$, let $y\in\OO_D$, let $\vs{(g_h^{\sss(y)})}:\OO_{D,y}(y)\to A$ and $(g_h^{\sss(y)})'_s:\OO_{D,h}(y)\setminus\R\to A$ be the usual spherical value and spherical derivative of the one variable slice function $g_h^{\sss(y)}$, respectively. If $z$ is a point of $D$ such that $y\in\OO_{\{z\}}$ and $g_h^{\sss(y)}$ is induced by the stem function $G_1+\ui G_2:D_h(z)\to A\otimes\R^2$, then 
$\vs{(g_h^{\sss(y)})}(x_h)=G_1(w)$ for all $x_h=\alpha_h+J_h\beta_h\in\OO_{D_h(z)}$, where $w:=\alpha_h+i\beta_h\in D_h(z)$, and $(g_h^{\sss(y)})'_s(x_h)=\beta_h^{-1} G_2(w)$ if $\beta_h\neq0$. As a consequence, we have that $\vs{(g_h^{\sss(y)})}(x_h)=\frac{1}{2}(g_h^{\sss(y)}(x_h)+g_h^{\sss(y)}((x_h)^c))$ and $(g_h^{\sss(y)})'_s(x_h)=\frac{1}{2}(\IM(x_h))^{-1}(g_h^{\sss(y)}(x_h)-g_h^{\sss(y)}((x_h)^c))$.

Assume that $g$ is a slice function w.r.t.\ $\mr{x}_h$. Then, for each $e\in\{0,1\}$, we define the function $\SD_{\mr{x}_h}^0g:\OO_D\to A$ and $\SD_{\mr{x}_h}^1g:\OO_D\setminus\R_{\{h\}}\to A$ by setting
\begin{equation}\label{eq:ge0}
\SD_{\mr{x}_h}^0g(x):=\vs{(g_h^{\sss(x)})}(x_h)\, \text{ for all $x=(x_1,\ldots,x_n)\in\OO_D$}
\end{equation}
and
\begin{equation}\label{eq:ge1}
\SD_{\mr{x}_h}^1g(x):=(g_h^{\sss(x)})'_s(x_h)\, \text{ for all $x=(x_1,\ldots,x_n)\in\OO_D\setminus\R_{\{h\}}$.}
\end{equation}

Given any $K\in\pa(n)$ and $h\in\{1,\ldots,n\}$, define $K_h:=K\cap\{1,\ldots,h\}$.

\begin{proposition}\label{prop:slicewrtx1}
Assume that $n\geq2$. Let $f:\OO_D\to A$ be a slice function, let $K\in\pa(n)$ and let $\varepsilon:\{1,\ldots,n\}\to\{0,1\}$ be the characteristic function of $K$. Then $f$ is a slice function w.r.t.\ $\mr{x}_1$ and, for each $h\in\{2,\ldots,n\}$, the function $\SD^{\varepsilon(h-1)}_{\mr{x}_{h-1}}\cdots \SD^{\varepsilon(1)}_{\mr{x}_1}f:\OO_D\setminus\R_{K_{h-1}}\to A$, obtained iterating \eqref{eq:ge0} and \eqref{eq:ge1} as follows $\SD^{\varepsilon(h-1)}_{\mr{x}_{h-1}}\cdots\SD^{\varepsilon(1)}_{\mr{x}_1}f:=\SD^{\varepsilon(h-1)}_{\mr{x}_{h-1}}(\cdots(\SD^{\varepsilon(2)}_{\mr{x}_2}(\SD^{\varepsilon(1)}_{\mr{x}_1}f))\cdots)$, is a well-defined slice function w.r.t.\ $\mr{x}_h$. Moreover, it holds:
\[
\SD^{\varepsilon(n)}_{\mr{x}_n}\cdots \SD^{\varepsilon(1)}_{\mr{x}_1}f:=\SD^{\varepsilon(n)}_{\mr{x}_n}(\SD^{\varepsilon(n-1)}_{\mr{x}_{n-1}}\cdots \SD^{\varepsilon(1)}_{\mr{x}_1}f)=
\begin{cases}
\vs f&\text{ if }K=\emptyset,\vspace{.3em}\\
f'_{s,K}&\text{ if }K\neq\emptyset.
\end{cases}  
\]
\end{proposition}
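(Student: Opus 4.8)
The plan is to prove the statement by induction on the number $m$ of variables already processed, keeping track of an explicit formula for the stem function of each partial composition. For $m\in\{0,1,\ldots,n\}$ write $K_m:=K\cap\{1,\ldots,m\}$ (so $K_0=\emptyset$ and $K_n=K$) and set $g^{(m)}:=\SD^{\varepsilon(m)}_{\mr{x}_m}\cdots\SD^{\varepsilon(1)}_{\mr{x}_1}f$, with $g^{(0)}:=f$. The inductive claim is that $g^{(m)}$ is a well-defined slice function on $\OO_D\setminus\R_{K_m}$ induced by the stem function $G^{(m)}=\sum_{L\in\pa(n)}e_LG^{(m)}_L$ whose components are
\[
G^{(m)}_L=\Big(\textstyle\prod_{j\in K_m}\beta_j^{-1}\Big)F_{K_m\cup L}\quad\text{if }L\subset\{m+1,\ldots,n\},
\]
and $G^{(m)}_L=0$ otherwise, where $F=\sum_{K}e_KF_K$ is the (unique, by Proposition \ref{prop:representation}) stem function inducing $f$. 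The base case $m=0$ is immediate, since $\R_{K_0}=\R_\emptyset=\emptyset$ by \eqref{eq:R-empty} and $G^{(0)}=F$. The crucial feature encoded in the claim is that $G^{(m)}_L$ vanishes unless $L\subset\{m+1,\ldots,n\}$; this is what drives the induction.

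For the inductive step I would pass from $m-1$ to $m$. Fix $x=(x_1,\ldots,x_n)$ with $x_\ell=\alpha_\ell+J_\ell\beta_\ell$ and the associated $z$, freeze all variables except $x_m$, and view $g^{(m-1)}$ as a function of $x_m$ on the one-variable circular set $\OO_{D_m(z)}=\OO_{D,m}(x)$ identified in \eqref{eq:Dhz}. By the inductive hypothesis only the components $G^{(m-1)}_L$ with $L\subset\{m,\ldots,n\}$ are nonzero, so in the sum $\sum_L[J_L,G^{(m-1)}_L(z)]$ the index $m$ is the minimal possible element of $L$. Splitting according to whether $m\in L$ and using that $m$ is minimal, every term with $m\in L$ has $J_m$ as its leftmost factor, whence $[J_{\{m\}\cup L''},c]=J_m[J_{L''},c]$ by Definition \ref{def:[]}. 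This exhibits $g^{(m-1)}$, as a function of $x_m=\alpha_m+J_m\beta_m$, in the form $G_1+J_mG_2$, where $G_1,G_2$ depend on $z$ and are given by
\[
G_1=\textstyle\sum_{L\subset\{m+1,\ldots,n\}}[J_L,G^{(m-1)}_L(z)],\qquad G_2=\sum_{L''\subset\{m+1,\ldots,n\}}[J_{L''},G^{(m-1)}_{\{m\}\cup L''}(z)],
\]
and a parity check via \eqref{eq:stem2} shows $G_1$ is even and $G_2$ odd in $\beta_m$. Hence $g^{(m-1)}$ is a slice function w.r.t.\ $\mr{x}_m$, and reading off the one-variable spherical value and derivative gives $\SD^0_{\mr{x}_m}g^{(m-1)}(x)=G_1$ and $\SD^1_{\mr{x}_m}g^{(m-1)}(x)=\beta_m^{-1}G_2$. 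Comparing with the displayed formula and distinguishing $\varepsilon(m)=0$ (where $K_m=K_{m-1}$) from $\varepsilon(m)=1$ (where $K_m=K_{m-1}\cup\{m\}$ and the extra factor $\beta_m^{-1}$ appears) yields exactly the claimed $G^{(m)}$; in the case $\varepsilon(m)=1$ this new factor $\beta_m^{-1}$ is what enlarges the removed set from $\R_{K_{m-1}}$ to $\R_{K_m}$.

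Specializing to $m=n$ finishes the proof: then $K_n=K$ and the only surviving index is $L=\emptyset$, so $G^{(n)}$ has the single component $G^{(n)}_\emptyset=F_\emptyset$ when $K=\emptyset$ and $G^{(n)}_\emptyset=\beta_K^{-1}F_K$ when $K\neq\emptyset$. By Definition \ref{def:vs} this is $\vs f$ in the first case, and by Definition \ref{def:ds} it is $f'_{s,K}$ in the second, as required; the assertion that $f$ is slice w.r.t.\ $\mr{x}_1$ is established at the first step ($m=1$), and the slice character w.r.t.\ $\mr{x}_h$ of $\SD^{\varepsilon(h-1)}_{\mr{x}_{h-1}}\cdots\SD^{\varepsilon(1)}_{\mr{x}_1}f$ is the step from $m-1=h-1$ to $m=h$. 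The main obstacle is the bookkeeping forced by non-associativity: the ordered-product notation $[\,\cdot\,]$ forbids moving imaginary units past one another, so one cannot factor out $J_m$ in general. The whole argument hinges on processing the variables in increasing order, which guarantees that when $x_m$ is reached all surviving stem components are indexed by subsets of $\{m,\ldots,n\}$ and hence $J_m$ sits leftmost; this single structural fact is precisely what reduces each step to the one-variable spherical value/derivative and makes the iteration collapse to $\vs f$ or $f'_{s,K}$.
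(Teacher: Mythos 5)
Your proof is correct and follows essentially the same route as the paper's: an induction over the variables in increasing order, whose inductive content --- your explicit stem-function formula $G^{(m)}_L=\beta_{K_m}^{-1}F_{K_m\cup L}$ for $L\subset\{m+1,\ldots,n\}$ --- is exactly the paper's property (b), namely $\SD^{\varepsilon(h)}_{\mr{x}_h}\cdots\SD^{\varepsilon(1)}_{\mr{x}_1}f(y)=(\beta_{K_h})^{-1}\sum_{H\in\pa(n),H_h=\emptyset}[I_H,F_{H\cup K_h}(z)]$, and whose engine is the same structural observation that all surviving components are indexed by subsets of $\{m,\ldots,n\}$, so that $J_m$ sits leftmost in the ordered products and can be factored out to read off the one-variable spherical value and derivative. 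The final specialization at $m=n$ to $F_\emptyset$ (giving $\vs f$) and $\beta_K^{-1}F_K$ (giving $f'_{s,K}$) also matches the paper's conclusion.
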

\begin{proof}
Let $F=\sum_{H\in\pa(n)}e_HF_H:D\to A\otimes\R^{2^n}$ be the stem function inducing $f$, let $y=(y_1,\ldots,y_n)=(\alpha_1+I_1\beta_1,\ldots,\alpha_n+I_n\beta_n)\in\OO_D$, let $I:=(I_1,\ldots,I_n)$ and let $w=(w_1,\ldots,w_n):=(\alpha_1+i\beta_1,\ldots,\alpha_n+i\beta_n)\in D$. Let us prove by induction on $h\in\{1,\ldots,n\}$ that the two following properties hold true:
\begin{itemize}
 \item[(a)] $\SD^{\varepsilon(h-1)}_{\mr{x}_{h-1}}\cdots\SD^{\varepsilon(1)}_{\mr{x}_1}f$ is a slice function w.r.t.\  $\mr{x}_h$, where for convention  $\SD^{\varepsilon(h-1)}_{\mr{x}_{h-1}}\cdots \SD^{\varepsilon(1)}_{\mr{x}_1}f:=f$ if $h=1$.
 \item[(b)] $\SD^{\varepsilon(h)}_{\mr{x}_h}\cdots \SD^{\varepsilon(1)}_{\mr{x}_1}f(y)=
(\beta_{K_h})^{-1}\sum_{H\in\pa(n),H_h=\emptyset}[I_H,F_{H\cup K_h}(z)]$, where $\beta_\emptyset:=1$.
\end{itemize}

First, we consider the case $h=1$. It holds:
\begin{equation}\label{eq:fx1}
\textstyle
f(x_1,y')=\sum_{H\in\pa(n),1\not\in H}[J_H,F_H(z_1,w')]+J_1\big(\sum_{H\in\pa(n),1\not\in H}[J_H,F_{H\cup\{1\}}(z_1,w')]\big),
\end{equation}
where $x_1=\alpha_1+J_1\beta_1\in\OO_{D,1}(y)$, $y':=(y_2,\ldots,y_n)$, $z_1:=\alpha_1+i\beta_1\in D_1(w)$, $w':=(w_2,\ldots,w_n)$ and $J=(J_1,I_2,\ldots,I_n)$. Define the functions $F_1,F_2:D_1(w)\to A$ by setting
\begin{align*}
F_1(z_1)&\textstyle:=\sum_{H\in\pa(n),1\not\in H}[J_H,F_H(z_1,w')],\\
F_2(z_1)&\textstyle:=\sum_{H\in\pa(n),1\not\in H}[J_H,F_{H\cup\{1\}}(z_1,w')].
\end{align*}
It is immediate to verify that $F_1+iF_2:D_1(w)\to A\otimes\R^2$ is a stem function. Consequently, $f$ is slice w.r.t.\ $\mr{x}_1$. Moreover, we have: 
\[
\textstyle
\SD^0_{\mr{x}_1}f(y)=\sum_{H\in\pa(n),1\not\in H}[I_H,F_H(z)]
\;\text{ and }\;
\SD^1_{\mr{x}_1}f(y)=\beta_1^{-1}\sum_{H\in\pa(n),1\not\in H}[I_H,F_{H\cup\{1\}}(z)].
\]
This proves that $f$ satisfies (a) and (b) for $h=1$.

Assume (a) and (b) are verified for some $h\in\{1,\ldots,n-1\}$. By (b), we deduce:
\begin{align}
&\SD^{\varepsilon(h)}_{\mr{x}_h}\cdots \SD^{\varepsilon(1)}_{\mr{x}_1}f(y'',x_{h+1},\hat{y})=\textstyle
(\beta_{K_{h-1}}\beta_{h,K})^{-1}\sum_{H\in\pa(n),H_{h+1}=\emptyset}[L_H,F_{H\cup K_h}(z'',z_{h+1},\hat{z})]+\nonumber\\
&\textstyle+J_{h+1}
\big((\beta_{K_{h-1}}\beta_{h,K})^{-1}\sum_{H\in\pa(n),H_{h+1}=\emptyset}[L_H, F_{H\cup K_h\cup\{h+1\}}(z'',z_{h+1},\hat{z})]\big),\label{eq:DD}
\end{align}
where $x_h=\alpha_h+J_h\beta_h\in\OO_{D,h}(y)$, $\beta_{h,K}=\beta_h$ if $h\in K$, $\beta_{h,K}=1$ if $h\not\in K$, $y''=(y_1,\ldots,y_{h})$, $\hat{y}=(y_{h+2},\ldots,y_n)$, $z_h=\alpha_h+i\beta_h\in D_h(z)$, $z''=(z_1,\ldots,z_{h})$, $\hat{z}=(z_{h+2},\ldots,z_n)$ and $L=(I_1,\ldots,I_{h},J_{h+1},I_{h+2},\ldots,I_n)$. Here $\hat{y}$ and $\hat{z}$ are omitted if $h+1=n$.
Proceeding as above, it is immediate to verify that $\SD^{\varepsilon(h)}_{\mr{x}_h}\cdots\SD^{\varepsilon(1)}_{\mr{x}_1}f$ is slice w.r.t.\ $\mr{x}_{h+1}$, i.e.\ (a) is satisfied for $h+1$. Moreover, we have:
\begin{align*}
\SD^0_{\mr{x}_{h+1}}\SD^{\varepsilon(h)}_{\mr{x}_h}\cdots\SD^{\varepsilon(1)}_{\mr{x}_1}f(y)&\textstyle=(\beta_{K_h})^{-1}\sum_{H\in\pa(n),H_{h+1}=\emptyset}[I_H, F_{H\cup K_h}(z)],\\\notag
\SD^1_{\mr{x}_{h+1}}\SD^{\varepsilon(h)}_{\mr{x}_h}\cdots\SD^{\varepsilon(1)}_{\mr{x}_1}f(y)&\textstyle=
\beta_{h+1}^{-1}\big(\beta_{K_h}\big)^{-1}\sum_{H\in\pa(n),H_{h+1}=\emptyset}[I_H,F_{H\cup K_h\cup\{h+1\}}(z)].
\end{align*}
In both cases, the last two expressions are equal to 
\[
\textstyle
(\beta_{K_{h+1}})^{-1}\sum_{H\in\pa(n),H_{h+1}=\emptyset}[I_H, F_{H\cup K_{h+1}}(z)]
\]
and the induction step works. When $h=n$, the right-hand side of (b) becomes $F_\emptyset(z)$ if $K=\emptyset$, and $\beta_K^{-1}F_K(z)$ if $K\ne\emptyset$. This completes the proof.
\end{proof}

\begin{definition}\label{def:D_epsilon}
Assume that $n\geq2$. Let $h\in\{2,\ldots,n\}$ and let $\epsilon:\{1,\ldots,h-1\}\to\{0,1\}$ be any function. Given a slice function $f:\OO_D\to A$, we define the \emph{truncated spherical $\epsilon$-derivative $\SD_\epsilon f:\OO_D\setminus\R_{\epsilon^{-1}(1)}\to A$ of $f$} by setting $\SD_\epsilon f:=\SD^{\epsilon(h-1)}_{\mr{x}_{h-1}}\cdots\SD^{\epsilon(1)}_{\mr{x}_1}f$, and we say that such a derivative has \emph{order $h-1$}. For convention, we define also the  \emph{truncated spherical $\emptyset$-derivative $\SD_\emptyset f:\OO_D\to A$ of $f$} by setting $\SD_\emptyset f:=f$, and we say that such a derivative has \emph{order $0$}. \bs
\end{definition}

Proposition \ref{prop:slicewrtx1} asserts that each $h$-order truncated spherical derivative of a slice function is a well-defined slice function w.r.t.\ $\mr{x}_h$. Moreover, a by-product of the proof of the mentioned proposition reads as follows. If $F:D\to A\otimes\R^{2^n}$ is the stem function inducing $f$, then
\begin{equation}\label{eq:SD}
\textstyle
\SD_\epsilon f(x)=(\beta_{\epsilon^{-1}(1)})^{-1}\sum_{H\in\pa(n),H_{h-1}=\emptyset}[J_H,F_{H\cup\epsilon^{-1}(1)}(z)]
\end{equation}
for all $x=(\alpha_1+J_1\beta_1,\ldots,\alpha_n+J_n\beta_n)\in\OO_D\setminus\R_{\epsilon^{-1}(1)}$ with $z:=(\alpha_1+i\beta_1,\ldots,\alpha_n+i\beta_n)$ and $J:=(J_1,\ldots,J_n)$, where $\beta_\emptyset:=1$.


\subsection{Smoothness}

By Assumption \ref{assumption1}, we are assuming that the real alternative $^*$-algebra $A$ we are working with has finite dimension and, as a finite dimensional real vector space, $A$ is equipped with the natural $\mscr{C}^\omega$~manifold structure defined by the global coordinate systems associated with its real vector bases. Here `$\,\mscr{C}^\omega\,$' means `real analytic'. For each $n\geq1$, we equip $A^n$ with the corresponding product structure of $\mscr{C}^\omega$~manifolds. We call the underlying topology on $A^n$ as Euclidean topology of $A^n$. Given any non-empty subset $S$ of $A^n$, we equip $S$ with the relative topology induced by the Euclidean one of $A^n$. We call such a topology on $S$ as Euclidean topology of $S$. If in addition $S$ is open in $A^n$, then we always assume that $S$ is equipped with the $\mscr{C}^\omega$~manifold structure induced by the one of $A^n$.

Similarly, we equip $D$ with the Euclidean topology induced by the one of $\C=\R^2$ and, in the case $D$ is open in $\C$, we always assume that $D$ is equipped with the $\mscr{C}^\omega$~manifold structure induced by the one of $\C=\R^2$. 

As usual, given two topological spaces $X$ and $Y$, we denote $\mscr{C}^0(X,Y)$ the set of all continuous maps from $X$ to $Y$. If $r\in(\N\setminus\{0\})\cup\{\infty,\omega\}$ and $X$ and $Y$ are equipped with some $\mscr{C}^r$ manifold structures (for instance, $\mscr{C}^\omega$ manifold structures), then the symbol $\mscr{C}^r(X,Y)$ indicates the set of all $\mscr{C}^r$ maps from $X$ to $Y$. In the latter case, given any non-empty subset $S$ of $X$ and a map $f:S\to Y$, we say that $f$ is a \textit{$\mscr{C}^r$ map} if there exist an open neighborhood $U$ of $S$ in $X$ and a map $g:U\to Y$ such that $g(x)=f(x)$ for all $x\in S$ and, equipping $U$ with the natural $\mscr{C}^r$ manifold structure induced by the one of $X$, $g$ belongs to $\mscr{C}^r(U,Y)$. We denote $\mscr{C}^r(S,Y)$ the set of all $\mscr{C}^r$ maps from $S$ to $Y$.     

\begin{definition}
We define 
\begin{itemize}
 \item $\mr{Stem}^0(D,A\otimes\R^{2^n})$ as the set of all continuous stem functions from $D$ to $A\otimes\R^{2^n}$, i.e. the set of all stem functions $F=\sum_{K\in\pa(n)}e_KF_K:D\to A\otimes\R^{2^n}$ such that each $F_K$ belongs to $\mscr{C}^0(D,A)$,
 \item $\mc{S}^0(\OO_D,A)$ as the set of slice functions from $\OO_D$ to $A$ induced by continuous stem functions, i.e. $\mc{S}^0(\OO_D,A):=\I(\mr{Stem}^0(D,A\otimes\R^{2^n}))$,
\end{itemize}
and, in the case $D$ is open in $\C$ and $r\in(\N\setminus\{0\})\cup\{\infty,\omega\}$,
\begin{itemize}
 \item $\mr{Stem}^r(D,A\otimes\R^{2^n})$ as the set of all $\mscr{C}^r$ stem functions from $D$ to $A\otimes\R^{2^n}$, i.e. the set of all stem functions $F=\sum_{K\in\pa(n)}e_KF_K:D\to A\otimes\R^{2^n}$ such that each $F_K$ belongs to $\mscr{C}^r(D,A)$,
  \item $\mc{S}^r(\OO_D,A)$ as the set of slice functions from $\OO_D$ to $A$ induced by $\mscr{C}^r$ stem functions, i.e. $\mc{S}^r(\OO_D,A):=\I(\mr{Stem}^r(D,A\otimes\R^{2^n}))$. \bs
\end{itemize}
\end{definition}

Equip $\N\cup\{\infty,\omega\}$ with the unique total ordering $\leq$, extending the one $\leq$ of $\N$, by requiring that $s\leq\infty$ for all $s\in\N$, and $\infty\leq\omega$. Denote $\lfloor s\rfloor$ the integer part of $s\in\R$.

\begin{theorem} \label{thm:Cr}
The following assertions hold.
\begin{itemize}
 \item[$(\mr{i})$] If $\cS_A$ is compact, then $\mc{S}^0(\OO_D,A)\subset\mscr{C}^0(\OO_D,A)$.
 \item[$(\mr{ii})$] Suppose that $D$ is open in $\C^n$. Let $r\in\N\cup\{\infty,\omega\}$ such that $r\geq 2^n-1$, and let $\mb{w}_n(r)$ be the element of $\N\cup\{\infty,\omega\}$ defined by $\mb{w}_n(r):=r$ if $r\in\{\infty,\omega\}$ and
\[
\textstyle
\mb{w}_n(r):=\left\lfloor\frac{r-2^n+1}{2^n}\right\rfloor=\left\lfloor\frac{r+1}{2^n}\right\rfloor-1
\]
if $r\in\N$ and $r\geq2^n-1$. Then it holds:
\[
\mc{S}^r(\OO_D,A)\subset\mscr{C}^{\mb{w}_n(r)}(\OO_D,A).
\]
In particular, we have $\mc{S}^\infty(\OO_D,A)\subset\mscr{C}^\infty(\OO_D,A)$ and $\mc{S}^\omega(\OO_D,A)\subset\mscr{C}^\omega(\OO_D,A)$.
\end{itemize}
\end{theorem}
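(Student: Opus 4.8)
The two parts require different techniques: part $(\mr{i})$ is a soft compactness argument, while part $(\mr{ii})$ is a Whitney-type regularity statement whose derivative bookkeeping is the real content.

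For part $(\mr{i})$ the plan is to factor $f=\I(F)$ through the parametrization
\[
\Omega\colon (\cS_A)^n\times D\to\OO_D,\qquad \Omega(J,z):=(\phi_{J_1}(z_1),\ldots,\phi_{J_n}(z_n)),
\]
which is continuous and surjective. The composite $f\circ\Omega$ sends $(J,z)$ to $\sum_{K\in\pa(n)}[J_K,F_K(z)]$, hence is continuous, since multiplication in $A$ is continuous and each $F_K\in\mscr{C}^0(D,A)$. To pass continuity from $f\circ\Omega$ to $f$ I would argue sequentially. Given $x_m\to x_0$ in $\OO_D$, set $z^{(m)}_h:=\RE(x_{m,h})+i|\IM(x_{m,h})|$, a continuous function of $x_m$, so $z^{(m)}\to z_0$; and choose $J^{(m)}_h:=\IM(x_{m,h})/|\IM(x_{m,h})|$ where $x_{m,h}\notin\R$ and arbitrarily otherwise, so that $\Omega(J^{(m)},z^{(m)})=x_m$. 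Compactness of $\cS_A$ enters here: $(\cS_A)^n$ is compact, so a subsequence $J^{(m_j)}\to J^*$ exists, and continuity of $\Omega$ forces $\Omega(J^*,z_0)=x_0$, whence $f(x_{m_j})=(f\circ\Omega)(J^{(m_j)},z^{(m_j)})\to f(x_0)$. A standard subsequence-of-subsequence argument upgrades this to $f(x_m)\to f(x_0)$, giving $f\in\mscr{C}^0(\OO_D,A)$.

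For part $(\mr{ii})$ the plan is to rewrite $f$ so that the only non-smooth ingredients (the undetermined $J_h$ and $\beta_h=|\IM(x_h)|$) are eliminated in favour of the real-analytic quantities $\IM(x_h)$, $\RE(x_h)$ and $n(\IM(x_h))$. Writing $z=(\alpha_h+i\beta_h)_h$, the parity relations \eqref{eq:stem} say that $F_K$ is odd in $\beta_h$ for $h\in K$ and even for $h\notin K$, so $F_K$ factors as $F_K(z)=\beta_K\,G_K(\alpha,\beta^2)$ with $\beta_K=\prod_{k\in K}\beta_k$ and $\beta^2=(\beta_1^2,\ldots,\beta_n^2)$, where $G_K$ is even in every $\beta_h$, i.e. a genuine function of the squares. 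Distributing each real scalar $\beta_k$ into the matching $J_k$ and using $\beta_kJ_k=\IM(x_k)$ turns the defining sum into
\[
\textstyle f(x)=\sum_{K\in\pa(n)}[(\IM(x_k))_{k\in K},\,G_K(\RE(x_1),\ldots,\RE(x_n),n(\IM(x_1)),\ldots,n(\IM(x_n)))],
\]
an identity valid on all of $\OO_D$ (terms with $k\in K$ and $x_k\in\R$ vanish on both sides). Since $x\mapsto\IM(x_k)$, $x\mapsto\RE(x_h)$ and $x\mapsto n(\IM(x_h))$ are real-analytic and ordered multiplication in $A$ is smooth, $f$ will be of class $\mscr{C}^s$ as soon as every $G_K$ is of class $\mscr{C}^s$ near the image of $x\mapsto(\RE(x),n(\IM(x)))$.

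The \emph{main obstacle} is thus a Whitney-type lemma with a sharp derivative count: a function with the parity of $F_K$, of class $\mscr{C}^r$, factors through $\beta_K$ into a $\mscr{C}^{\mb{w}_n(r)}$ function of the squared variables. I would prove this one variable at a time, using that an even $\mscr{C}^m$ function of $\beta_h$ is a $\mscr{C}^{\lfloor m/2\rfloor}$ function of $\beta_h^2$, and that dividing an odd $\mscr{C}^m$ function by $\beta_h$ yields an even $\mscr{C}^{m-1}$ function (Hadamard's lemma). Processing variable $h$ sends the current regularity $m$ to $\lfloor m/2\rfloor$ if $h\notin K$ and to $\lfloor(m-1)/2\rfloor$ if $h\in K$; since $\lfloor(m-1)/2\rfloor\leq\lfloor m/2\rfloor$, the worst multi-index is $K=\{1,\ldots,n\}$, for which $m_h=\lfloor(m_{h-1}-1)/2\rfloor$ with $m_0=r$. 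Setting $a_h:=m_h+1$ this becomes $a_h=\lfloor a_{h-1}/2\rfloor$ with $a_0=r+1$, so $a_n=\lfloor(r+1)/2^n\rfloor$ and $m_n=\lfloor(r+1)/2^n\rfloor-1=\mb{w}_n(r)$; every other $K$ gives at least this regularity, and $r\geq2^n-1$ is precisely what makes $\mb{w}_n(r)\geq0$. Applying the parametrized version of these facts to retain joint smoothness in the remaining $\beta$'s and in the $\alpha$'s, each $G_K$ is $\mscr{C}^{\mb{w}_n(r)}$ on an open set, and the reduction formula yields $f\in\mscr{C}^{\mb{w}_n(r)}(\OO_D,A)$; the cases $r\in\{\infty,\omega\}$ follow at once since no derivatives are then lost. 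The delicate points are the uniformity of the factorization across the untouched variables and the confirmation that the all-odd index is indeed the bottleneck; as a consistency check, for $n=2$ and $r=4k+3$ the count gives $\mb{w}_2(4k+3)=k$, matching Proposition \ref{thm:Cr2}.
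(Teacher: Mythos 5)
Your proposal is correct; part $(\mr{ii})$ follows essentially the paper's route, while part $(\mr{i})$ takes a genuinely different and cleaner one. For $(\mr{ii})$ you use exactly the paper's ingredients: the parity relations \eqref{eq:stem}, Whitney's even/odd factorization applied one variable at a time in its parametrized form (joint regularity in the untouched variables, which is the content of the Remark in \cite{Whitney1943} that the paper cites), and the rewriting $f(x)=\sum_{K\in\pa(n)}[\IM_K(x),G_K(\cdot)]$, which is precisely the paper's extension $\hat{f}(x)=\sum_{K\in\pa(n)}[\IM_K(x),F'_K(w_n(x))]$. Your derivative bookkeeping is in fact slightly finer than the paper's: the paper applies the worst-case map $\rho(s)=\lfloor(s-1)/2\rfloor$ uniformly to all $n$ variables and then verifies $\rho^n(r)\geq\mb{w}_n(r)$, whereas you distinguish the even step $m\mapsto\lfloor m/2\rfloor$ from the odd step $m\mapsto\lfloor(m-1)/2\rfloor$, identify $K=\{1,\ldots,n\}$ as the bottleneck, and telescope $a_h=\lfloor a_{h-1}/2\rfloor$ to land on $\mb{w}_n(r)$ exactly; the two counts agree. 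One point you should make explicit: $\OO_D$ need not be open in $A^n$ (only in $(Q_A)^n$), and the paper's notion of $\mscr{C}^s$ on $\OO_D$ requires an extension to an open neighborhood of $\OO_D$ in $A^n$; since $n(\IM(x_h))$ is $A$-valued off the quadratic cone, you must compose it with a real-linear projection fixing $\R$ (the paper's $\eta=\pi_\R\circ n\circ\IM$, built from a basis with $u_1=1$) before feeding it into $G_K$, so that your composite formula genuinely defines a $\mscr{C}^{\mb{w}_n(r)}$ map on an open set of $A^n$ containing $\OO_D$. For $(\mr{i})$, by contrast, the paper runs an induction over the closed strata $Q(\ell)$ of points with at most $\ell$ non-real coordinates, using a finite closed cover and the vanishing of the components $F_K$ at real coordinates; you instead factor $f$ through the continuous surjection $\Omega:(\cS_A)^n\times D\to\OO_D$ and run a subsequence-of-subsequences compactness argument, with the even/odd vanishing entering only implicitly through the already-established well-posedness of $\I(F)$ — which is exactly what lets you equate $(f\circ\Omega)(J^*,z_0)$ with $f(x_0)$ when some $\beta_h=0$ and $J^*_h$ is an uncontrolled limit of imaginary units, so you should cite it at that step. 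Your version is shorter and isolates precisely where compactness of $\cS_A$ is used (extraction of $J^*$), at the cost of the stratum-by-stratum information the paper's induction produces along the way; both arguments fail, as they must, without compactness, as the split-quaternion example following the theorem shows.
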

\begin{proof}
Choose a real vector basis $\mc{B}=(u_1,\ldots,u_d)$ of $A$ with $u_1=1$, and denote $\pi_\R:A\to\R$ the projection of $A$ onto the first component of the coordinates induced by $\mc{B}$, i.e. the real linear function sending each $a=\sum_{h=1}^da_hu_h\in A$ into $a_1\in\R$. Define the functions $\theta,\eta,\xi:A\to\R$ and $v,w:A\to\C$ by setting
\[
\theta(a):=\pi_\R(\RE(a)), \quad \eta(a):=\pi_\R(n(\IM(x))), \quad \xi(a):=\sqrt{|\eta(a)|}
\]
and
\[
v(a):=\theta(a)+i\xi(a), \quad w(a):=\theta(a)+i\eta(a).
\]
Note that $\theta,\eta\in\mscr{C}^\omega(A,\R)$, $\xi\in\mscr{C}^0(A,\R)$, $v\in\mscr{C}^0(A,\C)$ and $w\in\mscr{C}^\omega(A,\C)$. Moreover, it holds
\begin{align}
&v(\alpha+J\beta)=\alpha+i|\beta|,\label{eq:wh1}\\
&w(\alpha+J\beta)=\alpha+i\beta^2\label{eq:wh2}
\end{align}
for all $\alpha,\beta\in\R$ and $J\in\cS_A$.  Define also the maps $v_n,w_n:A^n\to\C^n$ by setting
\[
v_n(x_1,\ldots,x_n):=(v(x_1),\ldots,v(x_n)) \;\text{ and }\; w_n(x_1,\ldots,x_n):=(w(x_1),\ldots,w(x_n)).
\]
Let $C$ be the closed subset of $A$ defined by $C:=\xi^{-1}(0)=\eta^{-1}(0)$. Since $C\cap Q_A=\R$, we have that $(A\setminus C)\cap Q_A=Q_A\setminus C=Q_A\setminus\R$, and hence $(A\setminus C)^n\cap\OO_D=\OO_D\setminus\R_\bullet$. Let $\mr{j}:A\setminus C\to A$ be the continuous map defined by
\[
\textstyle
\mr{j}(a):=\frac{1}{\xi(a)}\IM(a).
\]  
Define $\mr{J}_\emptyset:=1$ and, for each $K=\{k_1,\ldots,k_p\}\in\pa(n)\setminus\{\emptyset\}$ with $k_1<\ldots<k_p$, define the continuous map $\mr{J}_K:\OO_D\setminus\R_{K}\to(\cS_A)^{|K|}$ by
\[
\mr{J}_K(x_1,\ldots,x_n):=(\mr{j}(x_{k_1}),\ldots,\mr{j}(x_{k_p})).
\]
Denote $\mr{J}_\bullet:Q_A\setminus\R_\bullet\to(\cS_A)^n$ the continuous map $\mr{J}_{\{1,\ldots,n\}}$. Note that $\mr{J}_\bullet(\alpha_1+J_1|\beta_1|,\ldots,\alpha_n+J_n|\beta_n|)=(J_1,\ldots,J_n)$ if $(\alpha_1+i\beta_1,\ldots,\alpha_n+i\beta_n)\in D_\bullet$ and $J_1,\ldots,J_n\in\cS_A$. 

Choose $F=\sum_{K\in\pa(n)}e_KF_K\in\mr{Stem}(D,A\otimes\R^{2^n})$ and define $f:=\I(F)$.

Let us prove $(\mr{i})$.

Suppose that $F$ is continuous and $\cS_A$ is compact. For each $H\in\pa(n)$ and $\ell\in\{0,1,\ldots,n\}$, define $Q(H):=\bigcap_{h\in\{1,\ldots,n\}\setminus H}\{(x_1,\ldots,x_n)\in\OO_D:x_h\in\R\}$ and $Q(\ell):=\bigcup_{H\in\pa(n),|H|\leq\ell}Q(H)$. Note that the $Q(H)$'s and the $Q(\ell)$'s are closed subsets of $\OO_D$; moreover, $Q(0)=Q(\emptyset)=\OO_D\cap\R^n$ and $Q(n)=Q(\{1,\ldots,n\})=\OO_D$. We will prove by induction on $\ell\in\{0,1,\ldots,n\}$ that the restriction $f|_{Q(\ell)}$ of $f$ to $Q(\ell)$ is continuous. Evidently, if the latter assertion is true then $f$ is continuous, because $Q(n)=\OO_D$. The case $\ell=0$ follows immediately from the equality $f(x)=F_\emptyset(v_n(x))$ for all $x\in Q(0)=\OO_D\cap\R^n$. Suppose the assertion is true for some $\ell\in\{0,1,\ldots,n-1\}$. Note that, for each $H,L\in\pa(n)$ with $|H|=|L|=\ell+1$ and $H\neq L$, we have that $|H\cap L|\leq\ell$ and hence
\[
\textstyle
Q(H)\cap Q(L)=\bigcap_{h\in\{1,\ldots,n\}\setminus (H\cap L)}\{(x_1,\ldots,x_n)\in\OO_D:x_h\in\R\}\subset Q(\ell).
\]
It follows that $\{Q(H)\}_{H\in\pa(n),|H|=\ell{+1}}$ is a finite closed cover of $Q(\ell+1)$ and, for each $H,L\in\pa(n)$ with $|H|=|L|=\ell+1$ and $H\neq L$, the restrictions $f|_{Q(H)}$ and $f|_{Q(L)}$ are continuous on $Q(H)\cap Q(L)$ by induction. Consequently, it suffices to show that, for each fixed $H\in\pa(n)$ with $|H|=\ell+1$, $f|_{Q(H)}$ is continuous. By induction, $f|_{Q(\ell)}$ is continuous so the same is true for $f|_P$, where $P:=Q(H)\cap Q(\ell)$. The set $P$ is closed in $Q(H)$ and
\[
Q(H)\setminus P=Q(H)\setminus Q(\ell)\subset\OO_D\setminus\R_\bullet.
\]
Since $f(x)=\sum_{K\in\pa(n)}[\mr{J}_\bullet(x),F_K(v_n(x))]$ for all $x\in\OO_D\setminus\R_\bullet$, it follows that $f|_{Q(H)\setminus P}$ is continuous. Now, in order to complete the proof of $(\mr{i})$, it suffices to show that, if $\{y_m\}_{m\in\N}$ is a sequence in $Q(H)\setminus P$ converging to some point $x=(x_1,\ldots,x_n)\in P$, then the sequence $\{f(y_m)\}_{m\in\N}$ converges to $f(x)$. Consider such a sequence $\{y_m\}_{m\in\N}$ in $Q(H)\setminus P$ and $x=(x_1,\ldots,x_n)\in P$. Define $H^*:=\{h\in H:x_h\in\R\}$. Note that, by definition of $P$, $H^*\neq\emptyset$. By the even-odd properties of the $F_K$'s, we deduce at once that
\begin{equation}\label{eq:HH*}
\textstyle
f(y_m)=\sum_{K\in\pa(n),K\subset H}[\mr{J}_K(y_m),F_K(v_n(y_m))]
\end{equation}
for all $m\in\N$, and
\begin{equation}\label{eq:HH**}
\textstyle
f(x)=\sum_{K\in\pa(n),K\subset H\setminus H^*}[\mr{J}_K(x),F_K(v_n(x))].
\end{equation}
Let $K\in\pa(n)$ with $K\subset H$ and $K\cap H^*\neq\emptyset$. Choose $\nu\in K\cap H^*$ and observe that $x_\nu\in\R$. Since $v_n$ and $F_K$ are continuous, the sequence $\{F_K(v_n(y_m))\}_{m\in\N}$ converges to $F_K(v_n(x))$. If we write $v_n(x)=(z_1,\ldots,z_n)\in\C^n$, then $z_\nu=x_\nu\in\R$ and $F_K(v_n(x))=0$. On the other hand, $(\cS_A)^n$ is compact in $A^n$ and hence it is bounded. It follows that the sequence $\{[\mr{J}_K(y_m),F_K(v_n(y_m))]\}_{m\in\N}$ converges to zero. Consequently, by \eqref{eq:HH*}, $\{f(y_m)\}_{m\in\N}$ converges to $\sum_{K\in\pa(n),K\subset H\setminus H^*}[\mr{J}_K(x),F_K(v_n(x))]$, which is equal to $f(x)$ by \eqref{eq:HH**}.

It remains to show point $(\mr{ii})$.

Suppose that $D$ is open in $\C^n$. Assume that $F$ is of class $\mscr{C}^r$ for $r\in\N$ with $r\geq2^n-1$. Let $\rho:\Z\to\Z$ be the function $\rho(s):=\big\lfloor\frac{s-1}{2}\big\rfloor$ and, for each $k\geq 1$, let $\rho^k:\Z\to\Z$ be the $k^{\mr{th}}$-iterated composition of $\rho$ with itself. Since $\rho$ is non-decreasing and $r\geq2^n\mb{w}_n(r)+2^n-1$, we have that $\rho^n$ is non-decreasing and
\begin{equation}\label{eq:rho-n}
\rho^n(r)\geq\mb{w}_n(r);
\end{equation}
indeed, it holds:
\begin{align*}
\rho^n(r)&\geq\rho^n(2^n\mb{w}_n(r)+2^n-1)=\rho^{n-1}(2^{n-1}\mb{w}_n(r)+2^{n-1}-1)=\ldots=\rho(2\mb{w}_n+1)=\mb{w}_n(r).
\end{align*} 
Consider the component $F_K:D\to A$ of $F$ for some fixed $K\in\pa(n)$. If $z=(z_1,\ldots,z_n)=(\alpha_1+i\beta_1,\ldots,\alpha_n+i\beta_n)$ are the coordinates of $\C^n$, then $F_K$ is even w.r.t.\ $z_h$ if $h\not\in K$ and it is odd w.r.t.\ $z_h$ if $h\in K$. By \eqref{eq:rho-n}, $\rho^n(r)$ is non-negative, because $\rho^n(r)\geq\mb{w}_n(r)\geq0$. Since $F_K$ is of class $\mscr{C}^r$ and $\rho^n(r)\geq0$, we can apply to $F_K$ the representation results of Whitney for even-odd function along each variables $z_1,\ldots,z_n$, see \cite{Whitney1943} especially Remark at page 160. 
In this way, if $W_n:\C^n\to\C^n$ is the $\mscr{C}^\omega$ map given by $W_n(\alpha_1+i\beta_1,\ldots,\alpha_n+i\beta_n):=(\alpha_1+i\beta_1^2,\ldots,\alpha_n+i\beta_n^2)$, then we obtain an open neighborhood $U$ of $W_n(D)$ in $\C^n$ and a $\mscr{C}^{\rho^n(r)}$ map $F'_K:U\to A$ such that $F_K(z)=\beta_KF'_K(W_n(z))$ for all $z\in D$, where $\beta_\emptyset:=1$. Using \eqref{eq:rho-n} again, we know that each function $F'_k$ is also of class $\mscr{C}^{\mb{w}_n(r)}$. Note that $w_n(\OO_D)\subset W_n(D)\subset U$. Consequently, the set $V:=(w_n)^{-1}(U)$ is an open neighborhood of $\OO_D$ in $A^n$. Define the function $\hat{f}:V\to A$ by setting
\[
\textstyle
\hat{f}(x):=\sum_{K\in\pa(n)}[\IM_K(x),F'_K(w_n(x))].
\]
The function $\hat{f}$ is of class $\mscr{C}^{\mb{w}_n(r)}$ and extends $f$ to the whole $V$. As a consequence, $f$ belongs to $\mscr{C}^{\mb{w}_n(r)}$, as desired. The proof in the case $r\in\{\infty,\omega\}$ is similar, but easier because the $F'_K$'s have the same $\mscr{C}^r$ regularity of $F_K$.
\end{proof}

\begin{remark}
(i) In the statement of point (i) of the preceding result, we cannot omit the compactness condition on $\cS_A$, also in the one variable case. In Proposition 7(1) of \cite{AIM2011} we forgot to add the compactness hypothesis. Let $A$ be the Clifford algebra $\mi{C}\ell_{1,1}=\cS\HH$ of split-quaternions, equipped with the Clifford conjugation (see Sections 3.2.1 and 3.2.2 of \cite{GHS}). Given any element  $x=x_0+x_1e_1+x_2e_2+x_{12}e_{12}$ of $\cS\HH$ with $x_0,x_1,x_2,x_{12}\in\R$, we have that $t(x)=2x_0$ and $n(x)=x_0^2-x_1^2+x_2^2-x_{12}^2$. It follows that $\cS_A$ is the $2$-hyperboloid of $A\simeq\R^4$ given by the equations $x_0=0=x_2^2-x_1^2-x_{12}^2-1$ and $Q_A$ is the union of $\R$ and the open cone $x_2^2-x_1^2-x_{12}^2>0$. Note that $\cS_A$ is not compact. Consider the continuous stem function $F:\C\to A\otimes\C$ defined by $F(\alpha+i\beta):=i|\beta|^{\frac{1}{2}}\mr{sgn}(\beta)$, where $\mr{sgn}(\beta)$ is equal to $1$ if $\beta>0$, $-1$ if $\beta<0$ and $0$ if $\beta=0$. If $f:Q_A\to A$ is the one variable slice function induced by $F$, then $f(x)=0$ for all $x\in\R$, and $f(x)=(x_2^2-x_1^2-x_{12}^2)^{-\frac{1}{4}}(x_1e_1+x_2e_2+x_{12}e_{12})$ for all $x\in Q_A\setminus\R$. Let $\alpha\in\R$ and, for each $t>0$, let $y_t$ be the point of $Q_A\setminus\R$ defined by $y_t:=\alpha+te_1+(t+t^4)e_2$. Since $f(y_t)=t^{-\frac{1}{4}}(2+t^3)^{-\frac{1}{4}}(e_1+(1+t^3)e_2)$ for all $t>0$, we have that $\lim_{t\to 0^+}y_t=\alpha$ and $\lim_{t\to 0^+}t^{\frac{1}{4}}f(y_t)=2^{-\frac{1}{4}}(e_1+e_2)\neq0$. This proves that $f$ is not continuous at $\alpha$.

(ii) A by-product of the preceding proof is that, if $D$ is open in $\C^n$, then $\mc{S}^r(\OO_D,A)\subset\mscr{C}^{\rho^n(r)}(\OO_D,A)$ for all $r\in\N$ with $\rho^n(r)\geq0$.  

(iii) Thanks to the preceding proof, it is also quite evident that, if $\OO_D\cap\R_\bullet=\emptyset$, then $\mc{S}^r(\OO_D,A)\subset\mscr{C}^r(\OO_D,A)$ for all $r\in\N\cup\{\infty,\omega\}$. \bs
\end{remark}


\subsection{Multiplicative structures on slice functions and polynomials}\label{sec:m-s-slice}

Let us introduce the concept of symmetric difference algebra, or $\dsim$-algebra for short.  

\begin{definition}\label{def:symm-diff-prod}
Given a bilinear map $\mr{b}:\R^{2^n}\times\R^{2^n}\to\R^{2^n}$, we say that $\mr{b}$ is a \emph{symmetric difference product} on $\R^{2^n}$, or a \emph{$\dsim$-product} on $\R^{2^n}$ for short, if there exists a function $\sigma:\pa(n)\times\pa(n)\to\R$ such that
\begin{equation}\label{eq:empty}
\sigma(K,\emptyset)=\sigma(\emptyset,K)=1\, \text{ for all $K\in\pa(n)$}
\end{equation}
and
\begin{equation}\label{eq:K,H}
\mr{b}(e_K,e_H)=e_{K\dsim H}\sigma(K,H)\, \text{ for all $K,H\in\pa(n)$.}
\end{equation}
If this is the case, we say that the $\dsim$-product $\mr{b}$ is \emph{induced by $\sigma$}, and we write $\mr{b}=\EuScript{B}(\sigma)$. For simplicity, we use also the symbol $v\cdot_\sigma w$ in place of $\mr{b}(v,w)$ and, if there is no possibility of confusion, we omit `$\,\cdot_\sigma$' writing simply $vw$.

Let $P$ be a real vector space, equipped with a product $p:P\times P\to P$ making $P$ a real algebra. We say that the real algebra $(P,p)$ is a \emph{symmetric difference algebra}, or a \emph{$\dsim$-algebra} for short, if it is isomorphic to some $\R^{2^n}$ equipped with a $\dsim$-product. \bs
\end{definition}

Evidently, each $\dsim$-product is induced by a unique function $\sigma$, and each function $\sigma:\pa(n)\times\pa(n)\to\R$ satisfying \eqref{eq:empty} defines a $\dsim$-product on $\R^{2^n}$. By Assumption \ref{assumption1}, \eqref{eq:empty} and \eqref{eq:K,H}, we have that $e_\emptyset=1$ is the unity of $\R^{2^n}$, and $e_K^2=\sigma(K,K)\in\R$ for all $K\in\pa(n)$; consequently, a necessary condition for a $2^{n}$-dimensional real algebra to be a $\dsim$-algebra is that it has a unity $e$ and a vector basis $\{v_K\}_{K\in\pa(n)}$ such that, for each $K\in\pa(n)$, $v_K^2$ belongs to the vector subspace of $A$ generated by~$e$.

As we will see in the next remark, the notion of $\dsim$-product includes several important classical products on $\R^{2^n}$. Moreover, all the real algebras with unity of dimension $1$ and $2$ are $\dsim$-algebras. On the contrary, for each $n\geq 2$, there exist $2^n$-dimensional real algebras with unity which are not $\dsim$-algebras.

\begin{examples}\label{ex:symm-diff-prod}
$(1)$ Each real Clifford algebra $\mi{C}\ell(p,q)$ is a $\dsim$-algebra, including quaternions $\hh=\mi{C}\ell(0,2)$. When $n=3$, another example of associative $\dsim$-algebra is the one of dual quaternions, see \cite{AlgebraSliceFunctions} for the definition. The algebra $\oo$ of octonions and the algebra $\sq\oo$ of split-octonions are examples of non-associative $\dsim$-algebras, see \cite{AlgebraSliceFunctions}.

$(2)$ Up to isomorphism, the unique real algebra with unity of dimension $1$ is $\R$, which is a $\dsim$-algebra. All the real algebras with unity of dimension $2$ are $\dsim$-algebras as well. Suppose that $\R^2$ is equipped with a product such that $1$ is its neutral element and $e_1^2=\alpha+\beta e_1$ for some $\alpha,\beta\in\R$. Define $v:=\beta-2e_1$. Since $v^2=4\alpha+\beta^2$ belongs to $\R$ and $\{1,v\}$ is a vector basis of $\R^2$, it follows that $\R^2$ equipped with such a product is isomorphic to $\R^2$ equipped with the $\dsim$-product induced by the function $\sigma:\pa(1)\times\pa(1)\to\R$ such that $\sigma(\{1\},\{1\})=4\alpha+\beta^2$.

Let $n\geq2$. Consider the product on $\R^{2^n}$ such that $e_\emptyset=1$ is its neutral element, $e_Ke_H=0$ if $K,H\in\pa(n)\setminus\{\emptyset\}$ with $K\neq H$, and $e_K^2=-\frac{2}{2^n-1}\sum_{H\in\pa(n)\setminus\{\emptyset\}}e_H$ for all $K\in\pa(n)\setminus\{\emptyset\}$. If $v=\sum_{K\in\pa(n)}e_Ka_K$ is a generic element of $\R^{2^n}$ then
\[
\textstyle
v^2=a_\emptyset^2+\sum_{K\in\pa(n)\setminus\{\emptyset\}}e_K\big(2a_\emptyset a_K-\frac{2}{2^n-1}\sum_{H\in\pa(n)\setminus\{\emptyset\}}a_H^2\big).
\]
By simple computations, we see that $v^2\in\R$ if and only if either $v\in\R$ or $v=\lambda\sum_{K\in\pa(n)}e_K$ for some $\lambda\in\R$. It follows that there exist at most two linearly independent vectors of $\R^{2^n}$ whose squares are real. Consequently, $\R^{2^n}$ equipped with the mentioned product is not a $\dsim$-algebra. \bs
\end{examples}

Other very interesting examples of $\dsim$-algebras can be constructed via tensor products.

\begin{examples}
Let $n.m\in\N^*$. Denote $\{e'_K\}_{K\in\pa(n)}$ the fixed real vector basis of $\R^{2^n}$, and $\{e''_H\}_{H\in\pa(m)}$ the fixed real vector basis of $\R^{2^m}$. Recall that $e'_\emptyset=1\in\R^{2^n}$ and $e''_\emptyset=1\in\R^{2^m}$. Given any $L\in\pa(n+m)$, define $L_m\in\pa(m)$ and $L_m^*\in\pa(n)$ by setting $L_m:=L\cap\{1,\ldots,m\}$ and $L_m^*:=\{l\in\N^*:l+m\in L\}$. Write the elements $x$ of $\R^{2^n}\otimes\R^{2^m}$ as follows:
\[
\textstyle
x=\sum_{H\in\pa(m)}e''_H(\sum_{K\in\pa(n)}e'_Kr_{H,K})=\sum_{H\in\pa(m),K\in\pa(n)}e''_He'_Kr_{H,K} 
\]
for $r_{H,K}\in\R$, where $e''_He'_K:=e'_K\otimes e''_H$. Identify $\R^{2^{n+m}}$ with $\R^{2^n}\otimes\R^{2^m}$ and define the real vector basis $\{e_L\}_{L\in\pa(n+m)}$ of $\R^{2^{n+m}}$ by $e_L:=e''_{L_m}e'_{L_m^*}$. In this way, we can write $x=\sum_{L\in\pa(n+m)}e_Lr_L$, where $r_L:=r_{L_m,L_m^*}$.

Choose a $\dsim$-product $\mr{b}=\EuScript{B}(\sigma)$ on $\R^{2^n}$ and a $\dsim$-product $\mr{c}=\EuScript{B}(\tau)$ on $\R^{2^m}$. Define the function $\sigma\otimes\tau:\pa(n+m)\times\pa(n+m)\to\R$ and the $\dsim$-product $\mr{b}\otimes\mr{c}$ on $\R^{2^{n+m}}$ as follows:
\begin{align}
(\sigma\otimes\tau)(L,M)&:=\sigma(L_m^*,M_m^*)\tau(L_m,M_m),\label{eq:LM}\\
\mr{b}\otimes\mr{c}&:=\EuScript{B}(\sigma\otimes\tau).\label{eq:bc}
\end{align}
We call $(\R^{2^{n+m}},\mr{b}\otimes\mr{c})$ tensor product of the $\dsim$-algebras $(\R^{2^n},\mr{b})$ and $(\R^{2^m},\mr{c})$. Note that, given $L,M\in\pa(n+m)$, it holds:
\begin{align*}
(e'_{L_m^*}\otimes e''_{L_m})\cdot_{\sigma\otimes\tau}(e'_{M_m^*}\otimes e''_{M_m})&=e_L\cdot_{\sigma\otimes\tau}e_M=e_{L\dsim M}\sigma(L_m^*,M_m^*)\tau(L_m,M_m)=\\
&=(e'_{(L\dsim M)^*_m}\otimes e''_{(L\dsim M)_m})\sigma(L_m^*,M_m^*)\tau(L_m,M_m)=\\
&=(e'_{L_m^*\dsim M_m^*}\otimes e''_{L_m\dsim M_m})\sigma(L_m^*,M_m^*)\tau(L_m,M_m)=\\
&=(e'_{L_m^*\dsim M_m^*}\sigma(L_m^*,M_m^*))\otimes(e''_{L_m\dsim M_m}\tau(L_m,M_m))=\\
&=(e'_{L^*_m}\cdot_{\sigma}e'_{M^*_m})\otimes(e''_{L_m}\cdot_{\tau}e''_{M_m}).\, \text{ \bs}
\end{align*}
\end{examples}

Note that the real algebra $\C$ of complex numbers coincides with $\R^2$ equipped with the $\dsim$-product $\eta:\pa(1)\times\pa(1)\to\R$ such that $\eta(\{1\},\{1\}):=-1$.

\begin{definition}\label{def:tp}
Given any $n\in\N^*$, we denote $\sigma_\otimes^n:\pa(n)\times\pa(n)\to\R$ the $n$-times iterated tensor product of $\eta:\pa(1)\times\pa(1)\to\R$ with itself, i.e. $\sigma_\otimes^1:=\eta$ and $\sigma_\otimes^n:=\sigma_\otimes^{n-1}\otimes\eta$ if $n\geq2$. We say that $\mr{b}_\otimes^n:=\EuScript{B}(\sigma_\otimes^n)$ is the \emph{tensor product} on $\C^{\otimes n}=\R^{2^n}$, and $\C^{\otimes n}$ equipped with $\mr{b}_\otimes^n$ is the \emph{$n^{\mr{th}}$-tensor power} of $\C$. \bs
\end{definition}

\begin{lemma}\label{lem:tensor}
For all $n\in\N^*$ and for all $K,H\in\pa(n)$, it holds $\sigma_\otimes^n(K,H)=(-1)^{|K\cap H|}$. In particular, the $n^{\mr{th}}$-tensor power $\C^{\otimes n}$ of $\C$ is commutative and associative.
\end{lemma}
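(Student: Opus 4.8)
The plan is to establish the closed formula $\sigma_\otimes^n(K,H)=(-1)^{|K\cap H|}$ by induction on $n$, and then to read off commutativity and associativity as purely formal consequences of this formula together with the commutativity and associativity of the symmetric difference $\dsim$ on $\pa(n)$. For the base case $n=1$, the formula is immediate from $\sigma_\otimes^1=\eta$: the only non-unit value is $\eta(\{1\},\{1\})=-1=(-1)^{|\{1\}\cap\{1\}|}$, while the unity relations \eqref{eq:empty} give $\eta(K,\emptyset)=\eta(\emptyset,K)=1=(-1)^0$.

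For the inductive step I would unwind the recursive definition $\sigma_\otimes^n=\sigma_\otimes^{n-1}\otimes\eta$ through \eqref{eq:LM}, specialised to the tensor factor indexed by $\pa(n-1)$ paired with the single factor indexed by $\pa(1)$ (so $m=1$), which reads
\[
\sigma_\otimes^n(L,M)=\sigma_\otimes^{n-1}(L_1^*,M_1^*)\,\eta(L_1,M_1),
\]
where $L_1=L\cap\{1\}$ and $L_1^*=\{l\in\N^*:l+1\in L\}$. Applying the inductive hypothesis to the first factor and the base case to the second yields $\sigma_\otimes^n(L,M)=(-1)^{|L_1^*\cap M_1^*|+|L_1\cap M_1|}$. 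The crux is then the bookkeeping identity $|L_1^*\cap M_1^*|+|L_1\cap M_1|=|L\cap M|$: indeed $L_1\cap M_1=(L\cap M)\cap\{1\}$, while the shift $l\mapsto l+1$ is a bijection from $L_1^*\cap M_1^*$ onto $(L\cap M)\cap\{2,\ldots,n\}$, and since $\{1\}$ and $\{2,\ldots,n\}$ partition $\{1,\ldots,n\}$ the two cardinalities add up to $|L\cap M|$. This completes the induction. I expect this translation between the product indexing of \eqref{eq:LM} and ordinary intersection cardinalities to be the only genuinely delicate point of the argument.

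For the final assertion, commutativity is immediate: on basis vectors \eqref{eq:K,H} gives $e_K\cdot e_H=e_{K\dsim H}(-1)^{|K\cap H|}$, and both $K\dsim H$ and $|K\cap H|$ are symmetric in $K,H$, so this equals $e_H\cdot e_K$; bilinearity then extends commutativity to all of $\R^{2^n}$. For associativity, comparing $(e_K\cdot e_H)\cdot e_L$ with $e_K\cdot(e_H\cdot e_L)$ and using the associativity of $\dsim$ reduces the claim to the cocycle identity
\[
\sigma_\otimes^n(K,H)\,\sigma_\otimes^n(K\dsim H,L)=\sigma_\otimes^n(K,H\dsim L)\,\sigma_\otimes^n(H,L).
\]
Via the formula this becomes an identity of parities, which I would verify using $(K\dsim H)\cap L=(K\cap L)\dsim(H\cap L)$ together with $|A\dsim B|=|A|+|B|-2|A\cap B|$: both exponents expand to $|K\cap H|+|K\cap L|+|H\cap L|-2|K\cap H\cap L|$ and hence coincide modulo $2$. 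Bilinearity finally upgrades associativity on basis vectors to the whole algebra, giving the stated result.
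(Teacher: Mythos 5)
Your proof is correct and follows essentially the same route as the paper's: induction on $n$ via the specialisation of \eqref{eq:LM} to $m=1$, with exactly the same cardinality bookkeeping $|K_1^*\cap H_1^*|+|K_1\cap H_1|=|K\cap H|$ coming from the shift bijection and the splitting of $\{1,\ldots,n\}$ into $\{1\}$ and $\{2,\ldots,n\}$. The only difference is that you explicitly verify the final assertion — commutativity from the symmetry of $K\dsim H$ and $|K\cap H|$, and associativity via the cocycle identity $\sigma_\otimes^n(K,H)\,\sigma_\otimes^n(K\dsim H,L)=\sigma_\otimes^n(K,H\dsim L)\,\sigma_\otimes^n(H,L)$, whose two exponents indeed both equal $|K\cap H|+|K\cap L|+|H\cap L|-2|K\cap H\cap L|$ — whereas the paper treats this as an immediate consequence of the closed formula; your added verification is correct and harmless.
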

\begin{proof}
Let us prove this assertion by induction on $n\in\N^*$. The case $n=1$ is evident, because $\sigma_\otimes^1=\eta$ and $\eta$ has the required property. Let $n\geq2$. By induction, there exists a real vector basis $\{e_{H'}\}_{H'\in\pa(n-1)}$ of $\R^{2^{n-1}}$ such that $\sigma_\otimes^{n-1}(K',H')=(-1)^{|K'\cap H'|}$ for all $K',H'\in\pa(n-1)$. By \eqref{eq:LM}, we have that $\sigma_\otimes^n(K,H)=\sigma_\otimes^{n-1}(K_1^*,H_1^*)\eta(K_1,L_1)=(-1)^{|K_1^*\cap H_1^*|+|K_1\cap L_1|}$. 
Since $K_1^*\cap H_1^*=(K\cap H)^*_1$ and $K_1\cap H_1=(K\cap H)_1$, we easily deduce that $|K_1^*\cap H_1^*|+|K_1\cap L_1|=|K\cap H|$, as desired.
\end{proof}

\begin{assumption}
Throughout the remaining part of this section, we equip $\R^{2^n}$ with a $\dsim$-product $\mr{b}=\EuScript{B}(\sigma)$, and the tensor product $A\otimes\R^{2^n}$ with the following product extending $\mr{b}$:
\begin{equation}\label{eq:extended-product}
\textstyle
\big(\sum_{H\in\pa(n)}e_Ha_H\big)\cdot_\sigma\big(\sum_{L\in\pa(n)}e_Lb_L\big):=\sum_{H,L\in\pa(n)}(e_H\cdot_\sigma e_L)(a_Hb_L),
\end{equation}
where $a_Hb_L$ is the product of $a_H$ and $b_L$ in $A$. For simplicity, for each $\xi,\eta\in A\otimes\R^{2^n}$, we also write $\xi\eta$ in place of $\xi\cdot_\sigma\eta$. \bs
\end{assumption}

Note that if, for each $K\in\pa(n)$, $\mscr{D}(K)$ denotes the set
\[
\mscr{D}(K):=\big\{(K_1,K_2,K_3) \in \pa(n)^3 \,:\, K_1 \cap K_2=\emptyset, K_1 \cup K_2=K, K \cap K_3=\emptyset\big\},
\]
then $\big(\sum_{H\in\pa(n)}e_Ha_H\big)\big(\sum_{L\in\pa(n)}e_Lb_L\big)=\sum_{K\in\pa(n)}e_Kc_K$, where
\begin{equation}\label{eq:K}
\textstyle
c_K=\sum_{(K_1,K_2,K_3) \in \mscr{D}(K)}a_{K_1 \cup K_3}b_{K_2 \cup K_3}\sigma(K_1 \cup K_3,K_2 \cup K_3).
\end{equation}
Indeed, we have:
\begin{align*}
&\textstyle\sum_{H,L \in \pa(n)}(e_He_L)(a_Hb_L)=\sum_{H,L \in \pa(n)}e_{H \dsim L}a_Hb_L\sigma(H,L)=\\
&\textstyle=\sum_{K \in \pa(n)}e_K\sum_{(K_1,K_2,K_3) \in \mscr{D}(K)}a_{K_1 \cup K_3}b_{K_2 \cup K_3}\sigma(K_1 \cup K_3,K_2 \cup K_3).
\end{align*}

In general the pointwise product of two slice functions is not a slice function. For instance, if $n=1$ and $f,g:\hh\to\hh$ are the slice functions defined by $f\equiv i$ and $g(x)=x$, then $(fg)(x)=ix$ is not slice. Otherwise, by Proposition \ref{prop:polynomials} and Corollary \ref{cor:ip},   it would follow that $(fg)(x)=xi$ for all $x\in\hh$, which is impossible being $(fg)(j)=ji\neq ij$. On the contrary, the pointwise product of two stem functions is still a stem function.

\begin{lemma} \label{lem:stem-product}
Let $F,G:D\to A \otimes\R^{2^n}$ be stem functions and let $F\cdot_\sigma G:D \to A \otimes\R^{2^n}$ be the pointwise product of $F$ and $G$ w.r.t.\ $\mr{b}=\EuScript{B}(\sigma)$, that is $(F\cdot_\sigma G)(z):=F(z)\cdot_\sigma G(z)$ for all $z\in D$. Then $F\cdot_\sigma G$ is still a stem function.
\end{lemma}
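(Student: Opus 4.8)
The plan is to compute the components of $F\cdot_\sigma G$ explicitly via formula~\eqref{eq:K} and then verify directly that each one satisfies the parity condition~\eqref{eq:stem2}. Writing $F=\sum_{H\in\pa(n)}e_HF_H$ and $G=\sum_{L\in\pa(n)}e_LG_L$, formula~\eqref{eq:K} gives $(F\cdot_\sigma G)(z)=\sum_{K\in\pa(n)}e_KC_K(z)$, where
\[
C_K(z)=\sum_{(K_1,K_2,K_3)\in\mscr{D}(K)}F_{K_1\cup K_3}(z)\,G_{K_2\cup K_3}(z)\,\sigma(K_1\cup K_3,K_2\cup K_3).
\]
It then suffices to show that $C_K(\overline{z}^M)=(-1)^{|K\cap M|}C_K(z)$ for all $K,M\in\pa(n)$ and all $z\in D$.

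First I would fix $M\in\pa(n)$ and a triple $(K_1,K_2,K_3)\in\mscr{D}(K)$, and apply the stem property~\eqref{eq:stem2} of $F$ and $G$ to the two factors $F_{K_1\cup K_3}$ and $G_{K_2\cup K_3}$. This replaces $F_{K_1\cup K_3}(\overline{z}^M)\,G_{K_2\cup K_3}(\overline{z}^M)$ by $(-1)^{|(K_1\cup K_3)\cap M|+|(K_2\cup K_3)\cap M|}$ times $F_{K_1\cup K_3}(z)\,G_{K_2\cup K_3}(z)$, leaving the scalar coefficient $\sigma(K_1\cup K_3,K_2\cup K_3)$ untouched. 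The heart of the argument is then a parity computation on this exponent.

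The key observation is that in any triple of $\mscr{D}(K)$ the sets $K_1,K_2,K_3$ are pairwise disjoint: indeed $K_1\cap K_2=\emptyset$ by definition, and since $K=K_1\cup K_2$ while $K\cap K_3=\emptyset$, the set $K_3$ is disjoint from both $K_1$ and $K_2$. This disjointness lets me split the cardinalities as $|(K_1\cup K_3)\cap M|=|K_1\cap M|+|K_3\cap M|$ and $|(K_2\cup K_3)\cap M|=|K_2\cap M|+|K_3\cap M|$. Summing, the two copies of $|K_3\cap M|$ cancel modulo $2$, leaving $|K_1\cap M|+|K_2\cap M|$; and because $K_1,K_2$ partition $K$, this equals $|K\cap M|$. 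Hence the sign factor equals $(-1)^{|K\cap M|}$ \emph{uniformly} over every triple of $\mscr{D}(K)$, so it factors out of the sum defining $C_K(\overline{z}^M)$, yielding $C_K(\overline{z}^M)=(-1)^{|K\cap M|}C_K(z)$. By~\eqref{eq:stem2} this shows $F\cdot_\sigma G$ is a stem function.

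I expect no serious obstacle: the statement reduces to the combinatorial fact that the index set $K_3$ shared by the two factors contributes an even power of $-1$ and therefore drops out, while the complementary indices $K_1,K_2$ reassemble $K$. The only point requiring care is to establish the pairwise disjointness of $K_1,K_2,K_3$ from the definition of $\mscr{D}(K)$ before splitting the cardinalities, since that disjointness is precisely what makes the parities additive; everything else is routine bookkeeping.
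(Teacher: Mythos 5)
Your proof is correct and follows essentially the same route as the paper: both expand the components of $F\cdot_\sigma G$ via \eqref{eq:K} and reduce the stem property to the parity identity $|(K_1\cup K_3)\cap M|+|(K_2\cup K_3)\cap M|\equiv|K\cap M|\pmod 2$. The only cosmetic differences are that you verify the condition \eqref{eq:stem2} for an arbitrary $M\in\pa(n)$ where the paper checks single conjugations $\overline{z}^h$ (equivalent, as the paper notes), and you spell out the pairwise-disjointness argument behind the parity claim that the paper states without proof.
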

\begin{proof}
Write $FG$ in place of $F\cdot_\sigma G$, for short. By \eqref{eq:K}, if $F=\sum_{H \in \pa(n)}e_HF_H$, $G=\sum_{L \in \pa(n)}e_LG_L$ and $FG=\sum_{K\in\pa(n)}e_K(FG)_K$, then 
\[
\textstyle
(FG)_K=\sum_{(K_1,K_2,K_3) \in \mscr{D}(K)}F_{K_1 \cup K_3}G_{K_2 \cup K_3}\sigma(K_1 \cup K_3,K_2 \cup K_3).
\]

Choose $K \in \pa(n)$, $h \in \{1,\ldots,n\}$ and $z \in D$. Note that, for each $(K_1,K_2,K_3) \in \mscr{D}(K)$, the integers $|(K_1 \cup K_3) \cap \{h\}|+|(K_2 \cup K_3) \cap \{h\}|$ and $|K \cap \{h\}|$ have the same parity. Consequently, we have
\begin{align*}
(FG)_K(\overline{z}^h)&=\textstyle\sum_{(K_1,K_2,K_3) \in \mscr{D}(K)}F_{K_1 \cup K_3}(\overline{z}^h) \, G_{K_2 \cup K_3}(\overline{z}^h) \, \sigma(K_1 \cup K_3,K_2 \cup K_3)=\\
&=\textstyle (-1)^{|K \cap \{h\}|}\sum_{(K_1,K_2,K_3) \in \mscr{D}(K)}F_{K_1 \cup K_3}(z) \, G_{K_2 \cup K_3}(z) \, \sigma(K_1 \cup K_3,K_2 \cup K_3)=\\
&=\textstyle(-1)^{|K \cap \{h\}|}(FG)_K(z),
\end{align*}
so $FG$ is a stem function, as desired.
\end{proof}

Thanks to the latter lemma, we can define a product on the class of slice functions.

\begin{definition} \label{def:slice-product}
Let $f,g:\OO_D \to A$ be slice functions with $f=\I(F)$ and $g=\I(G)$. We define the \emph{slice product $f\cdot_\sigma g:\OO_D\to A$} of $f$ and $g$ by $f\cdot_\sigma g:=\I(F\cdot_\sigma G)$. Moreover, we say that the slice product $f\cdot g=f\cdot_\sigma g$ is \emph{induced by $\mr{b}$}, or \emph{by $\sigma$}. If there is no possibility of confusion, we simply write $FG$ and $f\cdot g$ in place of $F\cdot_\sigma G$ and $f\cdot_\sigma g$, respectively. \bs
\end{definition}

We specialize the preceding definition as follows.

\begin{definition}\label{def:stp}
We call \emph{slice tensor product} on $\mc{S}(\OO_D,A)$ the product on $\mc{S}(\OO_D,A)$ induced by the tensor product $\mr{b}_\otimes^n=\EuScript{B}(\sigma_\otimes^n)$. 
Given $f,g\in\mc{S}(\OO_D,A)$, we say that $f\cdot_{\sigma_\otimes^n} g$ is the \emph{slice tensor product} of $f$ and $g$. \bs
\end{definition}

\begin{assumption}
In what follows, we use the symbol `$\,\tenso$' to denote `$\,\cdot_{\sigma_\otimes^n}\,$'.
\end{assumption}

Corollary \ref{cor:I} imply at once the following fact. 

\begin{corollary}
The pairs $(\mr{Stem}(D,A\otimes\R^{2^n}),\cdot_\sigma)$ and $(\mc{S}(\OO_D,A),\cdot_\sigma)$ are real algebras, and $\I$ is a real algebra isomorphism between them.
\end{corollary}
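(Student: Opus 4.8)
The plan is to deduce the statement almost immediately from three ingredients already in place: the bilinearity of the extended product \eqref{eq:extended-product} on $A\otimes\R^{2^n}$, the closure property established in Lemma \ref{lem:stem-product}, and the linear-isomorphism property of $\I$ from Corollary \ref{cor:I}. There is no genuine obstacle here; all the substantive work was carried out in those two earlier results, and the present statement is a formal consequence of them together with the defining identity of the slice product.

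First I would observe that the product $\cdot_\sigma$ defined by \eqref{eq:extended-product} is $\R$-bilinear on $A\otimes\R^{2^n}$, since it is obtained by extending the $\dsim$-product $\mr{b}=\EuScript{B}(\sigma)$ bilinearly over the coefficients in $A$. Hence $(A\otimes\R^{2^n},\cdot_\sigma)$ is a real algebra, and consequently the space of all maps $D\to A\otimes\R^{2^n}$, equipped with the pointwise operations, is a real algebra as well. By Remark \ref{rem:vector}, $\mr{Stem}(D,A\otimes\R^{2^n})$ is an $\R$-vector subspace of this function space, and by Lemma \ref{lem:stem-product} it is closed under the pointwise product $\cdot_\sigma$. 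Therefore $(\mr{Stem}(D,A\otimes\R^{2^n}),\cdot_\sigma)$ is a subalgebra, in particular a real algebra.

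Next I would check that $\I$ is multiplicative, which is immediate from Definition \ref{def:slice-product}: for all stem functions $F,G$ one has $\I(F)\cdot_\sigma\I(G)=\I(F\cdot_\sigma G)$. The only point requiring care is that the slice product is well-defined, and this rests on the fact that each slice function is induced by a \emph{unique} stem function, established in Proposition \ref{prop:representation}; this uniqueness is precisely what allows one to read off $F$ and $G$ unambiguously from $f=\I(F)$ and $g=\I(G)$. Since Corollary \ref{cor:I} already gives that $\I$ is a bijective $\R$-linear map, combining linearity with multiplicativity shows that $\I$ is a real algebra isomorphism.

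Finally, transporting the algebra structure of $\mr{Stem}(D,A\otimes\R^{2^n})$ through the isomorphism $\I$ equips $\mc{S}(\OO_D,A)$ with a real algebra structure whose product is exactly the slice product $\cdot_\sigma$ of Definition \ref{def:slice-product}. Thus $(\mc{S}(\OO_D,A),\cdot_\sigma)$ is a real algebra and $\I$ is a real algebra isomorphism between the two, as claimed.
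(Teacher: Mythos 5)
Your proof is correct and follows exactly the route the paper intends: the paper gives no written argument beyond ``Corollary \ref{cor:I} implies at once,'' and your proposal simply makes explicit the intended ingredients --- closure of stem functions under $\cdot_\sigma$ (Lemma \ref{lem:stem-product}), multiplicativity of $\I$ by Definition \ref{def:slice-product}, well-definedness via the uniqueness of the inducing stem function (Proposition \ref{prop:representation}), and bijective linearity of $\I$ (Corollary \ref{cor:I}). Nothing is missing; your write-up is a faithful, slightly more detailed version of the paper's one-line justification.
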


Let us introduce the concepts of slice polynomial functions associated with $\mr{b}=\EuScript{B}(\sigma)$, and of hypercomplex $\dsim$-product.

\begin{definition}\label{def:slice-polynomials}
Let $\mr{b}=\EuScript{B}(\sigma)$ be a $\triangle$-product on $\R^{2^n}$. Given any $k\in\{1,\ldots,n\}$ and $m\in\N$, we define the slice function $x_k^{{\sss\bullet} m}:(Q_A)^n\to A$ as the function constantly equal to $1$ if $m=0$, as the $k^{\text{th}}$-coordinate function $x_k:(Q_A)^n\to A$ if $m=1$ and as the $m$-times iterated slice product of $x_k:(Q_A)^n\to A$ with itself w.r.t.\ $\mr{b}$ if $m\geq2$, i.e. $x_k^{{\sss\bullet} 0}:\equiv1$ and $x_k^{{\sss\bullet} m}:=x_k^{{\sss\bullet} m-1}\cdot_\sigma x_k$ if $m\geq1$. We say that a function $P:(Q_A)^n\to A$ is \emph{slice monomial w.r.t.\ $\mr{b}$}, or \emph{w.r.t.\ $\sigma$}, if there exist $\ell=(\ell_1,\ldots,\ell_n)\in\N$ and $a\in A$ such that
\[
P=x_1^{{\sss\bullet}\ell_1}\cdot_\sigma (x_2^{{\sss\bullet}\ell_2}\cdots (x_{m-1}^{{\sss\bullet}\ell_{m-1}}\cdot_{\sigma} (x_m^{{\sss\bullet}\ell_m} \cdot_{\sigma} a)) \ldots),
\]
where $a\in A$ is identified with the slice function from $(Q_A)^n$ to $A$ constantly equal to $a$. If $P$ has this form and there is no possibility of confusion, then we denote $P$ as $x^{{\sss\bullet}\ell}\cdot a$. We call $P:(Q_A)^n\to A$ \emph{slice polynomial function w.r.t.\ $\mr{b}$}, or \emph{w.r.t.\ $\sigma$} if it is the finite sum of slice monomial functions w.r.t.\ $\mr{b}$, or w.r.t.\ $\mr{\sigma}$.

The restriction of a slice monomial (respectively polynomial) function w.r.t.\ $\mr{b}$, or w.r.t.~$\mr{\sigma}$, on $\OO_D$ is said to be \emph{slice monomial} (respectively \emph{polynomial}) \emph{on $\OO_D$ w.r.t.\ $\mr{b}$}, or \emph{w.r.t.\ $\mr{\sigma}$.} \bs
\end{definition} 

\begin{definition}
We say that the $\dsim$-product $\mr{b}=\EuScript{B}(\sigma)$ on $\R^{2^n}$ is 
\emph{hypercomplex}
 if it satisfies the following two conditions:
\begin{equation}\label{eq:units}
e_k^2=-1
\end{equation}
for all $k\in\{1,\ldots,n\}$, and
\begin{equation}\label{eq:eK}
e_K=e_{k_1}(e_{k_2}\cdots (e_{k_{s-1}}e_{k_s})\ldots)
\end{equation}
for all $K\in\pa(n)\setminus\{\emptyset\}$ with $K=\{k_1,\ldots,k_s\}$ and $k_1<\ldots<k_s$, which are equivalent to $\sigma(\{k\},\{k\})=-1$ and $\sigma(\{k_1\},\{k_2,\ldots,k_s\})\cdots\sigma(\{k_{s-2}\},\{k_{s-1},k_s\})\sigma(\{k_{s-1}\},\{k_s\})=1$, respe\-ctively. We say that a real algebra is a \emph{hypercomplex $\dsim$-algebra} if it is isomorphic to some $\R^{2^n}$ equipped with a hypercomplex $\dsim$-product. \bs
\end{definition}

\begin{examples}\label{ex:symm-diff-prod2}
The real algebras $\C^{\otimes n}$, $\R_q=\mi{C}\ell(0,q)$ and all their finite tensor products are hypercomplex $\dsim$-algebras. The real algebra $\C=\C^{\otimes 1}=\R_1$ of complex numbers is the unique hypercomplex $\dsim$-algebra of dimension $2$. The Clifford algebra $\mi{C}\ell(1,0)$ is an example of $\dsim$-algebra, which is not hypercomplex; indeed it has no imaginary units. A natural question is to understand whether the $\dsim$-algebra $\mi{C}\ell(p,q)$ is hypercomplex when $p\geq1$. This problem seems to be not so easy to settle. The reader bears in mind the several relations existing between Clifford algebras; for instance, $\mi{C}\ell(4,q-4)$ and $\R_q$ are isomorphic if $q\geq4$, see \cite[\S16.4]{Lounesto}. \bs 
\end{examples}

\begin{lemma}\label{lem:tensor-uniqueness-1}
The unique hypercomplex, commutative and associative $\dsim$-product on $\R^{2^n}$ is the tensor product $\mr{b}_\otimes^n$. 
\end{lemma}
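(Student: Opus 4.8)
The plan is to translate the three algebraic hypotheses on $\mr{b}=\EuScript{B}(\sigma)$ into conditions on the structure constants $\sigma(K,H)$, and then to show that these conditions force $\sigma(K,H)=(-1)^{|K\cap H|}$; by Lemma~\ref{lem:tensor} this value is exactly $\sigma_\otimes^n(K,H)$, so that $\mr{b}=\mr{b}_\otimes^n$. First I would record the elementary reformulations on basis vectors. Since $K\dsim H=H\dsim K$, commutativity of $\mr{b}$ is equivalent to the symmetry $\sigma(K,H)=\sigma(H,K)$, and associativity of $\mr{b}$ is equivalent to the cocycle identity $\sigma(K\dsim H,L)\,\sigma(K,H)=\sigma(K,H\dsim L)\,\sigma(H,L)$ for all $K,H,L\in\pa(n)$. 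I would also note that $\mr{b}_\otimes^n$ itself meets all three requirements: it is commutative and associative by Lemma~\ref{lem:tensor}, while $\sigma_\otimes^n(\{k\},\{k\})=(-1)^{|\{k\}\cap\{k\}|}=-1$ and, for $k_1<\dots<k_s$, each factor $\sigma_\otimes^n(\{k_i\},\{k_{i+1},\dots,k_s\})=(-1)^0=1$, so that \eqref{eq:units} and \eqref{eq:eK} hold.

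For the uniqueness part, let $\mr{b}=\EuScript{B}(\sigma)$ be any hypercomplex, commutative and associative $\dsim$-product. The key observation is that the hypercomplex condition \eqref{eq:eK} expresses every basis vector $e_K$ (with $K=\{k_1,\dots,k_s\}$, $k_1<\dots<k_s$) as the ordered product $e_{k_1}(e_{k_2}(\cdots(e_{k_{s-1}}e_{k_s})\ldots))$ of the generators $e_1,\dots,e_n$, with coefficient exactly $1$. Since $\mr{b}$ is associative this product is independent of the bracketing, and since $\mr{b}$ is commutative it is independent of the order of the factors. I would then compute $e_Ke_H$ directly: writing $e_K=\prod_{k\in K}e_k$ and $e_H=\prod_{h\in H}e_h$, the product $e_Ke_H$ is a product over all the generators indexed by $K$ and by $H$, in which each $m\in K\cap H$ occurs twice and each $m\in K\dsim H$ occurs once. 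Rearranging by commutativity and associativity, and replacing each pair $e_m e_m$ by $-1$ via \eqref{eq:units}, yields
\[
e_Ke_H=(-1)^{|K\cap H|}\prod_{m\in K\dsim H}e_m=(-1)^{|K\cap H|}\,e_{K\dsim H},
\]
where the last equality is again \eqref{eq:eK}. Hence $\sigma(K,H)=(-1)^{|K\cap H|}=\sigma_\otimes^n(K,H)$ for all $K,H\in\pa(n)$, which gives $\mr{b}=\mr{b}_\otimes^n$ as claimed.

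The one step that needs care — and which I regard as the main technical point rather than a real obstacle — is making the rearrangement rigorous, since one must justify collapsing a product of generators in which some indices repeat down to a signed basis vector. I would isolate this as an auxiliary bookkeeping statement, proved by induction on the total number of generators appearing: the inductive step uses commutativity to move a repeated generator next to its twin, associativity to group the adjacent pair $e_m e_m$, and \eqref{eq:units} to evaluate it as $-1$, thereby reducing the number of factors. Once this lemma is available the displayed computation of $e_Ke_H$ is immediate, and the abstract commutativity and associativity of $\mr{b}$ (equivalently, the symmetry and cocycle identity of $\sigma$) are exactly what legitimises the reordering.
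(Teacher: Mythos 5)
Your proof is correct and takes essentially the same route as the paper's: both use \eqref{eq:eK} together with commutativity and associativity to reduce $e_K\cdot_\sigma e_H$ to $e_{K\dsim H}$ multiplied by the squares $e_m^2$ for $m\in K\cap H$, then apply \eqref{eq:units} and Lemma \ref{lem:tensor} to conclude $\sigma(K,H)=(-1)^{|K\cap H|}$, i.e.\ $\mr{b}=\mr{b}_\otimes^n$. Your isolation of the reordering/collapsing step as an auxiliary induction is simply a more explicit write-up of what the paper leaves implicit.
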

\begin{proof}
By Lemma \ref{lem:tensor}, we know that $\mr{b}_\otimes^n$ is hypercomplex, commutative and associative. Suppose $\mr{b}=\EuScript{B}(\sigma)$ is a $\dsim$-product on $\R^{2^n}$ hypercomplex, commutative and associative. By \eqref{eq:eK}, the commutativity and the associativity, we have that $e_K\cdot_\sigma e_H=e_{K\dsim H}\cdot_\sigma e_{\ell_1}^2\cdot_\sigma \cdots\cdot_\sigma e_{\ell_q}^2$, where $\ell_1,\ldots,\ell_q$ are the elements of $K\cap H$ if $K\cap H\neq\emptyset$, and `$e_{\ell_1}^2\cdot_\sigma \cdots\cdot_\sigma e_{\ell_q}^2$' is omitted if $K\cap H=\emptyset$. Using \eqref{eq:units} and Lemma \ref{lem:tensor} again, we deduce that $e_K\cdot_\sigma e_H=e_{K\dsim H}(-1)^{|K\cap H|}$ and $\mr{b}=\mr{b}_\otimes^n$.
\end{proof}

The next result describes the `algebraic relevance' of hypercomplex $\dsim$-algebras in the context of slice functions. It asserts that, if the $\dsim$-product $\mr{b}=\EuScript{B}(\sigma)$ is hypercomplex, then the notions of polynomial function, pointwise defined in Definition \ref{def:polynomials}, and of slice polynomial function defined in Definition \ref{def:slice-polynomials} coincide.

\begin{lemma}\label{lem:polynomials}
If the $\dsim$-product $\mr{b}$ on $\R^{2^n}$ is hypercomplex (for instance, $\mr{b}=\mr{b}_\otimes^n$), then a function $f:\OO_D\to A$ is polynomial if and only if it is slice polynomial w.r.t.\ $\mr{b}$. More precisely, if $f(x)=\sum_{\ell\in L}x^\ell a_\ell$ for some finite subset $L$ of $\N^n$ and $a_\ell\in A$, then $f=\sum_{\ell\in L}x^{{\sss\bullet}\ell}\cdot a_\ell$ on $\OO_D$.
\end{lemma}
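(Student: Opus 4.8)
The plan is to reduce the entire statement to a single comparison of stem functions. By bilinearity of the slice product and the linearity of $\I$ (Corollary \ref{cor:I}), it suffices to treat one slice monomial $x^{{\sss\bullet}\ell}\cdot a$ with $\ell=(\ell_1,\ldots,\ell_n)\in\N^n$ and $a\in A$, and to prove the pointwise identity $x^{{\sss\bullet}\ell}\cdot a=x^\ell a$; summing over $\ell\in L$ then yields both the explicit formula and, since the two displayed expressions for $f$ literally coincide term by term, the equivalence ``polynomial iff slice polynomial''. Throughout I will identify each slice function with the unique stem function inducing it (Proposition \ref{prop:representation}) and use that the slice product is $\I$ applied to the pointwise $\cdot_\sigma$-product of stem functions (Definition \ref{def:slice-product} and Lemma \ref{lem:stem-product}).

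First I would record the one-variable content. The $k$-th coordinate function $x_k$ is induced by the stem function $Z_k:=e_\emptyset\alpha_k+e_k\beta_k$, which is the case of \eqref{eq:F-ell} where $\ell$ has a single $1$ in position $k$. Since $\mr{b}=\EuScript{B}(\sigma)$ is hypercomplex, \eqref{eq:units} gives $e_k\cdot_\sigma e_k=-e_\emptyset$, so the real span $\langle e_\emptyset,e_k\rangle$ is a copy of $\C$ inside $(\R^{2^n},\cdot_\sigma)$, in particular commutative and associative; hence the nesting in Definition \ref{def:slice-polynomials} is irrelevant here and the iterated slice power $x_k^{{\sss\bullet}\ell_k}$ is induced by $Z_k^{\cdot_\sigma\ell_k}=(\alpha_k+e_k\beta_k)^{\ell_k}=p_{\ell_k}(\alpha_k,\beta_k)e_\emptyset+q_{\ell_k}(\alpha_k,\beta_k)e_k$, by the very definition of $p_{\ell_k},q_{\ell_k}$ (Definition \ref{def:pkqk}). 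Thus $x_k^{{\sss\bullet}\ell_k}=x_k^{\ell_k}$ as slice functions.

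Next I would compute the stem function of the right-nested slice product $x_1^{{\sss\bullet}\ell_1}\cdot_\sigma(\cdots(x_n^{{\sss\bullet}\ell_n}\cdot_\sigma a)\ldots)$. Writing $G_k:=p_{\ell_k}(\alpha_k,\beta_k)e_\emptyset+q_{\ell_k}(\alpha_k,\beta_k)e_k$ and using \eqref{eq:extended-product}, I would expand $G_1\cdot_\sigma(\cdots(G_n\cdot_\sigma a)\ldots)$ by selecting, for each $k$, either the scalar summand $p_{\ell_k}e_\emptyset$ or the summand $q_{\ell_k}e_k$. Choosing the latter exactly on a subset $K\subseteq\{1,\ldots,n\}$ contributes the real factor $\big(\prod_{h\notin K}p_{\ell_h}\big)\big(\prod_{h\in K}q_{\ell_h}\big)$, the element $a$ at the far right, and the right-nested $\cdot_\sigma$-product of the $e_k$ with $k\in K$ in increasing order (the factors with $k\notin K$ are $e_\emptyset$ and drop out). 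The hypercomplex condition \eqref{eq:eK} says precisely that this last product equals $e_K$ with coefficient $1$. Collecting terms, the $e_K$-component of the stem product is $\big(\prod_{h\notin K}p_{\ell_h}(\alpha_h,\beta_h)\big)\big(\prod_{h\in K}q_{\ell_h}(\alpha_h,\beta_h)\big)a$, which is exactly $F^{(\ell)}_K$ from \eqref{eq:F-ell}. Hence the stem product is $F^{(\ell)}$, and applying $\I$ together with Proposition \ref{prop:polynomials} gives $x^{{\sss\bullet}\ell}\cdot a=\I(F^{(\ell)})=x^\ell a$.

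The main obstacle is the bookkeeping in this last expansion: since $\cdot_\sigma$ need not be associative, one must verify that the real scalars $p_{\ell_h},q_{\ell_h}$ (which lie in $\R\subseteq A$ and are therefore central) can be pulled out of the product without sign changes, and that the surviving factors $e_k$, $k\in K$, really appear right-nested in increasing order, so that \eqref{eq:eK} applies verbatim and produces coefficient $+1$ rather than some other scalar. I expect to handle this by a short induction on $n$, peeling off $G_1$ from the left, with base case the one-variable computation above; the inductive step uses only that $e_\emptyset$ is the unit, that reals are central, and the two hypercomplex identities \eqref{eq:units} and \eqref{eq:eK}. Once this is established, linearity closes the argument.
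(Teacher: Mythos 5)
Your proposal is correct and takes essentially the same route as the paper's proof: both reduce to a single monomial, use $e_h^2=-1$ to identify the stem function of $x_h^{{\sss\bullet}\ell_h}$ as $p_{\ell_h}+e_hq_{\ell_h}$, expand the right-nested stem product multilinearly (pulling the central real scalars out and invoking \eqref{eq:eK} to identify the increasing right-nested product of the $e_k$'s with $e_K$), recognize the result as $F^{(\ell)}$ from Proposition \ref{prop:polynomials}, and finish by linearity. The induction you anticipate for the bookkeeping is exactly the multilinear expansion carried out implicitly in the paper's displayed chain of equalities, so no genuinely new ingredient is needed.
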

\begin{proof}
Let $\ell=(\ell_1,\ldots,\ell_n)\in\N^n$ and let $F^{(\ell)}:D\to A\otimes\R^{2^n}$ be the stem function inducing $x^{{\sss\bullet}\ell}\cdot a_\ell$. Given $h\in\{1,\ldots,n\}$, denote
 $G^{(h)}:D\to A\otimes\R^{2^n}$ the stem function inducing $x_h^{\bullet \ell_h}:\OO_D\to A$. Let $z=(\alpha_1+i\beta_1,\ldots,\alpha_n+i\beta_n)\in D$, and let $p_{\ell_h}$ and $q_{\ell_h}$ be  the real polynomials defined in Definition~\ref{def:pkqk}. Since $\mr{b}$ satisfies \eqref{eq:units} and \eqref{eq:eK}, we have that $G^{(h)}(z)=p_{\ell_h}(\alpha_h,\beta_h)+e_hq_{\ell_h}(\alpha_h,\beta_h)$ and 
\begin{align*}
F^{(\ell)}(z)&\textstyle=[(G^{(h)}(z))_{h=1}^n,a_\ell]=[(p_{\ell_h}(\alpha_h,\beta_h)+e_hq_{\ell_h}(\alpha_h,\beta_h))_{h=1}^n,a_\ell]=\\
&\textstyle=\sum_{K\in\pa(n)}\big(\prod_{h\in\{1,\ldots,n\}\setminus K}p_{\ell_h}(\alpha_h,\beta_h)\big)[(e_hq_{\ell_h}(\alpha_h,\beta_h))_{h\in K},a_\ell]=\\
&\textstyle=\sum_{K\in\pa(n)}e_K\big(\big(\prod_{h\in\{1,\ldots,n\}\setminus K}p_{\ell_h}(\alpha_h,\beta_h)\big)\big(\prod_{h\in K}q_{\ell_h}(\alpha_h,\beta_h)\big)a_\ell\big).
\end{align*}
By Proposition \ref{prop:polynomials}, it follows that $x^{{\sss\bullet}\ell}\cdot a_\ell=\I(F^{(\ell)})=x^\ell a_\ell$. Consequently, $f=\sum_{\ell\in L}x^{{\sss\bullet}\ell}\cdot a_\ell$.
\end{proof}


\subsection{$\C_J$-slice preserving, slice preserving and circular functions}\label{sec:C_Jslicepreserving}

\begin{definition}\label{def:OmegaDI}
Given any function $f:\OO_D \to A$ and 
 $J \in \cS_A$, we denote $\OO_D(J)$ the intersection $\OO_D \cap (\C_J)^n$, and $f_J:\OO_D(J)\to A$ the restriction of $f$ on $\OO_D(J)$. \bs
\end{definition}

\begin{definition}
Let $f:\OO_D\to A$ be a function. Given $J\in\cS_A$, we say that $f$ is a \emph{$\C_J$-slice preserving function} if $f$ is a slice function and $f(\OO_D(J))\subset\C_J$. We denote $\mc{S}_{\C_J}(\OO_D,A)$ the subset of $\mc{S}(\OO_D,A)$ of all $\C_J$-slice preserving functions from $\OO_D$ to $A$.

We say that $f$ is \emph{slice preserving} if it is a slice function and the stem function $F=\sum_{K\in\pa(n)}e_KF_K$ inducing $f$ has the following property: each component $F_K$ of $F$ is real va\-lued, that is $F_K(D)\subset\R$ for all $K\in\pa(n)$. We denote $\mc{S}_\R(\OO_D,A)$ the subset of $\mc{S}(\OO_D,A)$ of all slice preserving functions from $\OO_D$ to $A$. \bs
\end{definition}

\begin{lemma}\label{lem:slice-preserving}
Let $F:\OO_D\to A\otimes\R^{2^n}$ be a stem function with $F=\sum_{K\in\pa(n)}e_KF_K$ and let $f:=\I(F):\OO_D\to A$ be the corresponding slice function. The following assertions hold.
\begin{itemize}
 \item[$(\mr{i})$] Given $J	\in\cS_A$, $f$ belongs to $\mc{S}_{\C_J}(\OO_D,A)$ if and only if $F_K(D)\subset\C_J$ for all $K\in\pa(n)$.
 \item[$(\mr{ii})$] Suppose that there exist $I,J\in\cS_A$ such that $I\neq\pm J$. Then $f$ is slice preserving if and only if it is $\C_K$-slice preserving for $K\in\{I,J\}$, or equivalently $\mc{S}_\R(\OO_D,A)=\bigcap_{K\in\cS_A}\mc{S}_{\C_K}(\OO_D,A)$.
\end{itemize}
\end{lemma}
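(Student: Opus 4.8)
The plan is to reduce everything to two structural facts: that $\C_J$ is a (commutative, associative) subalgebra of $A$, and that $\C_I\cap\C_J=\R$ whenever $I\neq\pm J$; the bridge between the values of $f$ and the components $F_K$ of its inducing stem function will be the representation formula \eqref{eq:components-F}. I would prove $(\mr{i})$ first and deduce $(\mr{ii})$ from it.

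For the forward implication of $(\mr{i})$, assume $F_K(D)\subset\C_J$ for every $K\in\pa(n)$ and take $x\in\OO_D(J)=\OO_D\cap(\C_J)^n$. Writing $x=(\alpha_1+J_1\beta_1,\ldots,\alpha_n+J_n\beta_n)$, each coordinate lies in $\C_J$, so I may choose $J_h\in\{J,-J\}\subset\C_J$ (indeed $J_h=J$ uniformly, the value of $f$ being independent of the chosen representation, and recall $J^{-1}=J^c=-J\in\C_J$). Then in the defining expression $f(x)=\sum_{K\in\pa(n)}[J_K,F_K(z)]$ every factor of every ordered product $[J_K,F_K(z)]$ lies in $\C_J$; since $\C_J$ is closed under the product of $A$ regardless of bracketing (being a subalgebra), each term, and hence the sum, lies in $\C_J$. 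Thus $f(\OO_D(J))\subset\C_J$, i.e. $f\in\mc{S}_{\C_J}(\OO_D,A)$.

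For the converse of $(\mr{i})$, suppose $f(\OO_D(J))\subset\C_J$ and apply \eqref{eq:components-F} with the uniform choice $I=(J,\ldots,J)$. Then $y=(\alpha_1+J\beta_1,\ldots,\alpha_n+J\beta_n)\in\OO_D(J)$, and for each $H\in\pa(n)$ the point $y^{\,c,H}$ (obtained by conjugating the coordinates indexed by $H$, i.e. corresponding to $\overline{z}^{H}\in D$) again lies in $(\C_J)^n\cap\OO_D=\OO_D(J)$, so $f(y^{\,c,H})\in\C_J$ by hypothesis. As the entries of $I_K^{-1}$ all equal $J^{-1}=-J\in\C_J$, the ordered product $[I_K^{-1},f(y^{\,c,H})]$ stays in $\C_J$, and so does the real linear combination $F_K(z)=2^{-n}\sum_{H}(-1)^{|K\cap H|}[I_K^{-1},f(y^{\,c,H})]$; since $z\in D$ was arbitrary, $F_K(D)\subset\C_J$ for all $K$.

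Part $(\mr{ii})$ then follows formally. If $f$ is slice preserving, each component $F_K$ is $\R$-valued, hence takes values in $\C_L$ for every $L\in\cS_A$ (since $\R\subset\C_L$), so $(\mr{i})$ gives $f\in\mc{S}_{\C_L}(\OO_D,A)$ for each $L\in\cS_A$; in particular $f$ is $\C_I$- and $\C_J$-slice preserving. Conversely, if $f$ is both $\C_I$- and $\C_J$-slice preserving, then $(\mr{i})$ yields $F_K(D)\subset\C_I\cap\C_J$ for all $K$, and since $I\neq\pm J$ the Preliminaries give $\C_I\cap\C_J=\R$, whence $F_K(D)\subset\R$ and $f$ is slice preserving. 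The two displayed characterizations of $\mc{S}_\R(\OO_D,A)$ are exactly these two implications, the fixed pair $I,J$ realizing the intersection over all of $\cS_A$. The only point requiring care is that the nested ordered products stay inside $\C_J$; this is why I insist throughout on $J^{-1}=J^c=-J\in\C_J$ and on $\C_J$ being a genuine subalgebra, so that no associativity of $A$ is needed.
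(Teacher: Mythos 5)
Your proof is correct and follows essentially the same route as the paper's: the forward direction of $(\mr{i})$ from the fact that $\C_J$ is a subalgebra applied to the defining expression of $f$, the converse by applying \eqref{eq:components-F} with $I_1=\cdots=I_n=J$, and $(\mr{ii})$ deduced from $(\mr{i})$ together with $\R\subset\C_L$ and the Independence Lemma fact $\C_I\cap\C_J=\R$ for $I\neq\pm J$. The only difference is that you spell out details the paper leaves implicit (the uniform choice $J_h=J$, that $y^{\,c,H}\in\OO_D(J)$, and that $J^{-1}=-J\in\C_J$), all of which are correct.
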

\begin{proof}
Since $\C_J$ is a real subalgebra of $A$, if $F_K(D)\subset\C_J$ for all $K\in\pa(n)$, then Definition \ref{def:slice-function} implies at once that $f(\OO_D(J))\subset\C_J$. Suppose now $f(\OO_D(J))\subset\C_J$ and apply formula \eqref{eq:components-F} to $f$ with $I_1=\ldots=I_n=J$. We obtain immediately that $F_K(D)\subset\C_J$ for all $K\in\pa(n)$. This proves $(\mr{i})$. Let us show $(\mr{ii})$. Recall that $\R\subset\C_J$ for all $J\in\cS_A$. As a consequence, preceding point $(\mr{i})$ implies that $\mc{S}_\R(\OO_D,A)\subset\bigcap_{K\in\cS_A}\mc{S}_{\C_K}(\OO_D,A)$. Finally, if $f\in\mc{S}_{\C_I}(\OO_D,A)\cap\mc{S}_{\C_J}(\OO_D,A)$, then using again above point $(\mr{i})$ we deduce that $F_K(D)\subset\C_I\cap\C_J$. Thanks to the Independence Lemma \cite[p. 224]{Numbers}, we know that $\C_I\cap\C_J=\R$, and we are done.
\end{proof}

Since each plane $\C_J$ is a real subalgebra of $A$, it follows at once:

\begin{lemma}\label{lem:subalgebras}
Let $\mr{b}=\EuScript{B}(\sigma)$ be any $\dsim$-product on $\R^{2^n}$ and let $J\in\cS_A$. The sets $\mc{S}_\R(\OO_D,A)$ and $\mc{S}_{\C_J}(\OO_D,A)$ are real subalgebras of $(\mc{S}(\OO_D,A),\cdot_\sigma)$. 
\end{lemma}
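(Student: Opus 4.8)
The plan is to transfer the statement, via the isomorphism $\I$, to the level of stem functions, where membership in $\mc{S}_\R(\OO_D,A)$ and $\mc{S}_{\C_J}(\OO_D,A)$ is a pointwise condition on the components $F_K$, and then to observe that this condition is preserved by the three algebra operations. The whole point is already anticipated by the sentence preceding the lemma: everything reduces to the fact that $\R$ and $\C_J$ are real subalgebras of $A$.

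First I would record the reduction. By Corollary \ref{cor:I} the map $\I$ is a real linear isomorphism $\mr{Stem}(D,A\otimes\R^{2^n})\to\mc{S}(\OO_D,A)$, and by Definition \ref{def:slice-product} it is multiplicative, $\I(F)\cdot_\sigma\I(G)=\I(F\cdot_\sigma G)$; hence it is a real-algebra isomorphism for the products $\cdot_\sigma$. It therefore suffices to show that the $\I$-preimages of $\mc{S}_\R(\OO_D,A)$ and $\mc{S}_{\C_J}(\OO_D,A)$ are real subalgebras of the stem-function algebra. By the definition of $\mc{S}_\R$ and by Lemma \ref{lem:slice-preserving}$(\mr{i})$, these preimages are, respectively, the set of stem functions $F=\sum_{K\in\pa(n)}e_KF_K$ with $F_K(D)\subset E$ for all $K\in\pa(n)$, where $E=\R$ in the first case and $E=\C_J$ in the second.

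Next I would verify closure under the three operations. Closure under addition and real scalar multiplication is immediate from Remark \ref{rem:vector}, since both act componentwise and $E$ is a real vector subspace of $A$; moreover the unit stem function $e_\emptyset 1$ has its single nonzero component equal to $1\in\R\subset E$, so it lies in the subset. The decisive step is closure under the pointwise product $\cdot_\sigma$. Here I would invoke the explicit component formula \eqref{eq:K}: for stem functions $F,G$ with all components valued in $E$, and for each $K\in\pa(n)$ and $z\in D$,
\[
(F\cdot_\sigma G)_K(z)=\sum_{(K_1,K_2,K_3)\in\mscr{D}(K)}F_{K_1\cup K_3}(z)\,G_{K_2\cup K_3}(z)\,\sigma(K_1\cup K_3,K_2\cup K_3)
\]
is a finite $\R$-linear combination (the coefficients $\sigma(\cdot,\cdot)$ being real) of products of two elements of $E$.

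The main — and essentially only — substantive observation is that $E$ is closed under exactly these operations: since $\R$ and $\C_J=\langle 1,J\rangle$ are real subalgebras of $A$, each product $F_{K_1\cup K_3}(z)\,G_{K_2\cup K_3}(z)$ lies in $E$, and an $\R$-linear combination of elements of $E$ again lies in $E$ (note $\R\subset E$). Hence $(F\cdot_\sigma G)_K(D)\subset E$ for every $K$, so $F\cdot_\sigma G$ belongs to the subset, and transporting through $\I$ gives that $\mc{S}_\R(\OO_D,A)$ and $\mc{S}_{\C_J}(\OO_D,A)$ are real subalgebras of $(\mc{S}(\OO_D,A),\cdot_\sigma)$. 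I expect no genuine obstacle; the only subtlety to flag is that in the non-associative setting one uses that $\C_J$, being a copy of $\C$, is itself associative and commutative, so that the product of two of its elements is an unambiguous element of $\C_J$ — but this is precisely part of $\C_J$ being a subalgebra.
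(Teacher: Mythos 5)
Your proof is correct and follows essentially the same route as the paper, which states the lemma without a written proof beyond the remark that each plane $\C_J$ is a real subalgebra of $A$ (the paper treats the componentwise verification as immediate). Your transfer through the isomorphism $\I$, the characterization of membership via Lemma \ref{lem:slice-preserving}$(\mr{i})$, and the closure check using the component formula \eqref{eq:K} with real coefficients $\sigma(\cdot,\cdot)$ is precisely the elaboration the authors intend.
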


The next result concerns the relation between slice tensor and pointwise products. Given two functions $f,g:\OO_D\to A$, we indicate $fg:\OO_D\to A$ the pointwise product of $f$ and $g$, i.e. $(fg)(x):=f(x)g(x)$ for all $x\in\OO_D$, where $f(x)g(x)$ is the product of $f(x)$ and $g(x)$ in $A$. 

\begin{proposition}\label{prop:slice-pointwise-products}
Let $f,g\in\mc{S}_{\C_J}(\OO_D,A)$ for some $J\in\cS_A$, and let $a\in A$. Identify $a$ with the function from $\OO_D$ (or from $\OO_D(J)$) to $A$ constantly equal to $a$. The following holds:
\begin{itemize}
 \item[$(\mr{i})$] $(f\tenso(g\tenso a))(x)=f(x)g(x)a$ for all $x\in\OO_D(J)$. Equivalently, $(f\tenso(g\tenso a))_J=f_Jg_Ja$.
 \item[$(\mr{ii})$] $f\tenso(g\tenso a)=(f\tenso g)\tenso a\;$ on the whole $\OO_D$.
\end{itemize}
\end{proposition}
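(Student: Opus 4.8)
The plan is to reduce both assertions to the behaviour of the slice tensor product on the single plane $\OO_D(J)=\OO_D\cap(\C_J)^n$, where all imaginary units are forced to equal $J$ and the non-associativity of $A$ can be controlled by Artin's theorem. The central device I would introduce is the real-linear evaluation map $\Lambda_J\colon A\otimes\R^{2^n}\to A$, $\Lambda_J\big(\sum_{K\in\pa(n)}e_K c_K\big):=\sum_{K\in\pa(n)}J^{|K|}c_K$, where each term $J^{|K|}c_K$ is unambiguous since $A$ is power-associative and $J,c_K$ generate an associative subalgebra. Directly from Definition \ref{def:slice-function}, for $x=(\alpha_1+J\beta_1,\ldots,\alpha_n+J\beta_n)\in\OO_D(J)$ and $z=(\alpha_1+i\beta_1,\ldots,\alpha_n+i\beta_n)\in D$ one has $\I(F)(x)=\Lambda_J(F(z))$ for every stem function $F$, because there $[J_K,F_K(z)]=J^{|K|}F_K(z)$. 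I would also record that the constant $a$ is the slice function induced by $e_\emptyset a$, so that by \eqref{eq:extended-product} and $e_T\cdot_{\sigma_\otimes^n}e_\emptyset=e_T$ one gets $g\tenso a=\I\big(\sum_K e_K(G_K a)\big)$ whenever $g=\I(G)$.

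The key step will be a multiplicativity statement: if $f=\I(F)\in\mc{S}_{\C_J}(\OO_D,A)$ and $h=\I(H)$ is any slice function, then $(f\tenso h)(x)=f(x)h(x)$ for all $x\in\OO_D(J)$. By Lemma \ref{lem:slice-preserving}(i) every component $F_S(z)$ lies in $\C_J$, so it suffices to check that $\Lambda_J$ is multiplicative on products $\xi\cdot_{\sigma_\otimes^n}\zeta$ with $\xi=\sum_S e_S u_S$, $u_S\in\C_J$, and $\zeta=\sum_T e_T w_T$, $w_T\in A$. Expanding with Lemma \ref{lem:tensor} (so $\sigma_\otimes^n(S,T)=(-1)^{|S\cap T|}$) reduces this to the term-by-term identity $(-1)^{|S\cap T|}J^{|S\dsim T|}(u_Sw_T)=(J^{|S|}u_S)(J^{|T|}w_T)$. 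On the right, Artin's theorem applied to the associative subalgebra generated by $J$ and $w_T$ (which contains $u_S\in\C_J$) together with the commutativity of $\C_J$ gives $(J^{|S|}u_S)(J^{|T|}w_T)=J^{|S|+|T|}(u_Sw_T)$; on the left, $J^2=-1$ yields $J^{|S\dsim T|}=J^{|S|+|T|-2|S\cap T|}=(-1)^{|S\cap T|}J^{|S|+|T|}$, and the two factors $(-1)^{|S\cap T|}$ cancel. This gives $\Lambda_J(F(z)\cdot_{\sigma_\otimes^n}H(z))=\Lambda_J(F(z))\Lambda_J(H(z))$, i.e.\ $(f\tenso h)(x)=f(x)h(x)$.

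With this in hand, $(\mr{i})$ would follow by two applications. Taking $h=a$ gives $(g\tenso a)(x)=g(x)a$ on $\OO_D(J)$; then, since $f$ is $\C_J$-preserving and $g\tenso a$ is an arbitrary slice function, $(f\tenso(g\tenso a))(x)=f(x)\big((g\tenso a)(x)\big)=f(x)\big(g(x)a\big)=f(x)g(x)a$, the last equality again by Artin because $f(x),g(x)\in\C_J$ and $a\in A$. For $(\mr{ii})$ I would note that both $f\tenso(g\tenso a)$ and $(f\tenso g)\tenso a$ are slice functions, and that by Lemma \ref{lem:subalgebras} the factor $f\tenso g$ lies in $\mc{S}_{\C_J}(\OO_D,A)$; hence the key step yields $((f\tenso g)\tenso a)(x)=(f\tenso g)(x)\,a=f(x)g(x)a$ on $\OO_D(J)$. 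Both sides thus coincide on $\OO_D(J)=\OO_D\cap(\C_J)^n$, and the Identity Principle (Corollary \ref{cor:ip}, with $I_1=\cdots=I_n=J$) upgrades this to equality on all of $\OO_D$.

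The one genuinely delicate point is that $A$ is merely alternative, so the extended product on $A\otimes\R^{2^n}$ is not associative and $f\tenso(g\tenso a)\neq(f\tenso g)\tenso a$ for generic slice functions; the hypotheses are precisely what allow Artin's theorem to restore associativity inside the subalgebra generated by $J$ and $a$. I expect the main thing to get right to be the sign bookkeeping in the multiplicativity of $\Lambda_J$, matching $J^{|S\dsim T|}$ against $J^{|S|+|T|}$ through $J^2=-1$. I note in passing that $(\mr{ii})$ can also be obtained without the Identity Principle, by checking at the stem level that the two triple products have equal $K$-components: each reduces to comparing $F_{S}(z)\big(G_{T}(z)a\big)$ with $\big(F_{S}(z)G_{T}(z)\big)a$, which agree by Artin since $F_S(z),G_T(z)\in\C_J$.
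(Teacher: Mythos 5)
Your proposal is correct and follows essentially the same route as the paper: the sign bookkeeping $(-1)^{|S\cap T|}J^{|S\dsim T|}=J^{|S|+|T|}$ combined with Artin's theorem (the associative subalgebra generated by $J$ and the $A$-valued factor contains $\C_J$, so all factors commute/associate as needed) is exactly the computation the paper performs on the triple stem product $F\tenso(G\tenso C)$, and (ii) is deduced from (i) via Lemma \ref{lem:subalgebras} and the identity principle (Corollary \ref{cor:ip} with $I_1=\cdots=I_n=J$) just as in the paper. Your departures are only organizational: you isolate the computation as a reusable multiplicativity lemma for the evaluation map $\Lambda_J$ (requiring only the \emph{left} factor to be $\C_J$-preserving) and apply it twice, and your closing observation that (ii) also follows directly at the stem level from $F_S(G_Ta)=(F_SG_T)a$, bypassing the identity principle, is a correct minor variant the paper does not use.
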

\begin{proof}
First note that $(\mr{i})$ implies $(\mr{ii})$. Indeed, applying $(\mr{i})$ twice (with $a=1$ and $g\equiv1$), Lemma \ref{lem:subalgebras} and Artin's theorem, one obtains $(f\tenso g)_J=f_Jg_J$ and $((f\tenso g)\tenso a)_J=(f\tenso g)_Ja=f_Jg_Ja=(f\tenso(g\tenso a))_J$. Corollary \ref{cor:ip} implies $(\mr{ii})$. Let us prove $(\mr{i})$. Let $F=\sum_{K\in\pa(n)}e_KF_K$ and $G=\sum_{H\in\pa(n)}e_HG_H$ be the stem functions inducing $f$ and~$g$, respectively. Let $C=\sum_{L\in\pa(n)}e_LC_L:D\to A\otimes\R^{2^n}$ be the stem function constantly equal to $a$, i.e. $C_\emptyset=a$ on $D$ and $C_L=0$ on $D$ for all $L\in\pa(n)\setminus\{\emptyset\}$. Evidently, $\I(C)=a$. Consider $z=(\alpha_1+i\beta_1,\ldots,\alpha_n+i\beta_n)\in D$ and $x=(\alpha_1+J\beta_1,\ldots,\alpha_n+J\beta_n)\in\OO_D(J)$. Let $\mr{J}:=(J,\ldots,J)\in(\cS_A)^n$. By Lemma \ref{lem:tensor}, we have that
\[
\textstyle
F\tenso(G\tenso C)=\sum_{K,H\in\pa(n)}e_{K \dsim H}(-1)^{|K\cap H|}F_K(G_Ha).
\]
Lemma \ref{lem:slice-preserving}$(\mr{i})$ implies that $F_K(z),G_H(z)\in\C_J$, so the elements $F_K(z)$, $G_H(z)$ and $J$ of $A$ commute and associate. Bearing in mind Artin's theorem, it follows that
\begin{align*}
(f\tenso(g\tenso a))(x)&\textstyle=\sum_{K,H \in \pa(n)}[\mr{J}_{K \dsim H},(-1)^{|K\cap H|}F_K(z)G_H(z)a]=\\
&\textstyle=\sum_{K,H \in \pa(n)}J^{|K \dsim H|}(-1)^{|K\cap H|}F_K(z)G_H(z)a=\\
&\textstyle=\sum_{K,H \in \pa(n)}J^{|K \dsim H|}J^{2|K\cap H|}F_K(z)G_H(z)a=\\
&\textstyle=\sum_{K,H \in \pa(n)}J^{|K \dsim H|+2|K\cap H|}F_K(z)G_H(z)a=\\
&\textstyle=\sum_{K,H \in \pa(n)}J^{|K|+|H|}F_K(z)G_H(z)a=\\
&\textstyle=(\sum_{K\in \pa(n)}J^{|K|}F_K(z))(\sum_{H\in \pa(n)}J^{|H|}G_H(z))a=f(x)g(x)a.
\end{align*}
The proof is complete.
\end{proof}

\begin{lemma}\label{lem:tensor-center}
Let $\mr{b}=\EuScript{B}(\sigma)$ be a commutative and associative $\triangle$-product on $\R^{2^n}$ (for instance $\mr{b}=\mr{b}^n_\otimes$). Then the set $\mc{S}_\R(\OO_D,A)$ is contained in the center of $(\mc{S}(\OO_D,A),\cdot_\sigma)$.
\end{lemma}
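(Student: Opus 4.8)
The plan is to transport the statement to the isomorphic algebra of stem functions and then exploit that a slice preserving function is induced by a stem function all of whose components are real valued, hence central in $A$. Since by Corollary \ref{cor:I} the map $\I$ is a real algebra isomorphism from $(\mr{Stem}(D,A\otimes\R^{2^n}),\cdot_\sigma)$ onto $(\mc{S}(\OO_D,A),\cdot_\sigma)$, it suffices to prove that every stem function $F=\sum_{K\in\pa(n)}e_KF_K$ with $F_K(D)\subset\R$ for all $K\in\pa(n)$ lies in the center of the stem function algebra, i.e.\ that it commutes with every stem function and lies in its nucleus.

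For the commuting part I would take an arbitrary stem function $G=\sum_{L\in\pa(n)}e_LG_L$ and apply the explicit product formula \eqref{eq:extended-product}, which gives $F\cdot_\sigma G=\sum_{K,L\in\pa(n)}e_{K\dsim L}\sigma(K,L)(F_KG_L)$ and $G\cdot_\sigma F=\sum_{K,L\in\pa(n)}e_{K\dsim L}\sigma(L,K)(G_LF_K)$. The commutativity of $\mr{b}$ yields $\sigma(K,L)=\sigma(L,K)$, while each value $F_K(z)$, being a real multiple of $1$, is central in $A$, so $F_KG_L=G_LF_K$. Since moreover $K\dsim L=L\dsim H$ is symmetric, the two sums agree term by term and $F\cdot_\sigma G=G\cdot_\sigma F$.

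For the nucleus condition I would compute, for arbitrary stem functions $G,H$, the three associators obtained by placing $F$ in the first, second and third slot. The associativity of $\mr{b}$, in the form $\sigma(K,L)\sigma(K\dsim L,M)=\sigma(L,M)\sigma(K,L\dsim M)$ (from $(e_K\cdot_\sigma e_L)\cdot_\sigma e_M=e_K\cdot_\sigma(e_L\cdot_\sigma e_M)$), together with the associativity of $\dsim$, makes all the scalar coefficients appearing in $(\xi\cdot_\sigma\eta)\cdot_\sigma\zeta$ and $\xi\cdot_\sigma(\eta\cdot_\sigma\zeta)$ coincide. Hence each such associator reduces to a sum over $K,L,M$ of a common scalar times an associator in $A$ in which one of the three arguments is some $F_K(z)\in\R$. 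Because real scalars lie in the nucleus of $A$ and associate with all its elements, every one of these $A$-associators vanishes, so all three associators are zero and $F$ belongs to the nucleus.

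Combining the two parts shows that $F$ is central, whence $f=\I(F)$ is central in $(\mc{S}(\OO_D,A),\cdot_\sigma)$, which proves the inclusion. The genuinely delicate point is the nucleus computation: one must first use the associativity of $\sigma$ to align the $\dsim$-coefficients, and only then invoke the real-valuedness of the $F_K$ to annihilate the residual $A$-associators; by contrast, the commuting part needs only the commutativity of $\mr{b}$, which explains why both hypotheses on $\mr{b}$ enter the statement.
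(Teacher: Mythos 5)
Your proposal is correct and takes essentially the same route as the paper's proof: both work at the level of stem functions (via the isomorphism $\I$), deduce $F\cdot_\sigma G=G\cdot_\sigma F$ from the commutativity of $\sigma$ together with the centrality in $A$ of the real values $F_K(z)$, and then verify the three nucleus identities by using the associativity of $\sigma$ (and of $\dsim$) to align the coefficients before invoking the fact that real scalars associate with all elements of $A$. The only blemish is the typo ``$K\dsim L=L\dsim H$'', which should read $K\dsim L=L\dsim K$.
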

\begin{proof}
Let $F=\sum_{K\in\pa(n)}e_KF_K$, $G=\sum_{L\in\pa(n)}e_LG_L$ and $H=\sum_{M\in\pa(n)}e_MH_M$ be stem functions on $D$ such that each $F_K$ is real-valued. Bearing in mind the latter condition and the fact that `$\,\cdot_\sigma\,$' is commutative and associative, we obtain:
\begin{align*}
FG&=\textstyle\sum_{K,L\in\pa(n)}e_K\cdot_\sigma e_LF_KG_L=\sum_{L,K\in\pa(n)}e_L\cdot_\sigma e_KG_LF_K=GF,\\
(FG)H&=\textstyle\sum_{K,L,M\in\pa(n)}e_K\cdot_\sigma e_L\cdot_\sigma e_M(F_KG_L)H_M=\\
&=\textstyle\sum_{K,L,M\in\pa(n)}e_K\cdot_\sigma e_L\cdot_\sigma e_MF_K(G_LH_M)=F(GH).
\end{align*}
Similar considerations prove also that $(GF)H=G(FH)$ and $(GH)F=G(HF)$.
\end{proof}

In \cite[Remark 7]{AIM2011} we proved that, if $n=1$, $f\in\mc{S}_\R(\OO_D,A)$ and $g\in\mc{S}(\OO_D,A)$, then $f\tenso g=fg$ on the whole $\OO_D$. Remark \ref{rem:order} shows that in general the latter equality is false for $n\geq2$; indeed, the coordinate functions $x_2$ and $x_1$ belong to $\mc{S}_\R(\hh^n,\hh)$, but $x_2x_1$ is not slice on $\hh^n$.

Our next results give some generalizations to several variables of the mentioned result contained in \cite[Remark 7]{AIM2011}. First, we need a definition.

\begin{definition}
Let $F=\sum_{K\in\pa(n)}e_KF_K:D\to A\otimes\R^{2^n}$ be a stem function, let $f=\I(F):\OO_D\to A$ be the slice function induced by $F$ and let $H\in\pa(n)$. We say that $F$ is \emph{$H$-reduced} if $F_K=0$ on $D$ for all $K\in\pa(n)$ with $K\not\subset H$. If $F$ is $H$-reduced then we say also that $f$ is \emph{$H$-reduced}. If $H=\{h\}$ for some $h\in\{1,\ldots,n\}$, then we use the term \emph{$h$-reduced} meaning $\{h\}$-reduced. We say that $f$ is \emph{circular} if it is $\emptyset$-reduced, namely if $F$ is $A$-valued. Denote $\mc{S}_c(\OO_D,A)$ the subset of $\mc{S}(\OO_D,A)$ formed by all circular functions. \bs
\end{definition}

\begin{lemma}
Let $\mr{b}=\EuScript{B}(\sigma)$ be any $\dsim$-product on $\R^{2^n}$. Then, it holds:
\begin{itemize}
 \item[$(\mr{i})$] The set $\mc{S}_c(\OO_D,A)$ is a real subalgebra of $(\mc{S}(\OO_D,A),\cdot_\sigma)$. Furthermore, if $f,g\in\mc{S}_c(\OO_D,A)$, then $f\cdot_\sigma g=fg$ on $\OO_D$.
 \item[$(\mr{ii})$] If $f\in\mc{S}_\R(\OO_D,A)\cap\mc{S}_c(\OO_D,A)$ and $g\in\mc{S}(\OO_D,A)$, then $f\cdot_\sigma g=fg$ on $\OO_D$.
\end{itemize}
\end{lemma}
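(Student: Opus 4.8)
The plan is to reduce both parts to a single structural observation: a circular function is determined by its $\emptyset$-component. Indeed, if $f=\I(F)$ with $F=e_\emptyset F_\emptyset$ is $A$-valued, then by Definition \ref{def:slice-function} and the convention $[\emptyset,v]=v$ one has $f(x)=[J_\emptyset,F_\emptyset(z)]=F_\emptyset(z)$, so $f$ depends only on $z$ and is constant on each $\cS_x$. Everything else then reduces to bookkeeping with the $\dsim$-product through the extended product \eqref{eq:extended-product}.

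For part $(\mr{i})$, I would first dispatch the easy half of the subalgebra claim: closure under addition, real scalar multiplication, and the presence of the unit $1$ are immediate from Remark \ref{rem:vector}, since sums and real multiples of $A$-valued stem functions stay $A$-valued. The substantive point is closure under $\cdot_\sigma$. Taking $f=\I(F)$, $g=\I(G)$ with $F=e_\emptyset F_\emptyset$ and $G=e_\emptyset G_\emptyset$, I would compute the pointwise $\dsim$-product via \eqref{eq:extended-product}: the only surviving term is $(e_\emptyset\cdot_\sigma e_\emptyset)(F_\emptyset G_\emptyset)$, and since $e_\emptyset\cdot_\sigma e_\emptyset=e_{\emptyset\dsim\emptyset}\sigma(\emptyset,\emptyset)=e_\emptyset$ by \eqref{eq:empty}, we get $F\cdot_\sigma G=e_\emptyset(F_\emptyset G_\emptyset)$, which is again $A$-valued. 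Hence $f\cdot_\sigma g$ is circular, giving the subalgebra claim; evaluating, $(f\cdot_\sigma g)(x)=F_\emptyset(z)G_\emptyset(z)=f(x)g(x)$, which is the asserted identity $f\cdot_\sigma g=fg$.

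For part $(\mr{ii})$, the hypothesis $f\in\mc{S}_\R(\OO_D,A)\cap\mc{S}_c(\OO_D,A)$ forces $F=e_\emptyset F_\emptyset$ with $F_\emptyset$ real-valued, so $f(x)=F_\emptyset(z)\in\R$. Writing $g=\I(G)$ with $G=\sum_{H\in\pa(n)}e_HG_H$, I would again invoke \eqref{eq:extended-product}, now using $e_\emptyset\cdot_\sigma e_H=e_{\emptyset\dsim H}\sigma(\emptyset,H)=e_H$ by \eqref{eq:empty}, to obtain $(F\cdot_\sigma G)_H=F_\emptyset G_H$ for every $H$. Consequently
\[
(f\cdot_\sigma g)(x)=\sum_{H\in\pa(n)}[J_H,F_\emptyset(z)G_H(z)].
\]

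The one delicate step, which I expect to be the main (if minor) obstacle in the non-associative octonionic case, is to pull the real factor $F_\emptyset(z)$ out of the nested ordered product $[J_H,\,\cdot\,]$ of Definition \ref{def:[]}. This is legitimate precisely because $F_\emptyset(z)\in\R$ is central and associates with every element of $A$, so no reassociation is needed and $[J_H,F_\emptyset(z)G_H(z)]=F_\emptyset(z)[J_H,G_H(z)]$. Summing over $H$ then yields $(f\cdot_\sigma g)(x)=F_\emptyset(z)\sum_{H\in\pa(n)}[J_H,G_H(z)]=f(x)g(x)$, as claimed.
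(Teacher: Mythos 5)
Your proof is correct and takes essentially the same route as the paper's: for $(\mr{i})$ you observe that the $\emptyset$-components multiply as in $A$ because $\sigma(\emptyset,\emptyset)=1$, and for $(\mr{ii})$ you compute $(f\cdot_\sigma g)(x)=\sum_{H}[J_H,F_\emptyset(z)G_H(z)]$ and pull the real factor $F_\emptyset(z)$ out of the ordered product, which is exactly the paper's one-line computation. Your explicit remark that it is the \emph{reality} of $F_\emptyset(z)$ (so no reassociation is needed in the non-associative case) that legitimizes the pull-out is a useful clarification the paper leaves implicit.
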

\begin{proof}
Let $F$ and $G$ be the stem functions inducing $f$ and $g$, respectively. Point $(\mr{i})$ follows immediately from the fact that, for all $z\in D$, $(F\cdot_\sigma G)(z)$ is equal to the product of  $F_\emptyset(z)$ and $G_\emptyset(z)$ in $A$. Let us prove $(\mr{ii})$. In this case $F=F_\emptyset$ is real-valued, $G=\sum_{K\in\pa(n)}e_KG_K$ is generic and
\begin{align*}
(f\cdot_\sigma g)(x)&\textstyle=\sum_{K\in\pa(n)}[J_K,F_\emptyset(z)G_K(z)]=F_\emptyset(z)\sum_{K\in\pa(n)}[J_K,G_K(z)]=f(x)g(x),
\end{align*}
as desired.
\end{proof}

\begin{proposition}\label{prop:reduced-prod}
Let $\mr{b}=\EuScript{B}(\sigma)$ be an associative and hypercomplex $\triangle$-product on $\R^{2^n}$ (for instance $\mr{b}=\mr{b}^n_\otimes$), let $f\in\mc{S}_\R(\OO_D,A)$ and let $g\in\mc{S}(\OO_D,A)$. Suppose that there exist $\ell\in\{1,\ldots,n\}$ and $H\in\pa(n)$ such that $f$ is $\ell$-reduced, $g$ is $H$-reduced and $\ell\leq h$ for all $h\in H$. Then $f\cdot_\sigma g=fg$ on $\OO_D$.
\end{proposition}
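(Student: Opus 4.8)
The plan is to prove the identity $f\cdot_\sigma g=fg$ by evaluating both sides at an arbitrary point $x=(\alpha_1+J_1\beta_1,\ldots,\alpha_n+J_n\beta_n)\in\OO_D$ and checking that they agree term by term. The three hypotheses are arranged precisely so that the combinatorics of the ordered products $[\,\cdot\,]$ on the pointwise side match the combinatorics of $\mr{b}=\EuScript{B}(\sigma)$ on the slice side; no appeal to the identity principle is needed, although Corollary \ref{cor:ip} would provide an alternative endgame once one knows the two functions coincide on a single slice.

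First I would record the explicit forms of the factors. Since $f\in\mc{S}_\R(\OO_D,A)$ is $\ell$-reduced, its inducing stem function is $F=F_\emptyset+e_\ell F_\ell$ with $F_\emptyset,F_\ell$ real-valued, so that $f(x)=F_\emptyset(z)+J_\ell F_\ell(z)$ with $F_\emptyset(z),F_\ell(z)\in\R$, where $z\in D$ is the point associated with $x$. Since $g$ is $H$-reduced, $g(x)=\sum_{M\subset H}[J_M,G_M(z)]$. Because $F_\emptyset(z)$ and $F_\ell(z)$ are real, hence central in $A$, one may pull them through every ordered product, and the pointwise product becomes
\[
\textstyle
f(x)g(x)=F_\emptyset(z)\sum_{M\subset H}[J_M,G_M(z)]+F_\ell(z)\sum_{M\subset H}J_\ell\,[J_M,G_M(z)].
\]
The crux is to evaluate $J_\ell[J_M,G_M(z)]$ for $M\subset H$. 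Here the hypothesis $\ell\le h$ for all $h\in H$ is essential, as it forces $\ell\le\min M$: if $\ell\notin M$ then $\ell<\min M$, so prepending $J_\ell$ merely extends the ordered product, giving $J_\ell[J_M,G_M(z)]=[J_{\{\ell\}\cup M},G_M(z)]$; if $\ell\in M$ then $\ell=\min M$, and the left alternative law $x^2y=x(xy)$ together with $J_\ell^2=-1$ (valid since $J_\ell\in\cS_A$) collapses the two leading factors, giving $J_\ell[J_M,G_M(z)]=-[J_{M\setminus\{\ell\}},G_M(z)]$.

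On the slice side I would expand $F\cdot_\sigma G$ via \eqref{eq:extended-product}: the only surviving basis products are $e_\emptyset\cdot_\sigma e_M=e_M$ and $e_\ell\cdot_\sigma e_M$ for $M\subset H$. Using the hypercomplex relations, \eqref{eq:eK} applied to both $\{\ell\}\cup M$ and $M$ yields $e_\ell\cdot_\sigma e_M=e_{\{\ell\}\cup M}$ when $\ell\notin M$ (the ordering $\ell<\min M$ is exactly what makes $\ell$ the required first index in \eqref{eq:eK}), while when $\ell\in M$ one writes $e_M=e_\ell\cdot_\sigma e_{M\setminus\{\ell\}}$ by \eqref{eq:eK} and then uses associativity of $\mr{b}$ and \eqref{eq:units} to get $e_\ell\cdot_\sigma e_M=(e_\ell\cdot_\sigma e_\ell)\cdot_\sigma e_{M\setminus\{\ell\}}=-e_{M\setminus\{\ell\}}$. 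Applying $\I$ and again pulling the real scalars $F_\emptyset(z),F_\ell(z)$ out of the ordered products then reproduces exactly the four sums obtained for $f(x)g(x)$, so the two sides coincide at $x$, and hence on all of $\OO_D$. The main obstacle is purely bookkeeping in the case $\ell\in M$ while keeping the non-associativity of $A$ under control: the point to get right is that the single collapse $J_\ell^2=-1$ on the pointwise side (licensed by alternativity) corresponds precisely to the identity $e_\ell\cdot_\sigma e_\ell=-1$ used inside the associative product $\mr{b}$ on the slice side, and that the condition $\ell\le\min H$ guarantees $\ell$ always plays the role of the smallest index, so that no reordering — which would introduce spurious factors $\sigma$ — is ever required.
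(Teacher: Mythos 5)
Your proof is correct and follows essentially the same route as the paper's: both decompose $F=F_\emptyset+e_\ell F_\ell$ with real components, expand $g$ over $M\subset H$, and use the ordering hypothesis $\ell\leq\min H$ together with \eqref{eq:units}, \eqref{eq:eK} and associativity of $\mr{b}$ to show that $e_\ell\cdot_\sigma e_M$ behaves exactly like left multiplication by $J_\ell$ on the ordered products $[J_M,\cdot]$, splitting into the cases $\ell\notin M$ and $\ell\in M$ (the paper invokes Artin's theorem where you use the left alternative law $J_\ell(J_\ell w)=J_\ell^2w=-w$, an equivalent justification). The term-by-term matching at an arbitrary point, rather than the identity principle, is also exactly the paper's strategy.
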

\begin{proof}
Denote $F,G\in\mr{Stem}(D,A\otimes\R^{2^n})$ the stem functions inducing $f$ and $g$, respectively. Write $F=F_\emptyset+e_\ell F_\ell$ and $G=\sum_{K\in\pa(n),K\subset H}e_KG_K$. Here $F_\ell$ denotes $F_{\{\ell\}}$. Let $z=(\alpha_1+i\beta_1,\ldots,\alpha_n+i\beta_n)\in D$ and let $x=(\alpha_1+J_1\beta_1,\ldots,\alpha_n+J_n\beta_n)$ for some $J=(J_1,\ldots,J_n)\in(\cS_A)^n$. By hypothesis, we know that $F_\emptyset(z),F_\ell(z)\in\R$. Since $\ell\leq h$ for all $h\in H$, given any $K\in\pa(n)$ with $K\subset H$, we have that:
\begin{itemize}
 \item $e_\ell\cdot_\sigma e_K=e_{K\cup\{\ell\}}$ and $[J_{K\cup\{\ell\}},G_K(z)]=J_\ell[J_K,G_K(z)]$ if $\ell\not\in K$, 
 \item $e_\ell\cdot_\sigma e_K=-e_{K\setminus\{\ell\}}$ and, thanks to Artin's theorem, $[J_{K\setminus\{\ell\}},G_K(z)]=-J_\ell[J_K,G_K(z)]$ if $\ell\in K$.
\end{itemize}
In particular, it holds:
\begin{align*}
(f\cdot_\sigma g)(x)=\,&\textstyle\sum_{K\in\pa(n),K\subset H}[J_K,F_\emptyset(z)G_K(z)]+\sum_{K\in\pa(n),K\subset H,\ell\not\in K}[J_{K\cup\{\ell\}},F_\ell(z)G_K(z)]+\\
&\textstyle+\sum_{K\in\pa(n),K\subset H,\ell\in K}(-[J_{K\setminus\{\ell\}},F_\ell(z)G_K(z)])=\\
=\,&\textstyle F_\emptyset(z)\sum_{K\in\pa(n),K\subset H}[J_K,G_K(z)]+J_\ell F_\ell(z)\sum_{K\in\pa(n),K\subset H,\ell\not\in K}[J_K,G_K(z)]+\\
&\textstyle+J_\ell F_\ell(z)\sum_{K\in\pa(n),K\subset H,\ell\in K}[J_K,G_K(z)]=\\
=\,&\textstyle(F_\emptyset(z)+J_\ell F_\ell(z))(\sum_{K\in\pa(n),K\subset H}[J_K,G_K(z)])=f(x)g(x),
\end{align*}
as desired.
\end{proof}


\section{Slice regular functions}\label{sec:Slice regular functions}


\begin{assumption}\label{assumption:openess}
Throughout this section, we assume that $D$ is open in $\C^n$.
\end{assumption}


\subsection{Complex structures on $A \otimes \R^{2^n}$}

Let $F=\sum_{K\in\pa(n)}e_KF_K:D\to A\otimes\R^{2^n}$ be a $\mscr{C}^1$ function, i.e. each $F_K:D\to A$ is of class $\mscr{C}^1$ in the usual real sense. Given a family $\J=\{\J_h\}_{h=1}^n$ consisting of $n$ complex structures on $A\otimes\R^{2^n}$, we say that a function $F$ is \emph{holomorphic w.r.t.\ $\J$} if, for all $z\in D$, it holds:
\begin{equation}\label{eq:J}
\frac{\partial F}{\partial\alpha_h}(z)+\J_h\left(\frac{\partial F}{\partial\beta_h}(z)\right)=0\; \text{ for all $h\in\{1,\ldots,n\}$,}
\end{equation}
where $z=(\alpha_1+i\beta_1,\ldots,\alpha_n+i\beta_n)$ are the coordinates of $\C^n$, $\frac{\partial F}{\partial\alpha_h}=\sum_{K\in\pa(n)}e_K\frac{\partial F_K}{\partial\alpha_h}$ and $\frac{\partial F}{\partial\beta_h}=\sum_{K\in\pa(n)}e_K\frac{\partial F_K}{\partial\beta_h}$. For short, in what follows, we will often denote $\partial_{\alpha_h}$ and $\partial_{\beta_h}$ the partial derivatives $\frac{\partial}{\partial\alpha_h}$ and $\frac{\partial}{\partial\beta_h}$, respectively.

We are interested in finding all the families $\J=\{\J_h\}_{h=1}^n$ having the following two universal/algebraic properties:
\begin{property}\label{property:universal}
Each complex structure $\J_h$ of $A\otimes\R^{2^n}$ is the extension of a complex structure of $\R^{2^n}$, we call again $\J_h$, via tensor product in the sense that $\J_h(a\otimes x)=a \otimes \J_h(x)\,$ for all $a\in A$ and $x\in\R^{2^n}$. Equivalently, given any $K\in\pa(n)$ and $a\in A$, if $\J_h(e_K)=\sum_{H\in\pa(n)}e_Hj^{\sss(h,K)}_H\in\R^{2^n}$, then $\J_h(e_Ka)=\sum_{H\in\pa(n)}e_H(j^{\sss(h,K)}_Ha)\in A\otimes\R^{2^n}$. 
\end{property}
\begin{property}\label{property:algebraic}
All polynomial stem functions $F:\C^n\to A\otimes\R^{2^n}$ are holomorphic w.r.t.\ $\J$.
\end{property}
Note that, if we apply the latter property to the polynomial stem functions $\{(\alpha_1+i\beta_1,\ldots,\alpha_n+i\beta_n)\mapsto \alpha_h+e_h\beta_h\}_{h=1}^n$, the ones inducing the coordinate monomials $x_1,\ldots,x_n$, then we deduce that $1+\J_h(e_h)=0$ for all $h\in\{1,\ldots,n\}$, which is equivalent to say that $\J_h(1)=e_h$ or $\J_h(e_h)=-1$.

Our next result shows that there exists a unique family $\J$ with the mentioned two properties.

\begin{proposition}\label{prop:complex-structures}
There exists a unique family $\{\J_h:A\otimes\R^{2^n}\to A\otimes\R^{2^n}\}_{h=1}^n$ of complex structures satisfying Properties \ref{property:universal} and \ref{property:algebraic}. Each endomorphism $\J_h$ is characterized by Property \ref{property:universal} and the following condition: for each $K\in\pa(n)$, it holds
\begin{equation}\label{eq:c-s1}
\J_h(e_K)=(-1)^{|K \cap \{h\}|}e_{K \dsim \{h\}}
\end{equation}
or, equivalently,
\begin{equation}\label{eq:c-s2}
\J_h(e_K)=
\left\{
\begin{array}{ll}
-e_{K \setminus \{h\}} & \text{\quad if }h\in K\vspace{.3em}\\
e_{K \cup \{h\}} & \text{\quad if }h\not\in K
\end{array}
\right..
\end{equation}
\end{proposition}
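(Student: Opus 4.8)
The plan is to prove the three assertions---existence, uniqueness, and the explicit description---simultaneously, after first reducing everything to the action on the basis $\{e_K\}_{K\in\pa(n)}$. Indeed, by Property~\ref{property:universal} any admissible $\J_h$ is completely determined by the vectors $\J_h(e_K)\in\R^{2^n}$, so it suffices to show that Property~\ref{property:algebraic} together with the complex-structure identity $\J_h^2=-\id$ forces $\J_h(e_K)$ to be given by \eqref{eq:c-s1}, and conversely that the endomorphisms defined by \eqref{eq:c-s1} and extended through Property~\ref{property:universal} do satisfy both properties.

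For \textbf{uniqueness}, I would feed into Property~\ref{property:algebraic} the multilinear monomial stem functions. Fix $h$ and a subset $S\ni h$, and take $\ell$ to be the indicator of $S$ with $a=1$. Since $p_0=1,q_0=0,p_1=\alpha,q_1=\beta$, formula~\eqref{eq:F-ell} collapses to $F^{(\ell)}(z)=\sum_{K\subseteq S}e_K\big(\prod_{i\in S\setminus K}\alpha_i\big)\big(\prod_{i\in K}\beta_i\big)$. Writing $S':=S\setminus\{h\}$ and $m_K(z):=\big(\prod_{i\in S'\setminus K}\alpha_i\big)\big(\prod_{i\in K}\beta_i\big)$ for $K\subseteq S'$, direct differentiation gives $\partial_{\alpha_h}F^{(\ell)}=\sum_{K\subseteq S'}m_K\,e_K$ and $\partial_{\beta_h}F^{(\ell)}=\sum_{K\subseteq S'}m_K\,e_{K\cup\{h\}}$, so the holomorphicity equation \eqref{eq:J} becomes $\sum_{K\subseteq S'}m_K(z)\,[\,e_K+\J_h(e_{K\cup\{h\}})\,]=0$ for all $z$. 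As the scalar monomials $\{m_K\}_{K\subseteq S'}$ are linearly independent functions on the open set $D$, I conclude $\J_h(e_{K\cup\{h\}})=-e_K$ for every $K\subseteq\{1,\dots,n\}\setminus\{h\}$ (letting $S$ range). Applying $\J_h$ once more and using $\J_h^2=-\id$ then yields $\J_h(e_K)=e_{K\cup\{h\}}$ whenever $h\notin K$. The two cases are precisely \eqref{eq:c-s2}, equivalently \eqref{eq:c-s1}, so the family is unique.

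For \textbf{existence}, I would define each $\J_h$ on $\R^{2^n}$ by \eqref{eq:c-s1} and extend it through Property~\ref{property:universal}. The relation $\J_h^2=-\id$ is checked on the basis in the two cases $h\in K$ and $h\notin K$ (the two applications of $\J_h$ add and then remove $h$, producing the sign $-1$), and it passes to $A\otimes\R^{2^n}$ since $\J_h^2(a\otimes x)=a\otimes\J_h^2(x)$. To verify Property~\ref{property:algebraic} I would use that \eqref{eq:J} is $\R$-linear in $F$ and that $\J_h$ commutes with right multiplication by $A$, so it is enough to treat a single monomial stem function $F^{(\ell)}$ of \eqref{eq:F-ell}. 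Differentiating \eqref{eq:F-ell} in $\alpha_h$ and $\beta_h$ and pairing each index set $L$ with $h\notin L$ to $L\cup\{h\}$---whose coefficients share the same $a$-bearing factor, call it $C_L$---the combination $\partial_{\alpha_h}F^{(\ell)}+\J_h(\partial_{\beta_h}F^{(\ell)})$ groups into $\sum_{h\notin L}\big[e_L(\partial_{\alpha_h}p_{\ell_h}-\partial_{\beta_h}q_{\ell_h})C_L+e_{L\cup\{h\}}(\partial_{\alpha_h}q_{\ell_h}+\partial_{\beta_h}p_{\ell_h})C_L\big]$. Both brackets vanish identically by the Cauchy--Riemann relations $\partial_{\alpha}p_k=\partial_{\beta}q_k$ and $\partial_{\alpha}q_k=-\partial_{\beta}p_k$ satisfied by the polynomials of Definition~\ref{def:pkqk}, so $F^{(\ell)}$ is holomorphic. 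Conceptually this is transparent via Lemma~\ref{lem:tensor}: under $\R^{2^n}\cong\C^{\otimes n}$ the operator $\J_h$ is multiplication by $e_h$ acting on the $h$-th tensor factor, while $F^{(\ell)}$ is the tensor product over $i$ of the one-variable holomorphic stems $p_{\ell_i}+e_iq_{\ell_i}$, so holomorphicity in $x_h$ reduces to the classical one-variable statement.

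I expect the main obstacle to be the existence check for \emph{arbitrary} powers $\ell$, rather than just the multilinear ones used in uniqueness. The delicate point is confirming that differentiation and the action of $\J_h$ interact correctly with the non-commutative coefficient $a\in A$ and with the general coefficients $p_{\ell_h},q_{\ell_h}$. This is resolved by the bookkeeping observation that the $e_{L\cup\{h\}}$- and $e_L$-coefficients carry identical non-$h$ factors $C_L$, so that after $\J_h$ passes (by Property~\ref{property:universal}) through the real scalars and through $a$, the whole expression collapses to the two Cauchy--Riemann brackets; crucially, no associativity or commutativity of $A$ is invoked, since within each monomial the powers $x_i^{\ell_i}$ are formed by power-associativity and $a$ remains on the right throughout.
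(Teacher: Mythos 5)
Your proposal is correct and follows essentially the same route as the paper: both pin down $\J_h$ by feeding the multilinear monomial stem functions into Property~\ref{property:algebraic} (the paper by induction on $|K|$, you by linear independence of the monomials $m_K$, with the identity $\J_h^2=-\id$ used in the same way to obtain the case $h\notin K$), and both verify Property~\ref{property:algebraic} for a general monomial by pairing the $e_L$- and $e_{L\cup\{h\}}$-components and invoking the Cauchy--Riemann relations for the polynomials $p_k,q_k$ of Definition~\ref{def:pkqk}. Your explicit check that the maps defined by \eqref{eq:c-s1} square to $-\id$, and your remark that the coefficient $a$ passes through $\J_h$ by Property~\ref{property:universal} without any use of associativity, are small details the paper leaves implicit.
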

\begin{proof}
Let us prove \eqref{eq:c-s2} by induction on the cardinality $|K|$ of $K\in\pa(n)$. If $|K|=0$, then $K=\emptyset$ and we just know that $\J_h(e_\emptyset)=\J_h(1)=e_h$ for all $h\in\{1,\ldots,n\}$, so \eqref{eq:c-s2} is verified.

Suppose $|K|\geq1$. Let $K=\{k_1,\ldots,k_s\}$ with $k_1<\ldots<k_s$ and let $F:\C^n\to A\otimes\R^{2^n}$ be the monomial stem function inducing the monomial function $x_K:=[(x_k)_{k\in K}]$. We have
\[\textstyle
F(z)=\sum_{L\in\pa(K)}e_L(\alpha_{K\setminus L}\beta_L),
\]
for all $z=(\alpha_1+i\beta_1,\ldots,\alpha_n+i\beta_n)\in\C^n$, where $\pa(K)$ denotes the set $\{L\in\pa(n):L\subset K\}$, $\alpha_{K\setminus L}:=\prd_{h\in K\setminus L}\alpha_h$ and $\beta_L:=\prd_{h\in L}\beta_h$, where $\alpha_\emptyset=\beta_\emptyset:=1$. Choose $h\in K$. It holds
\begin{align*}
&\textstyle\partial_{\alpha_h}F=\sum_{L\in\pa(K),h\not\in L}e_L(\alpha_{K\setminus (L\cup\{h\})}\beta_L);
\end{align*}
moreover, thanks to Property \ref{property:universal} and the induction hypothesis, we have
\begin{align*}
\textstyle\J_h(\partial_{\beta_h}F)&\textstyle=\sum_{L\in\pa(K),h\in L}\J_h(e_L)(\alpha_{K\setminus L}\beta_{L\setminus\{h\}})=\\
&\textstyle=\J_h(e_K)\beta_{K\setminus\{h\}}+\sum_{L\in\pa(K)\setminus\{K\},h\in L}(-e_{L\setminus\{h\}})(\alpha_{K\setminus L}\beta_{L\setminus\{h\}})=\\
&\textstyle=\J_h(e_K)\beta_{K\setminus\{h\}}-\sum_{L\in\pa(K),h\not\in L,L\neq K\setminus\{h\}}e_L(\alpha_{K\setminus (L\cup\{h\})}\beta_L)=\\
&\textstyle=\J_h(e_K)\beta_{K\setminus\{h\}}+e_{K\setminus\{h\}}\beta_{K\setminus\{h\}}-\partial_{\alpha_h}F.
\end{align*}
By Property \ref{property:algebraic}, $F$ is holomorphic w.r.t.\ $\J$, so
\[\textstyle
0=\textstyle\partial_{\alpha_h}F+\J_h(\partial_{\beta_h}F)=\J_h(e_K)\beta_{K\setminus\{h\}}+e_{K\setminus\{h\}}\beta_{K\setminus\{h\}}.
\]
Consequently, $\J_h(e_K)=-e_{K\setminus\{h\}}$ as desired. If $h\not\in K$, then the preceding equality implies that $\J_h(e_{K\cup\{h\}})=-e_K$, so $\J_h(e_K)=\J_h(-\J_h(e_{K\cup\{h\}}))=e_{K\cup\{h\}}$. Equality  \eqref{eq:c-s2} is proven.

It remains to show that the complex structures $\J=\{\J_1,\ldots,\J_n\}$ on $A\otimes\R^{2^n}$ defined by Property \ref{property:universal} and equality \eqref{eq:c-s2} satisfies also Property \ref{property:algebraic}. Equivalently, we have to prove that, given any $\ell=(\ell_1,\ldots,\ell_n)\in\N^n$, the monomial stem function $F$ inducing $x^\ell$ is holomorphic w.r.t.~$\J$. Given $k\in\N$, let $p_k,q_k\in\R[X,Y]$ such that $(\alpha+i\beta)^k=p_k(\alpha,\beta)+iq_k(\alpha,\beta)$ for all $\alpha,\beta\in\R$, as in Definition \ref{def:pkqk}. By the Cauchy-Riemann equations, we have that $\partial_\alpha p_k=\partial_\beta q_k$ and $\partial_\beta p_k=-\partial_\alpha q_k$. For each $K=\{k_1,\ldots,k_s\}\in\pa(n)\setminus\{\emptyset\}$ with $k_1<\ldots<k_s$, define the functions $p_K,q_K:\C^n\to\R$ by $p_K(z):=\prd_{k\in K}p_{\ell_k}(\alpha_k,\beta_k)$ and $q_K(z):=\prd_{k\in K}q_{\ell_k}(\alpha_k,\beta_k)$, where $z=(\alpha_1+i\beta_1,\ldots,\alpha_n+i\beta_n)\in\C^n$. Define also $p_\emptyset=q_\emptyset:\C^n\to\R$ as the function constantly equal to $1$. Note that $F=\sum_{K\in\pa(n)}e_Kp_{\{1,\ldots,n\}\setminus K}q_K$. Consequently, if $h\in\{1,\ldots,n\}$, it holds:
\begin{align*}
\partial_{\alpha_h}F=&\textstyle\sum_{K\in\pa(n),h\in K}e_Kp_{\{1,\ldots,n\}\setminus K}(\partial_{\alpha_h}q_{\ell_h})q_{K\setminus\{h\}}+\\
&\textstyle+\sum_{K\in\pa(n),h\not\in K}e_K(\partial_{\alpha_h}p_{\ell_h})p_{\{1,\ldots,n\}\setminus(K\cup\{h\})}q_K=\\
=&\textstyle-\sum_{K\in\pa(n),h\in K}e_Kp_{\{1,\ldots,n\}\setminus K}(\partial_{\beta_h}p_{\ell_h})q_{K\setminus\{h\}}+\\
&\textstyle+\sum_{K\in\pa(n),h\not\in K}e_K(\partial_{\beta_h}q_{\ell_h})p_{\{1,\ldots,n\}\setminus(K\cup\{h\})}q_K=\\
=&\textstyle-\sum_{K\in\pa(n),h\not\in K}e_{K\cup\{h\}}p_{\{1,\ldots,n\}\setminus(K\cup\{h\})}(\partial_{\beta_h}p_{\ell_h})q_K+\\
&\textstyle+\sum_{K\in\pa(n),h\in K}e_{K\setminus\{h\}}(\partial_{\beta_h}q_{\ell_h})p_{\{1,\ldots,n\}\setminus K}q_{K\setminus\{h\}}
\end{align*}
and
\begin{align*}
\textstyle\J_h\left(\partial_{\beta_h}F\right)=&\textstyle\sum_{K\in\pa(n),h\in K}\J_h(e_K)p_{\{1,\ldots,n\}\setminus K}(\partial_{\beta_h}q_{\ell_h})q_{K\setminus\{h\}}+\\
&\textstyle+\sum_{K\in\pa(n),h\not\in K}\J_h(e_K)(\partial_{\beta_h}p_{\ell_h})p_{\{1,\ldots,n\}\setminus(K\cup\{h\})}q_K=\\
=&\textstyle-\sum_{K\in\pa(n),h\in K}e_{K\setminus\{h\}}p_{\{1,\ldots,n\}\setminus K}(\partial_{\beta_h}q_{\ell_h})q_{K\setminus\{h\}}+\\
&\textstyle+\sum_{K\in\pa(n),h\not\in K}e_{K\cup\{h\}}(\partial_{\beta_h}p_{\ell_h})p_{\{1,\ldots,n\}\setminus(K\cup\{h\})}q_K.
\end{align*}
Consequently, $\partial_{\alpha_h}F+\J_h\left(\partial_{\beta_h}F\right)=0$, as desired.
\end{proof}

The above complex structures $\J_1,\ldots,\J_n$ commute.

\begin{lemma}\label{prop:Jh}
The complex structures $\J_1,\ldots,\J_n$ on $\R^{2^n}$ defined in Proposition \ref{prop:complex-structures} commute, that is $\J_h\J_k=\J_k\J_h$ for all $h,k\in\{1,\ldots,n\}$.
\end{lemma}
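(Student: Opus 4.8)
The plan is to prove the commutativity $\J_h\J_k=\J_k\J_h$ by a direct, finite computation on the basis $\{e_K\}_{K\in\pa(n)}$, since by Property \ref{property:universal} each $\J_h$ acts $A$-linearly (it commutes with the $A$-tensor factor), so it suffices to verify the identity on the basis vectors $e_K$ of $\R^{2^n}$. The explicit formula \eqref{eq:c-s1}, namely $\J_h(e_K)=(-1)^{|K\cap\{h\}|}e_{K\dsim\{h\}}$, reduces everything to combinatorics of symmetric differences and cardinalities of intersections with the singletons $\{h\}$ and $\{k\}$.

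First I would dispose of the trivial case $h=k$, where the claim is immediate. For $h\neq k$ I would compute both composites applied to an arbitrary $e_K$. Applying \eqref{eq:c-s1} twice gives
\[
\J_h\J_k(e_K)=\J_h\big((-1)^{|K\cap\{k\}|}e_{K\dsim\{k\}}\big)=(-1)^{|K\cap\{k\}|}(-1)^{|(K\dsim\{k\})\cap\{h\}|}e_{(K\dsim\{k\})\dsim\{h\}},
\]
and symmetrically
\[
\J_k\J_h(e_K)=(-1)^{|K\cap\{h\}|}(-1)^{|(K\dsim\{h\})\cap\{k\}|}e_{(K\dsim\{h\})\dsim\{k\}}.
\]
Since symmetric difference is commutative and associative, the two resulting basis vectors agree: $(K\dsim\{k\})\dsim\{h\}=(K\dsim\{h\})\dsim\{k\}$. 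It therefore remains only to check that the two sign factors coincide.

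The sign comparison is where a small observation is needed, and it is the only point requiring a moment's care. When $h\neq k$, the operation of taking symmetric difference with $\{k\}$ does not affect membership of $h$, that is $|(K\dsim\{k\})\cap\{h\}|=|K\cap\{h\}|$, and likewise $|(K\dsim\{h\})\cap\{k\}|=|K\cap\{k\}|$. Hence both exponents equal $|K\cap\{h\}|+|K\cap\{k\}|$, so the signs are identical and the two composites agree on every $e_K$. By $A$-linearity (Property \ref{property:universal}) this extends to all of $A\otimes\R^{2^n}$, completing the proof. I do not expect any genuine obstacle here; the entire content is the elementary remark that disjoint singletons interact independently under symmetric difference and intersection, so the two sign contributions simply add and the order of application is irrelevant.
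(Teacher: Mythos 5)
Your proof is correct and takes essentially the same approach as the paper's: a direct verification of $\J_h\J_k(e_K)=\J_k\J_h(e_K)$ on the basis vectors $e_K$, extended by linearity. The only (cosmetic) difference is that you work with the compact sign formula \eqref{eq:c-s1} and the observation that symmetric difference with $\{k\}$ does not affect membership of $h\neq k$, whereas the paper carries out the same computation via the case form \eqref{eq:c-s2}, splitting into the four cases according to whether $h$ and $k$ belong to $K$ and noting the resulting expression is symmetric in $h$ and $k$.
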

\begin{proof}
Let $K \in \pa(n)$ and let $h,k \in \{1,\ldots,n\}$ with $h\neq k$. We have
\[
(\J_h\J_k)(e_K)=
\left\{
\begin{array}{ll}
-\J_h(e_{K \setminus \{k\}}) & \text{\quad if $k\in K$}
\vspace{.3em}\\
\J_h(e_{K \cup \{k\}}) & \text{\quad if $k\not\in K$.}
\end{array}
\right.
\]
Therefore
\[
(\J_h\J_k)(e_K)=
\left\{
\begin{array}{ll}
e_{K \setminus\{h,k\}} & \text{\quad if $h,k \in K$}
\vspace{.3em}\\
-e_{(K \cup \{h\}) \setminus \{k\}} & \text{\quad if $h \notin K$, $k \in K$}
\vspace{.3em}\\
-e_{(K \cup \{k\}) \setminus \{h\}} & \text{\quad if $h \in K$, $k \notin K$}
\vspace{.3em}\\
e_{K \cup \{h,k\}} & \text{\quad if $h \notin K$, $k \notin K$}
\end{array}
\right.
\]
is symmetric in $h$ and $k$.
\end{proof}

\begin{remark}
The mentioned complex structures $\J_1,\ldots,\J_n$ on $\R^{2^n}$ can also be characterized as the unique complex structures on $\R^{2^n}$ satisfying the following two conditions:
\begin{itemize}
 \item[$(\mr{i})$] $\J_h\J_k=\J_k\J_h$ for all $h,k\in\{1,\ldots,n\}$.
 \item[$(\mr{ii})$] $\J_h(e_K)=e_{K\cup\{h\}}$ for all $h \in \{1,\ldots,n\}$ and $K \in \pa(n)$ such that $h<k$ for all $k \in K$.
\end{itemize}
We have only to verify the uniqueness of $\J_1,\ldots,\J_n$. Let $\J_1^{\pr},\ldots,\J_n^{\pr}$ be complex structures on $\R^{2^n}$ satisfying conditions $(\mr{i})$ and $(\mr{ii})$. Applying $(\mr{ii})$ with $K=\emptyset$, we have that $\J_h(1)=e_h$ and hence $\J_h(e_h)=-1$ for all $h\in\{1,\ldots,n\}$. Let $K=\{k_1,\ldots,k_s\}\in\pa(n)\setminus\{\emptyset\}$ with $k_1<\ldots<k_s$, and let $h\in\{1,\ldots,n\}$. We have to prove that $\J_h^{\pr}(e_K)=-e_{K \setminus \{h\}}$ if $h \in K$ and $\J_h^{\pr}(e_K)=e_{K \cup \{h\}}$ if $h \not\in K$. First, suppose $h\not\in K$. If $h<k$ for all $k\in K$, then we are done by $(\mr{ii})$. If $h>k$ for some $k\in K$, then there exists a unique $t\in\{1,\ldots,s\}$ such that $k_t<h<k_{t+1}$, where $k_{s+1}:=n+1$. Define $K':=\{k_{t+1},\ldots,k_s\}$ if $t+1\leq s$, and $K':=\emptyset$ otherwise. By  $(\mr{i})$ and  $(\mr{ii})$, we have:
\[
\J_h^{\pr}(e_K)=(\J_h^{\pr}\J_{k_1}^{\pr}\cdots\J_{k_t}^{\pr})(e_{K^{\pr}})=(\J_{k_1}^{\pr}\cdots\J_{k_t}^{\pr})\big(\J_h^{\pr}(e_{K^{\pr}})\big)=(\J_{k_1}^{\pr}\cdots\J_{k_t}^{\pr})(e_{K'\cup\{h\}})=e_{K\cup\{h\}}.
\]
Finally, if $h\in K$, then $\J_h(e_{K\setminus\{h\}})=e_K$, so $\J_h(e_K)=\J_h(\J_h(e_{K\setminus\{h\}}))=-e_{K\setminus\{h\}}$. \bs
\end{remark}

\begin{assumption}\label{notation:J}
In what follows, we denote $\J=\{\J_1,\ldots,\J_n\}$ the family of complex stru\-ctures on $A\otimes\R^{2^n}$ defined in Proposition \ref{prop:complex-structures}.
\end{assumption}

\begin{definition} \label{def:partial_h,dibar_h}
Let $F:D \to A \otimes \R^{2^n}$ be a $\mscr{C}^1$ stem function. For each $h \in \{1,\ldots,n\}$, we denote $\partial_h$ and $\dibar_h$ the \emph{Cauchy-Riemann operators} w.r.t.\ the complex structures $i$ on $D$ and  $\J_h$ on $A \otimes \R^{2^n}$, that is
\[
\partial_hF=\frac{1}{2}\left(\dd{F}{\alpha_h}-\J_h\left(\dd{F}{\beta_h}\right)\right)
\;\;\text{ and }\;\;
\quad \dibar_hF=\frac{1}{2}\left(\dd{F}{\alpha_h}+\J_h\left(\dd{F}{\beta_h}\right)\right),
\]
where $(\alpha_1+i\beta_1,\ldots,\alpha_n+i\beta_n)$ are the coordinates of $D$. \bs
\end{definition}

As a consequence of Proposition \ref{prop:Jh}, each operator of the type $\partial_h$ or $\dibar_h$ commutes with each other:
\begin{equation} \label{eq:dd}
\partial_h\partial_k=\partial_k\partial_h,\quad \partial_h\dibar_k=\dibar_k\partial_h
\;\;\text{ and }\;\;
\dibar_h\dibar_k=\dibar_k\dibar_h
\end{equation}
for all $h,k \in \{1,\ldots,n\}$.

\begin{lemma} \label{lem:K}
Let $F:D \to A \otimes \R^{2^n}$ be a $\mscr{C}^1$ stem function and let $h \in \{1,\ldots,n\}$. For each $K \in \pa(n)$, denote $(\partial_hF)_K$ and $(\dibar_hF)_K$ the $K$-components of $\partial_hF$ and $\dibar_hF$, respectively. Then, for all $K \in \pa(n)$, it holds:
\begin{equation} \label{eq:partial_h}
(\partial_hF)_K=\frac{1}{2}\left(\dd{F_K}{\alpha_h}+\dd{F_{K \dsim \{h\}}}{\beta_h}(-1)^{|K \cap \{h\}|}\right)
\end{equation}
and
\begin{equation} \label{eq:dibar_h}
(\dibar_hF)_K=\frac{1}{2}\left(\dd{F_K}{\alpha_h}-\dd{F_{K \dsim \{h\}}}{\beta_h}(-1)^{|K \cap \{h\}|}\right).
\end{equation}
In particular, $\partial_hF:D \to A \otimes \R^{2^n}$ and $\dibar_hF:D \to A \otimes \R^{2^n}$ are stem functions.
\end{lemma}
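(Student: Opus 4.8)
The plan is to compute everything componentwise, starting from the expansion $F=\sum_{K\in\pa(n)}e_KF_K$. First I would write $\dd{F}{\alpha_h}=\sum_{K\in\pa(n)}e_K\dd{F_K}{\alpha_h}$ and $\dd{F}{\beta_h}=\sum_{K\in\pa(n)}e_K\dd{F_K}{\beta_h}$, and then apply $\J_h$ to the second sum. Using Property \ref{property:universal} to pull $\J_h$ through the $A$-valued coefficients, together with the explicit action \eqref{eq:c-s1}, namely $\J_h(e_K)=(-1)^{|K\cap\{h\}|}e_{K\dsim\{h\}}$, I obtain $\J_h(\dd{F}{\beta_h})=\sum_{L\in\pa(n)}(-1)^{|L\cap\{h\}|}e_{L\dsim\{h\}}\dd{F_L}{\beta_h}$. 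The key manipulation is to reindex this sum by the substitution $L=K\dsim\{h\}$, which is legitimate because $L\mapsto L\dsim\{h\}$ is an involution of $\pa(n)$, so as to read off the coefficient of each basis vector $e_K$. This requires only the elementary sign identity $|(K\dsim\{h\})\cap\{h\}|=1-|K\cap\{h\}|$, which yields the flip $(-1)^{|(K\dsim\{h\})\cap\{h\}|}=-(-1)^{|K\cap\{h\}|}$. Substituting into the definitions of $\partial_h$ and $\dibar_h$ and collecting the $K$-components then produces \eqref{eq:partial_h} and \eqref{eq:dibar_h} directly.

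For the final assertion, that $\partial_hF$ and $\dibar_hF$ are again stem functions, I would verify the defining relation \eqref{eq:stem} componentwise, i.e.\ $(\partial_hF)_K(\overline{z}^m)=(-1)^{|K\cap\{m\}|}(\partial_hF)_K(z)$ for every $m\in\{1,\ldots,n\}$. The most transparent route is to differentiate the componentwise stem relations $F_K(\overline{z}^m)=(-1)^{|K\cap\{m\}|}F_K(z)$ of $F$ itself, tracking the involution $\tau_m:z\mapsto\overline{z}^m$, which negates $\beta_m$ and fixes every other coordinate. I would split into the cases $m\neq h$ and $m=h$: when $m\neq h$ the chain rule introduces no extra sign, whereas for $m=h$ the operator $\dd{}{\beta_h}$ picks up a factor $-1$ from $\beta_h\mapsto-\beta_h$. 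Feeding the transformed derivatives into \eqref{eq:partial_h} (and \eqref{eq:dibar_h}) and simplifying — again via $|(K\dsim\{h\})\cap\{h\}|=1-|K\cap\{h\}|$ — recovers exactly the factor $(-1)^{|K\cap\{m\}|}$ in both cases. A structurally cleaner variant, which I would record as a remark, is that $\dd{}{\alpha_h}$ preserves stem functions, $\dd{}{\beta_h}$ sends a stem function to one whose $h$-parity is reversed, and $\J_h$ reverses that $h$-parity back; hence $\dd{F}{\alpha_h}$ and $\J_h(\dd{F}{\beta_h})$ are both stem functions, and so are their half-sum and half-difference.

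The whole argument is essentially sign bookkeeping, so the only place genuine care is needed is the interaction between the symmetric-difference reindexing $L=K\dsim\{h\}$ and the conjugation $\tau_h$. Concretely, the factor $(-1)^{|K\cap\{h\}|}$ multiplying $\dd{F_{K\dsim\{h\}}}{\beta_h}$ in \eqref{eq:partial_h} and the extra sign produced when $\dd{}{\beta_h}$ acts through $\tau_h$ must be seen to cancel, so that the ``reversed'' $h$-parity of $\dd{F_{K\dsim\{h\}}}{\beta_h}$ is corrected and the $K$-component ends up with the expected parity $(-1)^{|K\cap\{h\}|}$. Once this cancellation is pinned down, the $m\neq h$ cases and the entirely parallel computation for $\dibar_h$ follow the same template and are routine.
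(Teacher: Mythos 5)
Your proposal is correct and follows essentially the same route as the paper: the explicit action $\J_h(e_K)=(-1)^{|K\cap\{h\}|}e_{K\dsim\{h\}}$ with the involutive reindexing $L=K\dsim\{h\}$ is just a unified-sign rewriting of the paper's case split $h\in K$ versus $h\not\in K$, and your stem-function verification (differentiating $F_K(\overline{z}^m)=(-1)^{|K\cap\{m\}|}F_K(z)$, with the extra chain-rule sign only when $m=h$) matches the paper's use of the parity identity for $|(K\dsim\{h\})\cap\{j\}|+|\{j\}\cap\{h\}|$. Your closing remark — that $\partial_{\alpha_h}$ preserves the stem property while $\partial_{\beta_h}$ and $\J_h$ each reverse the $h$-parity, so their composition restores it — is a correct and slightly more conceptual packaging of the same sign bookkeeping.
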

\begin{proof} Let $K \in \pa(n)$. Equation \eqref{eq:partial_h} follows immediately from the following computation:
\begin{align*}
2 \, \partial_hF=&\textstyle\sum_{K \in \pa(n)} \big(e_K \partial_{\alpha_h}F_K-\J_h(e_K) \partial_{\beta_h}F_K\big)=\\
=&\textstyle\sum_{K\in\pa(n),h\in K} \big(e_K \partial_{\alpha_h}F_K+e_{K  \setminus\{h\}}\partial_{\beta_h}F_K\big)+\\
&\textstyle+\sum_{K\in\pa(n),h\not\in K} \big(e_K \partial_{\alpha_h}F_K-e_{K \cup\{h\}} \partial_{\beta_h}F_K\big)=\\
=&\textstyle\sum_{K\in\pa(n),h\in K} e_K\big(\partial_{\alpha_h}F_K- \partial_{\beta_h}F_{K \setminus\{h\}}\big)+
\sum_{K\in\pa(n),h\not\in K} e_K\big(\partial_{\alpha_h}F_K+ \partial_{\beta_h}F_{K\cup\{h\}}\big).
\end{align*}
Similarly, we have that
\[
\textstyle
2 \, \dibar_hF=\sum_{K\in\pa(n),h\in K} e_K\big(\partial_{\alpha_h}F_K+ \partial_{\beta_h}F_{K \setminus\{h\}}\big)+
\sum_{K\in\pa(n),h\not\in K} e_K\big(\partial_{\alpha_h}F_K- \partial_{\beta_h}F_{K\cup\{h\}}\big).
\]
Consequently, \eqref{eq:dibar_h} holds.

It remains to show that $\partial_hF$ and $\dibar_hF$ are stem functions. Let $j \in \{1,\ldots,n\}$. Since $F$ is a stem function, we know that $F_K(\overline{z}^j)=(-1)^{|K \cap \{j\}|}F_K(z)$ for all $z \in D$. Fix $z \in D$. Differentiating both members of the latter equality w.r.t.\ $\alpha_h$ and $\beta_h$, we obtain:
\begin{equation} \label{eq:j1}
\partial_{\alpha_h}F_K(\overline{z}^j)=(-1)^{|K \cap \{j\}|}\partial_{\alpha_h}F_K(z)
\end{equation}
and
\[
\partial_{\beta_h}F_K(\overline{z}^j)=(-1)^{|K \cap \{j\}|+|\{j\} \cap \{h\}|} \partial_{\beta_h}F_K(z).
\]
Since the integers $|(K \dsim \{h\}) \cap \{j\}|+|\{j\} \cap \{h\}|$ and $|K \cap \{j\}|$ have the same parity, the latter equality implies the next one:
\begin{equation} \label{eq:j2}
\partial_{\beta_h}F_{K \dsim \{h\}}(\overline{z}^j)=(-1)^{|K \cap \{j\}|} \partial_{\beta_h}F_{K\dsim\{h\}}(z).
\end{equation}
By combining (\ref{eq:partial_h}), (\ref{eq:j1}) and (\ref{eq:j2}), we obtain that $(\partial_hF)_K(\overline{z}^j)=(-1)^{|K \cap \{j\}|}(\partial_hF)_K(z)$ and hence $\partial_hF$ is a stem function. Similarly, by using (\ref{eq:dibar_h}) instead of (\ref{eq:partial_h}), we infer that $\dibar_hF$ is a stem function as well.
\end{proof}

The last part of Lemma \ref{lem:K} allows to give the following definition.

\begin{definition} \label{def:df}
Let $F:D \to A\otimes\R^{2^n}$ be a $\mscr{C}^1$ stem function, let $f=\I(F):\OO_D \to A$ be the corresponding slice function and let $h \in \{1,\ldots,n\}$. We define the \emph{slice partial derivatives} $\frac{\partial f}{\partial x_h}$ and $\frac{\partial f}{\partial x_h^c}$ of $f$ as the following slice function in $\mc{S}^0(\OO_D,A)$:
\begin{equation}
\dd{f}{x_h}:=\I(\partial_hF)
\;\;\text{ and }\;\;
\dd{f}{x_h^c}:=\I(\dibar_hF). \; \text{ \bs}
\end{equation}
\end{definition}


\subsection{Holomorphic stem functions, slice regular functions and polynomials}

Let us introduce the concepts of holomorphic stem and slice regular functions. The reader bears in mind Assumptions \ref{assumption:openess} and \ref{notation:J}, and Definition \ref{def:partial_h,dibar_h}.

\begin{definition}
Let $F:D \to A \otimes \R^{2^n}$ be a $\mscr{C}^1$ stem function. Given $h \in \{1,\ldots,n\}$, we say that $F$ is \emph{$h$-holomorphic} if $\dibar_hF=0$ on $D$. We call $F$ \emph{holomorphic} if it is $h$-holomorphic for all $h \in \{1,\ldots,n\}$. If $F$ is holomorphic, then we say that $\I(F):\OO_D \to A$ is a \emph{slice regular function}. We denote  $\mc{SR}(\OO_D,A)$ the real vector subspace of $\mc{S}(\OO_D,A)$ of all slice regular functions. \bs
\end{definition}

Note that $F$ is $h$-holomorphic if and only if, for each $z=(z_1,\ldots,z_n)\in D$, the restriction function $F^{\sss(z)}:(D_h(z),i)\to(A \otimes \R^{2^n},\J_h)$, sending $w$ into $F(z_1,\ldots,z_{h-1},w,z_{h+1},\ldots,z_n)$, is holomorphic in the usual sense. Here $D_h(z)$ is the non-empty open subset of $\C$ defined in \eqref{def:Dhz}.

A useful characterization of the $h$-holomorphy of a stem function is as follows.

\begin{lemma} \label{lem:h-holomorphy}
Let $F:D \to A \otimes \R^{2^n}$ be a $\mathscr C^1$ stem function with $F=\sum_{K \in \pa(n)}e_KF_K$ and let $h \in \{1,\ldots,n\}$.
The following assertions are equivalent:
\begin{itemize}
  \item[$(\mr{i})$] $F$ is $h$-holomorphic.
  \item[$(\mr{ii})$] For each $K \in \pa(n)$ with $h\not\in K$, the functions $F_K$ and $F_{K \cup \{h\}}$ satisfy the following Cauchy-Riemann equations
\begin{equation} \label{eq:K notni h}
\dd{F_K}{\alpha_h}=\dd{F_{K \cup \{h\}}}{\beta_h}
\;\; \text{ and } \;\;
\dd{F_K}{\beta_h}=-\dd{F_{K \cup \{h\}}}{\alpha_h}.
\end{equation}
\end{itemize}
\end{lemma}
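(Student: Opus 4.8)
The plan is to reduce $h$-holomorphy directly to the component formula for $\dibar_hF$ established in Lemma~\ref{lem:K}. Since $\{e_K\}_{K\in\pa(n)}$ is a basis of $\R^{2^n}$, the vanishing $\dibar_hF=0$ on $D$ is equivalent to the simultaneous vanishing of all the $K$-components $(\dibar_hF)_K$ on $D$, for every $K\in\pa(n)$. This is the only structural fact I need; everything else is bookkeeping.

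First I would invoke formula~\eqref{eq:dibar_h}, which expresses
\[
(\dibar_hF)_K=\frac{1}{2}\left(\dd{F_K}{\alpha_h}-(-1)^{|K\cap\{h\}|}\dd{F_{K\dsim\{h\}}}{\beta_h}\right),
\]
so that $(\dibar_hF)_K=0$ is equivalent to the single scalar relation $\dd{F_K}{\alpha_h}=(-1)^{|K\cap\{h\}|}\dd{F_{K\dsim\{h\}}}{\beta_h}$. I would then split the analysis according to whether $h\in K$ or not. When $h\notin K$ we have $|K\cap\{h\}|=0$ and $K\dsim\{h\}=K\cup\{h\}$, so the relation reads $\dd{F_K}{\alpha_h}=\dd{F_{K\cup\{h\}}}{\beta_h}$, which is exactly the first Cauchy--Riemann equation in~\eqref{eq:K notni h}. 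When $h\in K$, I would write $K=K'\cup\{h\}$ with $h\notin K'$; then $|K\cap\{h\}|=1$ and $K\dsim\{h\}=K'$, so the relation becomes $\dd{F_{K'\cup\{h\}}}{\alpha_h}=-\dd{F_{K'}}{\beta_h}$, i.e. $\dd{F_{K'}}{\beta_h}=-\dd{F_{K'\cup\{h\}}}{\alpha_h}$, the second equation in~\eqref{eq:K notni h}.

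To close the argument I would observe that the assignment $K'\mapsto K'\cup\{h\}$ is a bijection between the subsets of $\{1,\ldots,n\}$ not containing $h$ and those containing $h$. Consequently, as $K$ ranges over all of $\pa(n)$, the family of conditions $(\dibar_hF)_K=0$ decomposes precisely into the two Cauchy--Riemann equations of~\eqref{eq:K notni h}, both indexed by the subsets $K$ with $h\notin K$: the case $h\notin K$ supplies the first equation, and the case $h\in K$, after the reindexing, supplies the second. This yields the equivalence $(\mr{i})\Leftrightarrow(\mr{ii})$.

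I do not anticipate a genuine obstacle here; the statement is essentially an unpacking of Lemma~\ref{lem:K}. The only point requiring care is the correct tracking of the sign $(-1)^{|K\cap\{h\}|}$ together with the symmetric-difference identity $K\dsim\{h\}=K\cup\{h\}$ (resp.\ $K\setminus\{h\}$) in the two cases, and the verification that the reindexing collapses both cases onto the common index set of subsets avoiding $h$ without double-counting or omission.
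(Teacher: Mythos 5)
Your proposal is correct and takes essentially the same route as the paper: the paper's proof likewise unpacks $\dibar_hF$ into its $e_K$-components via the computation behind Lemma~\ref{lem:K} (formula~\eqref{eq:dibar_h}), groups the terms according to whether $h\in K$ or $h\notin K$, and concludes that $\dibar_hF=0$ is equivalent to the Cauchy--Riemann system~\eqref{eq:K notni h} indexed by the sets $K$ avoiding $h$. Your sign bookkeeping and the reindexing bijection $K'\mapsto K'\cup\{h\}$ are exactly right, so there is nothing to add.
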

\begin{proof} By (\ref{eq:dibar_h}), we know that
\[
\textstyle
2 \, \dibar_hF=\sum_{K\in\pa(n),h\not\in K} \big(e_{K \cup\{h\}} (\partial_{\alpha_h}F_{K \cup\{h\}}+\partial_{\beta_h}F_K)+
e_K(\partial_{\alpha_h}F_K-\partial_{\beta_h}F_{K \cup \{h\}})\big).
\]
As a consequence, $\dibar_hF=0$ if and only if \eqref{eq:K notni h} is satisfied.
\end{proof}

By Definition \ref{def:OmegaDI}, given any $J\in\cS_A$, we have that $\OO_D(J)=\OO_D\cap(\C_J)^n$ and $f_J:\OO_D(J)\to A$ is the restriction of $f$ on $\OO_D(J)$. By Assumptions \ref{assumption1} and \ref{assumption:openess}, it follows that $\OO_D(J)$ is a non-empty open subset of $(\C_J)^n$.

The next result contains some characterizations of slice regularity.

\begin{proposition} \label{prop:slice-regularity}
Let $f\in\mc{S}^1(\OO_D,A)$ and let $F=\sum_{K \in \pa(n)}e_KF_K:D \to A \otimes \R^{2^n}$ be the $\mathscr C^1$ stem function inducing $f$. The following assertions are equivalent:
\begin{itemize}
  \item[$(\mr{i})$] $f$ is slice regular.
  \item[$(\mr{i}')$] $\displaystyle\frac{\partial f}{\partial x_h^c}=0$ on $\OO_D$ for all $h\in\{1,\ldots,n\}$.
  \item[$(\mr{ii})$] For each $K \in \pa(n)$ and for each $h \in \{1,\ldots,n\}$ with $h\not\in K$, it holds:
\[
\dd{F_K}{\alpha_h}=\dd{F_{K \cup \{h\}}}{\beta_h}
\;\;\text{ and }\;\;
\dd{F_K}{\beta_h}=-\dd{F_{K \cup \{h\}}}{\alpha_h}.
\]
  \item[$(\mr{iii})$] There exists $J \in \cS_A$ such that $f_J:\OO_D(J)\to A$ is holomorphic w.r.t.\ the complex structures on $\OO_D(J)$ and on $A$ defined by the left multiplication by $J$; that is,
\begin{equation}\label{eq:holom-f_I}
\dd{f_J}{\alpha_h}(z)+J\dd{f_J}{\beta_h}(z)=0\;\text{ for all $z\in\OO_D(J)$ and for all $h \in \{1,\ldots,n\}$,}
\end{equation}
where $z=(\alpha_1+J\beta_1,\ldots,\alpha_n+J\beta_n)$ are the coordinates of points $z\in(\C_J)^n$.
  \item[$(\mr{iii}^{\pr})$] For each $J \in \cS_A$, $f_J$ is holomorphic in the sense of \eqref{eq:holom-f_I}.
 \end{itemize}
\end{proposition}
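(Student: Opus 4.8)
My plan is to prove the chain of equivalences by establishing $(\mr{i})\Leftrightarrow(\mr{i}')$, $(\mr{i})\Leftrightarrow(\mr{ii})$, and $(\mr{iii}')\Rightarrow(\mr{iii})\Rightarrow(\mr{i})\Rightarrow(\mr{iii}')$, so that all five assertions close up into a single cycle. The first two equivalences are essentially immediate from the tools already developed: $(\mr{i})\Leftrightarrow(\mr{i}')$ follows because $f$ slice regular means $F$ is holomorphic, i.e. $\dibar_hF=0$ for all $h$, and by Definition 2.30 $\frac{\partial f}{\partial x_h^c}=\I(\dibar_hF)$; since $\I$ is injective (Corollary 2.15), $\frac{\partial f}{\partial x_h^c}=0$ on $\OO_D$ is equivalent to $\dibar_hF=0$ on $D$. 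The equivalence $(\mr{i})\Leftrightarrow(\mr{ii})$ is exactly Lemma 2.31 applied for every $h\in\{1,\ldots,n\}$: holomorphy of $F$ is $h$-holomorphy for all $h$, which Lemma 2.31 translates into the Cauchy--Riemann system in $(\mr{ii})$.

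The substantive work lies in relating the intrinsic holomorphy condition on $F$ to the componentwise holomorphy of the restrictions $f_J$, i.e. the implications involving $(\mr{iii})$ and $(\mr{iii}')$. The key computation is to express $f_J$ on $\OO_D(J)=\OO_D\cap(\C_J)^n$ explicitly: for $z=(\alpha_1+J\beta_1,\ldots,\alpha_n+J\beta_n)$ one has $f_J(z)=\sum_{K\in\pa(n)}[J_K,F_K(\zeta)]=\sum_{K\in\pa(n)}J^{|K|}F_K(\zeta)$, where $\zeta=(\alpha_1+i\beta_1,\ldots,\alpha_n+i\beta_n)\in D$ and all $J_k=J$. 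I would then differentiate this expression and plug it into the condition $\frac{\partial f_J}{\partial\alpha_h}+J\frac{\partial f_J}{\partial\beta_h}=0$ from \eqref{eq:holom-f_I}. Grouping the sum by whether $h\in K$ and using $J^2=-1$, the quantity $\frac{\partial f_J}{\partial\alpha_h}+J\frac{\partial f_J}{\partial\beta_h}$ becomes a sum over pairs $(K,K\cup\{h\})$ with $h\notin K$ of the form $J^{|K|}\bigl(\frac{\partial F_K}{\partial\alpha_h}-\frac{\partial F_{K\cup\{h\}}}{\partial\beta_h}\bigr)+J^{|K|+1}\bigl(\frac{\partial F_{K\cup\{h\}}}{\partial\alpha_h}+\frac{\partial F_K}{\partial\beta_h}\bigr)$; this vanishes for all $z$ precisely when the two Cauchy--Riemann equations of $(\mr{ii})$ hold, giving $(\mr{i})\Leftrightarrow(\mr{iii}')$ directly and $(\mr{iii}')\Rightarrow(\mr{iii})$ trivially.

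The only delicate implication is the converse $(\mr{iii})\Rightarrow(\mr{i})$, where one starts from holomorphy of $f_J$ for a \emph{single} $J$ and must deduce holomorphy of the stem function $F$. The obstacle here is that knowing $\frac{\partial f_J}{\partial\alpha_h}+J\frac{\partial f_J}{\partial\beta_h}=0$ gives, via the computation above, the vanishing of a single $A$-valued combination $\sum_{K,h\notin K} J^{|K|}(\cdots)+J^{|K|+1}(\cdots)$, and one needs to separate out the individual components $F_K$. I expect this to be the main technical point, and the clean way to handle it is to apply the representation formula \eqref{eq:components-F} (Proposition 2.12) with $I_1=\cdots=I_n=J$, which recovers each $F_K(\zeta)$ as an explicit $\C_J$-linear combination of the values $f_J(y^{c,H})$; differentiating these expressions in $\alpha_h,\beta_h$ and invoking the assumed holomorphy of $f_J$ at all the conjugated points $y^{c,H}\in\OO_D(J)$ then forces each pair $F_K,F_{K\cup\{h\}}$ to satisfy \eqref{eq:K notni h}, hence $\dibar_hF=0$ by Lemma 2.31. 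Equivalently, and more conceptually, I would argue that the $2^n$ functions $\{J^{|K|}\}_{K\in\pa(n)}$ span $\C_J$ only two-dimensionally, so the grouping must instead be organized by the parity $|K|\bmod 2$ together with the already-established stem relations \eqref{eq:stem}; tracking how the even-odd symmetry of the $F_K$ interacts with the complex conjugates $y^{c,H}$ is exactly what makes the single-$J$ hypothesis suffice, and this bookkeeping is where care is required.
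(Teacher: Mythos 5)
Your proposal is correct and follows essentially the same route as the paper's proof: $(\mr{i})\Leftrightarrow(\mr{i}')$ from the injectivity of $\I$, $(\mr{i})\Leftrightarrow(\mr{ii})$ from Lemma \ref{lem:h-holomorphy}, $(\mr{ii})\Rightarrow(\mr{iii}')$ by expanding $f_J=\sum_{H\in\pa(n)}J^{|H|}F_H$ via Artin's theorem, and the delicate implication $(\mr{iii})\Rightarrow(\mr{ii})$ by exploiting holomorphy of $f_J$ at all $2^n$ conjugated points $y^{\,c,H}$ and inverting over $\pa(n)$ --- where the paper derives \eqref{eq:trucco} and inverts at the level of derivatives via Lemma \ref{lem:combinatorial}, you equivalently differentiate the already-proven formula \eqref{eq:components-F} with $I_1=\cdots=I_n=J$ (the chain rule producing the sign $(-1)^{|H\cap\{h\}|}$, and $h\notin K$ giving $|(K\cup\{h\})\cap H|=|K\cap H|+|H\cap\{h\}|$), which rests on the same combinatorial lemma, so the two computations are the same inversion performed in a different order. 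The only blemish is your middle paragraph's assertion that the grouped sum ``vanishes for all $z$ precisely when the two Cauchy--Riemann equations of $(\mr{ii})$ hold, giving $(\mr{i})\Leftrightarrow(\mr{iii}')$ directly'': since $J^2=-1$, the coefficients $J^{|K|}$ take only the values $\pm1,\pm J$ and pointwise vanishing does not decouple the individual components, but this does no harm because your logical chain uses only $(\mr{ii})\Rightarrow(\mr{iii}')$ at that point, and your final paragraph supplies a correct treatment of the genuinely needed converse.
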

\begin{proof}
Equivalence $(\mr{i}) \Leftrightarrow (\mr{i}')$ is an immediate consequence of the fact that, by Proposition~\ref{prop:representation}, a slice function is null if and only if its stem function is. Equivalence $(\mr{i}) \Leftrightarrow (\mr{ii})$ follows immediately from Lemma \ref{lem:h-holomorphy}. The implication $(\mr{iii}^{\pr}) \Rightarrow (\mr{iii})$ is evident. We will show that $(\mr{ii})\Rightarrow (\mr{iii}^{\pr})$ and $(\mr{iii})\Rightarrow (\mr{ii})$ completing the proof.

Let $J \in \cS_A$, let $x=(\alpha_1+J\beta_1,\ldots,\alpha_n+J\beta_n) \in \OO_D(J)$, let $z=(\alpha_1+\ui \beta_1,\ldots,\alpha_n+\ui \beta_n) \in D$ and let $h \in \{1,\ldots,n\}$. By Artin's theorem, we have that 
\begin{equation} \label{eq:f_I}
\textstyle
f_J(x)=\sum_{H \in \pa(n)}J^{|H|}F_H(z).
\end{equation}
Consequently, $f_J(x)=\sum_{H\in\pa(n),h\not\in H}J^{|H|}\big(F_H(z)+JF_{H \cup \{h\}}(z)\big)$ and hence
\begin{align*}
\left(\partial_{\alpha_h}f_J+J\partial_{\beta_h}f_J\right)(x)
=&\textstyle\sum_{H\in\pa(n),h\not\in H}J^{|H|}
\big(\partial_{\alpha_h}F_H-\partial_{\beta_h}F_{H \cup\{h\}}\big)(z)+\\
&\textstyle+\sum_{H\in\pa(n),h\not\in H}J^{|H|+1}\big(\partial_{\alpha_h}F_{H \cup\{h\}}+ \partial_{\beta_h}F_H\big)(z)=0.
\end{align*}
This proves implication $(\mr{ii})\Rightarrow (\mr{iii}^{\pr})$. Finally, suppose that $(\mr{iii})$ holds. Let us prove $(\mr{ii})$. Let $z \in D$ and $x \in \OO_D$ be as above and let $L \in \pa(n)$. Recall that $\overline{z}^L \in D$ and $x^{\, c,L} \in \OO_D$. For each $H \in \pa(n)$, define the function $F_{H,L}:D \to A$ by setting $F_{H,L}(z):=F_H(\overline{z}^L)$.  Thanks to (\ref{eq:stem2}) and (\ref{eq:f_I}), we obtain that
\[
\textstyle
f_J(x)=\sum_{H \in \pa(n)}J^{|H|}(-1)^{|H \cap L|}F_{H,L}(z).
\] 
On the other hand, it is immediate to verify that $\partial_{\alpha_h}F_{H,L}(z)=\partial_{\alpha_h}F_H(\overline{z}^L)$ and $\partial_{\beta_h}F_{H,L}(z)=(-1)^{|L \cap \{h\}|}\partial_{\beta_h}F_H(\overline{z}^L)$. Using again Artin's theorem, it follows that
\begin{align*}
0=\left(\partial_{\alpha_h}f_J+J\partial_{\beta_h}f_J\right)(x^{\, c,L})=&
\textstyle
\sum_{H \in \pa(n)}J^{|H|}(-1)^{|H \cap L|}\left(\partial_{\alpha_h}F_{H,L}+J\partial_{\beta_h}F_{H,L}\right)(\overline{z}^L)=\\
=&
\textstyle
\sum_{H \in \pa(n)}J^{|H|}(-1)^{|H \cap L|}\partial_{\alpha_h}F_H(z)+\\
&
\textstyle
+\sum_{H \in \pa(n)}J^{|H|+1}(-1)^{|H \cap L|+|L \cap \{h\}|}\partial_{\beta_h}F_H(z).
\end{align*}
We have just proven that
\begin{equation}\label{eq:trucco}
0=\textstyle
\sum_{H \in \pa(n)}J^{|H|}(-1)^{|H \cap L|}\partial_{\alpha_h}F_H(z)+\sum_{H \in \pa(n)}J^{|H|+1}(-1)^{|H \cap L|+|L \cap \{h\}|}\partial_{\beta_h}F_H(z).
\end{equation}
Fix $K \in \pa(n)$ and $h \in \{1,\ldots,n\}$ with $h\not\in K$. Multiply both members of (\ref{eq:trucco}) by $(-1)^{|L \cap K|}$ and sum over all $L \in \pa(n)$. Bearing in mind Lemma \ref{lem:combinatorial}, we obtain 
\begin{align*}
0=&
\textstyle
\sum_{H \in \pa(n)}J^{|H|}\partial_{\alpha_h}F_H(z)\left(\sum_{L \in \pa(n)}(-1)^{|H \cap L|+|L \cap K|}\right)+\\
&
\textstyle
+\sum_{H \in \pa(n)}J^{|H|+1}\partial_{\beta_h}F_H(z)\left(\sum_{L \in \pa(n)}(-1)^{|H \cap L|+|L \cap \{h\}|+|L \cap K|}\right)=\\
=&\,
\textstyle
2^nJ^{|K|}\partial_{\alpha_h}F_K(z)+\sum_{H \in \pa(n)}J^{|H|+1}\partial_{\beta_h}F_H(z)\left(\sum_{L \in \pa(n)}(-1)^{|H \cap L|+|L \cap (K \cup \{h\})|}\right)=\\
=&\,
\textstyle
2^nJ^{|K|}\partial_{\alpha_h}F_K(z)+2^nJ^{|K|+2}\partial_{\beta_h}F_{K \cup \{h\}}(z)=2^nJ^{|K|}\left(\partial_{\alpha_h}F_K(z)-\partial_{\beta_h}F_{K \cup \{h\}}(z)\right),
\end{align*}
and hence $\partial_{\alpha_h}F_K(z)=\partial_{\beta_h}F_{K \cup \{h\}}(z)$. Similarly, multiplying both members of (\ref{eq:trucco}) by $(-1)^{|L \cap (K \cup \{h\})|}$ and summing over all $L \in \pa(n)$, we obtain
\[
0=2^nJ^{|K|+1}\left(\partial_{\alpha_h}F_{K \cup \{h\}}(z)+\partial_{\beta_h}F_K(z)\right),
\]
and hence $\partial_{\beta_h}F_K(z)=-\partial_{\alpha_h}F_{K \cup \{h\}}(z)$. This proves $(\mr{ii})$.
\end{proof}

Slice regular functions include polynomial functions.

\begin{proposition}\label{prop:polynomials-sr}
All polynomial functions from $\OO_D$ to $A$ are slice regular.
\end{proposition}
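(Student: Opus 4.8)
A polynomial function $f:\OO_D\to A$ is, by Definition \ref{def:polynomials}, a finite sum of monomial functions $x^\ell a$. Since $\mc{SR}(\OO_D,A)$ is a real vector subspace of $\mc{S}(\OO_D,A)$ (being the kernel of the linear operators $\dibar_1,\ldots,\dibar_n$ transported through the linear isomorphism $\I$), it suffices to prove that every monomial function $x^\ell a$ is slice regular. First I would invoke Proposition \ref{prop:polynomials}, which tells us that $x^\ell a=\I(F^{(\ell)})$ where $F^{(\ell)}:D\to A\otimes\R^{2^n}$ is the explicit stem function of form \eqref{eq:F-ell}. By the definition of slice regularity, $x^\ell a$ is slice regular precisely when $F^{(\ell)}$ is holomorphic, i.e.\ $\dibar_h F^{(\ell)}=0$ on $D$ for all $h\in\{1,\ldots,n\}$.

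**The core of the argument is that $F^{(\ell)}$ is holomorphic.** This is exactly the content of Property \ref{property:algebraic}, which asserts that all polynomial stem functions $F:\C^n\to A\otimes\R^{2^n}$ are holomorphic w.r.t.\ $\J$. Crucially, Property \ref{property:algebraic} was not merely assumed: in the proof of Proposition \ref{prop:complex-structures} it was \emph{verified} that the family $\J=\{\J_1,\ldots,\J_n\}$ defined by equality \eqref{eq:c-s2} does satisfy Property \ref{property:algebraic}. Concretely, that proof shows that for every $\ell\in\N^n$ the monomial stem function inducing $x^\ell$ satisfies $\partial_{\alpha_h}F+\J_h(\partial_{\beta_h}F)=0$ for all $h$, which by Definition \ref{def:partial_h,dibar_h} is the same as $\dibar_h F=0$. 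Since the stem function of $x^\ell a$ is obtained from that of $x^\ell$ by right multiplication by the constant $a\in A$, and the operators $\dibar_h$ act only on the left $\R^{2^n}$-tensor factor (by Property \ref{property:universal} and Lemma \ref{lem:K}), the same Cauchy--Riemann relations persist after inserting $a$; alternatively, one restricts $F^{(\ell)}$ to $D$ and notes that holomorphy is preserved by restriction to the open subset $D\subset\C^n$.

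**There is essentially no obstacle here, only bookkeeping.** The substantive work was already carried out in the computation verifying Property \ref{property:algebraic} inside Proposition \ref{prop:complex-structures}, where the Cauchy--Riemann equations $\partial_\alpha p_k=\partial_\beta q_k$, $\partial_\beta p_k=-\partial_\alpha q_k$ for the scalar polynomials $p_k,q_k$ of Definition \ref{def:pkqk} were combined with \eqref{eq:c-s2} to produce the cancellation. The only point requiring a word of care is that Proposition \ref{prop:complex-structures} treats stem functions on the whole $\C^n$, whereas here the domain is the open subset $D$; this is harmless, since $\dibar_h F^{(\ell)}$ is computed pointwise and its vanishing on $\C^n$ restricts to vanishing on $D$. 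Thus the short proof reads: by Proposition \ref{prop:polynomials} each monomial $x^\ell a$ equals $\I(F^{(\ell)})$ with $F^{(\ell)}$ of form \eqref{eq:F-ell}; by Property \ref{property:algebraic} (established in Proposition \ref{prop:complex-structures}) the stem function $F^{(\ell)}$ is holomorphic; hence $x^\ell a$ is slice regular, and since $\mc{SR}(\OO_D,A)$ is a real vector subspace of $\mc{S}(\OO_D,A)$, every finite sum of such monomials—that is, every polynomial function—is slice regular.
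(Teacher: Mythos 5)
Your proof is correct, but it takes a genuinely different route from the paper's. The paper argues algebraically: it first notes that each coordinate function $x_k$ is slice regular, then invokes Lemma~\ref{lem:polynomials} to rewrite the pointwise monomial $x^\ell a$ as the iterated slice tensor product $x^{{\sss\bullet}\ell}\cdot a$ (this is where hypercomplexity of $\mr{b}_\otimes^n$ enters), and finally uses Leibniz's rule \eqref{eq:L2-tensor} together with the equivalence $(\mr{i})\Leftrightarrow(\mr{i}')$ of Proposition~\ref{prop:slice-regularity} to conclude that such products of slice regular functions are slice regular. You instead verify slice regularity analytically at the level of stem functions: Proposition~\ref{prop:polynomials} gives the explicit stem function $F^{(\ell)}$ of form \eqref{eq:F-ell}, and the second half of the proof of Proposition~\ref{prop:complex-structures} --- the verification of Property~\ref{property:algebraic} --- already contains the Cauchy--Riemann computation showing $\dibar_h F=0$ for the stem function of $x^\ell$. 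You correctly supply the two bookkeeping points this reduction needs: the computation in Proposition~\ref{prop:complex-structures} is carried out for $x^\ell$ without the coefficient, and inserting $a$ is harmless because, by Property~\ref{property:universal}, the structure constants of $\J_h$ are real, so $\J_h$ and the partial derivatives commute with right multiplication by the constant $a$; and holomorphy on $\C^n$ restricts to the open set $D$ pointwise. (The paper itself glosses this first point with an ``equivalently'' inside the proof of Proposition~\ref{prop:complex-structures}, so your argument actually makes that step explicit.) What each approach buys: yours is shorter and self-contained given Proposition~\ref{prop:complex-structures}, avoiding the multiplicative machinery entirely; the paper's route avoids repeating any computation and simultaneously exercises the stability of slice regularity under the slice tensor product, which is the theme developed immediately afterwards in Propositions~\ref{prop:sr-prod} and~\ref{prop:tensor}.
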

\begin{proof}
For each $k\in\{1,\ldots,n\}$, it is immediate to see that the coordinate function $x_k:\OO_D\to A$ is slice regular; indeed $\frac{\partial x_k}{\partial x_h^c}=0$ on $\OO_D$ for all $h\in\{1,\ldots,n\}$. Combining Lemma \ref{lem:polynomials}, equation \eqref{eq:L2-tensor} and equivalence $(\mr{i})\Leftrightarrow(\mr{i}')$ of Proposition \ref{prop:slice-regularity}, we deduce at once that all polynomials functions from $\OO_D$ to $A$ are slice regular.
\end{proof}


\subsection{On the zeros of polynomials}

This section deals with the zero set of polynomials over $A$ in the case $A$ is the division algebra $\HH$ of quaternions, the one $\oo$ of octonions or the Clifford algebra $\R_m=\mi{C}\ell(0,m)$ for $m\geq3$.   

Given any function $f:(Q_A)^n\to A$, we denote $V(f)$ its zero set, i.e.
\[
V(f):=\{x\in(Q_A)^n:f(x)=0\}.
\]

If $A=\HH$ and $f$ is a polynomial in the sense of Definition \ref{def:polynomials}, then the next result gives some properties of $V(f)$. Identify $\HH^n$ with $\R^{4n}$ by choosing one of its real vector basis.

\begin{proposition}\label{prop:dim-HH}
Let $f:\HH^n\to\HH$ be a nonconstant polynomial. Then $V(f)$ is a nonempty real algebraic subset of $\R^{4n}$ and its dimension $\dim_\R(V(f))$, as a real algebraic set, satisfies the following estimates:
\begin{equation}\label{eq:estimate-HH}
4n-4\leq\dim_\R(V(f))\leq4n-2.
\end{equation}
\end{proposition}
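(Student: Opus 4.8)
The plan is to prove the three assertions—real algebraicity, nonemptiness, and the two dimension estimates—simultaneously, by induction on $n$, the engine being the one–variable Fundamental Theorem of Algebra together with the classical description of the zero set of a one–variable quaternionic polynomial (a finite union of isolated points and of $2$–spheres $\cS_{x_1}$; see \cite{GeStoSt2013}). First I would dispose of real algebraicity. Fixing a real vector basis of $\HH$ and the induced identification $\HH^n\cong\R^{4n}$, a polynomial $f$ in the sense of Definition \ref{def:polynomials} has four real components $f^{(0)},\dots,f^{(3)}$, each an ordinary real polynomial in the $4n$ real coordinates (here one uses that $\HH$ is associative, so every ordered monomial $x^\ell a=x_1^{\ell_1}\cdots x_n^{\ell_n}a$ is a genuine product); hence $V(f)=\bigcap_{s=0}^{3}\{f^{(s)}=0\}$ is a real algebraic subset of $\R^{4n}$. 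All the dimension bookkeeping below takes place in the semialgebraic category, for which I shall freely use that $\dim p(X)\le\dim X$ for a continuous semialgebraic map, and the fiber–dimension inequality $\dim X\le \dim \overline{p(X)}+\max_w\dim p^{-1}(w)$ (Bochnak--Coste--Roy).

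For the inductive step I would single out the leftmost variable. Grouping by the power of $x_1$ gives $f(x_1,x')=\sum_{k=0}^{d}x_1^{k}\,g_k(x')$ with $x'=(x_2,\dots,x_n)$ and $g_k$ polynomials on $\HH^{n-1}$, where $g_d\not\equiv0$. If $d=0$ then $f$ does not involve $x_1$, so $V(f)=\HH\times V(\tilde f)$ for the associated $(n-1)$–variable polynomial $\tilde f$, and the conclusion follows from the inductive hypothesis by adding $4$ to every dimension. Assume therefore $d\ge1$. For fixed $x'$, the function $x_1\mapsto f(x_1,x')$ is a one–variable quaternionic polynomial; on the dense open set $U:=\{x':g_d(x')\ne0\}$ it has degree exactly $d\ge1$, hence is nonconstant and, by the one–variable Fundamental Theorem of Algebra, possesses a zero. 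Considering the projection $p:V(f)\to\HH^{n-1}$, $p(x_1,x')=x'$, this shows $p(V(f))\supseteq U$; in particular $V(f)\neq\emptyset$, and since $U$ is open and dense, $\dim V(f)\ge\dim p(V(f))\ge\dim U=4(n-1)=4n-4$, which is the lower estimate.

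The upper estimate is where the main care is required, because the fibers of $p$ jump in dimension. Over the complement of $Z:=\{x':g_0(x')=\dots=g_d(x')=0\}$ the function $x_1\mapsto f(x_1,x')$ is not identically zero, so by the one–variable theory its zero set is a finite union of isolated points and $2$–spheres $\cS_{x_1}$; thus these fibers have dimension $\le2$, and the fiber–dimension inequality bounds the corresponding part of $V(f)$ by $4(n-1)+2=4n-2$. Over $Z$ the fiber is all of $\HH$, so $p^{-1}(Z)=\HH\times Z$ has dimension $4+\dim Z$; but $Z\subseteq V(g_d)$, and either $g_d$ is a nonzero constant (so $Z=\emptyset$) or $g_d$ is a nonconstant polynomial on $\HH^{n-1}$, in which case the inductive hypothesis yields $\dim Z\le\dim V(g_d)\le 4(n-1)-2=4n-6$, whence $\dim p^{-1}(Z)\le 4n-2$. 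Taking the maximum over the two pieces gives $\dim V(f)\le 4n-2$, completing the induction; the base case $n=1$ is exactly the classical statement that the zero set of a nonconstant one–variable quaternionic polynomial is nonempty and is a finite union of points and spheres, hence of dimension $0$ or $2$.

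I expect the principal obstacle to be the control of the exceptional locus $Z$ on which the fiber dimension leaps from $\le 2$ to $4$: it is precisely the inductive bound $\dim V(g_d)\le 4n-6$ on the zero set of the leading coefficient that keeps $p^{-1}(Z)$ from exceeding dimension $4n-2$, and this is the step that genuinely exploits the polynomial nature of $f$ (so that $g_d$ is again a polynomial to which the proposition applies). A secondary technical point to be handled carefully is the passage between quaternionic and real coordinates needed to legitimately invoke the semialgebraic dimension theorems for $p$ and its fibers.
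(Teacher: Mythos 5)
Your proof is correct and follows essentially the same route as the paper's: induction on $n$ after writing $f=\sum_{k}x_1^k g_k(x')$, the one-variable Fundamental Theorem of Algebra producing nonempty fibers of dimension at most $2$ of the projection $p$ over the dense set where $g_d\neq0$, and the inductive bound $\dim_\R V(g_d)\leq 4n-6$ keeping the locus of four-dimensional fibers under control, with the same treatment of the degenerate case $d=0$. The only (harmless) variation is that you stratify by the common zero set $Z$ of all the coefficients rather than by $W=V(g_d)$ itself, which the paper handles with the cruder fiber bound $\dim_\R\pi^{-1}(y')\leq4$ over $W$; both versions conclude by the same semialgebraic fiber-dimension counting, for which the paper invokes the real-algebraic Sard theorem where you cite Bochnak--Coste--Roy.
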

\begin{proof}
The equation $f=0$ in $n$ quaternionic variables is equivalent to a system of four real polynomial equations in $4n$ real variables, so $V(f)$ is a real algebraic subset of $\R^{4n}$.

Let us prove \eqref{eq:estimate-HH} by induction on $n\geq1$. If $n=1$, this is an immediate consequence of the Fundamental Theorem of Algebra for quaternions \cite{Ni1941}, FTA for short.
Let $n\geq2$ and let $f:\HH^n\to\HH$ be a nonconstant polynomial function. We can assume that $f$ has the following form: $f(x)=\sum_{h=0}^dx_1^hp_h(x')$, where $d\geq0$, each $p_h:\HH^{n-1}\to\HH$ is a polynomial function in the variables $x':=(x_2,\ldots,x_n)$ and $p_d$ does not vanish identically on $\HH^{n-1}$. Note that the zero set $W$ of $p_d$ in $\HH^{n-1}$ is either empty if $p_d$ is constant or, by induction, $\dim_\R(W)\leq4(n-1)-2=4n-6$ if $p_d$ is not constant; in particular, $W\neq\HH^{n-1}$. 

Assume that $d\ge1$ and let $y'\not\in W$. Then the FTA implies that $V(f)\cap(\HH\times\{y'\})$ is either a nonempty finite set $F$ or a nonempty finite union $S$ of $2$-spheres of $\HH=\R^4$ or a nonempty set of the form $F\cup S$. In particular, $V(f)\neq\emptyset$. Let $\pi:V(f)\to\HH^{n-1}$ be the projection $\pi(x_1,x'):=x'$. We have just proven that $\pi^{-1}(y')$ is nonempty and $\dim_\R(\pi^{-1}(y'))\leq2$ for all $y'\not\in W$. Evidently, if $y'\in W$, then $\pi^{-1}(y')\subset\HH\times\{y'\}$ and hence $\dim_\R(\pi^{-1}(y'))\leq4$. By the version of Sard's theorem in real algebraic geometry, we deduce at once that
\[
4n-4=\dim_\R(\HH^{n-1})\leq\dim_\R(\pi^{-1}(\HH^{n-1}\setminus W))\leq\dim_\R(\HH^{n-1})+2=4n-2
\]
and
\[
\dim_\R(\pi^{-1}(W))\leq\dim_\R(W)+4\leq(4n-6)+4=4n-2.
\]
Since $\dim_\R(V(f))=\max\{\dim_\R(\pi^{-1}(\HH^{n-1}\setminus W)),\dim_\R(\pi^{-1}(W))\}$, we are done.

Assume now that $d=0$. In this case it must be $n>1$, since $f$ is nonconstant, and $f$ coincides with the polynomial $p_0(x')$ in the variables $x_2,\ldots, x_n$. From the previous argument applied to $p_0$, we obtain that $V(f)$ is nonempty and 
$4(n-1)-4\leq\dim_\R(V(p_0)\cap\HH^{n-1})\leq4(n-1)-2$,
from which it follows again that
\[
4(n-1)\leq\dim_\R(V(f))\leq4(n-1)+2.
\]
\end{proof}

\begin{example}\label{ex:rag-HH}
Estimates \eqref{eq:estimate-HH} are sharp. Suppose that  $n\geq2$. Consider the polynomial functions $f_1,f_2,f_3:\HH^n\to\HH$ defined by setting
\[
f_1(x):=x_1, \qquad f_2(x):=x_1^2+x_2^2+1, \qquad f_3(x):=x_1^2+1, 
\]
for all $x=(x_1,x_2,\ldots,x_n)\in\HH^n$. Evidently, $\dim_\R(V(f_1))=4n-4$ and $\dim_\R(V(f_3))=4n-2$. Let us study $V(f_2)$. Note that $x_2^2+1$ is a positive real number if and only if $x_2$ belongs to $3$-dimensional semi-algebraic subset $S$ of $\HH$ defined by
\[
S:=\R\cup\{x_2\in\HH:\mr{Re}(x_2)=0,n(x_2)<1\}
\]
Let $\pi:V(f_2)\to\HH^{n-1}$ be the projection $\pi(x,x'):=x'$, where $x':=(x_2,\ldots,x_n)$. Note that $\pi^{-1}(y')$ consists of a single point if $y'\in\cS_A\times\HH^{n-2}$, a $2$-sphere if $y'\in S\times\HH^{n-2}$ and two distinct points if $y'\in\HH^{n-1}\setminus((\cS_A\cup S)\times\HH^{n-2})$. As a consequence, we have:
\begin{itemize}
 \item $\dim_\R(\pi^{-1}(\cS_A\times\HH^{n-2}))=\dim_\R(\cS_A\times\HH^{n-2})+0=2+4(n-2)=4n-6$,
 \item $\dim_\R(\pi^{-1}(S\times\HH^{n-2}))=\dim_\R(S\times\HH^{n-2})+2=3+4(n-2)+2=4n-3$,
 \item $\dim_\R(\pi^{-1}(\HH^{n-1}\setminus((\cS_A\cup S)\times\HH^{n-2})))=\dim_\R(\HH^{n-1})+0=4(n-1)=4n-4$.
 \end{itemize}
It follows that $\dim_\R(V(f_2))=\max\{4n-6,4n-3,4n-4\}=4n-3$. \bs
\end{example}

Proposition \ref{prop:dim-HH} remains valid over the octonion algebra $\oo$. The proof is identical,
thanks to the fact that the Fundamental Theorem of Algebra still holds in this case (see \cite{Jou,Serodio2007}).

\begin{proposition}\label{prop:dim-oo}
Let $f:\oo^n\to\oo$ be a nonconstant polynomial. Then $V(f)$ is a nonempty real algebraic subset of $\R^{8n}$ and it holds
\begin{equation}\label{eq:estimate-oo}
8n-8\leq\dim_\R(V(f))\leq8n-2.
\end{equation}
\end{proposition}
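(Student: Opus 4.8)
The plan is to mirror the inductive argument of Proposition~\ref{prop:dim-HH} almost verbatim, replacing the real dimension $4$ of $\HH$ by the dimension $8$ of $\oo$, and the dimension $2$ of the sphere $\cS_\HH\cong S^2$ by the dimension $6$ of $\cS_\oo\cong S^6$. Since $\oo\cong\R^8$ as a real vector space, the single equation $f=0$ in $n$ octonionic variables is equivalent to a system of eight real polynomial equations in $8n$ real variables, so $V(f)$ is automatically a real algebraic subset of $\R^{8n}$; it then remains to prove nonemptiness and the estimate \eqref{eq:estimate-oo} by induction on $n$.

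For the base case $n=1$ I would invoke the Fundamental Theorem of Algebra for octonions \cite{Jou,Serodio2007}: a nonconstant one-variable octonionic polynomial has a zero, and, by the one-variable slice theory, its zero set is a nonempty finite union of isolated points (each of real dimension $0$) and spherical zeros $\cS_x$ (each a copy of $\cS_\oo\cong S^6$, of real dimension $6$). Hence $0\le\dim_\R(V(f))\le6$, which is precisely \eqref{eq:estimate-oo} for $n=1$.

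For the inductive step ($n\ge2$) I would write $f(x)=\sum_{h=0}^d x_1^h p_h(x')$ with $x'=(x_2,\ldots,x_n)$ and $p_d\not\equiv0$, and set $W\subset\oo^{n-1}$ to be the zero set of $p_d$, which is either empty or, by the inductive hypothesis, of dimension $\le 8(n-1)-2=8n-10$. Assuming $d\ge1$, for each $y'\notin W$ the octonionic FTA forces $V(f)\cap(\oo\times\{y'\})$ to be a nonempty union of points and $6$-spheres, so the fibre of the projection $\pi\colon V(f)\to\oo^{n-1}$, $\pi(x_1,x'):=x'$, is nonempty with $\dim_\R\pi^{-1}(y')\le6$, whereas over $y'\in W$ only the trivial bound $\dim_\R\pi^{-1}(y')\le8$ is available. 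Applying the real-algebraic version of Sard's theorem to $\pi$ then gives
\[
8n-8=\dim_\R(\oo^{n-1})\le\dim_\R\pi^{-1}(\oo^{n-1}\setminus W)\le(8n-8)+6=8n-2
\]
together with $\dim_\R\pi^{-1}(W)\le\dim_\R(W)+8\le(8n-10)+8=8n-2$; since $\dim_\R(V(f))$ is the maximum of these two quantities, the bound follows. The degenerate case $d=0$ reduces $f$ to a nonconstant polynomial $p_0$ in $x'$, for which the inductive hypothesis yields $8(n-1)-8\le\dim_\R(V(p_0))\le8(n-1)-2$; freeing the variable $x_1$ adds $8$ to the dimension and again produces $8n-8\le\dim_\R(V(f))\le8n-2$.

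The only point requiring genuine care—and the step I expect to be the main obstacle—is the description of the one-variable fibres: one must know not merely that $V(f)\cap(\oo\times\{y'\})$ is nonempty (the octonionic FTA) but that it is a finite union of points and $6$-spheres of real dimension at most $6$, with no higher-dimensional components. This is where the nonassociativity of $\oo$ could in principle interfere; it is controlled by the one-variable slice theory, in which every nonisolated zero is necessarily a spherical zero $\cS_x$ and hence a copy of $\cS_\oo\cong S^6$. Once this structural fact is secured, the dimension bookkeeping is identical to the quaternionic case treated in Proposition~\ref{prop:dim-HH}.
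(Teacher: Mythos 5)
Your proposal is correct and is essentially the paper's own argument: the paper proves Proposition~\ref{prop:dim-oo} by remarking that the proof of Proposition~\ref{prop:dim-HH} carries over verbatim with $4$ replaced by $8$ and $\cS_\HH$ by $\cS_\oo$, thanks to the octonionic Fundamental Theorem of Algebra \cite{Jou,Serodio2007}, which is exactly the adaptation you carry out. The structural fact you flag as the main obstacle (fibres are finite unions of points and $6$-spheres) is indeed the same fact the quaternionic proof uses, and it is supplied by the one-variable theory and the cited references, so your proof is complete.
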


\begin{remark}
Suppose that $n\geq2$. Let $f_1,f_2,f_3:\oo^n\to\oo$ be polynomial functions defined as in Example \ref{ex:rag-HH}. Repeating the same considerations we made in the mentioned example, we obtain that $\dim_\R(V(f_1))=8n-8$, $\dim_\R(V(f_2))=8n-3$ and $\dim_\R(V(f_3))=8n-2$. \bs 
\end{remark}

A weaker version of Proposition \ref{prop:dim-HH} is valid also over all Clifford algebra $\R_m$ with $m\geq3$.

Choose $m\geq3$ and equip $\R_m$ with the Clifford conjugation. Given any $a\in\R_m$, we write $a=\sum_{K\in\pa(m)}a_Ke_K$ for $a_K\in\R$. Recall that $t(a)=2\sum'_Ka_Ke_K$ and $n(a)=\sum'_K\langle a,ae_K\rangle e_K$, where $\sum'_K=\sum_{K\in\pa(m),|K|\equiv0,3\;(\text{mod } 4)}$ and $\langle\cdot,\cdot\rangle$ is the standard Euclidean scalar product on $\R_m=\R^{2^m}$, see Section 3.2 of \cite{GHS}. As a consequence, $a=\sum_{K\in\pa(m)}a_Ke_K$ belongs to $Q_{\R_m}$ if and only if $a_K=0$ and $\langle a,ae_K\rangle=0$ for all $K\in\pa(m)\setminus\{\emptyset\}$ such that $|K|\equiv0,3\;(\text{mod }4)$. Moreover, $a=\sum_{K\in\pa(m)}a_Ke_K$ belongs to $\cS_{\R_m}$ if and only if $a\in Q_{\R_m}$, $a_\emptyset=0$ and $\langle a,a\rangle=1$. It turns out that $Q_{\R_m}$ and $\cS_{\R_m}$ are real algebraic subsets of $\R_m=\R^{2^m}$. Since $Q_{\R_m}\setminus\R$ is semi-algebraically homeomorphic to $\cS_{\R_m}\times\{(\alpha,\beta)\in\R^2:\beta>0\}$, we have that
\begin{equation}\label{eq:qmsm} 
\dim_\R(\cS_{\R_m})=\dim_\R(Q_{\R_m})-2.
\end{equation}
Note that $Q_{\R_3}\subset Q_{\R_m}$; moreover, $\dim_\R(Q_{\R_3})=6$, see Examples 1(3) of \cite{GhPe_Trends} 
and \cite[Example 1.15]{AlgebraSliceFunctions}. It follows that $\dim_\R(Q_{\R_m})\geq6$. Denote $\R^{m+1}$ the real vector subspace of $\R_m$ of paravectors $a=a_0+\sum_{h=1}^ma_he_h$, where $a_0:=a_\emptyset$ and $a_h:=a_{\{h\}}$ (and $e_h=e_{\{h\}}$). It is easy to see that $\R^{m+1}$ is contained in $Q_{\R_m}$.

The Fundamental Theorem of Algebra still holds for one variable polynomials with paravector Clifford coefficients (see \cite[Examples 9(1)]{AIM2011}). This fact allows to generalize the first statement of Proposition \ref{prop:dim-HH} to this setting.

\begin{proposition}
Let $f:(Q_{\R_m})^n\to\R_m$ be a nonconstant polynomial with paravector coefficients. Then $V(f)$ is a nonempty real algebraic subset of $(Q_{\R_m})^n\subset\R^{n2^m}$.
\end{proposition}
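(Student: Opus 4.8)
The plan is to separate the two assertions---algebraicity and nonemptiness---and to handle nonemptiness by a reduction to the one variable Fundamental Theorem of Algebra for paravector polynomials, quoted from \cite[Examples 9(1)]{AIM2011}. This reduction is more elementary than the inductive argument of Proposition \ref{prop:dim-HH}, since here only nonemptiness (and not a dimension estimate) is required.

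For algebraicity I would argue exactly as in the first lines of the proof of Proposition \ref{prop:dim-HH}. Each monomial $x^\ell a$ is the restriction to $(Q_{\R_m})^n$ of a polynomial map $(\R_m)^n=\R^{n2^m}\to\R_m$, because the product of $\R_m$ is bilinear; hence, writing $f=\sum_{K\in\pa(m)}f_Ke_K$ in the fixed basis, each component $f_K\colon\R^{n2^m}\to\R$ is a real polynomial. Since $Q_{\R_m}$ was already seen to be a real algebraic subset of $\R_m=\R^{2^m}$, the product $(Q_{\R_m})^n$ is real algebraic in $\R^{n2^m}$, and therefore
\[
V(f)=(Q_{\R_m})^n\cap\bigcap_{K\in\pa(m)}\{f_K=0\}
\]
is a finite intersection of real algebraic sets, hence real algebraic.

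For nonemptiness, the key idea is to freeze all but one variable at \emph{real} values. Fix the given representation $f=\sum_{\ell\in L}x^\ell a_\ell$ with $a_\ell\in\R^{m+1}$ paravectors and $L\subset\N^n$ finite; since $f$ is nonconstant there is $\ell\in L$ with $a_\ell\neq0$ and $\ell\neq0$, so some index $k$ satisfies $d:=\max\{\ell_k:\ell\in L,\,a_\ell\neq0\}\geq1$. For reals $(r_i)_{i\neq k}$, substituting $x_i=r_i$ for $i\neq k$ and using that real scalars are central in $\R_m$, I would collect the result as a polynomial in the single variable $x_k$:
\[
\sum_{h=0}^{d}x_k^{\,h}\,b_h,\qquad b_h:=\sum_{\ell\in L,\,\ell_k=h}\Big(\textstyle\prod_{i\neq k}r_i^{\ell_i}\Big)a_\ell .
\]
Each $b_h$ is a real linear combination of paravectors, hence again a paravector, so $g(x_k):=\sum_{h}x_k^{\,h}b_h$ is a one variable polynomial with paravector coefficients. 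Choosing the reals so that the leading coefficient $b_d$ is nonzero---possible because the monomials $\prod_{i\neq k}r_i^{\ell_i}$ attached to distinct multi-indices are linearly independent, so $b_d$ is a nonzero $\R^{m+1}$-valued polynomial in $(r_i)_{i\neq k}$---makes $g$ nonconstant. The paravector Fundamental Theorem of Algebra then yields $\xi\in Q_{\R_m}$ with $g(\xi)=0$, and the point obtained by putting $\xi$ in the $k$-th slot and the chosen reals elsewhere lies in $V(f)$.

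The hard part is precisely the choice just highlighted: the Fundamental Theorem of Algebra is available only for paravector coefficients, so one cannot simply fix the remaining variables at arbitrary points of $Q_{\R_m}$---generic such values would turn the coefficients $b_h$ into non-paravector elements of $\R_m$, for which the theorem may fail. Restricting those variables to $\R$ is exactly what keeps the $b_h$ inside the paravector space $\R^{m+1}$, and the linear independence remark is what guarantees that this restriction does not accidentally drop the degree to $0$. Everything else is routine.
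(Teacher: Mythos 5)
Your proof is correct, and its core mechanism is the same as the paper's: nonemptiness is obtained by freezing all variables but one at \emph{real} values---which keeps the coefficients of the specialized one-variable polynomial inside the paravector space $\R^{m+1}$---and then invoking the Fundamental Theorem of Algebra for paravector polynomials from \cite[Examples 9(1)]{AIM2011}. The differences are organizational and, in one respect, an improvement. The paper argues by induction on $n$, expanding $f(x)=\sum_{h=0}^d x_1^h p_h(x')$ and recursing to $n-1$ variables when $d=0$; you avoid induction by directly selecting a variable $x_k$ in which $f$ has positive degree, which exists precisely because $f$ is nonconstant. More substantively, the paper claims that for \emph{every} real point $y'$ the specialization $\sum_{h=0}^d x_1^h p_h(y')$ is nonconstant, which is not literally true: for instance $f(x_1,x_2)=x_1x_2+1$ specializes to the nonvanishing constant $1$ at $y'=0$. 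What is true, and what your linear-independence argument supplies explicitly, is that the leading coefficient $b_d$, viewed as an $\R^{m+1}$-valued polynomial in the frozen real variables, is not identically zero, so a real point with $b_d\neq 0$ exists and the specialized polynomial has genuine degree $d\geq 1$. You also spell out the algebraicity of $V(f)$ (real polynomial components on $\R^{n2^m}$, intersected with the real algebraic set $(Q_{\R_m})^n$), which the paper leaves implicit, borrowing it from the quaternionic analogue. In short: same route as the paper, with a small genuine gap in the paper's quantifier repaired by your genericity argument.
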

\begin{proof}
We proceed by induction on $n$. If $n=1$, the FTA \cite[Examples 9(1)]{AIM2011} implies the thesis. Assume $n\ge2$ and write $f(x)=\sum_{h=0}^dx_1^hp_h(x')$, where $d\geq0$, each $p_h:(Q_{\R_m})^{n-1}\to\R_m$ is a polynomial function in the variables $x':=(x_2,\ldots,x_n)$ and $p_d$ does not vanish identically on $(Q_{\R_m})^{n-1}$. If $d=0$, then $f(x)=p_0(x')$ is nonconstant and therefore $V(f)\ne\emptyset$ by the induction assumption. If $d\ge1$, then for every fixed point $y'=(t_2,\ldots,t_n)\in\R\times\cdots\times\R\subset(Q_{\R_m})^{n-1}$, the function $f(x_1,y')=\sum_{h=0}^dx_1^hp_h(y')$ is a nonconstant polynomial in the variable $x_1$ with paravector coefficients. Therefore $V(f)$ is nonempty.
\end{proof}

When $n>1$, the zero set $V(f)$ is infinite for every nonconstant polynomial with paravector coefficients. It is difficult to obtain general estimates for the real dimension of $V(f)$. The next proposition suggests the types of results one can expect.

\begin{proposition}\label{prop:clifford_m}
Let $f:Q_{\R_m}\times(\R^{m+1})^{n-1}\to\R_m$ be a function of the form
\begin{equation}\label{eq:fx1x'}
\textstyle
f(x_1,x')=x_1^d+\sum_{h=0}^{d-1}x_1^hp_h(x')
\end{equation}
for all $(x_1,x')\in Q_{\R_m}\times(\R^{m+1})^{n-1}$, where $d\geq1$ and each $p_h:(Q_{\R_m})^{n-1}\to\R_m$ is a polynomial function in the variables $x'=(x_2,\ldots,x_n)$ such that 
\begin{equation}\label{eq:p_h}
\text{$p_h((\R^{m+1})^{n-1})\subset\R^{m+1}\;$ for all $h\in\{0,\ldots,d-1\}$;} 
\end{equation}
in the case $n=1$, \eqref{eq:p_h} means that the $p_h$'s are elements of $\R^{m+1}$. Then $f^{-1}(0)$ is a nonempty real algebraic subset of $\R^{2^m+(m+1)(n-1)}$ contained in $Q_{\R_m}\times(\R^{m+1})^{n-1}$, and it holds
\begin{equation}\label{eq:estimate-R_n}
(m+1)(n-1)\leq\dim_\R(f^{-1}(0))\leq(m+1)(n-1)+\dim_\R(Q_{\R_m})-2.
\end{equation} 
\end{proposition}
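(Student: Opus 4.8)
The plan is to reduce the $n$–variable estimate to a one–variable zero–set bound via the projection onto the last $n-1$ variables, following the scheme of Proposition~\ref{prop:dim-HH}; the genuinely new input is the one–variable estimate, which I establish first. I begin by observing that $f$ is the restriction to $Q_{\R_m}\times(\R^{m+1})^{n-1}$ of a polynomial map $\R^{2^m}\times\R^{(m+1)(n-1)}\to\R_m=\R^{2^m}$, so that $f=0$ is a system of $2^m$ real polynomial equations; since $Q_{\R_m}$ is a real algebraic subset of $\R_m$ and $(\R^{m+1})^{n-1}$ is a linear subspace, these equations together with the defining equations of the domain exhibit $f^{-1}(0)$ as a real algebraic subset of $\R^{2^m+(m+1)(n-1)}$ contained in $Q_{\R_m}\times(\R^{m+1})^{n-1}$. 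Both the nonemptiness and the lower bound come from the projection $\pi\colon f^{-1}(0)\to(\R^{m+1})^{n-1}$, $\pi(x_1,x'):=x'$. For fixed $x'\in(\R^{m+1})^{n-1}$, hypothesis~\eqref{eq:p_h} makes $g_{x'}(x_1):=f(x_1,x')=x_1^d+\sum_{h<d}x_1^hp_h(x')$ a monic one–variable polynomial of degree $d\ge1$ with paravector coefficients, which by the Fundamental Theorem of Algebra for paravector coefficients \cite[Examples 9(1)]{AIM2011} has a zero in $Q_{\R_m}$. Hence $\pi$ is surjective, so $f^{-1}(0)\ne\emptyset$ and, as dimension cannot grow under a semialgebraic map, $\dim_\R f^{-1}(0)\ge\dim_\R(\R^{m+1})^{n-1}=(m+1)(n-1)$.

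The core step is the uniform fiber bound $\dim_\R\pi^{-1}(x')=\dim_\R\big(V(g_{x'})\cap Q_{\R_m}\big)\le\dim_\R(Q_{\R_m})-2$. Writing $g=g_{x'}$ through its inducing stem function $G=G_1+\ui G_2$, the components $G_1(z),G_2(z)$ are $\R^{m+1}$–valued (the coefficients of $g$ are paravectors and $p_k,q_k$ are real), and by the very definition of a slice function a point $x_1=\alpha+\beta J$ with $J\in\cS_{\R_m}$ and $z=\alpha+\ui\beta$ is a zero precisely when $JG_2(z)=-G_1(z)$. The decisive fact is that every nonzero paravector $b$ is invertible in $\R_m$, with $b^{-1}=b^c/n(b)$. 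Thus if $G_2(z)\ne0$ the equation forces the single value $J=-G_1(z)G_2(z)^{-1}$, so this portion of the zero set is the image of a subset of the two–dimensional $(\alpha,\beta)$–half–plane and has dimension at most $2$; whereas $G_2(z)=0$ yields zeros only when $G_1(z)=0$ as well, i.e.\ when $G(z)=0$, and since the scalar component of $G$ is a monic polynomial of degree $d$ in $z$ this happens for finitely many $z$, each non-real one contributing a whole sphere $\cS_{x_1}=\alpha+\beta\,\cS_{\R_m}$ of dimension $\dim_\R(\cS_{\R_m})=\dim_\R(Q_{\R_m})-2$ by~\eqref{eq:qmsm}. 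Adding the finitely many real zeros (dimension $0$) and using $\dim_\R(Q_{\R_m})\ge6$, the maximum of these contributions is $\dim_\R(Q_{\R_m})-2$, as claimed.

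Finally I would feed the fiber bound into the real–algebraic version of Sard's theorem applied to $\pi$, exactly as in Proposition~\ref{prop:dim-HH}, to obtain $\dim_\R f^{-1}(0)\le(m+1)(n-1)+\max_{x'}\dim_\R\pi^{-1}(x')\le(m+1)(n-1)+\dim_\R(Q_{\R_m})-2$, which is the right–hand inequality in~\eqref{eq:estimate-R_n}. I expect the main obstacle to be the one–variable estimate of the previous paragraph: the point is that, off the finitely many fully degenerate spheres, each sphere carries at most one zero, and this relies essentially on the invertibility of nonzero \emph{paravectors} (invertibility of general elements of $\R_m$ fails), which is exactly what the hypotheses~\eqref{eq:p_h} and the restriction $x_1\in Q_{\R_m}$ are there to guarantee. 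A secondary technical point to check is the semialgebraicity of all the sets involved, so that the dimension inequalities under $\pi$ are legitimate.
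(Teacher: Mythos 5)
Your proof is correct, and its global scheme --- realizing $f^{-1}(0)$ as a real algebraic subset of $\R^{2^m+(m+1)(n-1)}$, projecting via $\pi$ onto $(\R^{m+1})^{n-1}$, getting nonemptiness of every fiber from the paravector Fundamental Theorem of Algebra of \cite[Examples 9(1)]{AIM2011}, and converting fiber bounds into \eqref{eq:estimate-R_n} by semialgebraic dimension theory as in Proposition \ref{prop:dim-HH} --- is exactly the paper's. The one genuine difference is local: where the paper simply cites \cite[Examples 9(1)]{AIM2011} for the structure of each fiber (a nonempty finite set, a finite union of `spheres' semialgebraically homeomorphic to $\cS_{\R_m}$, or a union of both, hence of dimension between $0$ and $\dim_\R(\cS_{\R_m})$), you re-derive a fiber bound from scratch via the stem components $G_1,G_2$ and the invertibility of nonzero paravectors. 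Your bound is formally weaker: on the locus $G_2(z)\neq0$ you conclude only that the corresponding zeros form the semialgebraic image of a subset of the $(\alpha,\beta)$-half-plane, hence a set of dimension at most $2$, without proving finiteness as the cited structure result asserts; but this suffices because $\dim_\R(\cS_{\R_m})=\dim_\R(Q_{\R_m})-2\geq4>2$ by \eqref{eq:qmsm} and $\dim_\R(Q_{\R_m})\geq6$, a point you correctly flag. What this buys is self-containedness --- you need nothing beyond the scalar component of $G$ being a monic degree-$d$ complex polynomial (finitely many spherical zeros) and the invertibility of nonzero paravectors, which is precisely what hypothesis \eqref{eq:p_h} and the restriction $x_1\in Q_{\R_m}$ guarantee --- at the cost of a slightly coarser fiber description; note also that your half-plane analysis simultaneously settles the case $n=1$, which the paper likewise disposes of by citation.
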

\begin{proof}
If $n=1$, the statement follows immediately from \eqref{eq:qmsm}, \eqref{eq:p_h} and Examples 9(1) of \cite{AIM2011}. Suppose that $n\geq2$. Let $y'\in(\R^{m+1})^{n-1}$. Using \eqref{eq:qmsm}, \eqref{eq:p_h} and Examples 9(1) of \cite{AIM2011} again, we have that $f^{-1}(0)\cap(Q_{\R_m}\times\{y'\})$ is either a nonempty finite set $F$ or a nonempty finite union $S$ of `spheres' semi-algebraically homeomorphic to $\cS_{\R_m}$ or a nonempty set of the form $F\cup S$. In particular, $f^{-1}(0)\neq\emptyset$ and, if $\pi:f^{-1}(0)\to(\R^{m+1})^{n-1}$ is the projection $\pi(x_1,x'):=x'$, then the fibers of $\pi$ are nonempty real algebraic sets of dimension ranging from $0$ to $\dim_\R(\cS_{\R_m})$. This fact and \eqref{eq:qmsm} completes the proof.
\end{proof}

\begin{example}
Suppose that $n\geq2$. Choose $m\geq 3$. Set $N:=(m+1)(n-1)$ and $M:=\dim_\R(Q_{\R_m})$. Define the functions  $f_1,f_2,f_3:Q_{\R_m}\times(\R^{m+1})^{n-1}\to\R_m$ as in Example \ref{ex:rag-HH}. Note that each of these functions satisfies \eqref{eq:fx1x'} and \eqref{eq:p_h}. Evidently, $\dim_\R(f_1^{-1}(0))=N$ and $\dim_\R(f_3^{-1}(0))=N+M-2$.
Let us study $f_2^{-1}(0)$ following the strategy used in Example \ref{ex:rag-HH} to investigate $V(f_2)$. Note that, if $x_2\in\R^{m+1}$, then $x_2^2+1$ is a positive real number if and only if $x_2\in S$, where $S$ is the $m$-dimensional semi-algebraic set defined by
\[\textstyle
S:=\R\cup\big\{\sum_{h=0}^ma_he_h\in\R^{m+1}:a_0=0,\,\sum_{h=1}^ma_h^2<1\big\}.
\]
Let $\partial S$ be the $(m-1)$-sphere $\{\sum_{h=0}^ma_he_h\in\R^{m+1}:a_0=0,\,\sum_{h=1}^ma_h^2=1\}$ and let $\pi:f_2^{-1}(0)\to(\R^{m+1})^{n-1}$ be the projection $\pi(x,x'):=x'$, where $x':=(x_2,\ldots,x_n)$. Note that $\pi^{-1}(y')$ consists of a single point if $y'\in\partial S\times(\R^{m+1})^{n-2}$, a `sphere' semi-algebraically homeomorphic to $\cS_{\R_m}$ if $y'\in  S\times(\R^{m+1})^{n-2}$ and two distinct points if $y'\in(\R^{m+1})^{n-1}\setminus((S\cup\partial S)\times(\R^{m+1})^{n-2})$. As a consequence, we have:
\begin{itemize}
 \item $\dim_\R(\pi^{-1}(\partial S\times(\R^{m+1})^{n-2}))=(m-1)+(m+1)(n-2)<N$,
 \item $\dim_\R(\pi^{-1}(S\times(\R^{m+1})^{n-2}))=m+(m+1)(n-2)+\dim_\R(\cS_{\R_m})=N-1+\dim_\R(\cS_{\R_m})$,
 \item $\dim_\R(\pi^{-1}((\R^{m+1})^{n-1}\setminus((S\cup\partial S)\times(\R^{m+1})^{n-2})))=N$.
\end{itemize}
Thanks to \eqref{eq:qmsm} and the fact that $M=\dim_\R(Q_{\R_m})\geq6$, it follows that
\[
N-1+\dim_\R(\cS_{\R_m})=N-1+M-2=N+M-3\geq N+3>N.
\]
As a consequence, we have that $\dim_\R(f_2^{-1}(0))=\dim_\R(\pi^{-1}(S\times(\R^{m+1})^{n-2}))=N+M-3$. \bs
\end{example}


\subsection{One variable interpretation of slice regularity}

We now show that the condition of slice regularity in several variables has an interpretation in terms of slice regularity in one variable. More precisely, we show that the slice regularity of a slice function $f:\OO_D\to A$ is equivalent to the slice regularity of all its $2^n-1$ truncated spherical derivatives $\SD_\epsilon f$ w.r.t.\ the single variable $\mr{x}_h$, where $h-1$ is the order of $\SD_\epsilon$. The notion of truncated spherical $\epsilon$-derivatives was introduced in above Definition \ref{def:D_epsilon}.

Let $g:\OO_D\to A$ be a function and let $h\in\{1,\ldots,n\}$. Recall that, by Definition \ref{def:gy}, $g$ is a slice function w.r.t.\ $\mr{x}_h$ if, for each $y=(y_1,\ldots,y_n)\in\OO_D$, the restriction function $g_h^{\sss(y)}:\OO_{D,h}(y)\to A$, sending $x_h$ into $g_h^{\sss(y)}(x_h):=g(y_1,\ldots,y_{h-1},x_h,y_{h+1},\ldots,y_n)$, is a slice function. Let us specialize this definition to the slice regular case.

\begin{definition}\label{def:gy-sr}
Let $g:\OO_D\to A$ be a function and let $h\in\{1,\ldots,n\}$. We say that $g$ is a \emph{slice regular function w.r.t.\ $\mr{x}_h$} if, for each $y\in\OO_D$, the function $g_h^{\sss(y)}:\OO_{D,h}(y)\to A$ is a slice regular function. \bs
\end{definition}

\begin{theorem}\label{prop:regularwrtxh}
Assume that $n\geq2$. Let $f:\OO_D\to A$ be a slice function. Then $f$ is slice regular if and only if $f$ is a slice regular function w.r.t.\ $\mr{x}_1$ and, for each $h\in\{2,\ldots,n\}$ and each function $\epsilon:\{1,\ldots,h-1\}\to\{0,1\}$, the truncated spherical $\epsilon$-derivative $\SD_\epsilon f$ of $f$ is a slice regular function w.r.t.\ $\mr{x}_h$.
\end{theorem}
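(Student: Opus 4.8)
The plan is to reduce the whole equivalence to the component-wise Cauchy--Riemann characterisation of Proposition~\ref{prop:slice-regularity}$(\mr{ii})$: writing $F=\sum_{K\in\pa(n)}e_KF_K$ for the stem function inducing $f$, slice regularity of $f$ is equivalent to the equations \eqref{eq:K notni h} holding on $D$ for every $K'\in\pa(n)$ and every $h\notin K'$. The bridge to the one-variable statements is the explicit formula~\eqref{eq:SD}. Fixing $y\in\OO_D$ and $h$, setting $K:=\epsilon^{-1}(1)\subset\{1,\ldots,h-1\}$, and substituting the varying variable $x_h=\alpha+J\beta$ into~\eqref{eq:SD}, I would split the sum according to whether $h$ lies in the index set (using that $h$ is the least of $\{h\}\cup H'$ to pull $J$ to the left of the ordered product). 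This identifies the one-variable slice function $(\SD_\epsilon f)_h^{\sss(y)}$ as induced by the stem function $G_1+iG_2$ with
\[
G_1=\beta_K^{-1}\sum_{H'\subset\{h+1,\ldots,n\}}[I_{H'},F_{H'\cup K}],\qquad
G_2=\beta_K^{-1}\sum_{H'\subset\{h+1,\ldots,n\}}[I_{H'},F_{H'\cup K\cup\{h\}}],
\]
evaluated at $z=(w_1,\ldots,w_{h-1},\alpha+i\beta,w_{h+1},\ldots,w_n)$, where $w_\ell=\alpha_\ell+i\beta_\ell$ and $I_\ell$ are the data of $y$. By Proposition~\ref{prop:slicewrtx1} this is a genuine slice function in $\mr{x}_h$, so its regularity is governed by the two scalar equations $\partial_{\alpha_h}G_1=\partial_{\beta_h}G_2$ and $\partial_{\beta_h}G_1=-\partial_{\alpha_h}G_2$.

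The forward implication is then immediate. If $f$ is slice regular, the equations \eqref{eq:K notni h} hold for every $K'$ and every $h\notin K'$; applying them with $K'=H'\cup K$ for each $H'\subset\{h+1,\ldots,n\}$ makes every summand of $\partial_{\alpha_h}G_1-\partial_{\beta_h}G_2$ and of $\partial_{\beta_h}G_1+\partial_{\alpha_h}G_2$ vanish, so each $\SD_\epsilon f$ is slice regular w.r.t.\ $\mr{x}_h$; the degenerate case $K=\emptyset$, $h=1$ gives that $f$ itself is slice regular w.r.t.\ $\mr{x}_1$.

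The substance of the theorem is the converse, and the main obstacle is exactly to pass from the two \emph{summed} Cauchy--Riemann identities back to the \emph{term-by-term} equations \eqref{eq:K notni h}. Here I would fix a point $z_0\in D$ and note that, as $y$ ranges over the representatives of $\OO_D$ projecting to $z_0$, the units $I_{h+1},\ldots,I_n$ vary freely over $\cS_A$ while the values $F_\bullet(z_0)$ stay fixed. The regularity hypothesis thus yields
\[
\sum_{H'\subset\{h+1,\ldots,n\}}[I_{H'},a_{H'}]=0\quad\text{for all }(I_{h+1},\ldots,I_n)\in(\cS_A)^{n-h},
\]
with $a_{H'}=(\partial_{\alpha_h}F_{H'\cup K}-\partial_{\beta_h}F_{H'\cup K\cup\{h\}})(z_0)$ (and similarly for the second equation). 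To separate, I would use that $\cS_A$ is closed under negation: replacing $I_\ell$ by $-I_\ell$ for $\ell\in L$ multiplies the $H'$-th summand by $(-1)^{|H'\cap L|}$, so multiplying by $(-1)^{|L\cap M|}$ and summing over $L$ collapses the double sum, via Lemma~\ref{lem:combinatorial}, to $2^{\,n-h}[I_M,a_M]=0$, whence $a_M=0$ by~\eqref{eq:uHv=w}. This is precisely the separation maneuver already exploited in Proposition~\ref{prop:representation} and Corollary~\ref{cor:sliceness-intrinsic}, and I expect it to be the delicate step.

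Two bookkeeping points then close the argument. First, the units can be varied freely only at non-real coordinates; but if $\ell\in H'$ with $(z_0)_\ell\in\R$, then both $F_{H'\cup K}$ and $F_{H'\cup K\cup\{h\}}$ are odd in the $\ell$-th variable, so by~\eqref{eq:stem} their $\alpha_h$- and $\beta_h$-derivatives vanish at $z_0$ and the corresponding $a_{H'}$ is automatically zero; hence the separation need only be run over the $H'$ supported on non-real indices, exactly where it is valid. Likewise, although $\SD_\epsilon f$ is defined only off $\R_K$, at a point $z_0$ with some $(z_0)_k\in\R$ for $k\in K$ both sides of \eqref{eq:K notni h} for the pair $(H'\cup K,h)$ vanish by the same oddness, so these equations in fact hold on all of $D$. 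Finally, decomposing an arbitrary $K'\not\ni h$ uniquely as $K'=(K'\cap\{1,\ldots,h-1\})\sqcup(K'\cap\{h+1,\ldots,n\})=K\sqcup H'$ and letting $h$ and $\epsilon$ range, I obtain \eqref{eq:K notni h} for all $h\geq2$ with $h\notin K'$; the hypothesis that $f$ is slice regular w.r.t.\ $\mr{x}_1$ supplies the remaining case $h=1$ by the identical separation. By Proposition~\ref{prop:slice-regularity}$(\mr{ii})$, this means $f$ is slice regular, completing the proof.
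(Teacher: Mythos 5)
Your proposal is correct and follows essentially the same route as the paper's proof: both directions reduce to the component-wise Cauchy--Riemann characterization $(\mathrm{ii})$ of Proposition \ref{prop:slice-regularity} via the stem decompositions \eqref{eq:fx1} and \eqref{eq:DD}, and the converse recovers the term-by-term equations from the summed identities by the $\pm$-averaging of Lemma \ref{lem:combinatorial}. The only (equivalent) variation is in the implementation of that separation: you flip the signs of the units $I_\ell$ at a fixed $z_0$ --- which amounts to evaluating at the conjugate points $y^{\,c,L}$, just as the paper evaluates the identity at $\overline{z}^M$ and invokes the parity relations \eqref{eq:stem2} --- and you dispose of the degenerate real-coordinate locus by parity-vanishing of the $\alpha_h$- and $\beta_h$-derivatives, where the paper instead derives the equations on $D_{K_{h-1}}$ and extends them to all of $D$ by density and continuity of the $\mscr{C}^1$ derivatives.
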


\begin{proof}
We use the notation introduced in \eqref{def:Dhz} and \eqref{def:OmegaDhx}. If $x=(x_1,\ldots,x_n)\in \OO_D$, then $\OO_{D,h}(x)$ is the subset of $Q_A$ defined by $\OO_{D,h}(x)=\{a\in A:(x_1,\ldots,x_{h-1},a,x_{h+1},\ldots,x_n)\in\OO_D\}$. It holds $\OO_{D,h}(x)=\OO_{D_h(z)}$, where $D_h(z)=\{w\in\C:(z_1,\ldots,z_{h-1},w,z_{h+1},\ldots,z_n)\in D\}$.

Assume that $f$ is slice regular. 
Let $F=\sum_{H\in\pa(n)}e_HF_H:D\to A\otimes\R^{2^n}$ be the stem function inducing $f$, let $y=(y_1,\ldots,y_n)=(\alpha_1+I_1\beta_1,\ldots,\alpha_n+I_n\beta_n)\in\OO_D$, let $I:=(I_1,\ldots,I_n)$ and let $w=(w_1,\ldots,w_n):=(\alpha_1+i\beta_1,\ldots,\alpha_n+i\beta_n)\in D$. 
Let us prove  that $f$  is slice regular w.r.t.\ $\mr{x}_1$. As seen in formula \eqref{eq:fx1}, it holds $f(x_1,y')=F_1(z_1)+J_1F_2(z_1)$,  where $x_1=\alpha_1+J_1\beta_1\in\OO_{D,1}(y)$, $y'=(y_2,\ldots,y_n)$, $z_1=\alpha_1+i\beta_1\in D_1(w)$, and  
\[
F_1(z_1)\textstyle=\sum_{H\in\pa(n),1\not\in H}[J_H,F_H(z_1,w')]
\;\;\text{ and }\;\;
F_2(z_1)\textstyle=\sum_{H\in\pa(n),1\not\in H}[J_H,F_{H\cup\{1\}}(z_1,w')],
\]
with $w'=(w_2,\ldots,w_n)$ and $J=(J_1,I_2,\ldots,I_n)$. From implication $(\mr{i})\Rightarrow (\mr{ii})$ of Proposition \ref{prop:slice-regularity}, it follows that $\partial_{\alpha_1}F_1(z_1)=\partial_{\beta_1}F_2(z_1)$ and $\partial_{\beta_1}F_1(z_1)=-\partial_{\alpha_1}F_2(z_1)$, that is $f$ is slice regular w.r.t.\ $\mr{x}_1$ on $\OO_{D,1}(y)$. 

Now let $h\in\{2,\ldots,n\}$ and let $\epsilon:\{1,\ldots,h-1\}\to\{0,1\}$ be any function. Set $K_{h-1}:=\epsilon^{-1}(1)$. Let us prove that $\SD_\epsilon f$ is slice regular w.r.t.\  $\mr{x}_h$. From formula \eqref{eq:DD}, it follows that $\SD_\epsilon f(y'',x_{h},\hat{y})=F_{h,1}(z_h)+J_hF_{h,2}(z_h)$, where
\begin{align*}
F_{h,1}(z_h)&\textstyle:=(\beta_{K_{h-1}})^{-1}\sum_{H\in\pa(n),H_{h}=\emptyset}[L_H,F_{H\cup K_{h-1}}(z'',z_{h},\hat{z})],\\
F_{h,2}(z_h)&\textstyle:=(\beta_{K_{h-1}})^{-1}\sum_{H\in\pa(n),H_{h}=\emptyset}[L_H, F_{H\cup K_{h-1}\cup\{h\}}(z'',z_{h},\hat{z})],
\end{align*}
$x_{h}\in\OO_{D,h}(y)$, $y''=(y_1,\ldots,y_{h-1})$, $\hat{y}=(y_{h+1},\ldots,y_n)$, $z_{h}=\alpha_{h}+i\beta_{h}\in D_{h}(z)$, $z''=(z_1,\ldots,z_{h-1})$, $\hat{z}=(z_{h+1},\ldots,z_n)$ and $L=(I_1,\ldots,I_{h-1},J_{h},I_{h+1},\ldots,I_n)$. Again from implication $(\mr{i})\Rightarrow (\mr{ii})$ of Proposition \ref{prop:slice-regularity}, 
it follows that $\partial_{\alpha_h}F_{h,1}(z_h)=\partial_{\beta_h}F_{h,2}(z_h)$ and $\partial_{\beta_h}F_{h,1}(z_h)=-\partial_{\alpha_h}F_{h,2}(z_h)$, i.e. $\SD_\epsilon f$ is slice regular w.r.t.\ $\mr{x}_h$ on $\OO_{D,h}(y)$. 

Conversely, assume that $f$ is slice regular w.r.t.\ $\mr{x}_1$ and that the functions $\SD^{\varepsilon(h-1)}_{\mr{x}_{h-1}}\cdots\SD^{\varepsilon(1)}_{\mr{x}_1}f$ are slice regular w.r.t.\ $\mr{x}_h$ for all $K\in\pa(n)$ and all $h\in\{2,\ldots,n\}$, where $\varepsilon:\{1,\ldots,n\}\to\{0,1\}$ is the characteristic function of $K$.

Let $K\in\pa(n)$ and let $h\in\{1,\ldots,n\}$ with $h\not\in K$.  Consider any $z\in D_{K_{h-1}}$, where $D_{K_{h-1}}=\bigcap_{k\in K_{h-1}}\{(z_1,\ldots,z_n)\in D:z_k\not\in\R\}$. From formulas \eqref{eq:fx1} and \eqref{eq:DD}, we obtain that it holds:
\begin{align}\label{eq:sr_one}
&\textstyle\sum_{H\in\pa(n),H_{h}=\emptyset}[L_H,\partial_{\alpha_h}F_{H\cup K_{h-1}}(z)-\partial_{\beta_h}F_{H\cup K_{h-1}\cup\{h\}}(z)]=0\\\label{eq:sr_one1}
&\textstyle\sum_{H\in\pa(n),H_{h}=\emptyset}[L_H,\partial_{\beta_h}F_{H\cup K_{h-1}}(z)+\partial_{\alpha_h}F_{H\cup K_{h-1}\cup\{h\}}(z)]=0,
\end{align}
where $L=(I_1,\ldots,I_{h-1},J_{h},I_{h+1},\ldots,I_n)$ and $H_0=K_0=\emptyset$.

Let $M\in\pa(n)$. Thanks to \eqref{eq:stem2}, for each $H\in\pa(n)$ such that $H_h=\emptyset$, it holds
\begin{equation}\label{eq:11}
\partial_{\alpha_h}F_{H\cup K_{h-1}}(\overline{z}^M)=(-1)^{|(H\cup K_{h-1})\cap M|}\partial_{\alpha_h}F_{H\cup K_{h-1}}(z);
\end{equation}
moreover, being $|(H\cup K_{h-1}\cup\{h\})\cap M|=|(H\cup K_{h-1})\cap M|+|M\cap\{h\}|$, it holds
\begin{equation}\label{eq:12}
\partial_{\beta_h}F_{H\cup K_{h-1}{\cup\{h\}}}(\overline{z}^M)=(-1)^{|(H\cup K_{h-1})\cap M|}\partial_{\beta_h}F_{H\cup K_{h-1}}(z).
\end{equation}
Thanks to \eqref{eq:11}, \eqref{eq:12} and the validity of \eqref{eq:sr_one} at the point $\overline{z}^M$, we get that
\begin{align}\label{eq:sr_one_M}
&\textstyle\sum_{H\in\pa(n),H_{h}=\emptyset}(-1)^{|(H\cup K_{h-1})\cap M|}[L_H,\partial_{\alpha_h}F_{H\cup K_{h-1}}(z)-\partial_{\beta_h}F_{H\cup K_{h-1}\cup\{h\}}(z)]=0
\end{align}
for all $M\in\pa(n)$. Multiply both members of $\eqref{eq:sr_one_M}$ by $(-1)^{|K\cap M|}$ and sum over all $M\in\pa(n)$. Using the combinatorial Lemma \ref{lem:combinatorial}, we get
\begin{align*}
0&\textstyle=\sum_{M\in\pa(n)}(-1)^{|K\cap M|}
\sum_{H\in\pa(n),H_{h}=\emptyset}(-1)^{|(H\cup K_{h-1})\cap M|}[L_H,\partial_{\alpha_h}F_{H\cup K_{h-1}}-\partial_{\beta_h}F_{H\cup K_{h-1}\cup\{h\}}]=\\
&\textstyle=\sum_{H\in\pa(n),H_{h}=\emptyset}
\sum_{M\in\pa(n)}(-1)^{|K\cap M|+|(H\cup K_{h-1})\cap M|}[L_H,\partial_{\alpha_h}F_{H\cup K_{h-1}}-\partial_{\beta_h}F_{H\cup K_{h-1}\cup\{h\}}]=\\
&\textstyle=\sum_{H\in\pa(n),H_{h}=\emptyset}
2^n\delta_{K,H\cup K_{h-1}}[L_H,\partial_{\alpha_h}F_{H\cup K_{h-1}}-\partial_{\beta_h}F_{H\cup K_{h-1}\cup\{h\}}]=\\
&\textstyle=2^n[L_{K\setminus K_h},\partial_{\alpha_h}F_{K}-\partial_{\beta_h}F_{K\cup\{h\}}]
\end{align*}
on the whole $D_{K_{h-1}}$. Therefore $\partial_{\alpha_h}F_{K}(z)=\partial_{\beta_h}F_{K\cup\{h\}}(z)$ on $D_{K_{h-1}}$. By Assumption \ref{assumption:openess}, we know that $D$ is open in $\C^n$. As a consequence, $D_{K_{h-1}}$ is dense in $D$. Since $F$ is of class $\mscr{C}^1$ on $D$, the partial derivatives $\partial_{\alpha_h}F_{K}$ and $\partial_{\beta_h}F_{K\cup\{h\}}(z)$ are continuous on $D$. It follows that $\partial_{\alpha_h}F_{K}(z)=\partial_{\beta_h}F_{K\cup\{h\}}(z)$ on the whole $D$. In a similar way, we deduce from \eqref{eq:sr_one1} that $\partial_{\beta_h}F_{K}=-\partial_{\alpha_h}F_{K\cup\{h\}}$ on $D$. From implication $(\mr{ii})\Rightarrow (\mr{i})$ of Proposition \ref{prop:slice-regularity}, it follows that $f$ is slice regular on $\OO_D$.
\end{proof}


\subsection{Leibniz's rule}

The next lemma gives sufficient conditions for Leibniz's rule to be valid for each $\partial_h$ and $\dibar_h$.

\begin{lemma}\label{lem:technical}
Let $\mr{b}=\EuScript{B}(\sigma)$ be a $\dsim$-product of $\R^{2^n}$, let $h\in\{1,\ldots,n\}$ and let $F,G:D\to A\otimes\R^{2^n}$ be $\mscr{C}^1$ stem functions. Write $F=\sum_{K\in\pa(n)}e_KF_K$ and $G=\sum_{H\in\pa(n)}e_HG_H$. Define
\begin{align*}
P'_{1,h}&\textstyle:=\big\{(K,H)\in\pa(n)\times\pa(n):\frac{\partial F_K}{\partial\beta_h}\cdot_\sigma G_H=0 \text{ on $D$}\big\},\\
P_{1,h}&:=(\pa(n)\times\pa(n))\setminus P'_{1,h},\\
P'_{2,h}&\textstyle:=\big\{(K,H)\in\pa(n)\times\pa(n):F_K\cdot_\sigma \frac{\partial G_H}{\partial\beta_h}=0 \text{ on $D$}\big\},\\
P_{2,h}&:=(\pa(n)\times\pa(n))\setminus P'_{2,h}.
\end{align*}
Suppose that the following two conditions hold:
\begin{equation}\label{eq:KHh1}
(-1)^{|(K\dsim H)\cap\{h\}|}\sigma(K,H)=(-1)^{|K\cap\{h\}|}\sigma(K\dsim\{h\},H)\;\text{ for all $(K,H)\in P_{1,h}$}
\end{equation}
and
\begin{equation}\label{eq:KHh2}
(-1)^{|(K\dsim H)\cap\{h\})}\sigma(K,H)=(-1)^{|H\cap\{h\}|}\sigma(K,H\dsim\{h\})\;\text{ for all $(K,H)\in P_{2,h}$}.
\end{equation}
Then it holds:
\begin{align}\label{eq:L1}
\partial_h(F\cdot_\sigma G)&=(\partial_hF)\cdot_\sigma G+F\cdot_\sigma (\partial_hG),\\\label{eq:L2}
\dibar_h(F\cdot_\sigma G)&=(\dibar_hF)\cdot_\sigma G+F\cdot_\sigma (\dibar_hG).
\end{align}
\end{lemma}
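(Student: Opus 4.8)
The plan is to reduce both \eqref{eq:L1} and \eqref{eq:L2} to a single compatibility statement between the complex structure $\J_h$ and the product $\cdot_\sigma$, exploiting the fact that the ordinary real partial derivatives satisfy Leibniz's rule for free. Since $(F\cdot_\sigma G)(z)=F(z)\cdot_\sigma G(z)$ pointwise and $\cdot_\sigma$ is $\R$-bilinear with constant structure constants, the usual product rule gives
\[
\partial_{\alpha_h}(F\cdot_\sigma G)=(\partial_{\alpha_h}F)\cdot_\sigma G+F\cdot_\sigma(\partial_{\alpha_h}G),\qquad
\partial_{\beta_h}(F\cdot_\sigma G)=(\partial_{\beta_h}F)\cdot_\sigma G+F\cdot_\sigma(\partial_{\beta_h}G).
\]
Recalling from Definition \ref{def:partial_h,dibar_h} that $\partial_h=\frac{1}{2}(\partial_{\alpha_h}-\J_h\circ\partial_{\beta_h})$ and $\dibar_h=\frac{1}{2}(\partial_{\alpha_h}+\J_h\circ\partial_{\beta_h})$, both Leibniz identities follow at once provided I can establish the \emph{twisted Leibniz identity}
\[
\J_h\big(\partial_{\beta_h}(F\cdot_\sigma G)\big)=(\J_h\partial_{\beta_h}F)\cdot_\sigma G+F\cdot_\sigma(\J_h\partial_{\beta_h}G).
\]
Using the two ordinary product rules above, this in turn splits into the two claims $\J_h\big((\partial_{\beta_h}F)\cdot_\sigma G\big)=(\J_h\partial_{\beta_h}F)\cdot_\sigma G$ and $\J_h\big(F\cdot_\sigma(\partial_{\beta_h}G)\big)=F\cdot_\sigma(\J_h\partial_{\beta_h}G)$.

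The core of the argument is a term-by-term expansion of these two claims in the raw product form $F\cdot_\sigma G=\sum_{K,H\in\pa(n)}e_{K\dsim H}\sigma(K,H)(F_KG_H)$. For the first claim, applying $\J_h$ (which acts only on the $\R^{2^n}$ factor by Property \ref{property:universal}, via \eqref{eq:c-s1}) produces the summand with basis vector $(-1)^{|(K\dsim H)\cap\{h\}|}e_{(K\dsim H)\dsim\{h\}}$ and coefficient $\sigma(K,H)(\partial_{\beta_h}F_K)G_H$, whereas $(\J_h\partial_{\beta_h}F)\cdot_\sigma G$ produces $(-1)^{|K\cap\{h\}|}e_{(K\dsim\{h\})\dsim H}$ with coefficient $\sigma(K\dsim\{h\},H)(\partial_{\beta_h}F_K)G_H$. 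Since $\dsim$ is commutative and associative, $(K\dsim H)\dsim\{h\}=(K\dsim\{h\})\dsim H$, so the basis vectors already agree and the two sums coincide termwise exactly when
\[
(-1)^{|(K\dsim H)\cap\{h\}|}\sigma(K,H)=(-1)^{|K\cap\{h\}|}\sigma(K\dsim\{h\},H).
\]
Here the decisive point is the dichotomy recorded by the sets $P_{1,h},P'_{1,h}$: if $(K,H)\in P'_{1,h}$ the coefficient $(\partial_{\beta_h}F_K)G_H$ vanishes identically on $D$ and the term disappears regardless of the scalars, while if $(K,H)\in P_{1,h}$ the displayed scalar identity is precisely hypothesis \eqref{eq:KHh1}. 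The second claim is entirely symmetric: expanding $F\cdot_\sigma(\partial_{\beta_h}G)$ one compares $(-1)^{|(K\dsim H)\cap\{h\}|}\sigma(K,H)$ with $(-1)^{|H\cap\{h\}|}\sigma(K,H\dsim\{h\})$, which is hypothesis \eqref{eq:KHh2} on exactly those pairs $(K,H)\in P_{2,h}$ where $F_K(\partial_{\beta_h}G_H)\not\equiv0$.

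Combining the twisted Leibniz identity with the ordinary product rule for $\partial_{\alpha_h}$ and the definitions of $\partial_h$ and $\dibar_h$ then yields \eqref{eq:L1} and \eqref{eq:L2} immediately, the two cases differing only by the sign in front of the $\J_h$-term. The computation itself is routine linear algebra; the only genuinely delicate point I expect is the bookkeeping of $P_{1,h}$ and $P_{2,h}$, namely keeping explicit track of which summands carry a nonvanishing $A$-valued coefficient, so that the hypotheses \eqref{eq:KHh1}--\eqref{eq:KHh2} need only be invoked where they actually apply rather than for all pairs $(K,H)$. I would therefore isolate the termwise comparison as a single computation and invoke the vanishing/nonvanishing alternative explicitly at the end.
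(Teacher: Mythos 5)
Your proposal is correct and takes essentially the same route as the paper's proof: apply the ordinary bilinear Leibniz rule to $\partial_{\alpha_h}$ and $\partial_{\beta_h}$, then verify termwise via \eqref{eq:c-s1} and $(K\dsim H)\dsim\{h\}=(K\dsim\{h\})\dsim H$ that $\J_h$ slides past the product, invoking \eqref{eq:KHh1} and \eqref{eq:KHh2} precisely on the pairs in $P_{1,h}$ and $P_{2,h}$ while the summands indexed by $P'_{1,h}$ and $P'_{2,h}$ vanish identically. Your handling of the vanishing/nonvanishing dichotomy and of the sign change between $\partial_h$ and $\dibar_h$ matches the paper's argument exactly.
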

\begin{proof}
We prove only \eqref{eq:L1}, the proof of \eqref{eq:L2} being similar. For simplicity, we omit `$\,\cdot_\sigma$' in each product w.r.t\ $\mr{b}$. We have:
\begin{align} \label{eq:FG-regular}
2 \, \partial_h(FG)=& \, \partial_{\alpha_h}(FG)-\J_h\big(\partial_{\beta_h}(FG)\big)=(\partial_{\alpha_h}F)G+F(\partial_{\alpha_h}G)+ \nonumber\\
&
-\J_h\big((\partial_{\beta_h}F)G\big)-\J_h\big(F(\partial_{\beta_h}G)\big).\end{align}
Bearing in mind \eqref{eq:c-s1}, we have also:
\begin{align*}
\J_h\big((\partial_{\beta_h}F)G\big)&=\textstyle\J_h\big(\sum_{K,H\in\pa(n)}e_{K\dsim H}\sigma(K,H)(\partial_{\beta_h}F_K)G_H\big)=\\
&=\textstyle\sum_{(K,H)\in{P_{1,h}}}\J_h(e_{K\dsim H})\sigma(K,H)(\partial_{\beta_h}F_K)G_H=\\
&=\textstyle\sum_{(K,H)\in{P_{1,h}}}e_{K\dsim H\dsim\{h\}}(-1)^{|(K\dsim H)\cap\{h\}|}\sigma(K,H)(\partial_{\beta_h}F_K)G_H.
\end{align*}
Similarly, we deduce:
\begin{align*}
\big(\J_h(\partial_{\beta_h}F)\big)G&=\textstyle\sum_{K,H\in\pa(n)}\J_h(e_K)e_H(\partial_{\beta_h}F_K)G_H=\\
&=\textstyle\sum_{(K,H)\in{P_{1,h}}}e_{K\dsim\{h\}}e_H(-1)^{|K\cap\{h\}|}(\partial_{\beta_h}F_K)G_H=\\
&=\textstyle\sum_{(K,H)\in{P_{1,h}}}e_{K\dsim H\dsim\{h\}}(-1)^{|K\cap\{h\}|}\sigma(K\dsim\{h\},H)(\partial_{\beta_h}F_K)G_H.
\end{align*}
Consequently, \eqref{eq:KHh1} implies that $\J_h\big((\partial_{\beta_h}F)G\big)=\big(\J_h(\partial_{\beta_h}F)\big)G$. Similar computations and \eqref{eq:KHh2} ensure also that $\J_h\big(F(\partial_{\beta_h}G)\big)=F\big(\J_h(\partial_{\beta_h}G)\big)$. Combining the latter two equalities with \eqref{eq:FG-regular}, we obtain:
\begin{align*}
\partial_h(FG)=&\,\textstyle\frac{1}{2}\big((\partial_{\alpha_h}F)G+F(\partial_{\alpha_h}G)
-\big(\J_h(\partial_{\beta_h}F)\big)G-F\big(\J_h(\partial_{\beta_h}G)\big)\big)=\\
=& \,
(\partial_hF)G+F(\partial_hG),
\end{align*}
as desired.
\end{proof}

The above lemma suffices to prove that Leibniz's rule works for the slice tensor product.

\begin{proposition}\label{prop:leibniz}
For each $f,g\in\mc{S}^1(\OO_D,A)$ and for each $h\in\{1,\ldots,n\}$, it holds:
\begin{align}\label{eq:L1-tensor}
\frac{\partial}{\partial x_h}(f\tenso g)&=\frac{\partial f}{\partial x_h}\tenso g+f\tenso \frac{\partial g}{\partial x_h},\\
\vspace{.3em}\label{eq:L2-tensor}
\frac{\partial}{\partial x_h^c}(f\tenso g)&=\frac{\partial f}{\partial x_h^c}\tenso g+f\tenso \frac{\partial g}{\partial x_h^c}.
\end{align}
\end{proposition}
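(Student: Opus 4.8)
The plan is to reduce the statement to the level of stem functions and then invoke Lemma \ref{lem:technical}. Since $\tenso=\cdot_{\sigma_\otimes^n}$ and, by Definition \ref{def:df}, the slice partial derivatives are defined by applying $\I$ to the Cauchy--Riemann operators $\partial_h,\dibar_h$ acting on the inducing stem functions, the identities \eqref{eq:L1-tensor} and \eqref{eq:L2-tensor} are the images under $\I$ of the corresponding stem-function identities. Writing $f=\I(F)$ and $g=\I(G)$ with $F,G\in\mr{Stem}^1(D,A\otimes\R^{2^n})$, and using that $\I$ is a real linear isomorphism (Corollary \ref{cor:I}), it therefore suffices to establish
\begin{align*}
\partial_h(F\cdot_{\sigma_\otimes^n}G)&=(\partial_hF)\cdot_{\sigma_\otimes^n}G+F\cdot_{\sigma_\otimes^n}(\partial_hG),\\
\dibar_h(F\cdot_{\sigma_\otimes^n}G)&=(\dibar_hF)\cdot_{\sigma_\otimes^n}G+F\cdot_{\sigma_\otimes^n}(\dibar_hG).
\end{align*}

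These are exactly equations \eqref{eq:L1} and \eqref{eq:L2} of Lemma \ref{lem:technical} for the choice $\sigma=\sigma_\otimes^n$. Hence the whole argument comes down to checking that the hypotheses \eqref{eq:KHh1} and \eqref{eq:KHh2} of that lemma hold for the tensor product. By Lemma \ref{lem:tensor}, $\sigma_\otimes^n(K,H)=(-1)^{|K\cap H|}$ for all $K,H\in\pa(n)$; in particular $\sigma_\otimes^n$ is symmetric in its two arguments. I would verify \eqref{eq:KHh1} directly and deduce \eqref{eq:KHh2} from it by the $K\leftrightarrow H$ symmetry of $\sigma_\otimes^n$: indeed \eqref{eq:KHh2} for $(K,H)$ is precisely \eqref{eq:KHh1} for $(H,K)$, using $K\dsim H=H\dsim K$ and $\sigma_\otimes^n(H\dsim\{h\},K)=\sigma_\otimes^n(K,H\dsim\{h\})$. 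It is harmless, and in fact cleaner, to prove these identities for \emph{all} pairs $(K,H)\in\pa(n)\times\pa(n)$ rather than only on $P_{1,h}$ and $P_{2,h}$.

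The core computation is therefore the following parity identity: with $\sigma_\otimes^n(K,H)=(-1)^{|K\cap H|}$, condition \eqref{eq:KHh1} reads
\[
|(K\dsim H)\cap\{h\}|+|K\cap H|\equiv|K\cap\{h\}|+|(K\dsim\{h\})\cap H|\pmod 2.
\]
I would evaluate each term via the indicator of membership of $h$. Since $\{h\}$ is a singleton, $|(K\dsim H)\cap\{h\}|\equiv\mathbf{1}_{h\in K}+\mathbf{1}_{h\in H}$ and $|K\cap\{h\}|=\mathbf{1}_{h\in K}$. Passing from $K$ to $K\dsim\{h\}$ only toggles membership of $h$, so $(K\dsim\{h\})\cap H$ differs from $K\cap H$ only at $h$, and only when $h\in H$; hence $|(K\dsim\{h\})\cap H|-|K\cap H|\equiv\mathbf{1}_{h\in H}\pmod 2$. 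Substituting these into both sides shows that each reduces to $|K\cap H|+\mathbf{1}_{h\in K}+\mathbf{1}_{h\in H}$ modulo $2$, so they agree and \eqref{eq:KHh1} holds. The symmetry observation then yields \eqref{eq:KHh2}, completing the verification of the hypotheses of Lemma \ref{lem:technical} and hence proving \eqref{eq:L1-tensor} and \eqref{eq:L2-tensor}.

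The argument has essentially no analytic content: the hypothesis $f,g\in\mc{S}^1(\OO_D,A)$ is only needed so that $\partial_h,\dibar_h$ are defined, and all the work is the bookkeeping of symmetric differences. The one place demanding care --- the ``main obstacle'', such as it is --- is getting the parity accounting in the displayed congruence exactly right, in particular tracking how the factor $(-1)^{|K\cap H|}$ coming from $\sigma_\otimes^n$ interacts with the sign $(-1)^{|(K\dsim H)\cap\{h\}|}$ produced by applying $\J_h$ to $e_{K\dsim H}$ (cf. \eqref{eq:c-s1}). Once this is matched, the conclusion is a direct appeal to the already-established Lemma \ref{lem:technical}.
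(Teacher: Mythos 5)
Your proposal is correct and follows essentially the same route as the paper's proof: both verify the hypotheses \eqref{eq:KHh1} and \eqref{eq:KHh2} of Lemma \ref{lem:technical} for $\sigma=\sigma_\otimes^n$ via the parity identity $|(K\dsim H)\cap\{h\}|+|K\cap H|\equiv|K\cap\{h\}|+|(K\dsim\{h\})\cap H| \pmod 2$, using $\sigma_\otimes^n(K,H)=(-1)^{|K\cap H|}$ from Lemma \ref{lem:tensor}, and then conclude by that lemma. The only cosmetic differences are that you make the reduction to stem functions via Corollary \ref{cor:I} explicit and deduce \eqref{eq:KHh2} from \eqref{eq:KHh1} by the symmetry of $\sigma_\otimes^n$, where the paper checks the single combinatorial identity by cases and reads off both conditions from it.
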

\begin{proof}
Given any pair $(K,H)\in\pa(n)\times\pa(n)$ and any $h\in\{1,\ldots,n\}$, it is immediate to check that $|(K\dsim H)\cap\{h\}|+|K\cap H|=|K\cap\{h\}|+|(K\dsim\{h\})\cap H|$; indeed, the preceding equality becomes $|K\cap H|=|K\cap H|$ if $h\not\in K\dsim H$, and $|K\cap H|+1=|K\cap H|+1$ if $h\in K\dsim H$. The mentioned equality is equivalent to \eqref{eq:KHh1} and \eqref{eq:KHh2} if $\mr{b}=\EuScript{B}(\sigma_\otimes^n)$, because $\sigma_\otimes^n(K,H)=(-1)^{|K\cap H|}$ by Lemma \ref{lem:tensor}. Lemma \ref{lem:technical} concludes the proof.
\end{proof}


\subsection{Multiplication of slice regular functions}

\begin{proposition}\label{prop:sr-prod}
The set $\mc{SR}(\OO_D,A)$ is a real subalgebra of $(\mc{S}(\OO_D,A),\tenso)$. Moreover, the set $\mc{SR}_\R(\OO_D,A):=\mc{S}_\R(\OO_D,A)\cap\mc{SR}(\OO_D,A)$ of all slice preserving slice regular functions from $\OO_D$ to $A$ is contained in the center of $(\mc{S}(\OO_D,A),\tenso)$. 
\end{proposition}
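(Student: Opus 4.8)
The plan is to obtain both assertions as essentially formal consequences of the Leibniz rule of Proposition~\ref{prop:leibniz} and of Lemma~\ref{lem:tensor-center}; the genuine analytic and combinatorial work has already been done in establishing those results, so what remains is to check that their hypotheses are met and to assemble the pieces.

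First I would prove that $\mc{SR}(\OO_D,A)$ is a subalgebra of $(\mc{S}(\OO_D,A),\tenso)$. It is already a real vector subspace of $\mc{S}(\OO_D,A)$ by definition, and it contains the unity, since the constant function $1$ is polynomial and hence slice regular by Proposition~\ref{prop:polynomials-sr}. The only remaining point is closure under the slice tensor product. Let $f=\I(F)$ and $g=\I(G)$ belong to $\mc{SR}(\OO_D,A)$; then $F$ and $G$ are holomorphic, in particular of class $\mscr{C}^1$, and the stem function $F\tenso G$ inducing $f\tenso g$ is a pointwise $\cdot_{\sigma_\otimes^n}$-product of $\mscr{C}^1$ functions, hence again $\mscr{C}^1$. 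Thus $f,g,f\tenso g\in\mc{S}^1(\OO_D,A)$, so the Leibniz rule \eqref{eq:L2-tensor} applies and gives, for every $h\in\{1,\ldots,n\}$,
\[
\frac{\partial}{\partial x_h^c}(f\tenso g)=\frac{\partial f}{\partial x_h^c}\tenso g+f\tenso\frac{\partial g}{\partial x_h^c}.
\]
By the equivalence $(\mr{i})\Leftrightarrow(\mr{i}')$ of Proposition~\ref{prop:slice-regularity}, the slice regularity of $f$ and $g$ means precisely that $\frac{\partial f}{\partial x_h^c}=\frac{\partial g}{\partial x_h^c}=0$ on $\OO_D$ for all $h$. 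Hence the right-hand side vanishes, so $\frac{\partial}{\partial x_h^c}(f\tenso g)=0$ for every $h$, and applying the same equivalence in the reverse direction yields that $f\tenso g$ is slice regular. This establishes the first assertion.

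For the second assertion I would invoke Lemma~\ref{lem:tensor-center} directly. By Lemma~\ref{lem:tensor} the tensor product $\mr{b}_\otimes^n=\EuScript{B}(\sigma_\otimes^n)$, which induces the slice tensor product $\tenso=\cdot_{\sigma_\otimes^n}$, is both commutative and associative. Lemma~\ref{lem:tensor-center}, applied with $\sigma=\sigma_\otimes^n$, then guarantees that $\mc{S}_\R(\OO_D,A)$ is contained in the center of $(\mc{S}(\OO_D,A),\tenso)$ — where, as the proof of that lemma shows, ``center'' is understood in the full non-associative sense, covering both the commuting and the associating conditions. Since by definition $\mc{SR}_\R(\OO_D,A)=\mc{S}_\R(\OO_D,A)\cap\mc{SR}(\OO_D,A)\subset\mc{S}_\R(\OO_D,A)$, the desired inclusion of $\mc{SR}_\R(\OO_D,A)$ in the center of $(\mc{S}(\OO_D,A),\tenso)$ is immediate.

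There is no serious obstacle here; the proposition is a clean corollary of the machinery built earlier. The one place demanding attention is verifying that the hypotheses of the imported results genuinely hold: that slice regular functions and their slice tensor product all lie in $\mc{S}^1(\OO_D,A)$ so that Proposition~\ref{prop:leibniz} is applicable, and that the $\dsim$-product underlying $\tenso$ is commutative and associative so that Lemma~\ref{lem:tensor-center} applies. Both are secured by the definitions together with Lemma~\ref{lem:tensor}, and once they are noted the argument reduces to the short chain of implications above.
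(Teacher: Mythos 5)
Your proof is correct and follows essentially the same route as the paper, which derives the first assertion from the Leibniz rule \eqref{eq:L2-tensor} together with the equivalence $(\mr{i})\Leftrightarrow(\mr{i}')$ of Proposition \ref{prop:slice-regularity}, and the second directly from Lemma \ref{lem:tensor-center}. Your additional checks (that slice regular functions lie in $\mc{S}^1(\OO_D,A)$, that the unity is slice regular, and that $\sigma_\otimes^n$ is commutative and associative via Lemma \ref{lem:tensor}) merely make explicit what the paper leaves implicit.
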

\begin{proof}
The first part is a direct consequence of equation \eqref{eq:L2-tensor} and equivalence $(\mr{i})\Leftrightarrow(\mr{i}')$ of Proposition \ref{prop:slice-regularity}; the second follows immediately from Lemma \ref{lem:tensor-center}. 
\end{proof}

In the next result we see that the slice tensor product is the unique associative and hypercomplex $\triangle$-product on $\R^{2^n}$, which preserves slice regularity.

\begin{proposition}\label{prop:tensor}
The tensor product $\mr{b}_\otimes^n$ is the unique associative and hypercomplex $\dsim$-product $\mr{b}=\EuScript{B}(\sigma)$ of $\R^{2^n}$ such that $\mc{SR}(\OO_D,A)$ is a real subalgebra of $(\mc{S}(\OO_D,A),\cdot_\sigma)$.
\end{proposition}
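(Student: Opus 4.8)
The plan is to prove existence and then uniqueness separately. Existence is already settled: by Proposition \ref{prop:sr-prod} the slice tensor product makes $\mc{SR}(\OO_D,A)$ a real subalgebra, so $\mr{b}_\otimes^n$ is one such product. For uniqueness I would start from an arbitrary associative and hypercomplex $\dsim$-product $\mr{b}=\EuScript{B}(\sigma)$ for which $\mc{SR}(\OO_D,A)$ is a subalgebra of $(\mc{S}(\OO_D,A),\cdot_\sigma)$, and reduce the problem to a commutativity statement via Lemma \ref{lem:tensor-uniqueness-1}: since that lemma identifies $\mr{b}_\otimes^n$ as the \emph{unique} hypercomplex, commutative and associative $\dsim$-product, it suffices to show that our $\mr{b}$ is commutative. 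Because $\mr{b}$ is hypercomplex and associative, \eqref{eq:eK} expresses each $e_K$ as an ordered product of the generators $e_k$; hence if the generators pairwise commute, the whole algebra is commutative. So the entire uniqueness statement collapses to proving $\sigma(\{k\},\{l\})=\sigma(\{l\},\{k\})$ for all $k\neq l$.

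Next I would extract this single scalar identity from the hypothesis by testing the subalgebra property on the coordinate functions. Fixing $k<l$, the hypercomplex condition \eqref{eq:eK} applied to $K=\{k,l\}$ already gives $\sigma(\{k\},\{l\})=1$, so the real content is the reversed product. The coordinate functions $x_k,x_l$ are slice regular by Proposition \ref{prop:polynomials-sr}, and by Proposition \ref{prop:polynomials} their inducing stem functions $F,G$ have only the real-valued components $F_\emptyset=\alpha_k,\ F_{\{k\}}=\beta_k$ and $G_\emptyset=\alpha_l,\ G_{\{l\}}=\beta_l$. Since $\mc{SR}(\OO_D,A)$ is a subalgebra, $x_l\cdot_\sigma x_k=\I(G\cdot_\sigma F)$ is slice regular, and by Corollary \ref{cor:I} its unique stem function $G\cdot_\sigma F$ is holomorphic. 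Using \eqref{eq:K,H} and the fact that all components of $F,G$ are central reals, a short computation yields $G\cdot_\sigma F=\sum_M e_M C_M$ with
\[
C_\emptyset=\alpha_k\alpha_l,\qquad C_{\{k\}}=\alpha_l\beta_k,\qquad C_{\{l\}}=\alpha_k\beta_l,\qquad C_{\{k,l\}}=\sigma(\{l\},\{k\})\beta_k\beta_l,
\]
all other components vanishing; the crucial sign $\sigma(\{l\},\{k\})$ survives precisely because the factors $e_l$ and $e_k$ are now multiplied in the opposite order. Applying the Cauchy--Riemann characterization of slice regularity in Proposition \ref{prop:slice-regularity}$(\mr{ii})$ to the set $\{l\}$ and the index $h=k\notin\{l\}$ gives
\[
\dd{C_{\{l\}}}{\alpha_k}=\dd{C_{\{l\}\cup\{k\}}}{\beta_k}\quad\Longleftrightarrow\quad \beta_l=\sigma(\{l\},\{k\})\,\beta_l\ \text{ on }D.
\]
As $D$ is open and non-empty in $\C^n$ (Assumption \ref{assumption:openess}), $\beta_l$ is not identically zero, forcing $\sigma(\{l\},\{k\})=1=\sigma(\{k\},\{l\})$.

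I expect the only real subtlety — rather than a true obstacle — to be the bookkeeping in the reduction chain: one must justify cleanly that commutativity of the generators, together with associativity and the hypercomplex generation of the basis via \eqref{eq:eK}, upgrades to commutativity of the whole product, so that Lemma \ref{lem:tensor-uniqueness-1} applies and delivers $\mr{b}=\mr{b}_\otimes^n$. Everything else is a direct computation with the two coordinate stem functions, and the decisive point is simply choosing to test the subalgebra hypothesis on the \emph{order-reversed} product $x_l\cdot_\sigma x_k$ with $k<l$, since that is where the asymmetry $\sigma(\{l\},\{k\})$ appears and where the regularity equation pins it down to $1$.
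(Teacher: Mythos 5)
Your proposal is correct, and while it shares the paper's overall skeleton (existence via Proposition \ref{prop:sr-prod}, reduction of uniqueness to commutativity, and the final appeal to Lemma \ref{lem:tensor-uniqueness-1}), the way you obtain commutativity is genuinely different from the paper's. The paper tests the subalgebra hypothesis on arbitrary pointwise monomials $x_K=[(x_k)_{k\in K}]$ and $x_H$, and extracts from the Cauchy--Riemann equations the stronger structural relation \eqref{eq:cap}, i.e.\ $\sigma(K,H)=\sigma(K\cap H,K\cap H)$ for all $K,H\in\pa(n)$; notably that derivation never uses associativity, which enters only in the last line. You instead test only the order-reversed product $x_l\cdot_\sigma x_k$ of two coordinate functions with $k<l$: since all components of the two stem functions are real scalars, the product stem function is computed in one line, its only order-sensitive component is $C_{\{k,l\}}=\sigma(\{l\},\{k\})\beta_k\beta_l$, and the Cauchy--Riemann equation of Proposition \ref{prop:slice-regularity}$(\mr{ii})$ with $K=\{l\}$, $h=k$ forces $\sigma(\{l\},\{k\})=1$ (your appeal to $\beta_l\not\equiv 0$ is fine, since a non-empty open $D\subset\C^n$ cannot lie in the hyperplane $\{\beta_l=0\}$), while $\sigma(\{k\},\{l\})=1$ is already forced by \eqref{eq:eK}. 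Your upgrading step is also sound: in an associative algebra, pairwise commuting generators generate a commutative subalgebra, and by \eqref{eq:eK} that subalgebra contains every $e_K$, hence spans $\R^{2^n}$, so $\mr{b}$ is commutative and Lemma \ref{lem:tensor-uniqueness-1} applies. What your route buys is a much lighter computation — degree-one monomials instead of the double-indexed sums \eqref{eq:chain1}--\eqref{eq:chain2} — at the cost of spending the associativity hypothesis earlier; what the paper's route buys is the intermediate identity \eqref{eq:cap}, valid for any (possibly non-associative) hypercomplex $\dsim$-product with the subalgebra property, which is strictly more information than commutativity of the generators alone.
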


\begin{proof}
Let $\mr{b}=\EuScript{B}(\sigma)$ be a $\dsim$-product of $\R^{2^n}$, let $f=\I(F)$ and $g=\I(G)$ be functions in $\mc{SR}(\OO_D,A)$, and let $(FG)_K$ be the $K$-component of $FG=F\cdot_\sigma G$ for all $K\in\pa(n)$.

Choose $K\in\pa(n)$ and $h\in\{1,\ldots,n\}$ with $h\not\in K$. For short, during the remaining part of this proof, we will use the symbol $\sum^*$ in place of $\sum_{(K_1,K_2,K_3)\in\mscr{D}(K)}$. Bearing in mind equality \eqref{eq:K} and implication $(\mr{i})\Rightarrow(\mr{ii})$ in Proposition \ref{prop:slice-regularity}, we have:
\begin{align}
\partial_{\alpha_h}(FG)_K=&\textstyle\sum^*\left((\partial_{\alpha_h}F_{K_1\cup K_3})G_{K_2\cup K_3}+F_{K_1\cup K_3}(\partial_{\alpha_h}G_{K_2\cup K_3})\right)\sigma(K_1\cup K_3,K_2\cup K_3)=\nonumber\\
=&\textstyle\sum^*_{h\in K_3}(-\partial_{\beta_h}F_{(K_1\cup K_3)\setminus\{h\}})G_{K_2\cup K_3}\sigma(K_1\cup K_3,K_2\cup K_3)+\nonumber\\
&\textstyle+\sum^*_{h\in K_3}F_{K_1\cup K_3}(-\partial_{\beta_h}G_{(K_2\cup K_3)\setminus\{h\}})\sigma(K_1\cup K_3,K_2\cup K_3)+\nonumber\\
&\textstyle+\sum^*_{h\not\in K_3}(\partial_{\beta_h}F_{K_1\cup K_3\cup\{h\}})G_{K_2\cup K_3}\sigma(K_1\cup K_3,K_2\cup K_3)+\nonumber\\
&\textstyle+\sum^*_{h\not\in K_3}F_{K_1\cup K_3}(\partial_{\beta_h}G_{K_2\cup K_3\cup\{h\}})\sigma(K_1\cup K_3,K_2\cup K_3)=\nonumber\\
=&\textstyle\sum^*_{h\not\in K_3}(-\partial_{\beta_h}F_{K_1\cup K_3})G_{K_2\cup K_3\cup\{h\}}\sigma(K_1\cup K_3\cup\{h\},K_2\cup K_3\cup\{h\})+\label{eq:chain1}\\
&\textstyle+\sum^*_{h\not\in K_3}F_{K_1\cup K_3\cup\{h\}}(-\partial_{\beta_h}G_{K_2\cup K_3})\sigma(K_1\cup K_3\cup\{h\},K_2\cup K_3\cup\{h\})+\nonumber\\
&\textstyle+\sum^*_{h\not\in K_3}(\partial_{\beta_h}F_{K_1\cup K_3\cup\{h\}})G_{K_2\cup K_3}\sigma(K_1\cup K_3,K_2\cup K_3)+\nonumber\\
&\textstyle+\sum^*_{h\not\in K_3}F_{K_1\cup K_3}(\partial_{\beta_h}G_{K_2\cup K_3\cup\{h\}})\sigma(K_1\cup K_3,K_2\cup K_3)\nonumber
\end{align}
and
\begin{align}
\partial_{\beta_h}(FG)_{K\cup\{h\}}=&\textstyle\sum^*_{h\not\in K_3}(\partial_{\beta_h}F_{(K_1\cup\{h\})\cup K_3})G_{K_2\cup K_3}\sigma((K_1\cup\{h\})\cup K_3,K_2\cup K_3)+\nonumber\\
&\textstyle+\sum^*_{h\not\in K_3}F_{(K_1\cup\{h\})\cup K_3}(\partial_{\beta_h}G_{K_2\cup K_3})\sigma((K_1\cup\{h\})\cup K_3,K_2\cup K_3)+\nonumber\\
&\textstyle+\sum^*_{h\not\in K_3}(\partial_{\beta_h}F_{K_1\cup K_3})G_{(K_2\cup\{h\})\cup K_3}\sigma(K_1\cup K_3,(K_2\cup\{h\})\cup K_3)+\nonumber\\
&\textstyle+\sum^*_{h\not\in K_3}F_{K_1\cup K_3}(\partial_{\beta_h}G_{(K_2\cup\{h\})\cup K_3})\sigma(K_1\cup K_3,(K_2\cup\{h\})\cup K_3)=\nonumber\\
=&\textstyle\sum^*_{h\not\in K_3}(\partial_{\beta_h}F_{K_1\cup K_3\cup\{h\}})G_{K_2\cup K_3}\sigma(K_1\cup K_3\cup\{h\},K_2\cup K_3)+\label{eq:chain2}\\
&\textstyle+\sum^*_{h\not\in K_3}F_{K_1\cup K_3\cup\{h\}}(\partial_{\beta_h}G_{K_2\cup K_3})\sigma(K_1\cup K_3\cup\{h\},K_2\cup K_3)+\nonumber\\
&\textstyle+\sum^*_{h\not\in K_3}(\partial_{\beta_h}F_{K_1\cup K_3})G_{K_2\cup K_3\cup\{h\}}\sigma(K_1\cup K_3,K_2\cup K_3\cup\{h\})+\nonumber\\
&\textstyle+\sum^*_{h\not\in K_3}F_{K_1\cup K_3}(\partial_{\beta_h}G_{K_2\cup K_3\cup\{h\}})\sigma(K_1\cup K_3,K_2\cup K_3\cup\{h\}).\nonumber
\end{align}
It follows that, if the following chain of equalities
\begin{align}
\sigma(K_1\cup K_3\cup\{h\},K_2\cup K_3\cup\{h\})&=-\sigma(K_1\cup K_3,K_2\cup K_3\cup\{h\})=\nonumber\\
&=-\sigma(K_1\cup K_3\cup\{h\},K_2\cup K_3)=\label{eq:2}\\
&=-\sigma(K_1\cup K_3,K_2\cup K_3)\nonumber
\end{align}
holds for all $(K_1,K_2,K_3)\in\mscr{D}(K)$ with $h\not\in K_1\cup K_2\cup K_3$, then $\partial_{\alpha_h}(FG)_K=\partial_{\beta_h}(FG)_{K\cup\{h\}}$. Similar computations ensures that, if equalities \eqref{eq:2} hold, then $\partial_{\beta_h}(FG)_K=-\partial_{\alpha_h}(FG)_{K\cup\{h\}}$ as well; consequently, by implication $(\mr{ii})\Rightarrow(\mr{i})$ in Proposition \ref{prop:slice-regularity}, $f\cdot_\sigma g$ is slice regular. Note that if $\sigma=\sigma_\otimes^n$, then equalities \eqref{eq:2} are trivially verified; indeed,
\begin{align*}
\sigma_\otimes^n(K_1\cup K_3\cup\{h\},K_2\cup K_3\cup\{h\})&=(-1)^{|K_3\cup\{h\}|}=-(-1)^{|K_3|},\\
\sigma_\otimes^n(K_1\cup K_3,K_2\cup K_3\cup\{h\})&=\sigma_\otimes^n(K_1\cup K_3\cup\{h\},K_2\cup K_3)=\\
&=\sigma_\otimes^n(K_1\cup K_3,K_2\cup K_3)=(-1)^{|K_3|}.
\end{align*}
This gives another proof of the fact that $f\tenso g$ is slice regular. 

Suppose $\mr{b}=\EuScript{B}(\sigma)$ is a hypercomplex $\dsim$-product of $\R^{2^n}$ such that $f\cdot_\sigma g\in\mc{SR}(\OO_D,A)$ for all $f,g\in\mc{SR}(\OO_D,A)$. Let $K,H\in\pa(n)$. We have to prove that $\sigma(K,H)=(-1)^{|K\cap H|}$.

First, we show that $\sigma(K,H)=\sigma(K\cap H,K\cap H)$. Suppose that $K\setminus H\neq\emptyset$, and choose $h\in K\setminus H$. Denote $x_K$ and $x_H$ the monomial functions from $\OO_D$ to $A$ defined as $x_K:=[(x_k)_{k\in K}]$ and $x_H:=[(x_h)_{h\in H}]$. Note that, if $\mc{K}=\sum_{L\in\pa(n)}e_L\mc{K}_L$ and $\mc{H}=\sum_{M\in\pa(n)}e_M\mc{H}_M$ denote the stem functions such that $x_K=\I(\mc{K})$ and $x_H=\I(\mc{H})$, then $\mc{K}_L(z)=\alpha_{K\setminus L}\beta_L$ if $L\subset K$, $\mc{K}_L(z)=0$ if $L\not\subset K$, $\mc{H}_M(z)=\alpha_{H\setminus M}\beta_M$ if $M\subset H$ and $\mc{H}_M(z)=0$ if $M\not\subset H$, where $z=(\alpha_1+i\beta_1,\ldots,\alpha_n+i\beta_n)\in D$, $\alpha_P:=\prd_{p\in P}\alpha_p$ and $\beta_P:=\prd_{p\in P}\beta_p$ for all $P\in\pa(n)\setminus\{\emptyset\}$, and $\alpha_\emptyset=\beta_\emptyset:=1$. Moreover, by \eqref{eq:chain1} and \eqref{eq:chain2}, if we set
\begin{align*}
\textstyle\sum^\bullet&:=\textstyle\sum_{(K_1,K_2,K_3)\in\mscr{D}((K\setminus\{h\})\dsim H),h\not\in K_3},\\
\textstyle\sum'&:=\textstyle\sum_{K_3\in\pa(n),K_3\subset K\cap H},\\
\sigma^{(1)}(K_3)&:=\sigma\big(((K\setminus H)\setminus\{h\})\cup K_3,(H\setminus K)\cup K_3\big),\\
\sigma^{(2)}(K_3)&:=\sigma\big((K\setminus H)\cup K_3,(H\setminus K)\cup K_3\big),
\end{align*}
then it holds
\begin{align*}
\partial_{\alpha_h}(\mc{K}\mc{H})_{(K\setminus\{h\})\dsim H}(z)=&\textstyle\sum^\bullet(\partial_{\beta_h}\mc{K}_{K_1\cup K_3\cup\{h\}})\mc{H}_{K_2\cup K_3}\sigma(K_1\cup K_3,K_2\cup K_3)=\\
=&\textstyle\sum'(\alpha_{(K\cap H)\setminus K_3}\beta_{((K\setminus H)\setminus\{h\})\cup K_3})\alpha_{(K\cap H)\setminus K_3}\beta_{(H\setminus K)\cup K_3}\sigma^{(1)}(K_3)=\\
=&\textstyle\,\beta_{(K\dsim H)\setminus\{h\}}\sum'(\alpha_{(K\cap H)\setminus K_3})^2(\beta_{K_3})^2\sigma^{(1)}(K_3)
\end{align*}
and, similarly,
\begin{align*}
\partial_{\beta_h}(\mc{K}\mc{H})_{K\dsim H}(z)=&\textstyle\textstyle\sum^\bullet(\partial_{\beta_h}\mc{K}_{K_1\cup K_3\cup\{h\}})\mc{H}_{K_2\cup K_3}\sigma(K_1\cup K_3\cup\{h\},K_2\cup K_3)=\\
=&\textstyle\,\beta_{(K\dsim H)\setminus\{h\}}\sum'(\alpha_{(K\cap H)\setminus K_3})^2(\beta_{K_3})^2\sigma^{(2)}(K_3).
\end{align*}
By hypothesis, the polynomial functions $\partial_{\alpha_h}(\mc{K}\mc{H})_{(K\setminus\{h\})\dsim H}$ and $\partial_{\beta_h}(\mc{K}\mc{H})_{K\dsim H}$ in the variable $z=(\alpha_1+i\beta_1,\ldots,\alpha_n+i\beta_n)$ are equal on the non-empty open subset $D$ of $\C^n$. Consequently,  the coefficients $\sigma^{(1)}(K_3)$ and $\sigma^{(2)}(K_3)$ are equal for all $K_3\in\pa(n)$ with $K_3\subset K\cap H$. In particular, the case $K_3=K\cap H$ implies that $\sigma(K\setminus\{h\},H)=\sigma(K,H)$ for all $h\in K\setminus H$. Since by hypothesis $\partial_{\beta_h}(\mc{K}\mc{H})_{(K\setminus\{h\})\dsim H}$ is equal to $-\partial_{\alpha_h}(\mc{K}\mc{H})_{K\dsim H}$ as well, similar computations show that $\sigma(K,H\setminus\{h\})=\sigma(K,H)\,$ for all $h\in H\setminus K$. This proves that
\begin{equation}\label{eq:cap}
\text{$\sigma(K,H)=\sigma(K\cap H,K\cap H)\,$ for all $K,H\in\pa(n)$,}
\end{equation}
as desired. In particular, $\mr{b}$ turns out to be commutative. Since it is also associative and hypercomplex by hypothesis, Lemma \ref{lem:tensor-uniqueness-1} implies that $\mr{b}=\mr{b}_\otimes^n$.
\end{proof}

We conclude this section with a result regarding slice regularity of pointwise products. For each $\ell\in\{1,\ldots,n\}$, we denote $\pi_\ell:A^n\to A$ the natural projection $\pi_\ell(x_1,\ldots,x_n):=x_\ell$.

\begin{lemma}\label{lem:sr-prod}
Let $\ell\in\{1,\ldots,n\}$, let $E_\ell$ be an open subset of $\C$ invariant under the complex conjugation of $\C$ and such that $\pi_\ell(\OO_D)\subset E_\ell$, and let $f:\OO_{E_\ell}\to A$ be a slice preserving slice regular function (in one variable), that is $f\in\mc{SR}_\R(\OO_{E_\ell},A)$. Let $H\in\pa(n)$ be such that $\ell\leq h$ for all $h\in H$, and let $g:\OO_D\to A$ be a $H$-reduced slice regular function (in $n$ variables). Define the function $p:\OO_D\to A$ by
\[
p(x):=f(x_\ell)g(x)
\]
for all $x= (x_1,\ldots,x_n)\in\OO_D$, where $f(x_\ell)g(x)$ is the product of $f(x_\ell)$ and $g(x)$ in $A$. Then $p$ belongs to $\mc{SR}(\OO_D,A)$.
\end{lemma}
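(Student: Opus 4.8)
The plan is to realize the pointwise product $p$ as a \emph{slice tensor product} of two slice regular functions and then invoke the subalgebra property of Proposition~\ref{prop:sr-prod}. First I would lift the one-variable function $f$ to an $n$-variable function $\tilde{f}:\OO_D\to A$ by setting $\tilde{f}(x):=f(x_\ell)$; this is well-defined since $x_\ell\in\OO_{E_\ell}$ for every $x\in\OO_D$ by hypothesis. Writing $f=\I(G_1+\ui G_2)$ for the one-variable stem function $G_1+\ui G_2:E_\ell\to A\otimes\R^2$, I would exhibit the $n$-variable stem function $\tilde{F}=\sum_{K\in\pa(n)}e_K\tilde{F}_K$ inducing $\tilde{f}$ by declaring $\tilde{F}_\emptyset(z):=G_1(z_\ell)$, $\tilde{F}_{\{\ell\}}(z):=G_2(z_\ell)$, and $\tilde{F}_K:=0$ for every other $K\in\pa(n)$. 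A direct check against \eqref{eq:stem}, using only the even/odd behaviour of $G_1,G_2$ in the $\ell$-th variable together with the independence of $\tilde{F}$ from the remaining variables, shows that $\tilde{F}$ is a genuine stem function and that $\I(\tilde{F})=\tilde{f}$. By construction $\tilde{f}$ is $\ell$-reduced, and since $f$ is slice preserving the components $G_1,G_2$ are real-valued, whence $\tilde{f}\in\mc{S}_\R(\OO_D,A)$.

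The key step is to verify that $\tilde{f}$ is slice regular as a function of $n$ variables. I would apply the criterion $(\mr{ii})$ of Proposition~\ref{prop:slice-regularity} (equivalently Lemma~\ref{lem:h-holomorphy}). For $h\neq\ell$ both $\partial_{\alpha_h}\tilde{F}$ and $\partial_{\beta_h}\tilde{F}$ vanish, because $\tilde{F}$ does not depend on the $h$-th variable, so the corresponding equations hold trivially. For $h=\ell$ the only component pair with $\ell\notin K$ and $\tilde{F}_K,\tilde{F}_{K\cup\{\ell\}}$ not both zero is $(K,K\cup\{\ell\})=(\emptyset,\{\ell\})$, and there the required identities $\dd{\tilde{F}_\emptyset}{\alpha_\ell}=\dd{\tilde{F}_{\{\ell\}}}{\beta_\ell}$ and $\dd{\tilde{F}_\emptyset}{\beta_\ell}=-\dd{\tilde{F}_{\{\ell\}}}{\alpha_\ell}$ are precisely the one-variable Cauchy--Riemann equations for $G_1,G_2$, which hold because $f$ is slice regular. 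Hence $\tilde{f}\in\mc{SR}_\R(\OO_D,A)$.

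With $\tilde{f}$ in hand I would apply Proposition~\ref{prop:reduced-prod} to the associative and hypercomplex product $\mr{b}^n_\otimes$: since $\tilde{f}\in\mc{S}_\R(\OO_D,A)$ is $\ell$-reduced, $g\in\mc{S}(\OO_D,A)$ is $H$-reduced, and $\ell\leq h$ for all $h\in H$, the proposition yields $\tilde{f}\tenso g=\tilde{f}g$ on the whole $\OO_D$. But $(\tilde{f}g)(x)=f(x_\ell)g(x)=p(x)$, so $p=\tilde{f}\tenso g$. Finally, $\tilde{f},g\in\mc{SR}(\OO_D,A)$ and, by Proposition~\ref{prop:sr-prod}, $\mc{SR}(\OO_D,A)$ is a real subalgebra of $(\mc{S}(\OO_D,A),\tenso)$; therefore the slice tensor product $\tilde{f}\tenso g=p$ is slice regular, i.e. $p\in\mc{SR}(\OO_D,A)$.

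The main obstacle is the middle paragraph: one must confirm that the naive extension $\tilde{f}$, constant in the variables $x_j$ with $j\neq\ell$, really is a slice regular $n$-variable slice function, i.e. that freezing those variables destroys neither the stem-function property nor the holomorphy. Once $\tilde{F}$ is written explicitly this is just bookkeeping with \eqref{eq:stem} and the one-variable Cauchy--Riemann equations, but it is the only point where genuine verification, rather than a direct appeal to an earlier result, is required.
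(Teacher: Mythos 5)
Your proposal is correct and follows essentially the same route as the paper's proof: the paper also defines the lift $f_*(x_1,\ldots,x_n):=f(x_\ell)$, notes it is an $\ell$-reduced slice regular (and slice preserving) function, identifies $p=f_*\tenso g$ via Proposition~\ref{prop:reduced-prod}, and concludes by Leibniz's rule \eqref{eq:L2-tensor}, which is exactly the content of the subalgebra property you invoke. The only difference is that you spell out the stem-function bookkeeping for $\tilde{f}$ that the paper leaves as a one-line remark, which is a legitimate and complete verification.
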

\begin{proof}
Let $f_*:\OO_D\to A$ be the function $f_*(x_1,\ldots,x_n):=f(x_h)$. Note that $f_*$ is a $\ell$-reduced slice regular function. By Proposition \ref{prop:reduced-prod}, we have that $p=f_*\tenso g$. By \eqref{eq:L2-tensor}, $p$ is slice regular.
\end{proof}


\subsection{Splitting decomposition of slice regular functions}\label{subsection:split-deco}

By Assumption \ref{assumption1}, the set $\cS_A$ is non-empty. Thanks to Artin's theorem, the left multiplication by an element of $\cS_A$ induces a complex structure on $A$, so the real dimension of $A$ is even and positive, say $\dim_\R(A)=2u+2$ for some $u\in\N$. More precisely, if $J\in\cS_A$, then the addition of $A$ together with the complex scalar multiplication $\C_J\times A\to A$, sending $(c,a)$ into the product $ca$ in $A$, defines a structure of $\C_J$-vector space. If $\{J_1,\ldots,J_u\}$ is a $\C_J$-vector basis of $A$, then $\{1,J,J_1,JJ_1,\ldots,J_u,JJ_u\}$ is a real vector basis of $A$, see \cite[Lemma 2.3]{PowerSeries}. A real vector basis of $A$ of this form is called \textit{splitting basis} of $A$ associated with $J$.    

In the next result we use Definition \ref{def:OmegaDI}.  

\begin{proposition}\label{prop:splitting}
Let $f\in\mc{S}^1(\OO_D,A)$, let $J\in\cS_A$ and let $\{1,J,J_1,JJ_1,\ldots,J_u,JJ_u\}$ be a splitting basis of $A$ associated with $J$. Denote $\{f_{k,\ell}:\OO_D(J)\to\R\}_{k\in\{1,2\},\ell\in\{1,\ldots,u\}}$ the unique real-valued functions on $\OO_D(J)$ such that $f_J=\sum_{\ell=0}^u(f_{1,\ell}J_\ell+f_{2,\ell}JJ_\ell)$, where $J_0:=1$. Define the $\C_J$-valued functions $\{f_\ell:\OO_D(J)\to\C_J\}_{\ell=0}^u$ by setting $f_\ell:=f_{1,\ell}+f_{2,\ell}J$, in such a way that $f_J=\sum_{\ell=0}^uf_\ell J_\ell$. The following assertions are equivalent:
\begin{itemize}
 \item[$(\mr{i})$] $f$ is slice regular.
 \item[$(\mr{ii})$] For each $\ell\in\{0,1,\ldots,u\}$, we have
\[
\frac{\partial f_{1,\ell}}{\partial\alpha_\ell}=\frac{\partial f_{2,\ell}}{\partial\beta_\ell}\;\;\text{ and }\;\;\frac{\partial f_{1,\ell}}{\partial\beta_\ell}=-\frac{\partial f_{2,\ell}}{\partial\alpha_\ell}\;\;\text{ on $\OO_D(J)$,}
\]
where $(\alpha_1+J\beta_1,\ldots,\alpha_n+J\beta_n)$ are the coordinates of $(\C_J)^n$.
 \item[$(\mr{iii})$] For each $\ell\in\{0,1,\ldots,u\}$, the function $f_\ell:\OO_D(J)\to\C_J$ is holomorphic, where $\OO_D(J)$ and $\C_J$ are equipped with the natural complex structures associated with (left) multiplication by~$J$.
\end{itemize}
\end{proposition}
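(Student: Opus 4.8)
The plan is to pivot everything through the equivalence $(\mr{i})\Leftrightarrow(\mr{iii})$ of Proposition \ref{prop:slice-regularity}, which states that $f$ is slice regular if and only if its restriction $f_J$ satisfies the left $J$-holomorphy condition $\partial_{\alpha_h}f_J+J\,\partial_{\beta_h}f_J=0$ on $\OO_D(J)$ for every $h\in\{1,\ldots,n\}$. First I would dispose of the equivalence $(\mr{ii})\Leftrightarrow(\mr{iii})$, which is purely formal: since $f\in\mc{S}^1(\OO_D,A)$ the real components $f_{k,\ell}$ are $\mscr{C}^1$, and for each index $\ell$ and each variable $h$, writing $f_\ell=f_{1,\ell}+f_{2,\ell}J$ and using $J^2=-1$ gives
\[
\partial_{\alpha_h}f_\ell+J\,\partial_{\beta_h}f_\ell=\big(\partial_{\alpha_h}f_{1,\ell}-\partial_{\beta_h}f_{2,\ell}\big)+\big(\partial_{\alpha_h}f_{2,\ell}+\partial_{\beta_h}f_{1,\ell}\big)J,
\]
so the holomorphy of the $\C_J$-valued map $f_\ell$, i.e.\ the vanishing of this expression for all $h$, is exactly the Cauchy--Riemann system recorded in $(\mr{ii})$.

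The heart of the argument is then to relate the single identity $\partial_{\alpha_h}f_J+J\,\partial_{\beta_h}f_J=0$ to the holomorphy of each scalar component $f_\ell$. Starting from $f_J=\sum_{\ell=0}^u f_\ell J_\ell$, with constant basis vectors $J_\ell$ and $\C_J$-valued coefficients $f_\ell$, the Leibniz rule for the multiplication of $A$ (one factor being constant) gives $\partial_{\alpha_h}f_J=\sum_\ell(\partial_{\alpha_h}f_\ell)J_\ell$ and likewise for $\beta_h$. The one delicate point is to push the left factor $J$ inside the sum: for any $c\in\C_J$ I would verify $J(cJ_\ell)=(Jc)J_\ell$ by writing $c=a+bJ$ and invoking the left alternative law $J(JJ_\ell)=J^2J_\ell=-J_\ell$, so that both sides equal $aJJ_\ell-bJ_\ell$ (equivalently, Artin's theorem on the subalgebra generated by $J$ and $J_\ell$). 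Consequently
\[
\partial_{\alpha_h}f_J+J\,\partial_{\beta_h}f_J=\sum_{\ell=0}^u\big(\partial_{\alpha_h}f_\ell+J\,\partial_{\beta_h}f_\ell\big)J_\ell,
\]
an expansion in which every coefficient $\partial_{\alpha_h}f_\ell+J\,\partial_{\beta_h}f_\ell$ lies in $\C_J$.

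Since $\{J_\ell\}_{\ell=0}^u$ is, by construction of the splitting basis, a $\C_J$-basis of $A$ for the scalar action $c\cdot a=ca$, the displayed sum vanishes if and only if each coefficient vanishes. Hence $\partial_{\alpha_h}f_J+J\,\partial_{\beta_h}f_J=0$ for all $h$ holds if and only if $\partial_{\alpha_h}f_\ell+J\,\partial_{\beta_h}f_\ell=0$ for all $\ell$ and all $h$, i.e.\ if and only if every $f_\ell$ is holomorphic. Combining this with Proposition \ref{prop:slice-regularity} yields $(\mr{i})\Leftrightarrow(\mr{iii})$, and together with the formal step above this closes the chain $(\mr{i})\Leftrightarrow(\mr{ii})\Leftrightarrow(\mr{iii})$. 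The main obstacle is simply the non-associativity of $A$: it intervenes only in the commutation $J(cJ_\ell)=(Jc)J_\ell$ and in the $\C_J$-linear independence of the splitting basis, both of which are controlled by alternativity and Artin's theorem; once these are settled, the remaining manipulations are routine.
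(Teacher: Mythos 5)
Your proposal is correct and takes essentially the same route as the paper's proof: the equivalence $(\mr{ii})\Leftrightarrow(\mr{iii})$ is dispatched as a formal Cauchy--Riemann computation, and $(\mr{i})\Leftrightarrow(\mr{iii})$ follows by combining Proposition \ref{prop:slice-regularity} with the expansion $\partial_{\alpha_h}f_J+J\,\partial_{\beta_h}f_J=\sum_{\ell=0}^u\big(\partial_{\alpha_h}f_\ell+J\,\partial_{\beta_h}f_\ell\big)J_\ell$ and the $\C_J$-linear independence of the splitting basis. Your explicit check that $J(cJ_\ell)=(Jc)J_\ell$ for $c\in\C_J$ merely spells out what the paper compresses into its single appeal to Artin's theorem.
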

\begin{proof}
Equivalence $(\mr{ii}) \Leftrightarrow (\mr{iii})$ is evident. Let $h\in\{1,\ldots,n\}$. Bearing in mind Artin's theorem, we have
\[
\textstyle
\partial_{\alpha_h}f_J+J\partial_{\beta_h}f_J=\sum_{\ell=0}^u(\partial_{\alpha_h}f_\ell+J\partial_{\beta_h}f_\ell)J_\ell.
\]
Thanks to equivalence $(\mr{i}) \Leftrightarrow (\mr{iii})$ in Proposition \ref{prop:slice-regularity}, we deduce that $f$ is slice regular if and only if $\partial_{\alpha_h}f_\ell+J\partial_{\beta_h}f_\ell=0$ on $\OO_D$ for all $\ell\in\{0,1,\ldots,u\}$ and $h\in\{1,\ldots,n\}$. The latter assertion is in turn equivalent to say that each $f_\ell$ is holomorphic. This proves equivalence $(\mr{i}) \Leftrightarrow (\mr{iii})$ and completes the proof.
\end{proof}

As a consequence, we deduce:

\begin{corollary} \label{cor:real-analyticity}
Every slice regular function is real analytic, i.e. $\mc{SR}(\OO_D,A) \subset \mscr{C}^{\omega}(\OO_D,A)$.
\end{corollary}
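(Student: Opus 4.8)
The plan is to reduce the statement to the real analyticity of the inducing stem function and then invoke Theorem \ref{thm:Cr}(ii). Let $f=\I(F)\in\mc{SR}(\OO_D,A)$; by definition $F$ is a $\mscr{C}^1$ stem function with $\dibar_hF=0$ for all $h\in\{1,\ldots,n\}$. Since Theorem \ref{thm:Cr}(ii) gives $\mc{S}^\omega(\OO_D,A)\subset\mscr{C}^\omega(\OO_D,A)$ and, by definition, $\mc{S}^\omega(\OO_D,A)=\I(\mr{Stem}^\omega(D,A\otimes\R^{2^n}))$, it suffices to prove that $F$ itself is real analytic on $D$ (which is open in $\C^n$ by Assumption \ref{assumption:openess}); indeed then $F\in\mr{Stem}^\omega(D,A\otimes\R^{2^n})$ and hence $f\in\mscr{C}^\omega(\OO_D,A)$. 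I would stress that Proposition \ref{prop:splitting} alone yields only real analyticity of the restriction $f_J$ on the slice $\OO_D(J)\subset(\C_J)^n$, so the decisive argument must be carried out at the level of the stem function.

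To prove that $F$ is real analytic I would exploit that the complex structures $\J_1,\ldots,\J_n$ commute (Lemma \ref{prop:Jh}). Set $W:=A\otimes\R^{2^n}$ and complexify, $W_\C:=W\otimes_\R\C$, extending each $\J_h$ complex-linearly. Since the $\J_h$ pairwise commute and satisfy $\J_h^2=-\id$, they admit a simultaneous eigenspace decomposition $W_\C=\bigoplus_{\epsilon\in\{\pm1\}^n}W^{(\epsilon)}$, where $\J_h$ acts on $W^{(\epsilon)}$ as multiplication by $\epsilon_h\sqrt{-1}$. Writing $F=\sum_\epsilon F^{(\epsilon)}$ with $F^{(\epsilon)}:D\to W^{(\epsilon)}$ the (still $\mscr{C}^1$) components of $F$, and using that the constant projections onto the $W^{(\epsilon)}$ commute with $\partial_{\alpha_h}$ and $\partial_{\beta_h}$ and that each $W^{(\epsilon)}$ is $\J_h$-invariant, the holomorphicity relations $\partial_{\alpha_h}F+\J_h(\partial_{\beta_h}F)=0$ project, by directness of the sum, to
\[
\partial_{\alpha_h}F^{(\epsilon)}+\epsilon_h\sqrt{-1}\,\partial_{\beta_h}F^{(\epsilon)}=0\qquad\text{for all }h\in\{1,\ldots,n\}\text{ and all }\epsilon.
\]

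Each such equation says that $F^{(\epsilon)}$ is holomorphic in $z_h$ when $\epsilon_h=1$ and holomorphic in $\overline{z_h}$ when $\epsilon_h=-1$. After the real-analytic change of coordinates $\zeta_h:=z_h$ (resp.\ $\zeta_h:=\overline{z_h}$) according to the sign of $\epsilon_h$, the map $F^{(\epsilon)}$ becomes separately holomorphic in each $\zeta_h$, and it is continuous since $F$ is $\mscr{C}^1$. Osgood's lemma then yields joint holomorphy of $F^{(\epsilon)}$ in $(\zeta_1,\ldots,\zeta_n)$, hence its real analyticity; as the change of coordinates and the (constant) projections $W_\C\to W^{(\epsilon)}$ are real analytic, each $F^{(\epsilon)}$, and therefore $F=\sum_\epsilon F^{(\epsilon)}$, is real analytic on $D$. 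Together with the first paragraph this proves $f\in\mscr{C}^\omega(\OO_D,A)$. The main obstacle is precisely that different variables carry different target complex structures $\J_h$; this is what the simultaneous eigenspace decomposition resolves, reducing the problem to genuinely separate (anti)holomorphy, where Osgood's lemma---needing only the continuity supplied by $\mscr{C}^1$---applies. By contrast, the naive route of differentiating the Cauchy-Riemann system of Proposition \ref{prop:slice-regularity}(ii) shows only that each $F_K$ is separately harmonic, and separate real analyticity does not imply joint real analyticity.
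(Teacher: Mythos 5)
Your proof is correct, but the core step is genuinely different from the paper's. The paper also reduces to analyticity of the stem function and finishes with Theorem \ref{thm:Cr}(ii), exactly as in your first paragraph; however, what you flag as insufficient --- that Proposition \ref{prop:splitting} only gives real analyticity of $f_J$ on $\OO_D(J)$ --- is precisely what the paper upgrades: by the representation formula \eqref{eq:components-F} with $I_1=\cdots=I_n=J$, each component $F_K(z)$ is a finite $A$-linear combination of values $f(y^{\, c,H})=f_J(y^{\, c,H})$, and $z\mapsto y^{\, c,H}$ is real analytic, so $F\in\mr{Stem}^\omega(D,A\otimes\R^{2^n})$ follows at once from the slice-level analyticity. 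You instead prove analyticity of $F$ intrinsically: complexifying the target and simultaneously diagonalizing the commuting structures $\J_1,\ldots,\J_n$ (Lemma \ref{prop:Jh}; each $\J_h$ squares to $-\mr{id}$, so the complexified family is simultaneously diagonalizable with eigenvalues $\pm i$) turns the system $\partial_{\alpha_h}F+\J_h(\partial_{\beta_h}F)=0$ into genuine separate holomorphy/antiholomorphy of the eigencomponents $F^{(\epsilon)}$, and Osgood's lemma --- needing only the continuity supplied by the $\mscr{C}^1$ hypothesis --- yields joint holomorphy, hence analyticity of $F=\sum_\epsilon F^{(\epsilon)}$. Both arguments are sound and share the same endgame. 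The paper's route is shorter given the machinery already in place and uses no several-complex-variables input beyond classical holomorphy of the splitting components $f_\ell$ on one slice; yours is self-contained at the stem level, makes transparent why $\J$-holomorphic stem functions are automatically analytic independently of $\cS_A$ and of any splitting basis, and, as you correctly observe, sidesteps the regularity obstruction (only $\mscr{C}^1$ is assumed) that would block naively differentiating the Cauchy--Riemann system of Proposition \ref{prop:slice-regularity}(ii).
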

\begin{proof}
By Proposition \ref{prop:splitting}, if $f=\I(F)\in\mc{SR}(\OO_D,A)$ and $J\in\cS_A$, then $f_J\in\mscr{C}^\omega(\OO_D(J),A)$. By formula \eqref{eq:components-F}, it follows that $F\in\mr{Stem}^\omega(D,A\otimes\R^{2^n})$ or, equivalently, $f\in\mc{S}^\omega(\OO_D,A)$. Now Theorem \ref{thm:Cr}$(\mr{ii})$ ensures that $f\in\mscr{C}^\omega(\OO_D,A)$.  
\end{proof}


\subsection{Convergent power series, slice tensor and star products}\label{subsec:cps}

In the theory of slice functions in one variable, the slice product $f\cdot g$ between slice functions $f=\I(F)$ and $g=\I(G)$ is induced by the usual product between complex numbers on $\R^2=\C$. Indeed, the latter product determines a real algebra structure on $A\otimes\R^2$, which coincides with the tensor product $A\otimes\C$. Then such a tensor product is used to compute the pointwise product $FG$ and, finally, one defines $f\cdot g:=\I(FG)$. One of the main features of the slice product $f\cdot g$ is revealed by its algebraic nature: if $f$ and $g$ are polynomials or, more generally, convergent power series in one variable, then $f\cdot g$ coincides with the standard abstract product $f*g$, sometimes called star product of $f$ and $g$ in the noncommutative setting, see \cite[Section 5]{AIM2011}.
 
The real algebra $A\otimes\C^{\otimes n}$ defines the slice tensor product `$\,\tenso\,$' on slice functions in $n$ variables, see Definition \ref{def:stp}. In this way, it is natural to ask whether such a product coincides with the star product on polynomials and on convergent power series in $n$-variables as well.

The aim of this section is to answer affirmatively to this question.

Let us review the concept of convergent power series with coefficients in $A$. A formal power series $s$ in $n$ indeterminates $X=(X_1,\ldots,X_n)$ with coefficients in $A$ is a sequence $\{a_\ell\}_{\ell\in\N^n}$ of elements of $A$. We formally write: $s(X)=\sum_{\ell\in\N^n}X^\ell a_\ell$ and $X^\ell=X_1^{\ell_1}\cdots X_n^{\ell_n}$ if $\ell=(\ell_1,\ldots,\ell_n)$. If the set $\{\ell\in\N^n:a_\ell\neq0\}$ is finite, then $s$ is called a (formal) polynomial. Denote $A[[X]]=A[[X_1,\ldots,X_n]]$ the set of all such formal power series. We can add and multiply formal power series in a standard way: if $t(X)=\sum_{\ell\in\N^n}X^\ell b_\ell\in A[[X]]$, then $s+t$ and $s\ast t$ are the elements of $A[[X]]$ defined by
\begin{align*}
&(s+t)(X)\textstyle:=\sum_{\ell\in\N^n}X^\ell(a_\ell+b_\ell),\\
&(s\ast t)(X)\textstyle:=\sum_{\ell\in\N^n}X^\ell\big(\sum_{p,q\in\N^n,\,p+q=\ell}a_pb_q\big).
\end{align*}
Given $r\in\R$, we can also define the real scalar multiplication $sr$ by $(sr)(X):=\sum_{\ell\in\N^n}X^\ell(a_\ell r)$. These operations make $A[[X]]$ a real algebra.

\begin{assumption}\label{assumption:norm}
In what follows, we assume that $\|\cdot\|:A\to\R$ is a norm of $A$ such that
\begin{equation}\label{eq:norm=nx}
\|x\|=\sqrt{n(x)}\;\;\text{ for all $x\in Q_A$,}
\end{equation}
equivalently, $\|\alpha+J\beta\|=\sqrt{\alpha^2+\beta^2}$ for all $\alpha,\beta\in\R$ and $J\in\cS_A$. 
\end{assumption}

Examples of real alternative $^*$-algebra $A$ with a norm having property \eqref{eq:norm=nx} are as follows: $\hh$ and $\oo$ with the usual conjugation $x^c:=\overline{x}$ and the usual Euclidean norm; the real Clifford algebra $\R_n$ with signature $(0, n)$, with the Clifford conjugation \cite[Definition 3.7]{GHS}, and the usual Euclidean norm of $\R^{2^n}$ or the Clifford operator norm, as defined in \cite[(7.20)]{GilbertMurray}.

Assumption \ref{assumption:norm} implies that
\begin{equation}\label{eq:compactness-sa}
\text{$\cS_A$ is compact in $A$.}
\end{equation}
Indeed, $\cS_A=\{I\in A:t(I)=0,n(I)=1\}$ is closed in $A$ and it is contained in the compact subset $S:=\{a\in A:\|a\|=1\}$ of $A$.

We say that $s(X)=\sum_{\ell\in\N^n}X^\ell a_\ell\in A[[X]]$ is a \textit{convergent power series} if there exists a real number $M>0$ such that
\begin{equation}\label{eq:Mdelta}
\|a_\ell\|\leq M^{|\ell|} \;\; \text{ for all $\ell=(\ell_1,\ldots,\ell_n)\in\N^n$ with $\textstyle|\ell|=\sum_{h=1}^n\ell_h$.}
\end{equation}
Note that such a concept does not depend on the chosen norm of $A$. Indeed, $A$ has finite real dimension so all the norms of $A$ are equivalent. Define the real number $B:=\max_{x,y\in S}\|xy\|$.
Note that $B>0$; indeed, $B\geq\|(1\|1\|^{-1})\cdot(1\|1\|^{-1})\|=\|1\|^{-1}>0$. By the very definition of $B$, it follows at once that
\begin{equation}\label{eq:B}
\text{$\|xy\|\leq B\|x\|\|y\|\;$ for all $x,y\in A$.}
\end{equation}
Thanks to the latter inequality, it follows immediately that the set of all convergent power series is a real subalgebra of $A[[X]]$.

Suppose now that $s(X)=\sum_{\ell\in\N^n}X^\ell a_\ell$ satisfies \eqref{eq:Mdelta}. Let $\rho\in\R$ with $0<B\rho M=:\gamma<1$ and let $\mbb{B}_\rho:=\bigcap_{h=1}^n\{(x_1,\ldots,x_n)\in A^n:\|x_h\|<\rho\}$. Note that, if $x\in\mbb{B}_\rho$ and $x^\ell a_\ell\in A$ is defined as in Definition \ref{def:polynomials}, then $\|x^\ell a_\ell\|\leq B^{|\ell|}\rho^{|\ell|}\|a_\ell\|\leq\gamma^{|\ell|}$; consequently, the series $\sum_{\ell\in\N^n}\|x^\ell a_\ell\|=\sum_{h\in\N}(\sum_{\ell\in\N^n,|\ell|=h}\|x^\ell a_\ell\|)$ is dominated by the positive real term series $\sum_{h\in\N}(h+1)^n\gamma^h$, which converges in $\R$. This proves that, for each $x\in\mbb{B}_\rho$, the series $\sum_{\ell\in\N^n}x^\ell a_\ell$ converges in~$A$ to a point $s(x)$. We abuse notation denoting $s:\mbb{B}_\rho\to A$ the function from $\mbb{B}_\rho$ to $A$, sending $x$ into $s(x)$. We say that such a function $s$ is a \textit{sum} of the convergent power series $s(X)$. Note that 
the series $\sum_{\ell\in\N^n}x^\ell a_\ell$ converges to $s:\mbb{B}_\rho\to A$ uniformly on $\mbb{B}_\rho$.

\begin{proposition}\label{prop:sum}
Let $s:\mbb{B}_\rho\to A$ be a sum of a convergent power series $s(X)$. Then $s$ is a slice regular function and there exists a unique sequence $\{a_\ell\}_{\ell\in\N^n}$ in $A$ such that $s(x)=\sum_{\ell\in\N^n}x^\ell a_\ell$. In particular, this is true if $s$ is a polynomial function.
\end{proposition}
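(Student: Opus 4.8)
The plan is to prove the three claims of Proposition~\ref{prop:sum} in order: slice regularity, the power-series representation, and uniqueness. The main tool is Corollary~\ref{cor:fl}, which guarantees that a pointwise limit of slice functions is again slice, together with the slice-regularity characterization of Proposition~\ref{prop:slice-regularity} and the convergence setup established just before the statement.

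First I would show that $s$ is slice regular. For each $N\in\N$, let $s_N:\mbb{B}_\rho\to A$ be the partial sum $s_N(x):=\sum_{\ell\in\N^n,|\ell|\leq N}x^\ell a_\ell$. Each $s_N$ is a polynomial function, hence slice regular by Proposition~\ref{prop:polynomials-sr}, so in particular $s_N\in\mc{SR}(\OO_D,A)\subset\mc{S}(\OO_D,A)$ with $\OO_D=\mbb{B}_\rho$. Since $\{s_N\}$ converges pointwise (indeed uniformly) to $s$ on $\mbb{B}_\rho$, Corollary~\ref{cor:fl} shows that $s$ is a slice function, say $s=\I(F)$ for a unique stem function $F$ by Proposition~\ref{prop:representation}. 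To upgrade sliceness to slice regularity I would restrict to a slice $\OO_D(J)=\mbb{B}_\rho\cap(\C_J)^n$ for a fixed $J\in\cS_A$ and use the characterization $(\mr{iii})$ of Proposition~\ref{prop:slice-regularity}: on $\OO_D(J)$ each $(s_N)_J$ is holomorphic with respect to left multiplication by $J$, and the series converges uniformly on compact subsets of $\OO_D(J)$; by the classical fact that a locally uniform limit of holomorphic maps (valued in the $\C_J$-vector space $A$) is holomorphic, $s_J$ satisfies \eqref{eq:holom-f_I}. Proposition~\ref{prop:slice-regularity} then yields that $s$ is slice regular.

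Next I would establish the representation $s(x)=\sum_{\ell\in\N^n}x^\ell a_\ell$. This is essentially immediate from the construction: the convergent power series $s(X)$ comes equipped with its coefficient sequence $\{a_\ell\}_{\ell\in\N^n}$, and the function $s$ was defined precisely as the pointwise sum $x\mapsto\sum_{\ell\in\N^n}x^\ell a_\ell$ on $\mbb{B}_\rho$, the convergence being guaranteed by the estimate $\|x^\ell a_\ell\|\leq\gamma^{|\ell|}$ with $\gamma<1$ derived before the statement. So existence of the representing sequence needs no further argument.

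Finally, for uniqueness, suppose $\sum_{\ell}x^\ell a_\ell=\sum_{\ell}x^\ell b_\ell$ for all $x\in\mbb{B}_\rho$. Setting $c_\ell:=a_\ell-b_\ell$, it suffices to show that $\sum_\ell x^\ell c_\ell\equiv0$ on $\mbb{B}_\rho$ forces $c_\ell=0$ for all $\ell$. I would recover the coefficients by evaluating on the single complex slice $\C_J$ and using separate holomorphy: restricting to $x=(x_1,\ldots,x_n)$ with all $x_h\in\C_J$ small, $\sum_\ell x^\ell c_\ell$ becomes an ordinary convergent power series in $n$ commuting complex variables $x_1,\ldots,x_n\in\C_J$ valued in the $\C_J$-vector space $A$, and the standard uniqueness of Taylor coefficients for several complex variables (applied componentwise in a $\C_J$-basis of $A$) gives $c_\ell=0$. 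The one subtlety to handle carefully is the noncommutativity/nonassociativity of $A$: the monomial $x^\ell c_\ell=[(x_h^{\ell_h})_{h=1}^n,c_\ell]$ is defined via the ordered product, but once the $x_h$ all lie in the commutative associative subalgebra $\C_J$, Artin's theorem makes $x^\ell=x_1^{\ell_1}\cdots x_n^{\ell_n}$ an honest commuting product, so the classical complex-variable argument applies verbatim. The main obstacle is thus not any deep estimate but the bookkeeping needed to reduce the hypercomplex ordered-product formalism to the genuinely commutative complex-analytic setting on a fixed slice, where uniqueness of Taylor coefficients is standard.
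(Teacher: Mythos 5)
Your proposal is correct and follows essentially the same route as the paper's proof: sliceness of the sum via Corollary~\ref{cor:fl}, slice regularity by restricting to a slice $\OO_D(J)$ and invoking the holomorphy characterization of Proposition~\ref{prop:slice-regularity} (the paper differentiates the series term by term using uniform convergence of the differentiated series, where you instead cite the Weierstrass theorem on locally uniform limits of holomorphic maps --- an equivalent step), and uniqueness by recovering the coefficients on a slice (the paper writes this as $a_\ell=(\ell!)^{-1}D_\ell s(0)$, which is the same Taylor-coefficient argument, with the reduction $x^\ell a=(x_1^{\ell_1}\cdots x_n^{\ell_n})a$ on $(\C_J)^n$ via Artin's theorem exactly as you describe).
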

\begin{proof}
Corollary \ref{cor:fl} ensures that $s$ is a slice function. By Proposition \ref{prop:polynomials-sr}, the monomial function $M_\ell:\mbb{B}_\rho\to A$, sending $x$ into $x^\ell a_\ell$, is slice regular for each $\ell\in\N^n$. Choose $J\in\cS_A$. By Proposition \ref{prop:slice-regularity}, we know that $\partial_{\alpha_h}M_{\ell,J}+J\partial_{\beta_h}M_{\ell,J}=0$ on $\mr{B}:=\mbb{B}_\rho\cap(\C_J)^n$, where $M_{\ell,J}$ is the restriction of $M_\ell$ to $\mr{B}$. Since the series $\sum_{\ell\in\N^n}M_{\ell,J}$ converges to $s_J$ uniformly on $\mr{B}$ and both the series $\sum_{\ell\in\N^n}\partial_{\alpha_h}M_{\ell,J}$ and $\sum_{\ell\in\N^n}\partial_{\beta_h}M_{\ell,J}$ converge uniformly on $\mr{B}$ as well, for all $h\in\{1,\ldots,n\}$, we can differentiate $s_J$ term by term obtaining:
\[
\textstyle
\partial_{\alpha_h}s_J+J\partial_{\beta_h}s_J=\sum_{\ell\in\N^n}(\partial_{\alpha_h}M_{\ell,J}+J\partial_{\beta_h}M_{\ell,J})=0
\]
on $\mr{B}$. Using Proposition \ref{prop:slice-regularity} again, we deduce that $s$ is slice regular. Finally, note that $a_\ell=(\ell!)^{-1}D_\ell s(0)$, where $D_\ell$ denotes the partial derivative $\partial^{|\ell|}/\partial\alpha_1^{\ell_1}\cdots\partial\alpha_n^{\ell_n}$.
\end{proof}

\begin{remark}\label{rem:poly}
In the polynomial case, the uniqueness assertion in the statement of the preceding result can be improved. Let $s:\OO_D\to A$ be a polynomial function. \emph{If $D$ is open in $\C^n$, then there exists a unique sequence $\{a_\ell\}_{\ell\in\N^n}$ in $A$ such that the set $L:=\{\ell\in\N^n:a_\ell\neq0\}$ is finite and $s(x)=\sum_{\ell\in L}x^\ell a_\ell$ on $\OO_D$}. As $s$ is a polynomial, $s(x)=\sum_{\ell\in L}x^\ell a_\ell$ for some finite non-empty subset $L$ of $\N^n$ and for some $a_\ell$ in $A$. Assume $s=0$ on $\OO_D$. We have to show that each $a_\ell$ is equal to zero. By Assumption \ref{assumption1}, we know that $D$ and $\cS_A$ are non-empty. Let $J\in\cS_A$. Let $z=(z_1,\ldots,z_n)\in\OO_D(J)$, let $\ell=(\ell_1,\ldots,\ell_n)\in\N^n$ and $m=(m_1,\ldots,m_n)\in\N^n$. Bearing in mind Artin's theorem and the fact that the components $z_h$ of $z$ commute each other, we have that $z^\ell a_\ell=[(z_h^{\ell_h})_{h=1}^n,a_\ell]=(z^\ell)a_\ell$ and the complex derivative $D_ms_J:\OO_D(J)\to A$ of $s_J$ is well-defined and it is equal to $\sum_{\ell\in\N^n}\frac{\ell!}{(\ell-m)!}(z^{\ell-m})a_\ell$, where $D_ms_J:=\partial^{|m|}s_J/\partial z_1^{m_1}\cdots\partial z_n^{m_n}$. Since $s_J=0$ on $\OO_D(J)$, the same is true for each complex derivative $D_ms_J$, i.e. $D_ms_J=0$ on $\OO_D(J)$. We prove by induction on the cardinality of $L$ that $a_\ell=0$ for all $\ell\in L$. If $L$ is the singleton $\{\ell\}$, then $a_\ell=D_\ell s_J=0$. Suppose $L$ contains at least two elements. Choose $m\in L$ such that $|m|\geq|\ell|$ for all $\ell\in L$. Since $\frac{D_ms_J}{m!}=a_m$, we have that $a_m=0$ and $s_J(z)=\sum_{\ell\in L\setminus\{m\}}(z^\ell)a_\ell$. The cardinality of $L\setminus\{m\}$ is $l-1$ so by induction all $a_\ell=0$. \bs
\end{remark}

\begin{definition}
Let $s:\mbb{B}_{\rho_1}\to A$ and $t:\mbb{B}_{\rho_2}\to A$ be sums of  convergent power series $s(X)$ and $t(X)$, respectively. Let $\{a_p\}_{p\in\N^n}$ and $\{b_q\}_{q\in\N^n}$ be the unique sequences in $A$ such that $s(X)=\sum_{p\in\N^n}X^p a_p$ and $t(X)=\sum_{q\in\N^n}X^q b_q$, see Proposition \ref{prop:sum}. Given a sum $s\ast t:\mbb{B}_{\rho_3}\to A$ of the convergent power series $(s\ast t)(X)$, we say that $s\ast t$ is a \emph{star product} of $s$ and $t$.

Suppose that $D$ is open in $\C^n$, and $s,t:\OO_D\to A$ are polynomial functions. Let $\{a_p\}_{p\in\N^n}$ and $\{b_q\}_{q\in\N^n}$ be the unique sequences in $A$ such that $s(X)=\sum_{p\in\N^n}X^p a_p$ and $t(X)=\sum_{q\in\N^n}X^q b_q$, see Remark \ref{rem:poly}. The \emph{star product $s\ast t:\OO_D\to A$} of $s$ and $t$ is defined as for $s(X)$ and $t(X)$, that is $(s\ast t)(x):=\sum_{\ell\in\N^n}x^\ell(\sum_{p,q\in\N^n,\,p+q=\ell}a_pb_q)$. \bs
\end{definition}

\begin{proposition}\label{prop:star-tensor}
The following hold.
\begin{itemize}
 \item[$(\mr{i})$] Let $s,t:\mbb{B}_\rho\to A$ be sums of convergent power series and let $s\ast t:\mbb{B}_{\rho'}\to A$ be a star product of $s$ and $t$ with $0<\rho'\leq\rho$. Then $s\tenso t=s\ast t$ on $\mbb{B}_{\rho'}$.
  \item[$(\mr{ii})$] Let $s,t:\OO_D\to A$ be polynomial functions. Suppose $D$ is open in $\C^n$. Then $s\tenso t=s\ast t$ on $\OO_D$. In particular, given any $\ell,m\in\N^n$ and $a,b\in A$, we have:
\begin{equation}\label{eq:xaxb}
(x^\ell a)\tenso(x^mb)=(x^\ell a)\ast(x^mb)=x^{\ell+m}(ab)\;\; \text{ on $\OO_D$}.
\end{equation}
\end{itemize}
\end{proposition}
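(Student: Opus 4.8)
The plan is to reduce the whole statement to the single monomial identity \eqref{eq:xaxb} and then propagate it, first by bilinearity to all polynomials (part~$(\mr{ii})$) and then by a pointwise limit to convergent power series (part~$(\mr{i})$).

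First I would prove \eqref{eq:xaxb}. Let $F$ and $G$ be the stem functions inducing $x^\ell a$ and $x^mb$, written explicitly as in Proposition~\ref{prop:polynomials}. Since $\I$ is a real algebra isomorphism from $(\mr{Stem}(D,A\otimes\R^{2^n}),\tenso)$ onto $(\mc{S}(\OO_D,A),\tenso)$, it suffices to compute the pointwise product $F\tenso G$ in $A\otimes\R^{2^n}$. Setting $\omega_h:=\alpha_h+e_h\beta_h$, I would first record that, by Lemma~\ref{lem:tensor}, the subspace $\langle e_K\rangle_{K\in\pa(n)}=\R^{2^n}$ is a commutative, associative and central subalgebra of $(A\otimes\R^{2^n},\tenso)$, and that $e_h^2=-1$ (condition \eqref{eq:units}) forces $\omega_h^{\tenso k}=p_k(\alpha_h,\beta_h)+e_hq_k(\alpha_h,\beta_h)$ for every $k$, with $p_k,q_k$ as in Definition~\ref{def:pkqk}. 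Combining this with \eqref{eq:eK}, the formula of Proposition~\ref{prop:polynomials} reads $F(z)=\omega_1^{\tenso\ell_1}\cdots\omega_n^{\tenso\ell_n}\,a$ and $G(z)=\omega_1^{\tenso m_1}\cdots\omega_n^{\tenso m_n}\,b$. As each $\omega_h$ is central and $\omega_h^{\tenso\ell_h}\tenso\omega_h^{\tenso m_h}=\omega_h^{\tenso(\ell_h+m_h)}$, I obtain $F(z)\tenso G(z)=\omega_1^{\tenso(\ell_1+m_1)}\cdots\omega_n^{\tenso(\ell_n+m_n)}(ab)$, which is exactly the stem function of $x^{\ell+m}(ab)$. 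Applying $\I$ gives $(x^\ell a)\tenso(x^mb)=x^{\ell+m}(ab)$, while $(x^\ell a)\ast(x^mb)=x^{\ell+m}(ab)$ is immediate from the definition of $\ast$ on monomials. Part~$(\mr{ii})$ then follows, since both $\tenso$ and $\ast$ are $\R$-bilinear and, by Remark~\ref{rem:poly}, a polynomial on the open set $\OO_D$ has a unique finite expansion $\sum_\ell x^\ell a_\ell$; expanding $s\tenso t$ and $s\ast t$ over pairs of monomials reduces both to $\sum_{\ell,m}x^{\ell+m}(a_\ell b_m)$.

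For part~$(\mr{i})$ I would argue by approximation. Put $S_N(x):=\sum_{|p|\le N}x^pa_p$ and $T_N(x):=\sum_{|q|\le N}x^qb_q$; these are polynomial functions on the open circular set $\mbb{B}_{\rho'}$, so part~$(\mr{ii})$ yields $S_N\tenso T_N=S_N\ast T_N$ on $\mbb{B}_{\rho'}$ for every $N$. The strategy is to pass to the limit $N\to\infty$ on both sides and conclude $s\tenso t=s\ast t$ from the equality of the two sequences of functions. On the star side, $S_N\ast T_N=\sum_{|p|,|q|\le N}x^{p+q}(a_pb_q)$ is the square partial sum of the double series $\sum_{p,q}x^{p+q}(a_pb_q)$, which converges absolutely and uniformly on $\mbb{B}_{\rho'}$ by the estimates \eqref{eq:Mdelta} and \eqref{eq:B} underlying the definition of a convergent power series; regrouping the terms by $\nu=p+q$ identifies its sum with $(s\ast t)(x)$, so $S_N\ast T_N\to s\ast t$ pointwise on $\mbb{B}_{\rho'}$.

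The remaining, and main, point will be to show $S_N\tenso T_N\to s\tenso t$ pointwise; this is the only genuine obstacle, because the slice tensor product is defined through stem functions rather than through pointwise values. Here I would invoke the representation formula \eqref{eq:components-F}: fixing $x\in\mbb{B}_{\rho'}$ and some $I\in(\cS_A)^n$, every component at the corresponding $z$ of the stem function inducing a slice function $f$ is a fixed finite $A$-linear combination of the finitely many values $\{f(y^{\,c,H})\}_{H\in\pa(n)}$, with $y^{\,c,H}\in\cS_x\subset\mbb{B}_{\rho'}$. Since $S_N\to s$ and $T_N\to t$ pointwise on $\mbb{B}_{\rho'}$ by the very definition of the sum of a convergent power series, and since $\cS_x$ is circular, the stem components of $S_N$ and $T_N$ converge pointwise to those of $s$ and $t$; continuity of the bilinear product on $A\otimes\R^{2^n}$ then gives convergence of the product stem functions, and finally the finite sum defining $\I$ in Definition~\ref{def:slice-function} gives $(S_N\tenso T_N)(x)\to(s\tenso t)(x)$. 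Comparing the two limits yields $s\tenso t=s\ast t$ on $\mbb{B}_{\rho'}$, as desired. Throughout, Proposition~\ref{prop:sum} guarantees that $s$, $t$ and their partial sums are genuine (slice regular) slice functions, so that \eqref{eq:components-F} applies.
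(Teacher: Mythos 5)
Your proposal is correct and takes essentially the paper's approach: your proof of (ii) rests on the same two pillars as the paper's, namely the hypercomplex relations $e_h^2=-1$ and \eqref{eq:eK} identifying pointwise monomials with slice monomials, and the centrality, commutativity and associativity of real-coefficient elements under $\sigma_\otimes^n$ (which the paper packages as Lemma \ref{lem:polynomials} and Lemma \ref{lem:tensor-center}, while you run the identical computation directly in the stem algebra $(A\otimes\R^{2^n},\tenso)$, legitimate since $\I$ is an algebra isomorphism). For (i) the paper merely writes ``passing (ii) to the limit,'' and your elaboration --- absolute and uniform convergence of the Cauchy-product side from \eqref{eq:Mdelta} and \eqref{eq:B}, plus pointwise convergence of the tensor side obtained by feeding pointwise convergence of $S_N,T_N$ through the representation formula \eqref{eq:components-F} to get convergence of stem components and hence of product stems --- is a sound and arguably the intended way to make that one-liner rigorous.
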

\begin{proof}
Let us start proving $(\mr{ii})$. Equip $\mc{S}(\OO_D,A)$ with the slice tensor product `$\,\tenso\,$'. Write `$\,\cdot\,$' in place `$\,\tenso\,$' for short. Let $\ell=(\ell_1,\ldots,\ell_n)$ and $m=(m_1,\ldots,m_n)$. Since the slice tensor product is hypercomplex, we can apply Lemma \ref{lem:polynomials}. As a consequence, in order to complete the proof, it suffices to show that $(x^{\bullet\ell}\cdot a)\cdot(x^{\bullet m}\cdot b)=x^{\bullet\ell+m}\cdot (ab)$. Let $h\in\{1,\ldots,n\}$. Note that the stem function $F=F_\emptyset+e_hF_h$ inducing $x_h^{\bullet\ell_h}$ has real-valued components $F_\emptyset$ and $F_h$. The same is true for $x_h^{\bullet m_h}$. By Lemma \ref{lem:tensor-center}, it follows at once that $x_h^{\bullet\ell_h}$ and $x_h^{\bullet m_h}$ belong to the center of $\mc{S}(\OO_D,A)$. Consequently, it holds $x^{\bullet\ell}\cdot x^{\bullet m}=x^{\bullet\ell+m}$ and
\[
(x^{\bullet\ell}\cdot a)\cdot(x^{\bullet m} \cdot b)=(x^{\bullet\ell}\cdot x^{\bullet m})\cdot(ab)=x^{\bullet\ell+m}\cdot(ab).
\]
This proves \eqref{eq:xaxb} and hence point $(\mr{ii})$. Passing $(\mr{ii})$ to the limit, we obtain $(\mr{i})$.
\end{proof}


\subsection{Slice regular functions and ordered analyticity}\label{sec:ordered analyticity}

Let $\|\cdot\|:A\to\R$ be a norm of $A$ satisfying Assumption \ref{assumption:norm}, let $\rho\in\R$ with $\rho>0$ and let $\mbb{B}_\rho:=\bigcap_{h=1}^n\{(x_1,\ldots,x_n)\in A^n:\|x_h\|<\rho\}$, as in Section \ref{subsec:cps}.

\begin{theorem}
A function $f:\mbb{B}_\rho\to A$ is slice regular if and only if $f$ is a sum of a convergent power series $\sum_{\ell\in\N^n}X^\ell a_\ell$ with coefficients in $A$. Moreover, if $f$ is slice regular, then
\[
a_\ell=(\ell!)^{-1}\partial_\ell f(0)
\]
for all $\ell=(\ell_1,\ldots,\ell_n)\in\N^n$, where $\partial_\ell$ is the derivative $\partial^{|\ell|}/\partial x_1^{\ell_1}\cdots\partial x_n^{\ell_n}$ obtained by composing in any order $\ell_1$-times $\partial/\partial x_1$, $\ell_2$-times $\partial/\partial x_2$, $\ldots$, $\ell_n$-times $\partial/\partial x_n$.
\end{theorem}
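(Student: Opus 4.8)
The plan is to establish the two implications separately. The ``if'' direction is immediate: if $f$ is a sum of a convergent power series, then $f$ is slice regular by Proposition~\ref{prop:sum}. For the converse, I would fix $f\in\mc{SR}(\mathbb{B}_\rho,A)$ and exploit the holomorphic behaviour of $f$ on a single slice together with the Identity Principle to reconstruct the power series on the whole polydisc $\mathbb{B}_\rho$.

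\emph{From slice regularity to a Taylor expansion on one slice.} First I would fix $J\in\cS_A$ and write $\mathbb{B}_\rho=\OO_D$ with $D=\{w\in\C^n:|w_h|<\rho\}$. By Proposition~\ref{prop:slice-regularity}$(\mr{iii})$, the restriction $f_J:\OO_D(J)\to A$ is holomorphic for the complex structure given by left multiplication $L_J$ by $J$; via $\phi_J$ it becomes an ordinary holomorphic map of $n$ complex variables from the polydisc $\{|w_h|<\rho\}$ into the finite-dimensional $\C_J$-vector space $(A,L_J)$. The classical theory of several complex variables then yields a Taylor expansion $f_J(z)=\sum_{\ell\in\N^n}z^\ell a_\ell$ with $a_\ell\in A$, converging absolutely and locally uniformly on the polydisc, where $z^\ell=z_1^{\ell_1}\cdots z_n^{\ell_n}$ is computed in the commutative subalgebra $\C_J$. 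Cauchy's estimates on each torus $\{|z_h|=r\}$, $r<\rho$, give $\|a_\ell\|\leq C_r\,r^{-|\ell|}$, so that $\limsup_{|\ell|\to\infty}\|a_\ell\|^{1/|\ell|}\leq\rho^{-1}$ and $\{a_\ell\}_{\ell}$ defines a convergent power series in the sense of Section~\ref{subsec:cps}.

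\emph{Global convergence on $\mathbb{B}_\rho$: the main obstacle.} The delicate point, and the one I expect to be hardest, is to pass from the single slice $(\C_J)^n$ to the whole $\mathbb{B}_\rho$, i.e.\ to check that the $A$-valued ordered series $\sum_\ell x^\ell a_\ell$ converges for every $x\in\mathbb{B}_\rho$, not merely on the smaller ball a crude submultiplicativity estimate would give. Here I would use that each factor lies in the quadratic cone: for $x=(x_1,\ldots,x_n)\in\mathbb{B}_\rho$ one has $x_h\in Q_A$, hence $x_h^{\ell_h}\in\C_{J_h}$ and $\|x_h^{\ell_h}\|=\|x_h\|^{\ell_h}$ by Assumption~\ref{assumption:norm}. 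Thus in $x^\ell a_\ell=[(x_h^{\ell_h})_{h=1}^n,a_\ell]$ only the $n$ multiplications joining the factors cost a constant $B$ from \eqref{eq:B}, giving $\|x^\ell a_\ell\|\leq B^n\prod_{h}\|x_h\|^{\ell_h}\,\|a_\ell\|$. Fixing $x$ and choosing $r$ with $\max_h\|x_h\|<r<\rho$, the estimate $\|a_\ell\|\leq C_r r^{-|\ell|}$ dominates the series by $B^nC_r\sum_\ell(\max_h\|x_h\|/r)^{|\ell|}$, which converges. Hence $s(x):=\sum_\ell x^\ell a_\ell$ is defined, with absolute and locally uniform convergence, on all of $\mathbb{B}_\rho$.

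\emph{Conclusion and the coefficient formula.} By the argument in the proof of Proposition~\ref{prop:sum} (which is local and applies on each sub-polydisc where the convergence is uniform), $s$ is slice regular on $\mathbb{B}_\rho$. Since $s_J=f_J$ on $\OO_D(J)$, the two slice functions $s$ and $f$ agree on $\mathbb{B}_\rho\cap(\C_J)^n$, so the Identity Principle (Corollary~\ref{cor:ip}, with $I_1=\cdots=I_n=J$) forces $s=f$ on $\mathbb{B}_\rho$; this proves that $f$ is a sum of a convergent power series. For the coefficients, I would note that slice regularity gives $\dibar_hF=0$, hence $\partial_hF=\partial_{\alpha_h}F$, so on the slice $\bigl(\dd{f}{x_h}\bigr)_J=\partial_{\alpha_h}f_J=\dd{f_J}{z_h}$, the complex $L_J$-derivative. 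Iterating and using that the operators $\partial_h$ commute (equation~\eqref{eq:dd}), the slice derivative $\partial_\ell f$ restricts on the slice to $\partial^{|\ell|}f_J/\partial z_1^{\ell_1}\cdots\partial z_n^{\ell_n}$; evaluating at the real point $0$ and comparing with the Taylor expansion yields $\partial_\ell f(0)=\ell!\,a_\ell$, as claimed.
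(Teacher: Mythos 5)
Your proof is correct, and it follows the paper's skeleton (``if'' via Proposition~\ref{prop:sum}; ``only if'' via a Taylor expansion on one slice, a global convergence estimate, the Identity Principle, and term-by-term differentiation) but replaces the paper's key estimate with a genuinely different one. Where you differ: the paper expands $f_J$ through a splitting basis $\{1,J,J_1,JJ_1,\ldots,J_u,JJ_u\}$ (Proposition~\ref{prop:splitting}) to get total convergence of $\sum_\ell x^\ell a_\ell$ on compact subsets of $\mbb{B}_\rho(J)$, and then transfers this to arbitrary circular compacts $C^*\subset\mbb{B}_\rho$ by applying the representation formula \eqref{eq:f} to each monomial slice function $M_\ell(x)=x^\ell a_\ell$, which together with \eqref{eq:B} and the compactness of $\cS_A$ from \eqref{eq:compactness-sa} yields $\max_{y\in C^*}\|M_\ell(y)\|\leq 2^n(BL)^{n+1}\max_{x\in C}\|M_\ell(x)\|$ with $C=C^*\cap(\C_J)^n$. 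You instead combine Cauchy's estimates for the slice coefficients with the observation that for $x_h\in Q_A$ one has $x_h^{\ell_h}\in\C_{J_h}$ and hence $\|x_h^{\ell_h}\|=\|x_h\|^{\ell_h}$ by \eqref{eq:norm=nx}, so that $\|x^\ell a_\ell\|\leq B^n\prod_h\|x_h\|^{\ell_h}\,\|a_\ell\|$ and the series is dominated by a geometric series on all of $\mbb{B}_\rho$; this is a valid and more elementary route, though it leans on two features special to this situation, namely the precise monomial shape of the terms and Assumption~\ref{assumption:norm} (your direct appeal to vector-valued SCV for $f_J$ into $(A,L_J)$ is also fine and is just the paper's splitting-basis step in disguise). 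What each approach buys: yours is shorter and avoids the representation formula entirely; the paper's slice-to-circular transfer is shape-agnostic, so it would apply verbatim to any series of slice functions once convergence on a single slice is known, which is the standard and more reusable mechanism in slice analysis. Your coefficient computation (using $\dibar_hF=0$ to identify $(\partial f/\partial x_h)_J$ with $\partial f_J/\partial z_h$, commutativity of the $\partial_h$'s from \eqref{eq:dd}, and evaluation at the real point $0$) is a correct, slightly more explicit version of the paper's one-line term-by-term differentiation.
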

\begin{proof}
By Proposition \ref{prop:sum}, if $f(x)=\sum_{\ell\in\N^n}x^\ell a_\ell$ on $\mbb{B}_\rho$, then $f$ is slice regular. Suppose that $f$ is slice regular. Let $J\in\cS_A$, let $\{1,J,J_1,JJ_1,\ldots,J_u,JJ_u\}$ be a splitting basis of $A$ associated with $J$ and let $\{f_h:\mbb{B}_\rho(J)\to\C_J\}_{h=1}^u$ be the family of $\C_J$-complex functions such that $f_J=\sum_{h=0}^uf_hJ_h$, where $J_0:=1$. By Proposition \ref{prop:splitting}, we know that each $f_h$ is holomorphic. As a consequence, each $f_h$ is the sum of a (unique) series $\sum_{\ell\in\N^n}x^\ell a_{\ell,h}$ with coefficients in $\C_J$, converging totally on compact subsets of $\mbb{B}_\rho(J)$ in the sense that $\sum_{\ell\in\N^n}\max_{x\in C}\|x^\ell a_{\ell,h}\|<+\infty$ for all compact subsets $C$ of $\mbb{B}_\rho(J)$. Set $a_\ell:=\sum_{h=0}^ua_{\ell,h}J_h$. Bearing in mind Artin's theorem, we deduce that $f_J$ is the sum of the series $\sum_{\ell_\in\N^n}x^\ell a_\ell$, converging totally on compact subsets of $\mbb{B}_\rho(J)$. In particular, if $M_\ell:\mbb{B}_\rho\to A$ denotes the monomial function $x^\ell a_\ell$ for each $\ell\in\N^n$, then $Q_C:=\sum_{\ell\in\N^n}\max_{x\in C}\|M_\ell(x)\|$ is finite for all compact subsets $C$ of $\mbb{B}_\rho(J)$. Choose arbitrarily a non-empty circular compact subset $C^*$ of $\mbb{B}_\rho$ and let $C:=C^*\cap(\C_J)^n\subset\mbb{B}_\rho(J)$. Let $x=(\alpha_1+J\beta_1,\ldots,\alpha_n+J\beta_n)\in C$ and let $y=(\alpha_1+I_1\beta_1,\ldots,\alpha_n+I_n\beta_n)\in C^*$ for some $I=(I_1,\ldots,I_n)\in(\cS_A)^n$. By \eqref{eq:compactness-sa}, we know that $\cS_A$ is compact in $A$ so $L:=\max\{\|1\|,\max_{a\in\cS_A}\|a\|\}$ is a positive real number. Let $B$ be a positive real number with property \eqref{eq:B}. Let $\ell\in\N^n$. By representation formula~\eqref{eq:f}, we have that $M_\ell(y)=2^{-n}\sum_{K,H \in \pa(n)}(-1)^{|K \cap H|}[I_K,[J^{-|K|}, M_\ell(x^{\, c,H})]]$. Consequently,
\begin{align*}
\|M_\ell(y)\|&\leq\textstyle2^{-n}\sum_{K,H \in \pa(n)}(BL)^{|K|+1}\|M_\ell(x^{\, c,H})\|\leq\\
&\leq\textstyle2^{-n}(BL)^{n+1}\sum_{K\in\pa(n)}\big(\sum_{H \in \pa(n)}\|M_\ell(x^{\, c,H})\|\big)\leq\\
&\leq\textstyle(BL)^{n+1}\sum_{H \in \pa(n)}\|M_\ell(x^{\, c,H})\|
\end{align*}
and hence
\begin{align*}
\textstyle
\max_{y\in C^*}\|M_\ell(y)\|&\leq\textstyle(BL)^{n+1}\sum_{H \in \pa(n)}\max_{x\in C}\|M_\ell(x^{\, c,H})\|=\\
&\textstyle=(BL)^{n+1}\sum_{H \in \pa(n)}\max_{x\in C}\|M_\ell(x)\|=\\
&\textstyle=2^n(BL)^{n+1}\max_{x\in C}\|M_\ell(x)\|.
\end{align*}
As a consequence, we deduce:
\begin{align*}
\textstyle\sum_{\ell\in\N^n}\max_{y\in C^*}\|M_\ell(y)\|&\leq\textstyle2^n(BL)^{n+1}Q_{C}<+\infty.
\end{align*}
This proves that the series $\sum_{\ell\in\N^n}x^\ell a_\ell$ converges to a function $s:\mbb{B}_\rho\to A$, totally on compact subsets of $\mbb{B}_\rho$. Since $f$ and $s$ are slice functions which coincide on $\mbb{B}_\rho(J)$, $f$ and $s$ coincide on the whole $\mbb{B}_\rho$ by Corollary \ref{cor:ip}. Finally, differentiating $s$ term by term, we obtain that $\partial_\ell f(0)=\partial_\ell s(0)=\ell !a_\ell$. We are done.
\end{proof}


\subsection{Cauchy integral formula for slice regular functions}\label{sec:Cauchy}


Throughout this section, we suppose Assumption \ref{assumption:openess} is true, i.e.\ $D$ is open in $\C^n$. Moreover, we fix $J\in\cS_A$.

\subsubsection{Some preparations}

Recall that $\phi_J:\C\to\C_J$ is the real algebra isomorphism $\phi_J(\alpha+i\beta):=\alpha+J\beta$. Choose bounded open subsets $E'_1,\ldots,E'_n$ of $\C$ invariant under complex conjugation and with $\mscr{C}^1$ boundaries $\partial E'_1,\ldots,\partial E'_n$. Let $h\in\{1,\ldots,n\}$. Define $E_h:=\phi_J(E_h')$ and $\partial E_h:=\phi_J(\partial E_h')$. Note that $\partial E_h$ is the boundary of $E_h$ in $\C_J$. Moreover, $\partial E_h$ is the disjoint union of a certain finite number, say $c_h$, of connected components, each homeomorphic to the circumference~$\cS^1$. Choose a $\mscr{C}^1$ parametrization $\xi_h:T_h\to\partial E_h$ of $\partial E_h$. Here $T_h$ is the disjoint union of $c_h$ intervals of the form $\{[a_{h,l},b_{h,l}]\}_{l=1}^{c_h}$, $\{\xi_h([a_{h,l},b_{h,l}])\}_{l=1}^{c_h}$ is the family of all connected components of $\partial E_h$, $\xi_h(a_{h,l})=\xi_h(b_{h,l})$ for all $l\in\{1,\ldots,c_h\}$, and each restriction of $\xi_h$ to $[a_{h,l},b_{h,l})$ is injective.

Consider the open subset $E$ of $(\C_J)^n$ and its distinguished boundary $\partial^*E$ given by
\[
E:=E_1\times\ldots\times E_n\;\;\;\text{ and }\;\;\;\partial^*E:=(\partial E_1)\times\ldots\times(\partial E_n).
\]
Define
\[
\OO(E):=\OO_{E'_1\times\ldots\times E'_n}.
\]
Note that $E=\OO(E)\cap(\C_J)^n$, i.e. $\OO(E)$ is the smallest circular set containing $E$. Denote $\mr{cl}(\OO(E))$ the closure of $\OO(E)$ in $(Q_A)^n$.

Given $v=(v_1,\ldots,v_n)\in(\C_J)^n$ and $K\in\pa(n)$, we define the element $v_K^c$ of $A$ by setting
\[
v_K^c:=
\left\{
 \begin{array}{ll}
1 & \text{ if  $K=\emptyset$,}
\vspace{.3em}
\\
\textstyle\prod_{h\in K}(v_h)^c  & \text{ if $K\neq\emptyset$.}
 \end{array}
\right.
\]

Let $T:=T_1\times\cdots\times T_n$. We define the maps $\xi:T\to\partial^*E$ and $\dot{\xi}:T\to\C_J$ by
\[
\xi(t):=(\xi_1(t_1),\ldots,\xi_n(t_n))
\;\;\;\text{ and }\;\;\;
\dot{\xi}(t):=\dot{\xi}_1(t_1)\cdots\dot{\xi}_n(t_n)
\]
for all $t=(t_1,\ldots,t_n)\in T$, where $\dot{\xi}_h(t_h)\in\C_J$ denotes the derivative of $\xi_h$ at $t_h$ in $\C_J$, and $\dot{\xi}_1(t_1)\cdots\dot{\xi}_n(t_n)$ the product of the $\dot{\xi}_h(t_h)$'s in $\C_J$ or, equivalently, in $A$. Given any $K\in\pa(n)$, we define also the function $\xi_K^c:T\to\C_J$ by $\xi_K^c(t):=(\xi(t))_K^c$, i.e.,
\[
\xi_K^c(t):=
\left\{
 \begin{array}{ll}
1 & \text{ if  $K=\emptyset$,}
\vspace{.3em}
\\
\textstyle\prod_{h\in K}(\xi_h(t_h))^c  & \text{ if $K\neq\emptyset$,}
 \end{array}
\right.
\]
for all $t=(t_1,\ldots,t_n)\in T$.

We need also two variants of Definition \ref{def:[]}. Let $C:\{1,\ldots,n\}\to A$ be any function and let $a\in A$. We define the elements $[C,a]$ and $[C]$ of $A$ by setting
\[
\textstyle
[C,a]:=C(1)\big(C(2)\big(\cdots\big(C(n-1)\big(C(n)a\big)\big)\ldots\big)\big)
\]
and
\[
\textstyle
[C]:=[C,1]=C(1)\big(C(2)\big(\cdots\big(C(n-1)C(n)\big)\ldots\big)\big).
\]
In addition, given any map $\EuScript{C}:\{1,\ldots,n\}\to\mc{S}(\OO(E),A)$, we define the slice functions $[\EuScript{C},a]$ and $\EuScript{C}$ in $\mc{S}(\OO(E),A)$ by setting
\[
\textstyle
[\EuScript{C},a]_\tenso:=\EuScript{C}(1)\tenso\big(\EuScript{C}(2)\tenso\big(\cdots\tenso\big(\EuScript{C}(n-1)\tenso\big(\EuScript{C}(n)\tenso a\big)\big)\ldots\big)\big)
\]
and
\[
\textstyle
[\EuScript{C}]_\tenso:=[\EuScript{C},1]_\tenso=\EuScript{C}(1)\tenso\big(\EuScript{C}(2)\tenso\big(\cdots\tenso\big(\EuScript{C}(n-1)\tenso\EuScript{C}(n)\big)\ldots\big)\big);
\]
here $a$ (in particular $1$) is identified with the function $\OO(E)\to A$ constantly equal to $a$.

Recall that, given any $q\in Q_A$, the function $\Delta_q:Q_A\to A$ is the characteristic polynomial of $q$, i.e.\ $\Delta_q(p):=p^2-2\mr{Re}(q)p+n(q)$. Denote $\Gamma_n$ the non-empty open subset of $(Q_A)^n\times (Q_A)^n$ defined by
\begin{equation}\label{eq:Gamma}
\textstyle
\Gamma_n:=\bigcap_{h=1}^n\{(x,y)=((x_1,\ldots,x_n),(y_1,\ldots,y_n))\in (Q_A)^n\times (Q_A)^n:\Delta_{y_h}(x_h)\neq0\},
\end{equation}
and $\Gamma_n(J)$ the non-empty open subset  of $(Q_A)^n\times(\C_J)^n$ given by
\begin{align}
&\Gamma_n(J):=\Gamma_n\cap((Q_A)^n\times(\C_J)^n).
\end{align}


\subsubsection{The general case}

Let $y=(y_1,\ldots,y_n)\in\partial^*E$. For each $h\in\{1,\ldots,n\}$, let $\EuScript{C}_{y,h}:\OO(E)\to A$ be the Cauchy kernel in the variable $x_h$ w.r.t.\ $y_h$, i.e. the slice function defined as follows:
\begin{equation}\label{eq:cyh}
\EuScript{C}_{y,h}(x):=\Delta_{y_h}(x_h)^{-1}(y_h^c-x_h)\;\;\text{ for all $x=(x_1,\ldots,x_n)\in\OO(E)$,}
\end{equation}
where $y_h^c:=(y_h)^c$ and $\Delta_{y_h}(x_h)^{-1}$ is the inverse of $\Delta_{y_h}(x_h)$ in $\C_J$ or, equivalently, in $A$. We define $\EuScript{C}_y:\{1,\ldots,n\}\to\mc{S}(\OO(E),A)$ by
\begin{equation}\label{def:cy}
\EuScript{C}_y(h):=\EuScript{C}_{y,h}\;\;\text{ for each $h\in\{1,\ldots,n\}$}.
\end{equation}

\begin{definition}\label{def:cg}
Let $S$ be any subset of $A^n$ containing $\partial^*E$ and let $g:S\to A$ be any function. We define the function $C_g:\OO(E)\times T\to A$ by setting
\begin{equation}\label{eq:cg}
C_g(x,t):=[\EuScript{C}_{\xi(t)},{\dot{\xi}(t)}J^{-n}g(\xi(t))]_\tenso(x). \;\text{ \bs}
\end{equation}
\end{definition}

\begin{remark}\label{rem:sr-cauchy}
For each $t\in T$, the function $\OO(E)\to A$, $x\mapsto C_g(x,t)$ is slice regular. This follows immediately from Proposition \ref{prop:sr-prod} and the fact that each function $\EuScript{C}_{\xi(t),h}:\OO(E)\to A$ and the constant function $\OO(E)\to A$, $x\mapsto\dot{\xi}(t)J^{-n}g(\xi(t))$ are slice regular. \bs
\end{remark}

Our general Cauchy integral formula reads as follows. 

\begin{theorem}
Let $f:\OO_D\to A$ be a slice regular function. Suppose that $\mr{cl}(\OO(E))\subset\OO_D$. Then
\begin{equation}\label{eq:slice-cauchy}
f(x)=(2\pi)^{-n}\int_TC_f(x,t)\,dt\quad\text{ for all $x\in\OO(E)$,}
\end{equation}
where $dt=dt_1\cdots dt_n$.
\end{theorem}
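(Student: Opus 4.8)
The plan is to reduce the $n$-variable Cauchy formula to the classical one-variable slice Cauchy formula applied iteratively, variable by variable, exploiting the slice tensor product structure of the kernel $C_f$. The essential observation (Remark \ref{rem:sr-cauchy}) is that $x\mapsto C_f(x,t)$ is built as an ordered slice tensor product of the one-variable Cauchy kernels $\EuScript{C}_{\xi(t),h}$, and Proposition \ref{prop:star-tensor}(i)--(ii) tells us that on products of balls the slice tensor product agrees with the star product; combined with Proposition \ref{prop:sr-prod} this guarantees slice regularity in $x$. First I would fix $x\in\OO(E)$ and, since both sides of \eqref{eq:slice-cauchy} are slice functions of $x$ (the right-hand side by Remark \ref{rem:sr-cauchy} and linearity of integration, the left-hand side by hypothesis), invoke the Identity Principle (Corollary \ref{cor:ip}) to reduce the verification to the single slice $\OO(E)\cap(\C_J)^n=E$. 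This is the key simplification: on $E$, all the variables $x_h$ lie in the \emph{same} commutative, associative plane $\C_J$, so by Artin's theorem the ordered products $[\EuScript{C}_{\xi(t)},\cdots]_\tenso$ collapse to ordinary products in $\C_J$, and the slice tensor product becomes the pointwise product (Proposition \ref{prop:slice-pointwise-products}(i)).

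\textbf{The Fubini iteration.} Once restricted to $E$, the integral factors through the product structure $T=T_1\times\cdots\times T_n$ and $\dot\xi(t)=\dot\xi_1(t_1)\cdots\dot\xi_n(t_n)$. I would then apply Fubini's theorem and integrate out the variables one at a time. Integrating in $t_n$ first, holding $x_1,\dots,x_{n-1}$ fixed and viewing $f$ as a slice regular function of the single quaternionic-type variable $x_n$ (here Theorem \ref{prop:regularwrtxh} guarantees $f$ is slice regular with respect to $\mr{x}_n$ after peeling off the spherical derivatives in the earlier variables), the innermost integral
\[
(2\pi)^{-1}\int_{T_n}\Delta_{\xi_n(t_n)}(x_n)^{-1}(\xi_n(t_n)^c-x_n)\,\dot\xi_n(t_n)\,J^{-1}\,f(\dots,\xi_n(t_n))\,dt_n
\]
is exactly the classical one-variable slice Cauchy integral over $\partial E_n$, which reproduces $f(\dots,x_n)$ by the one-variable Cauchy formula of \cite{AIM2011}. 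Repeating this $n$ times, each step removing one kernel factor $\EuScript{C}_{\xi(t),h}$ and one $J^{-1}$, recovers $f(x)$ on $E$, and hence on all of $\OO(E)$ by the Identity Principle.

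\textbf{Main obstacle.} The delicate point is the bookkeeping needed to make the one-variable reduction rigorous when $x$ ranges over $E\subset(\C_J)^n$ but the ambient kernel is defined via the \emph{slice tensor} product over all of $\OO(E)$. I must carefully justify that restricting the slice tensor product $[\EuScript{C}_{\xi(t)},\dot\xi(t)J^{-n}f(\xi(t))]_\tenso$ to the slice $E$ yields the iterated \emph{pointwise} product of one-variable kernels in the correct left-to-right order, with the scalars $\dot\xi_h(t_h)$ and $J^{-1}$ distributed consistently; this is precisely what Proposition \ref{prop:slice-pointwise-products}(i) and Artin's theorem deliver, but one must check that the ordering of the non-commuting quaternion values of $f$ is preserved through the iteration. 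A secondary technical matter is the interchange of the infinite/iterated integration with the slice structure and the term-by-term manipulations; these are controlled by the boundedness of $E$, the $\mscr{C}^1$ regularity of the parametrizations $\xi_h$, and the compactness of $\cS_A$ (Assumption \ref{assumption:norm}), so that all integrands are continuous and the Fubini interchange is legitimate. Once the restriction-to-$E$ identity is established, the passage back to $\OO(E)$ is immediate and carries no further difficulty.
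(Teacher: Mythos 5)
Your overall route is the same as the paper's: reduce to the slice $E=\OO(E)\cap(\C_J)^n$ by the Identity Principle, collapse the slice tensor product to the pointwise product there via Proposition \ref{prop:slice-pointwise-products}$(\mr{i})$ and Artin's theorem, and reproduce $f$ on $E$ by Cauchy's formula. But there is a genuine error in your justification of the one-variable reduction. Theorem \ref{prop:regularwrtxh} does \emph{not} guarantee that $f$ is slice regular w.r.t.\ $\mr{x}_n$: it asserts slice regularity of $f$ itself only w.r.t.\ $\mr{x}_1$, while for $h\geq2$ it is the truncated spherical derivatives $\SD_\epsilon f$ (not $f$) that are slice regular w.r.t.\ $\mr{x}_h$. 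The stronger statement you use is false in general: for $f(x_1,x_2)=x_1x_2$ on $\HH^2$ and a fixed $y_1\in\HH\setminus\R$, the function $x_2\mapsto y_1x_2$ is not even a slice function (left multiplication by a non-real constant destroys sliceness, cf.\ Remark \ref{rem:order}), so the one-variable slice Cauchy formula of \cite{AIM2011} cannot be applied to it on a circular domain. The repair is to notice that, once you have restricted to $x\in E$ with contour $\partial^*E\subset(\C_J)^n$, you do not need the slice version at all: by Proposition \ref{prop:slice-regularity}$(\mr{iii}^{\pr})$ the restriction $f_J$ is separately holomorphic w.r.t.\ left multiplication by $J$, so decomposing $f_J=\sum_{\ell=0}^u f_\ell J_\ell$ along a splitting basis (Proposition \ref{prop:splitting}) each $f_\ell$ is an ordinary $\C_J$-valued holomorphic function, and your Fubini iteration is precisely the classical iterated Cauchy formula on the polydomain $E_1\times\cdots\times E_n$ — which is exactly how the paper argues, quoting the several-variable classical formula in a single stroke and then reassembling with Artin's theorem.

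A secondary gap: the sliceness in $x$ of the right-hand side of \eqref{eq:slice-cauchy} does not follow from ``Remark \ref{rem:sr-cauchy} and linearity of integration,'' since sliceness means the existence of an inducing stem function and an integral is a limit, not a finite sum. The paper handles this explicitly: it applies the representation formula \eqref{eq:f} to each $\psi_t(x):=C_f(x,t)$, pulls the $x$-dependent brackets outside the $t$-integral, and concludes via the sliceness criterion (Corollary \ref{cor:sliceness-intrinsic}); alternatively one can approximate the integral by Riemann sums and invoke Corollary \ref{cor:fl} on pointwise limits of slice functions. Finally, your appeal to Proposition \ref{prop:star-tensor} is off-target: slice regularity of $x\mapsto C_f(x,t)$ is Remark \ref{rem:sr-cauchy}, which rests on Proposition \ref{prop:sr-prod}, and $\OO(E)$ need not be a product of balls, so the power-series comparison of star and tensor products plays no role here. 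With these two repairs your argument coincides with the paper's proof.
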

\begin{proof}
Let $\{1,J,J_1,JJ_1,\ldots,J_u,JJ_u\}$ be a splitting basis of $A$, see above  Section \ref{subsection:split-deco}. Write $f_J:E\to A$ as follows: $f_J=\sum_{\ell=0}^uf_\ell J_\ell$ for some (unique) functions $f_\ell:E\to\C_J$. By Proposition \ref{prop:splitting}, we know that each $f_\ell$ is holomorphic. In this way, given any $x=(x_1,\ldots,x_n)\in E$, we can apply the classical Cauchy formula to $f_\ell$ obtaining
\begin{align*}
f_\ell(x)&=(2\pi J)^{-n}\int_{\partial^*E}\frac{f_\ell(y)}{(y_1-x_1)\cdots(y_n-x_n)}\,dy_1\cdots dy_n=\\
&=(2\pi)^{-n}\int_{\partial^*E}\EuScript{C}_{y,1}(x)\cdots\EuScript{C}_{y,n}(x)J^{-n}f_\ell(y)\,dy_1\cdots dy_n=\\
&=(2\pi)^{-n}\int_T\EuScript{C}_{\xi(t),1}(x)\cdots\EuScript{C}_{\xi(t),n}(x)J^{-n}f_\ell(\xi(t))\dot{\xi}(t)\,dt=\\
&=(2\pi)^{-n}\int_T\EuScript{C}_{\xi(t),1}(x)\cdots\EuScript{C}_{\xi(t),n}(x)\dot{\xi}(t)J^{-n}f_\ell(\xi(t))\,dt.
\end{align*}
Thanks to Artin's theorem, we deduce
\begin{align*}
f_J(x)&=\sum_{\ell=0}^u\left((2\pi)^{-n}\int_T\EuScript{C}_{\xi(t),1}(x)\cdots\EuScript{C}_{\xi(t),n}(x)\dot{\xi}(t)J^{-n}f_\ell(\xi(t))\,dt\right)J_\ell=\\
&=(2\pi)^{-n}\int_T\EuScript{C}_{\xi(t),1}(x)\cdots\EuScript{C}_{\xi(t),n}(x)\dot{\xi}(t)J^{-n}f(\xi(t))\,dt.
\end{align*}
Choose $t\in T$ and define $\psi_t\in\mc{S}(\OO(E),A)$ by $\psi_t(x):=C_f(x,t)$. Bearing in mind Lemma \ref{lem:subalgebras}, Proposition \ref{prop:slice-pointwise-products} and Definition \ref{def:cg}, we have:
\begin{align*}
(\psi_t)_J&=\big([\EuScript{C}_{\xi(t)},\dot{\xi}(t)J^{-n}f(\xi(t))]_\tenso\big)_J=\\
&=\big((\EuScript{C}_{\xi(t),1}\tenso\cdots\tenso\EuScript{C}_{\xi(t),n})\tenso(\dot{\xi}(t)J^{-n}f(\xi(t)))\big)_J=\\
&=(\EuScript{C}_{\xi(t),1})_J\cdots(\EuScript{C}_{\xi(t),n})_J\dot{\xi}(t)J^{-n}f(\xi(t)).
\end{align*}
This proves that $C_f(x,t)=\psi_t(x)=\EuScript{C}_{\xi(t),1}(x)\cdots\EuScript{C}_{\xi(t),n}(x)\dot{\xi}(t)J^{-n}f(\xi(t))$ for all $(x,t)\in E\times T$. As a consequence, we have:
\[
f_J(x)=(2\pi)^{-n}\int_TC_f(x,t)\,dt\;\;\text{ for all $x\in E$.}
\]
Thanks to Corollary \ref{cor:ip}, in order to complete the proof, it suffices to show that the function $\OO_E\to A$, $x\mapsto\int_T\psi_t(x)\,dt=\int_TC_f(x,t)\,dt$ is a slice function. Define $\mr{J}:=(J,\ldots,J)\in(\cS_A)^n$. Choose $w=(\alpha_1+J\beta_1,\ldots,\alpha_n+J\beta_n) \in E$ and $x=(\alpha_1+L_1\beta_1,\ldots,\alpha_n+L_n\beta_n) \in \OO_D$ for some $L=(L_1,\ldots,L_n)\in(\cS_A)^n$. By representation formula \eqref{eq:f}, we know that
\[
\textstyle
\psi_t(x)=2^{-n}\sum_{K,H \in \pa(n)}(-1)^{|K \cap H|}[\mr{J}_K,[L_K^{-1},\psi_t(w^{\, c,H})]]
\]
for all $t\in T$. Hence, it holds:
\begin{align*}
\int_T\psi_t(x)\,dt&=\int_T\textstyle\big(2^{-n}\sum_{K,H \in \pa(n)}(-1)^{|K \cap H|}[\mr{J}_K,[L_K^{-1},\psi_t(w^{\, c,H})]]\big)\,dt=\\
&=\textstyle2^{-n}\sum_{K,H \in \pa(n)}(-1)^{|K \cap H|}[\mr{J}_K,[L_K^{-1},\int_T\psi_t(w^{\, c,H})\,dt]].
\end{align*}
Corollary \ref{cor:sliceness-intrinsic} implies that the function $\OO_E\to A$, $x\mapsto\int_T\psi_t(x)\,dt=\int_TC_f(x,t)\,dt$ is slice, as desired.
\end{proof}

\begin{remark}\label{rem:cauchy1}
As a byproduct of the preceding proof, we have that
\[
C_f(x,t)=\EuScript{C}_{\xi(t),1}(x)\cdots\EuScript{C}_{\xi(t),n}(x)\dot{\xi}(t)J^{-n}f(\xi(t))
\]
for all $(x,t)\in E\times T$. \bs
\end{remark}

Our next aim is to write $C_f$ in term of pointwise products of $A$-valued functions.  

For each $(x,y)=((x_1,\ldots,x_n),(y_1,\ldots,y_n))\in\Gamma_n(J)$ and for each $K\in\pa(n)$, we denote $\EuScript{C}(x,y,K):\{1,\ldots,n\}\to A$ the function given by
\begin{equation} \label{eq:C_g}
\EuScript{C}(x,y,K)(h):=
\left\{
 \begin{array}{ll}
(\Delta_{y_h}(x_h))^{-1} & \text{ if  $h\in K$,}
\vspace{.5em}
\\
(\Delta_{y_h}(x_h))^{-1}x_h  & \text{ if $h\not\in K$.}
 \end{array}
\right.
\end{equation}

\begin{theorem}
Let $f:\OO_D\to A$ be a slice regular function. Suppose that $\mr{cl}(\OO(E))\subset\OO_D$. Then
\begin{equation}\label{eq:cfxt}
C_f(x,t)=\textstyle\sum_{K\in\pa(n)}(-1)^{n-|K|}\big[\EuScript{C}(x,\xi(t),K),\xi_K^c(t)\dot{\xi}(t)J^{-n}f(\xi(t))\big]
\end{equation}
for each $(x,t)\in\OO(E)\times T$.
\end{theorem}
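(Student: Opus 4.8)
The plan is to fix $t\in T$, write $y=\xi(t)\in\partial^*E\subset(\C_J)^n$, and prove \eqref{eq:cfxt} as an identity of slice functions of $x$ on $\OO(E)$. Both sides are slice functions: the left-hand side $x\mapsto C_f(x,t)$ is slice regular by Remark \ref{rem:sr-cauchy}, while the sliceness of the right-hand side is addressed below. Since $E=\OO(E)\cap(\C_J)^n$, the Identity Principle (Corollary \ref{cor:ip}) reduces the theorem to checking the equality on the single slice $E$, where the computation becomes elementary.

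On $E$ I would start from the explicit product form of Remark \ref{rem:cauchy1}, namely $C_f(x,t)=\EuScript{C}_{\xi(t),1}(x)\cdots\EuScript{C}_{\xi(t),n}(x)\,\dot{\xi}(t)J^{-n}f(\xi(t))$, with each factor $\EuScript{C}_{\xi(t),h}(x)=\Delta_{y_h}(x_h)^{-1}(y_h^c-x_h)$ as in \eqref{eq:cyh}. For $x\in E$ all the quantities $\Delta_{y_h}(x_h)^{-1}$, $x_h$, $y_h^c$ and $\dot{\xi}(t)J^{-n}$ lie in the commutative, associative field $\C_J$ (by Artin's theorem and the fact that $t(y_h),n(y_h)\in\R$), so I may freely expand $\prod_{h=1}^n(y_h^c-x_h)=\sum_{K\in\pa(n)}(-1)^{n-|K|}\prod_{h\in K}y_h^c\prod_{h\notin K}x_h$ and redistribute the factors $\Delta_{y_h}(x_h)^{-1}$. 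Collecting the product $\prod_{h\in K}y_h^c=\xi_K^c(t)$ to the right and reassembling the remaining factors in increasing order of $h$ into the ordered-product notation of Definition \ref{def:[]} yields exactly $\sum_{K}(-1)^{n-|K|}[\EuScript{C}(x,\xi(t),K),\xi_K^c(t)\dot{\xi}(t)J^{-n}f(\xi(t))]$, with $\EuScript{C}(x,y,K)$ as in \eqref{eq:C_g}. Thus \eqref{eq:cfxt} holds on $E\times T$.

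It then remains to show that, for fixed $t$, the right-hand side of \eqref{eq:cfxt} is a slice function of $x$ on all of $\OO(E)$. Here I would split each single-variable factor $\EuScript{C}(x,y,K)(h)$ into its circular (slice preserving) and $\{h\}$-reduced (slice preserving) parts, so that, by multilinearity of the pointwise product, each summand $[\EuScript{C}(x,y,K),\xi_K^c\dot{\xi}J^{-n}f(y)]$ becomes a finite sum of pointwise ordered products whose factors are arranged in strictly increasing order of variable index and are each either circular slice preserving or $\{h\}$-reduced slice preserving. For products of this shape the ordered index structure lets me convert the pointwise product into the slice tensor product factor by factor, using Proposition \ref{prop:reduced-prod} for the $\{h\}$-reduced factors (its hypothesis $\ell\le h$ being guaranteed by the ordering) and the multiplication rules for circular slice preserving functions for the rest. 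Since slice tensor products of slice functions are slice (Proposition \ref{prop:sr-prod}), each summand is slice, and on $E$ it agrees with the pointwise product by Proposition \ref{prop:slice-pointwise-products}; hence the whole right-hand side is slice. With both sides slice and equal on $E$, Corollary \ref{cor:ip} closes the argument.

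The main obstacle is precisely this last step: the right-hand side is written with \emph{pointwise} products, which in general do not preserve sliceness (see Remark \ref{rem:order}), so the equality cannot be propagated off the slice $E$ for free. The crux is that the particular factors occurring in the Cauchy kernel are slice preserving in each single variable and appear in increasing index order, which is exactly the configuration in which Proposition \ref{prop:reduced-prod} and Lemma \ref{lem:sr-prod} force pointwise and slice tensor products to coincide; keeping track of this ordering, and of the relocation of the conjugated factors $y_h^c$ into $\xi_K^c(t)$, is the delicate bookkeeping the proof requires.
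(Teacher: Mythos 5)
Your proposal is correct and follows essentially the same route as the paper: expand $C_f(x,t)$ on the slice $E$ using Remark \ref{rem:cauchy1} and Artin's theorem, establish sliceness of each summand $[\EuScript{C}(x,\xi(t),K),\xi_K^c(t)\dot{\xi}(t)J^{-n}f(\xi(t))]$ by iterating Proposition \ref{prop:reduced-prod} on the factors (which are slice preserving and $h$-reduced, with indices increasing), and conclude with Corollary \ref{cor:ip}. Your extra splitting of each factor into circular and $\{h\}$-reduced parts is unnecessary — the factors $\Delta_{y_h}(x_h)^{-1}$ and $\Delta_{y_h}(x_h)^{-1}x_h$ are already $h$-reduced slice preserving, so Proposition \ref{prop:reduced-prod} applies to them directly, exactly as in the paper.
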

\begin{proof}
Let $t=(t_1,\ldots,t_n)\in T$, let $\psi_t:\OO(E)\to A$ be the function $\psi_t(x):=C_f(x,t)$, let $y=(y_1,\ldots,y_n):=(\xi_1(t_1),\ldots,\xi_n(t_n))\in\partial^*E$ and let $a:=\dot{\xi}(t)J^{-n}f(\xi(t))$. Thanks to Remark \ref{rem:cauchy1} and Artin's theorem, given any $x\in E$, we have that 
\begin{align}
\psi_t(x)&=\EuScript{C}_{\xi(t),1}(x)\cdots\EuScript{C}_{\xi(t),n}(x)\dot{\xi}(t)J^{-n}f(\xi(t))=\frac{(y_1^c-x_1)\cdots(y_n^c-x_n)}{\Delta_{y_1}(x_1)\cdots\Delta_{y_n}(x_n)}\,a=\nonumber\\
&=\frac{\textstyle\sum_{K\in\pa(n)}y_K^c\prod_{h\in\{1,\ldots,n\}\setminus K}(-x_h)}{\Delta_{y_1}(x_1)\cdots\Delta_{y_n}(x_n)}\,a=\label{eq:psi}\\
&=\textstyle\sum_{K\in\pa(n)}(-1)^{n-|K|}\big(\prod_{h\in K}\Delta_{y_h}(x_h)^{-1}\big)\big(\prod_{h\in\{1,\ldots,n\}\setminus K}\Delta_{y_h}(x_h)^{-1}x_h\big)y_K^ca=\nonumber\\
&=\textstyle\sum_{K\in\pa(n)}(-1)^{n-|K|}\big[\EuScript{C}(x,\xi(t),K),y_K^ca].\nonumber
\end{align}
Since $\psi_t$ is a slice function, in order to complete the proof, it is sufficient to show that the function $\phi_K:\OO(E)\to A$, sending $x$ into $[\EuScript{C}(x,\xi(t),K),y_K^ca]$, is slice for all $K\in\pa(n)$. Indeed, if this is true, then the function $\sum_{K\in\pa(n)}(-1)^{n-|K|}\phi_K$ is slice as well, and by Corollary \ref{cor:ip} we are done. Let $K\in\pa(n)$. 
Note that, for each $h\in\{1,\ldots,n\}$, the functions $\OO(E)\to A$, $x=(x_1,\ldots,x_n)\mapsto\Delta_{y_h}(x_h^c)$ and $\OO(E)\to A$, $x=(x_1,\ldots,x_n)\mapsto\Delta_{y_h}(x_h^c)x_h$ are slice preserving and $h$-reduced. Consequently, an iterated application of Proposition \ref{prop:reduced-prod} implies that $\phi_K$ is the iterated slice tensor product of $n+1$ slice functions, which is a slice function as well.

The proof is complete.
\end{proof}

\begin{remark}\label{rem:psi}
Repeating the chain of equalities \eqref{eq:psi} with $a=1$, we deduce that
\[
\textstyle
\EuScript{C}_{y,1}(x)\cdots\EuScript{C}_{y,n}(x)=\sum_{K\in\pa(n)}(-1)^{n-|K|}\big[\EuScript{C}(x,y,K),y_K^c]
\]
for all $(x,y)\in E\times\partial^*E$. \bs
\end{remark}


\subsubsection{The associative case}

Recall the definition of $\EuScript{C}_y$ for each $y\in\partial^*E$, given in \eqref{def:cy}.

\begin{definition}
We define the \emph{slice Cauchy kernel for $E$} as the function $C:\OO(E)\times\partial^*E\to A$ given by
\begin{equation}
C(x,y):=[\EuScript{C}_y]_\tenso(x). \;\text{ \bs}
\end{equation}
\end{definition}

\begin{remark}
For each $y\in\partial^*E$, the function $\OO(E)\to A$, $x\mapsto C(x,y)$ is slice regular, see Remark \ref{rem:sr-cauchy}. \bs
\end{remark}

Given two continuous functions $p,q:\partial^*E\to A$, if $A$ is associative, then we define
\[
\int_{\partial^*E}p(y)dyq(y):=\int_Tp(\xi(t))\dot{\xi}(t)q(\xi(t))\,dt.
\]

In the associative setting our Cauchy integral formula assumes a quite familiar form.

\begin{theorem}
Let $f:\OO_D\to A$ be a slice regular function. Suppose that $\mr{cl}(\OO(E))\subset\OO_D$. If~$A$ is associative then
\begin{equation}\label{eq:slice-cauchy-2}
f(x)=(2\pi)^{-n}\int_{\partial^*E}C(x,y)J^{-n}dy f(y)\quad\text{ for all $x\in\OO(E)$,}
\end{equation}
and the slice Cauchy kernel $C$ can be expressed in terms of pointwise products as follows:
\begin{equation}\label{eq:c(x,y)}
C(x,y)=\textstyle\sum_{K\in\pa(n)}(-1)^{n-|K|}[\EuScript{C}(x,y,K),y_K^c]
\end{equation}
for all $(x,y)\in\OO(E)\times\partial^*E$.
\end{theorem}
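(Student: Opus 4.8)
The plan is to deduce both assertions from the general Cauchy integral formula \eqref{eq:slice-cauchy} and from the pointwise expression \eqref{eq:cfxt} of $C_f$ already established for arbitrary $A$, exploiting associativity to pull the common right-hand factor $\dot\xi(t)J^{-n}f(\xi(t))$ out of every ordered product. No new hard analysis is needed; the work is to reorganize the existing formulas and to pass from a single slice to the whole circular set.

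First I would prove the kernel identity \eqref{eq:c(x,y)}. Fix $y\in\partial^*E$ and restrict to the slice $E=\OO(E)\cap(\C_J)^n$. Since each Cauchy factor $\EuScript{C}_{y,h}$ takes values in $\C_J$ on $E$, it is $\C_J$-slice preserving, so an iterated application of Proposition \ref{prop:slice-pointwise-products} (together with Lemma \ref{lem:subalgebras}) replaces the slice tensor product defining $C(x,y)=[\EuScript{C}_y]_\tenso(x)$ by the pointwise product, giving $C(x,y)=\EuScript{C}_{y,1}(x)\cdots\EuScript{C}_{y,n}(x)$ for $x\in E$. By Remark \ref{rem:psi} this equals $\sum_{K\in\pa(n)}(-1)^{n-|K|}[\EuScript{C}(x,y,K),y_K^c]$ on $E$. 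Both $x\mapsto C(x,y)$ and $x\mapsto\sum_{K\in\pa(n)}(-1)^{n-|K|}[\EuScript{C}(x,y,K),y_K^c]$ are slice functions on $\OO(E)$ — the first by Remark \ref{rem:sr-cauchy}, the second because each summand is an iterated slice tensor product of slice preserving, reduced factors via Proposition \ref{prop:reduced-prod}, exactly as in the proof of \eqref{eq:cfxt}. As they agree on $E$, the identity principle (Corollary \ref{cor:ip}) forces agreement on all of $\OO(E)$, which is \eqref{eq:c(x,y)}.

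Next I would turn the general formula into \eqref{eq:slice-cauchy-2}. Starting from \eqref{eq:cfxt}, associativity of $A$ lets me factor the common right factor $a:=\dot\xi(t)J^{-n}f(\xi(t))$ out of each ordered product, namely $[\EuScript{C}(x,\xi(t),K),\xi_K^c(t)a]=[\EuScript{C}(x,\xi(t),K),\xi_K^c(t)]\,a$, so that
\[
C_f(x,t)=\Big(\sum_{K\in\pa(n)}(-1)^{n-|K|}[\EuScript{C}(x,\xi(t),K),\xi_K^c(t)]\Big)\,\dot\xi(t)J^{-n}f(\xi(t)).
\]
By \eqref{eq:c(x,y)} the parenthesized sum is precisely $C(x,\xi(t))$, and since $\dot\xi(t),J^{-n}\in\C_J$ commute I may rewrite the factor as $J^{-n}\dot\xi(t)f(\xi(t))$, obtaining $C_f(x,t)=C(x,\xi(t))J^{-n}\dot\xi(t)f(\xi(t))$ for every $(x,t)\in\OO(E)\times T$. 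Substituting into \eqref{eq:slice-cauchy} and recognizing the right-hand side as the boundary integral $\int_{\partial^*E}C(x,y)J^{-n}\,dy\,f(y)$ through its defining formula yields \eqref{eq:slice-cauchy-2}.

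The only genuine subtlety is the passage from the single slice $E$ to the whole circular set $\OO(E)$ in the proof of \eqref{eq:c(x,y)}, which rests on verifying that both sides are slice in $x$ and then invoking the identity principle; the associativity-driven factorization and the commutation of $\dot\xi(t)$ and $J^{-n}$ inside $\C_J$ are the remaining points, but they are entirely routine.
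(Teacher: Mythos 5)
Your proof is correct and follows essentially the same route as the paper: establish \eqref{eq:c(x,y)} by identifying the slice tensor product with the pointwise product on the slice $E$ (via Lemma \ref{lem:subalgebras} and Proposition \ref{prop:slice-pointwise-products}), invoke Remark \ref{rem:psi}, check sliceness of both sides as in the proof of \eqref{eq:cfxt}, and extend by the identity principle (Corollary \ref{cor:ip}); then use associativity to factor $\dot\xi(t)J^{-n}f(\xi(t))$ out of \eqref{eq:cfxt} and match the resulting expression for $C_f$ with the boundary integral in \eqref{eq:slice-cauchy-2}. You even make explicit two small steps the paper leaves implicit (the $\C_J$-slice preserving property of the kernel factors and the commutation of $\dot\xi(t)$ with $J^{-n}$ inside $\C_J$), so nothing further is needed.
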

\begin{proof}
Thanks to Remark \ref{rem:psi}, for all $y\in\partial^*E$, the slice functions $\OO(E)\to A$, $x\mapsto C(x,y)$ and $\OO(E)\to A$, $x\mapsto \sum_{K\in\pa(n)}(-1)^{n-|K|}[\EuScript{C}(x,y,K),y_K^c]$ are equal on $E$. By Corollary \ref{cor:ip}, they coincide on the whole $\OO(E)$. This proves \eqref{eq:c(x,y)}. Since $A$ is associative, \eqref{eq:cfxt} and \eqref{eq:c(x,y)} imply that $C_f(x,t)=C(x,\xi(t))\dot{\xi}(t)J^{-n}f(\xi(t))$ for all $(x,t)\in\OO(E)\times T$. Consequently, the right hand sides of formulas \eqref{eq:slice-cauchy} and \eqref{eq:slice-cauchy-2} coincide. 
\end{proof}


\vspace{1em}

\noindent {\bf Acknowledgement.} This work was supported by GNSAGA of INdAM, and by the grants ``Progetto di Ricerca INdAM, Teoria delle funzioni ipercomplesse e applicazioni'', and PRIN ``Real and Complex Manifolds: Topology, Geometry and holomorphic dynamics'' of MUR. 




\end{document}